\documentclass{amsart}
\usepackage{graphicx}
\usepackage[all]{xy}
\usepackage[ansinew]{inputenc}
\usepackage{amsmath}
\usepackage{amscd}
\usepackage{amssymb}
\usepackage{latexsym}
\usepackage[active]{srcltx}
\usepackage{mathrsfs}
\usepackage{color}
\usepackage{enumerate}
\usepackage{hyperref}

\vfuzz2pt 
\hfuzz2pt 
\newtheorem{thm}{Theorem}[section]

\theoremstyle{definition}
\newtheorem{cor}[thm]{Corollary}
\newtheorem{lem}[thm]{Lemma}
\newtheorem{prop}[thm]{Proposition}
\newtheorem{defn}[thm]{Definition}

\newtheorem{fact}[thm]{Fact}
\newtheorem{claim}{Claim}

\newtheorem*{thmA}{Theorem A}
\newtheorem*{thmB}{Theorem B}

\numberwithin{equation}{section}

\newcommand{\cl}[1]{\textbf{cl}_{#1}}



\newcommand{\N}{\mathbb{N}}
\newcommand{\Z}{\mathbb{Z}}
\newcommand{\Q}{\mathbb{Q}}
\newcommand{\R}{\mathbb{R}}


\newcommand{\tp}{\operatorname{tp}}

\newcommand{\dcl}{\operatorname{dcl}}

\newcommand{\dom}{\operatorname{dom}}





\newcommand{\Cal}{\mathcal}

\newcommand{\OR}{\overline{\R}}

\newcommand{\TT}{\tilde{T}}

\def \<{\langle}
\def \>{\rangle}

\def \((  {(\!(}
\def \)) {)\!)}

\def \cl {\mathrm{cl}}

\begin{document}

\title[A tame Cantor set]{A tame Cantor set}

\author[P. Hieronymi]{Philipp Hieronymi}
\address
{Department of Mathematics\\University of Illinois at Urbana-Champaign\\1409 West Green Street\\Urbana, IL 61801}
\email{phierony@illinois.edu}
\urladdr{http://www.math.uiuc.edu/\textasciitilde phierony}

\subjclass[2010]{Primary 03C64,  Secondary 03C10, 03D05, 03E15, 28E15}

\date{\today}

\begin{abstract}
A Cantor set is a non-empty, compact subset of $\R$ that has neither interior nor isolated points. In this paper a Cantor set $K\subseteq \R$ is constructed such that every set definable in $(\R,<,+,\cdot,K)$ is Borel. In addition, we prove quantifier-elimination and completeness results for $(\R,<,+,\cdot,K)$, making the set $K$ the first example of a modeltheoretically tame Cantor set. This answers questions raised by Friedman, Kurdyka, Miller and Speissegger. The work in this paper depends crucially on results about automata on infinite words, in particular B\"uchi's celebrated theorem on the monadic second-order theory of one successor and McNaughton's theorem on Muller automata, which have never been used in the setting of expansions of the real field.
\end{abstract}

\thanks{A version of this paper is to appear in the \emph{Journal of the European Mathematical Society}. The author was partially supported by NSF grant DMS-1300402 and by UIUC Campus Research Board award 14194.}

\maketitle

\section{Introduction} Let $\OR:=(\R,<,+,\cdot)$ denote the real ordered field. The results in this paper contribute to the research program of understanding  expansions of $\OR$ by constructible sets. A set is \textbf{constructible} if it is a finite boolean combination of open sets. The motivation behind this work is the following natural question which lies in the intersection of model theory and descriptive set theory\footnote{The question was first raised in \cite{FKMS} p. 1311.}:
\begin{center}
\emph{What can be said about sets definable in such an expansion in terms of the real projective hierarchy?}
\end{center}
\noindent As is well known, when expanding the real field by constructible sets, arbitrary complicated projective sets can happen to be definable. Indeed, every projective subset of $\R^n$ is definable in $(\OR,\N)$, see for example Kechris \cite[37.6]{Kechris}. However, there are many examples of expansions of $\OR$ whose definable sets are all constructible; among these structures are all o-minimal expansions of $\OR$ and several non-o-minimal ones (see \cite{Miller-tame,Miller-fast,Miller-iteration}). This paper aims to determine what kind of expansions lie between these two extremes. Surprisingly little is known. The primary result in this direction is due to Friedman, Kurdyka, Miller and Speissegger \cite{FKMS}. They construct a constructible set $E\subseteq [0,1]$ such that $(\OR,E)$ defines sets on every level of the projective hierarchy (that is for each $N\in \N$ there is a definable set in $\boldsymbol{\Sigma}_{N+1}^1\setminus \boldsymbol{\Sigma}_N^1$), but does not define every projective set. At the end of \cite{FKMS} the question is discussed whether there is a constructible set $K$ and $N\in \N$ such that $(\OR,K)$ defines non-constructible sets, yet every definable set is $\boldsymbol{\Sigma}_N^1$. In this paper, we will answer this question positively.

\begin{thmA} There is a constructible set $K\subseteq \R$ such that $(\OR,K)$ defines non-constructible sets, yet every definable set in $(\OR,K)$ is Borel.
\end{thmA}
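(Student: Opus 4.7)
The plan is to construct $K$ as a Cantor subset of $[0,1]$ whose membership can be read off base-$b$ expansions: fix an integer base $b\geq 2$ and a B\"uchi-recognizable $\omega$-language $L\subseteq\{0,\dots,b-1\}^\omega$, and declare $x\in K$ iff some base-$b$ expansion of $x\in[0,1]$ lies in $L$. This builds a bridge from $(\OR,K)$ to the world of $\omega$-automata, where B\"uchi's theorem on the monadic second-order theory of one successor and McNaughton's determinization theorem become available. The middle-thirds Cantor set is a first approximation, but $L$ must be engineered so that $K$ has no isolated points, no interior, and so that the associated digit-sequence picture stays tame under the operations that arise during quantifier elimination.

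The technical heart of the proof is to enrich the language with a digit-extraction relation $D(x,n,d)$ meaning ``the $n$-th base-$b$ digit of $x$ is $d$'', together with a predicate for $\N\subseteq\R$, and then to prove a quantifier-elimination theorem reducing arbitrary $(\OR,K)$-formulas to formulas whose only non-field content is an $\omega$-regular condition on finitely many digit sequences of finitely many reals. Such a reduction would use o-minimality of $\OR$ to separate the semialgebraic ``continuous'' part of each formula from a purely combinatorial part on digit sequences. Once this is in place, the B\"uchi--McNaughton machinery takes over: the class of $\omega$-regular languages is closed under boolean operations and projection, and sits inside the Borel hierarchy on infinite-word space. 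Pulling back along the semialgebraic maps from the quantifier-elimination step preserves Borelness, so every definable subset of $\R^n$ is Borel. Non-constructibility is then witnessed by a set of reals whose digit sequences satisfy a regular condition that is not clopen in digit space; this yields a definable subset of $\R^n$ that is neither open, closed, nor a finite boolean combination of such.

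The main obstacle is that multiplication interacts chaotically with base-$b$ representations, so one cannot simply reduce to an automaton-theoretic formalism by brute force. The trick will be to avoid asking automata to handle products at all: by model-theoretic techniques for o-minimal expansions, one should be able to strip away the field-theoretic content and reduce questions about $(\OR,K)$ to questions about a discrete structure living on $\N$ (or a definable isomorphic copy) that does fit into B\"uchi's framework. Making this reduction faithful, and verifying that the resulting discrete structure is genuinely $\omega$-regular rather than merely Borel, is where most of the labour will go; an essential auxiliary point is that this particular $K$ must be rich enough to witness non-constructibility yet constrained enough not to define $\N$ in a way that would re-introduce the projective pathologies of $(\OR,\N)$.
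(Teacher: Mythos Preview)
Your proposal has a fatal obstruction at the very first step: any Cantor set $K$ whose membership is decided by a B\"uchi-recognizable condition on base-$b$ digits (for a fixed integer $b$) has positive Minkowski dimension. By the Fornasiero--Hieronymi--Miller theorem quoted in the introduction, an expansion of $\OR$ that defines such a $K$ must define $\N$, and hence defines every projective set. So for this class of $K$ the conclusion ``every definable set is Borel'' is simply false, and no quantifier-elimination argument can rescue it. You flag this danger in your last paragraph (``constrained enough not to define $\N$''), but the base-$b$ construction you commit to in the first paragraph already violates the constraint. Remark~5 of the paper makes exactly this point for the ternary Cantor set: the digit-based picture works over $(\R,<,+)$ but collapses over the full field.

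What the paper does instead is abandon fixed-base arithmetic entirely. The Cantor set $K$ is built from a sequence $(q_k)$ growing faster than every iterated exponential, so that $K$ is Minkowski null and the o-minimal structure cannot see any two scales of $K$ simultaneously. The isomorphic copy of $(\N,\Cal P(\N),\in,s_\N)$ that you correctly anticipate is realized not via digits but via the set $Q$ of complementary-interval lengths and a membership relation $\epsilon\subseteq Q\times K$. The hard work---and the place where your sketch is most optimistic---is showing that the field structure induces nothing on $Q\cup K$ beyond what the B\"uchi structure already sees; this is where the $T$-level machinery, $T$-convexity, and the growth condition on $(q_k)$ are used in an essential way, and it is not a matter of ``stripping away field-theoretic content'' by general o-minimal techniques alone.
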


\noindent This paper is also a contribution to the study of modeltheoretic tameness in expansions of the real field. Both sets $E$ from \cite{FKMS} and $K$ from this paper are Cantor sets. For our purposes, a \textbf{Cantor set} is a non-empty, compact subset of $\R$ that has neither interior nor isolated points. By Fornasiero, Hieronymi and Miller \cite{FHM} an expansion of $\OR$ does not define $\N$ (and hence not every projective set) if and only if every definable Cantor set of $\R$ is Minkowski null\footnote{A bounded set $A\subseteq \R^n$ is \textbf{Minkowski null} if $\lim_{r\to 0^+} r^{\varepsilon} N(A,r)=0$ for all $\varepsilon >0$, where $N(A,r)$ is minimum number of balls of radius $r$ needed to cover $A$.}. While prohibiting the existence of definable Cantor sets of positive Minkowski dimension in expansions that do not define $\N$, this result does not say much about definable sets in expansions that define Minkowski null Cantor sets. Again, the only result in this direction is the result from \cite{FKMS}, because the set $E$ is a Cantor set. The nondefinability of $\N$ in $(\OR,E)$ is deduced from the property that every subset of $\R$ definable in this expansion either has interior or is nowhere dense. While this statement can be interpreted as a weak form of topological tameness of the definable sets, it surely cannot be considered as tameness in terms of model theory. In fact, the structure $(\OR,E)$ defines a Borel isomorph of $(\OR,\N)$ and therefore does not satisfy any notion of what could reasonably be considered as modeltheoretic tameness. Just to give an example: its theory is obviously undecidable and there is no bound on the quantifier complexity needed to define all definable sets in this structure. This observation made Friedman and Miller ask the following question in personal communication with the author\footnote{Already at the end of \cite{FKMS} the question is raised whether there is a Cantor set different from $E$ such that more can be said about the definable sets in the expansion by that Cantor set.}: \begin{center}\emph{Is there a (modeltheorectically) tame Cantor set?} \end{center}
\noindent Here we give a positive answer to their question using the Cantor set $K$ from Theorem A. While it will follow easily from \cite{FKMS} that every bounded unary definable set in $(\OR,K)$ either has interior or is Minkowski null, we will say significantly more about the first-order theory of $(\OR,K)$ and definable sets in this structure. We will give a natural axiomatization of its theory (see Section \ref{section:theory} and Theorem \ref{thm:complete}) and prove a quantifier-elimination result in a suitably extended language (see Theorem \ref{thm:qe}). Since the precise axiomatization and quantifier-elimination results are technical, we postpone their statement.\newline

\noindent In \cite{FKMS} it is already pointed out that new ideas seem to be necessary to say more about the definable sets in expansions of $\OR$ by a Cantor set. This is indeed the case. In this paper we use some of the techniques from \cite{FKMS}, but we will have to develop several new tools to prove Theorem A and the existence of a tame Cantor set. Above all other we rely on a novel use of results about automata on infinite words. In particular, we recognize a deep connection between this research program and B\"uchi's famous theorem about the monadic second-order theory of one successor \cite{Buchi}. To the author's knowledge, this and related results have never been used for studying expansions of the real field. We regard this new relation between these research areas as one of the main contributions of this paper, and anticipate potential for further applications. We will outline some of these applications at the end of this introduction. First, we briefly describe how this connection arises.\newline

\noindent Many of the results in and around B\"uchi's paper are stated in terms of second-order logic and in terms of automata on infinite words, but all of them can be restated in terms of first-order model theory. Let $\Cal B$ be the two-sorted structure $(\N,\Cal P(\N),s_{\N},\in)$, where $s_{\N}$ is the successor function on $\N$ and $\in$ is the relation on $\N \times \Cal P(\N)$ such that $\in(t,X)$ iff $t \in X$. In \cite{Buchi} the decidability of the theory of $\Cal B$ and a quantifier-elimination result are established. The latter result, which is the most relevant to this paper, was later significantly strengthened by McNaughton \cite{McNaughton}. Here we will show that when a Cantor set $K$ is sufficiently regular, the expansion $(\OR,K)$ defines an isomorphic copy of $\Cal B$. And not only is such an isomorphic copy definable, we will see that for well chosen $K$ the complexity of the definable sets in $(\OR,K)$ is controlled by the complexity of the definable sets in $\Cal B$. Hence the results bounding the complexity of definable sets in $\Cal B$, such as the ones mentioned above, will bound the complexity of definable sets in $(\OR,K)$.\newline

\noindent Theorem A and the existence of a tame Cantor set are proved not only for expansions of the real field, but for a larger class of o-minimal expansions of $\OR$. An expansion $\Cal R$ of $\OR$ is \textbf{exponentially bounded} if for every function $f: \R \to \R$ definable in $\Cal R$ there exists $m\in \N$ such that $f$ is bounded at $+\infty$ by the $m$-the compositional iterate of $\exp$. All known o-minimal expansions of the real field are exponentially bounded.

\begin{thmB} There is a Cantor set $K\subseteq \R$ such that for every exponentially bounded o-minimal expansion $\Cal R$ of $\OR$, every definable set in $(\Cal R,K)$ is Borel.
\end{thmB}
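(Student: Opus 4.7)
The plan is to build a Cantor set $K$ whose membership predicate is equivalent to a constraint on the digits of a base-$b$ expansion (for a suitable $b \geq 3$), so that $K$ is naturally identified with $2^{\N}$. Under this identification the B\"uchi structure $\Cal B=(\N,\Cal P(\N),s_{\N},\in)$ can be interpreted inside $(\OR,K)$, and the goal is to show, conversely, that every set definable in $(\Cal R,K)$ is, up to an $\Cal R$-definable reparametrization, a boolean combination of preimages of sets definable in $\Cal B$. Since by McNaughton's theorem every subset of $(2^{\N})^{n}$ definable in $\Cal B$ is recognizable by a deterministic Muller automaton, and hence Borel, such a reduction forces every set definable in $(\Cal R,K)$ to be Borel as well.

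The interpretation of $\Cal B$ in $(\OR,K)$ should use only elementary ingredients. The shift map $x\mapsto bx\bmod 1$, the multiplication-by-$b^{n}$ maps, and the order on $\R$ allow one to define the $n$-th digit function on $K$ and the successor on digit positions; $K$ then plays the role of $\Cal P(\N)$, while $\N$ is coded as digit positions. I would first verify this interpretation in $(\OR,K)$, refining the tools from \cite{FKMS}, and deduce at once that every set definable in $\Cal B$ pulls back to a Borel subset of some $\R^{n}$ in $(\OR,K)\subseteq (\Cal R,K)$. Passing from $\OR$ to a general exponentially bounded o-minimal $\Cal R$ on this side costs nothing, since the interpretation uses only polynomial operations.

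The hard direction is the converse decomposition theorem, saying that every set definable in $(\Cal R,K)$ is a finite boolean combination of preimages, under $\Cal R$-definable maps, of sets definable in $\Cal B$. This is where exponential boundedness of $\Cal R$ is essential: it ensures that $\Cal R$-definable functions cannot outpace the tower of finer and finer scales at which the self-similarity of $K$ operates, so that after composing with a uniformly bounded number of iterated $\log$'s one can reduce to a scale where the interaction of $\Cal R$-definable data with $K$ is finite-to-one and fully controlled by the digit structure. Without exponential boundedness, an $\Cal R$-definable function could in principle encode the nonelementary growth of digit resolutions of $K$ and import arbitrary projective complexity; with it, the digit bookkeeping stays within the automaton-theoretic regime.

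I expect the main obstacle to be executing this decomposition uniformly in $\Cal R$. The natural vehicle is a quantifier-elimination result in an extended language augmented by symbols recording the digit data of elements of $K$ and by quantifiers ranging over $K$ read as $\Cal P(\N)$; this is essentially the content of the announced Theorem \ref{thm:qe}. Combining such an elimination with o-minimal cell decomposition for $\Cal R$, the exponential bound to keep the new terms at a controlled scale, and the Borelness of $\Cal B$-definable sets from B\"uchi--McNaughton, then yields that every set definable in $(\Cal R,K)$ is Borel, proving Theorem B.
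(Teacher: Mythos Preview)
Your proposal has a fundamental gap at the very first step: the Cantor set you describe cannot work. A Cantor set defined by a digit constraint in a fixed base $b$---such as the ternary Cantor set---is self-similar and has Minkowski dimension $\log_b 2>0$. By \cite{FHM} (quoted in the introduction and in Remark~5), any expansion of $\OR$ that defines a Cantor set of positive Minkowski dimension already defines $\N$, and hence defines every projective set. So for your $K$, the structure $(\OR,K)$ defines non-Borel sets before one even passes to a larger o-minimal $\Cal R$. The shift map $x\mapsto bx\bmod 1$ that you rely on is precisely what makes the self-similar scaling too rigid: it lets one recover integer multiplication and $\N$.

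The paper avoids this by constructing a \emph{non-self-similar}, Minkowski null Cantor set. The lengths of the complementary intervals are governed by a sequence $(q_k)$ of algebraically independent reals satisfying $\exp_m(q_k)/q_{k+1}\to 0$ for every $m$; the gaps shrink faster than any iterated exponential. There is no single base $b$, and the role of $\N$ is played by the discrete set $Q=\{q_k\}$, not by digit positions. Your intuition that exponential boundedness of $\Cal R$ should prevent $\Cal R$-definable functions from ``outpacing the scales of $K$'' is on the right track, but it only bites because the scales of $K$ themselves grow faster than every iterated exponential (condition~(C)); for a base-$b$ set the scales are merely geometric and any polynomial already outruns them. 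Concretely, the paper exploits condition~(C) through the theory of $T$-levels: distinct elements of $A\setminus Q$ lie in distinct $T$-levels (Lemma~\ref{lem:oneaperlevel}), which is what drives the back-and-forth behind the quantifier elimination. The remainder of your outline---interpret $\Cal B$, prove QE in an enriched language, and invoke McNaughton/Landweber for Borelness of the atomic predicates---does match the paper's architecture, but it cannot get off the ground without first replacing your $K$ by a Minkowski null one built \`a la \cite{FKMS}.
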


\noindent  Here is a very rough outline of the proof. Following Cantor's classical construction we define a Cantor set $K$ by inductively removing middle `thirds' of a line segment. However, as in \cite{FKMS}, instead of always removing exactly a third of the previous segment, we remove increasingly larger and larger portions of the segments. This construction results in a Cantor set that is homeomorphic to the classical Cantor ternary set, but Minkowski null. Indeed it follows from results from \cite{FKMS} that every image of $K^n$ under functions definable in $\Cal R$ is Minkowski null. Let $Q$ denote the set of lengths of complementary intervals of $K$. Note that $Q$ is definable in $(\OR,K)$. We show that there is a set $\epsilon \subseteq Q \times K$ definable in $(\OR,K)$ such that the two-sorted structures $(Q,K,\epsilon,s_Q)$ and $(\N,\Cal P(\N),\in,s_{\N})$ are isomorphic, where $s_Q$ denotes the successor function on $(Q,<)$. Then we use known results about the latter structure to control the complexity of definable sets in $(Q,K,\epsilon,s_Q)$ and hence in $(\OR,K)$. Because $K$ is constructed to be very sparse, we are then able to show that the o-minimal structure does not induce new definable sets on $Q$ and $K$ other than the ones coming from $(Q,K,\epsilon,s_Q)$. This last step requires most of the technical work in this paper and involves a wide array of tools from o-minimality.\newline

\noindent Throughout this paper we assume familiarity with basic definitions and results in model theory, o-minimality and descriptive set theory. We refer to Marker \cite{Marker} for model theory, to van den Dries \cite{tametop} for o-minimality, to Kechris \cite{Kechris} for descriptive set theory. This paper aims to be self-contained with respect to ingredients from fractal geometry and from the theory of automata on infinite words. Nevertheless, a good reference on the former is Falconer \cite{Falconer} and on the latter is Khoussainov and Nerode \cite{automata}.

\subsection*{Remarks} We conclude this introduction with a few remarks about the optimality of the results and the applicability of the methods of this paper to other open questions.\newline

\noindent \textbf{1.} Because every Cantor set is interdefinable over $\OR$ with the set of midpoints of its complementary intervals, there is a discrete set $D\subseteq \R$ such that $(\OR,D)$ defines non-constructible sets, yet every definable set in $(\OR,D)$ is Borel. By \cite[Theorem B]{discrete2} we can even take $D$ to be closed and discrete.\newline

\noindent \textbf{2.} In \cite{FKMS} it was suggested that a Cantor set $K$ exists such that every definable set in $(\OR,K)$ is not only Borel, but even a boolean combination of $F_{\sigma}$ sets. We do not know whether or not the Cantor set constructed in this paper has this stronger property.\newline

\noindent \textbf{3.} Another question from \cite{FKMS} asks whether there is a constructible set $E\subseteq \R$ and $N\in \N$ such that every definable set in $(\OR,E)$ is $\boldsymbol{\Sigma}_N^1$ and $(\OR,E)$ defines a non-Borel set. We imagine that the ideas presented in this paper can be used to give a positive answer to this question. However, since $\Cal B$ does not define non-Borel sets, one has to replace the use of $\Cal B$ by the use of more expressive structures. For example, structures based on Rabin's work on the monadic second order theory of multiple successor \cite{Rabin} might prove useful here.\newline

\noindent \textbf{4.} An open question related to the optimality of \cite{FHM} is whether there is an expansion of $\OR$ that does not define $\N$, but defines both a Cantor set and a dense and codense set. The tools from \cite{FKMS} are known to be not enough to construct such an expansion. However, it seems reasonable to expect that the work in this paper can be adjusted to construct a Cantor set $K$, a dense and codense subset $X\subseteq \R$ such that $(\OR,K,X)$ not only does not define $\N$, but is modeltheoretically well-behaved. An amalgamation of the proofs from \cite{densepairs} and from this paper should yield this result.
\newline

\noindent\textbf{5.} A model theorist might ask what happens when we look at expansions of the ordered real additive group by Cantor sets. Although to the author's knowledge this was never stated explicitly in the literature, strong results can be deduced easily from known theorems. Consider the famous Cantor ternary set $C$. It is not Minkowski null (its Minkowski dimension is $\log_3(2)$). Therefore the theory of $(\OR,C)$ is undecidable by \cite{FHM}. The situation is very different when we replace the real field by the ordered real additive group. For $r\in \N_{>2}$, consider the expansion $\Cal T_r$ of $(\R,<,+,\Z)$ by a ternary predicate $V_r(x,u,k)$ that holds if and only if $u$ is a positive integer power of $r$, $k\in \{0,\dots,r-1\}$ and the digit of a base-$r$ representation of $x$ in the position corresponding to $u$ is $k$. As shown in Boigelot, Rassart and Wolper \cite{BRW}, it follows from B\"uchi's work that the theory of $\Cal T_r$ is decidable. Since $C$ is precisely the set of real numbers in $[0,1]$ in one of whose ternary expansion the digit 1 does not appear, $C$ is $\emptyset$-definable in $\Cal T_3$. Therefore the theory of $(\R,<,+,C)$ is decidable.

\subsection*{Acknowledgements} I thank Chris Miller for bringing this question to my attention. I also would like to thank Anush Tserunyan for answering my questions about descriptive set theory, Lou van den Dries for answering my questions about o-minimality and Carl Jockusch and Paul Schupp for answering my questions about the monadic second order theory of one successor.

\subsection*{Notations} Throughout, definable means definable with parameters. If we need to be specific about the language $\Cal L$ and the parameters $X$ used to define a set, we say this set is $\Cal L$-$X$-definable.
If we say that $\varphi$ is a $\Cal L$-formula, we mean that there are no additional parameters appearing in $\varphi$. For an arbitrary language $\Cal L$ and an arbitrary $\Cal L$-theory $T$, we denote the type of a tuple $z$ of elements of a model $M$ of $T$ over some subset $X$ of the universe of $M$ by $\tp_{\Cal L}(z|X)$. Whenever there is a second model of $N$ of $T$ and an $\Cal L$-embedding of $X$ into $N$, then we write $\beta \tp_{\Cal L} (z|X)$ for the $\Cal L$-type over $\beta(X)$ given by
\[
\{ \varphi(y,\beta(x_1),\dots,\beta(x_m)) \ : \ \varphi(y,x_1,\dots,x_n) \in \tp_{\Cal L}(z|X)  \}.
\]
We will sometimes drop the subscript $\Cal L$ when the language is clear from the context. The variable $i,j,k,m,n$ always range over $\N = \{0,1,2,\dots\}$. Given two sets $X, Y$, $Z\subseteq X\times Y$ and $x \in X$, we write $Z_x$ for $\{ y \in Y  : (x,y) \in Z\}$. We will use $\pi : X \times Y \to X$ for the projection onto the first factor. Moreover, if $X$ is ordered by $<$ and $x\in X$, we write $X_{\leq x}$ for $\{ z \in X : z \leq x\}$. We write $\pi$ for the projection of $Z$ onto $X$. Moreover, if $(Y,\prec)$ is a linear order such that every element except the minimum and maximum of $Y$ has a predecessor and successor, we denote the predecessor function on $Y$ by $p_Y$ and the successor function on $Y$ by $s_Y$. If $X$ is a subset of a topological space, we denote the closure of $X$ by $\cl(X)$. If $x\in \R^n$ and $\varepsilon \in \R_{> 0}$, we denote the ball of radius $\varepsilon$ around $x$ by $B_{\varepsilon}(x)$.
Moreover, if $X$ is linearly ordered by $<$, then we will also write $<$ for the lexicographic ordering on $X^n$ given by $<$. If $M$ is a real closed field, $c=(c_1,\dots,c_n)\in M^n$ and $q=(q_1,\dots, q_n)\in \Q^n$, we write $q \cdot c$ for $(q_1c_1,\dots,q_nc_n)$.

\section{Construction of $K$} \label{section:setup} Fix an o-minimal expansion $\Cal R$ of the real field $\OR$. We denote the language of $\Cal R$ by $\Cal L$ and the $\Cal L$-theory of $\Cal R$ by $T$. Throughout, we assume that $\Cal R$ is exponentially bounded. By combining Miller \cite{growth}, Speissegger \cite{pfaffian} and Lion, Miller and Speissegger \cite{lms} $(\Cal R, \exp)$ is an exponentially bounded o-minimal expansion of $\OR$ if the same holds for $\Cal R$. Since Theorem B holds for $\Cal R$ if it holds for $(\Cal R,\exp)$, we assume that $\Cal R$ defines $\exp$.\newline

\noindent We denote the $m$-th compositional iterate by $\exp_m$. Take an increasing sequence $(P_k)_{k \in \N}$ of finite subsets of $\Q$ such that $\bigcup_{k\in \N} P_k =\Q$. Set $q_0=1$. Now fix a sequence $(q_k)_{k\in\N_{>0}}$ of positive algebraically independent real numbers such that
 \begin{itemize}
 \item[(A)] $q_{k+1} > 3q_k$ for $k\in \N$,
 \item[(B)] $| \sum_{i=0}^{k} p_i q_i^{-1}| > q_{k+1}^{-1}$ for $k\in \N$ and $p_0,\dots,p_n \in P_k$ not all zero,
 \item[(C)] $\lim_{k\to +\infty} \exp_m(q_k)/q_{k+1} = 0$ for $m\in \N$.

 \end{itemize}
 We denote the range of this sequence by $Q$. Set $K_0:=[0,1]$ and for $i\geq 1$
\[
K_{i+1} := K_i \setminus \bigcup_c (c+q_{i+1}^{-1},c+q_i^{-1}-q_{i+1}^{-1}),
\]
where $c$ ranges over the right endpoints of the complementary intervals of $K_i$. Set $K := \bigcap_{i} K_i$. We fix this $Q$ and this $K$ for the rest of the paper. The construction of $Q$ and $K$ was already given at \cite[p.1320]{FKMS}\footnote{In \cite{FKMS} $Q$ is used to denote the set of reciprocals of our
$Q$.}. As is pointed out there, one can easily check that $K$ is a Cantor set and homeomorphic to the Cantor ternary set.

\subsection*{Monadic second order theory of one successor} The work in this paper depends crucially on well known results about the monadic second-order theory of one successor. Because we expect a significant portion of the readers to be unfamiliar with many of the results, we will review them here. For details and proofs we refer to \cite{automata}.\\

\noindent Let $\Cal B$ be the two-sorted structure $(\N,\Cal P(\N),s_{\N},\in)$, where $s_{\N}$ is the successor function on $\N$ and $\in$ is the relation on $\N \times \Cal P(\N)$ such that $\in(t,X)$ iff $t \in X$.
We denote the language of $\Cal B$ by $\Cal L_B$. The theory of $\Cal B$ is called the monadic second-order theory of one successor. In a landmark paper B\"uchi \cite{Buchi} showed that the theory of $\Cal B$ is decidable. He established the decidability of the monadic second order theory of one successor by proving that a subset of $\Cal P(\N)^n$ is definable in $\Cal B$ if and only if it is recognizable by what was later named a B\"uchi automaton. In this paper  we will use a substantial strengthening of B\"uchi's characterization of definability in $\Cal B$ due to McNaughton \cite{McNaughton}. This generalization states that a set is definable in $\Cal B$ if and only if it is recognizable by a deterministic Muller automaton. For the purposes of this paper we are not so much interested in what exactly a deterministic Muller automaton is, but rather in what this characterization tells us about the Borel complexity of any given subset of $\Cal P(\N)^n$ in $\Cal B$. Viewing $\Cal P(\N)$ as the product $\{0,1\}^{\N}$, we can endow $\Cal P(\N)$ with the topology that corresponds to the usual product topology on $\{0,1\}^{\N}$. Among other things the Borel complexity with respect to this topology of subsets of $\Cal P(\N)$ definable in $\Cal B$ was studied in B\"uchi and Landweber \cite{Landweber}. There the following result was deduced from McNaughton's Theorem.

\begin{fact}\cite[Corollary 1]{Landweber}\label{fact:landweber} Every subset of $\Cal P(\N)^n$ definable in $\Cal B$ is a boolean combination of $\boldsymbol{\Pi}_2^0$ and hence in $\boldsymbol{\Delta}_3^0$.
\end{fact}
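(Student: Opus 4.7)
My plan is to read the Borel complexity directly off the acceptance condition of a deterministic Muller automaton, using McNaughton's theorem (already cited in the excerpt) to convert $\Cal B$-definability into such an automaton. Fix a set $X \subseteq \Cal P(\N)^n$ definable in $\Cal B$. Encoding a tuple $(A_1,\dots,A_n)$ of subsets of $\N$ as an infinite word over the alphabet $\Sigma=\{0,1\}^n$ (position $t$ records the characteristic values $\mathbf{1}_{A_i}(t)$) identifies $\Cal P(\N)^n$ with $\Sigma^{\omega}$ in a homeomorphic way, so that the product topology on $\Cal P(\N)^n$ coincides with the Cantor topology on $\Sigma^{\omega}$. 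By McNaughton's theorem, $X$ (viewed as an $\omega$-language over $\Sigma$) is recognized by some deterministic Muller automaton $\Cal A=(Q,\delta,q_0,\Cal F)$, where $\Cal F\subseteq \Cal P(Q)$ is the accepting family, and a word $x$ is accepted iff the set $\Inf{x}$ of states visited infinitely often by the unique run of $\Cal A$ on $x$ belongs to $\Cal F$.

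The key step is to analyze, for each state $q\in Q$, the set $U_q=\{x\in\Sigma^{\omega} : q\in\Inf{x}\}$. Because $\Cal A$ is deterministic, the condition ``the run on $x$ is in state $q$ after reading $x_0\dots x_m$'' depends only on the initial segment $x_0\dots x_m$, so it is clopen in the product topology. Consequently
\[
U_q \ =\ \{x : \forall n\,\exists m>n,\ \delta^{*}(q_0,x_0\dots x_m)=q\}
\]
is $\boldsymbol{\Pi}_2^0$, and its complement $V_q=\{x : q\notin\Inf{x}\}$ is $\boldsymbol{\Sigma}_2^0$. For each $F\in\Cal F$ the event $\Inf{x}=F$ is the finite intersection $\bigcap_{q\in F}U_q \cap \bigcap_{q\notin F}V_q$, so it is a boolean combination of $\boldsymbol{\Pi}_2^0$ sets. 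Taking the finite union over $F\in\Cal F$ gives the language $X$ as a boolean combination of $\boldsymbol{\Pi}_2^0$ sets. Since $\boldsymbol{\Sigma}_2^0\cup\boldsymbol{\Pi}_2^0\subseteq\boldsymbol{\Delta}_3^0$ and $\boldsymbol{\Delta}_3^0$ is closed under finite boolean operations, it follows that $X\in\boldsymbol{\Delta}_3^0$.

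The proof is essentially bookkeeping once McNaughton's theorem is in hand; the only substantive input is the passage from $\Cal B$-definability to a \emph{deterministic} Muller acceptor. That step is the genuine obstacle: with merely nondeterministic B\"uchi acceptors one would need to existentially quantify over runs, picking up an extra projection and pushing the natural bound up to $\boldsymbol{\Sigma}_3^0$ rather than $\boldsymbol{\Delta}_3^0$. Determinism is what lets the clopen event ``the run is in state $q$ at time $m$'' be expressed without any quantification over runs, and so it is what keeps the complexity confined to boolean combinations of $\boldsymbol{\Pi}_2^0$.
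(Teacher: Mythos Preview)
Your argument is correct and is essentially the standard derivation of this bound from McNaughton's theorem. Note, however, that the paper does not give its own proof of this statement: it is recorded as a fact and attributed to B\"uchi--Landweber \cite[Corollary~1]{Landweber}, with the remark that it is ``deduced from McNaughton's Theorem''. Your write-up supplies precisely that deduction, so there is nothing to compare against in the paper itself.
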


\noindent We will also use easy facts about definability in $\Cal B$, such as the definability in $\Cal B$ of the usual order on $\N$ and the set of finite subsets of $\N$, which we denote by $\Cal P_{\operatorname{fin}}(\N)$.\newline

\noindent We now explain the connection to the topic of this paper. It is well known that the Cantor ternary set and $\Cal P(\N)$ are homeomorphic (see for example \cite[I.3.4]{Kechris} or \cite[6.9.1]{automata}). Since $K$ is constructed in almost exactly the same way as the Cantor ternary set, one can easily see that the same construction gives an homeomorphism $h$ between $K$ and $\Cal P(\N)$. From this construction, it is clear that this homeomorphism can be extended to an isomorphism between the two-sorted structures $(\N,\Cal P(\N),\in,s_{\N})$ and $(Q,K,\epsilon,s_Q)$ for a certain $\epsilon \subseteq Q \times K$. We will now remind the reader what the precise definitions of $h$ and  $\epsilon$ are.\newline

\noindent Recall that $Q$ was defined to be the range of a sequence $(q_i)_{i \in \N}$ of real numbers and that we defined $K_0:=[0,1]$ and for $i\geq 1$, $K_{i+1} := K_i \setminus \bigcup_c (c+q_{i+1}^{-1},c+q_i^{-1}-q_{i+1}^{-1})$,
where $c$ ranges over the right endpoints of the complementary intervals of $K_i$, and $K := \bigcap_{i} K_i$. Let $g : \N \to Q$ be the map taking $n$ to $q_n$ and let $h : \Cal P(\N) \to K$ map $X \subseteq \N$ to $\sum_{n \in X} (q_{n-1}^{-1} - q_n^{-1})$. We will leave it to the reader to check that $h$ is well-defined\footnote{This is really the same construction as for the Cantor ternary set. In the case of the Cantor ternary set, the set $Q$ is $3^{\N}$, and hence $q_n=3^{n}$. Thus $q_{n-1}^{-1} - q_n^{-1} = 3^{-n-1}- 3^{-n}= 2 \cdot 3^{-n}$. Since the Cantor ternary set is the set of all numbers between 0 and 1 who have a ternary expansions consisting only of 0's and 2's, one can see directly that a function defined in the same ways as $h$ maps into the Cantor ternary set.}. Let $R_n$ be the set of right endpoints of complementary intervals of $K_n$ and let $R$ be the set of right endpoints of complementary intervals of $K$. Define $e : Q \times K \to R$ to be the function that maps $(q_n,c)$ to the largest $r\in R_{n}$ with $r\leq c$. From the construction, we immediately get that $0 \leq c - e(q,c) \leq q^{-1}$ for every $c\in K$ and $q \in Q$. Let $\epsilon \subseteq  Q \times K$ be the set of all $(q_n,c)$ such that $e(q_n,c)\in R_{n}\setminus R_{n-1}$.

\begin{prop}\label{prop:isokb} The map $\beta=(g,h)$ is an isomorphism between the two-sorted structures $(\N,\Cal P(\N),\in,s_{\N})$ and $(Q,K,\epsilon,s_Q)$ and $h$ is a homeomorphism.
\end{prop}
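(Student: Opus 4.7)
The plan is to split the statement into four ingredients: (a) $g$ is a successor-preserving bijection $\N\to Q$; (b) $h$ is a bijection $\Cal P(\N)\to K$; (c) $h$ is a homeomorphism; (d) $n\in X\iff \epsilon(g(n),h(X))$. Ingredient (a) is immediate: $(q_n)_{n\in\N}$ is a strictly increasing enumeration of $Q$ by condition (A), so $s_{Q}(g(n))=q_{n+1}=g(n+1)$, and $g$ is bijective.

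For (b), I would first show that the series defining $h(X)$ converges, as its positive terms telescope and are bounded by $q_0^{-1}=1$. I would then prove $h(X)\in K$ by induction on $n$: after $n$ stages of construction, the corresponding partial sum lies in a specific chunk of $K_n$ (of length $q_n^{-1}$) determined by the restriction of $X$ to the first $n$ indices, and the remaining tail of the series keeps $h(X)$ inside that chunk. Injectivity is a consequence of condition (A): if $n$ is the smallest index in the symmetric difference $X\triangle Y$, then $q_{n+1}>3q_n$ forces $|h(X)-h(Y)|$ to be strictly positive. Surjectivity is the inverse construction: given $c\in K$, at each stage $n$ record whether $c$ lies in the left half or the right half of the chunk of $K_{n-1}$ containing it, and let $X$ be the set of stages at which $c$ lands in the right half.

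For (c), the series defining $h$ converges uniformly in $X\in\{0,1\}^{\N}$, which gives continuity of $h$; combined with (b) and the compactness of $\Cal P(\N)$ together with the Hausdorffness of $K$, this makes $h$ a homeomorphism.

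Item (d) is the heart of the argument. I would first identify $R_n$ geometrically as the set of left endpoints of the $2^n$ chunks of $K_n$ (with the convention that the complementary interval $(-\infty,0)$ contributes the left endpoint $0$). Since each chunk of $K_n$ has length $q_n^{-1}$ and the next element of $R_n$ after a given left endpoint $a$ lies strictly beyond $a+q_n^{-1}$, the point $e(q_n,c)$ equals the left endpoint of the chunk of $K_n$ containing $c$. Next, $R_n\setminus R_{n-1}$ consists exactly of the freshly created left endpoints of the \emph{right} halves of the splitting chunks at stage $n$, namely $\{b+q_{n-1}^{-1}-q_n^{-1}:b\in R_{n-1}\}$. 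Hence $\epsilon(q_n,c)$ holds iff $c$ lies in the right-half descendant of its parent chunk at stage $n$. Finally, the form of $h(X)$ as a sum of right-half displacements shows that $h(X)$ lies in such a right half at stage $n$ precisely when $n\in X$, giving the desired equivalence. The main obstacle is (d); nothing is deep, but one has to keep straight the indexing conventions, the match between summands of $h(X)$ and left/right branching at each stage, and the identification of $R_n\setminus R_{n-1}$.
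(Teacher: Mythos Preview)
Your proposal is correct and in fact supplies far more detail than the paper does: the paper's own proof of this proposition is the single sentence ``Checking that $h$ is a homeomorphism and $\beta$ is an isomorphism between the two structures, is routine and we leave the details to the reader.'' Your decomposition into (a)--(d), and in particular your geometric identification of $R_n$ as the set of left endpoints of the $2^n$ chunks of $K_n$, of $R_n\setminus R_{n-1}$ as the fresh left endpoints of the right halves, and of $e(q_n,c)$ as the left endpoint of the chunk of $K_n$ containing $c$, is exactly the routine verification the paper has in mind; there is nothing substantive to compare against.
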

\begin{proof} Checking that $h$ is a homeomorphism and $\beta$ is an isomorphism between the two structures, is routine and we leave the details to the reader.
\end{proof}

\noindent We can deduce immediately from the definition of $h$ that if $c=\sum_{n \in X} (q_{n-1}^{-1} - q_n^{-1})$ for some $c\in K$ and $X\subseteq \N$, then $e(q_m,c) = \sum_{n \in X, n\leq m} (q_{n-1}^{-1} - q_n^{-1})$.\newline



\noindent It is now a good time to point out why we need to use these results about $\Cal B$. Observe that $(\OR,K)$ defines the discrete set $D$ of midpoints of complementary intervals of $K$ and a map $f:D\to K$ that maps an element $d\in D$ to $\sup K\cap (-\infty,d]$ (see \cite[Proof of Theorem]{FHM}). The image $f(D)$ is dense in $K$. The complexity of the definable sets in $(\OR,K)$ can be seen as a direct consequence of the definability of $f$ (for evidence see \cite[Theorem 1.1]{discrete} and Hieronymi and Tychonievich \cite[Theorem A]{HT}). Most of the technical work in this paper, and in particular the use of results about $\Cal B$, will be towards controlling this map.

\subsection*{Definable sets in $(\OR,K)$}

In this section we will study the Cantor set $K$ and the discrete set $Q$ in more detail.  The goal is to show that $Q$ and $\epsilon\subseteq Q\times K$ from the previous section are $\emptyset$-definable in $(\OR,K)$. It then follows from Proposition \ref{prop:isokb} that $(\OR,K)$ defines an isomorphic copy of $(\N,\Cal P(\N),\in,s_{\N})$.\newline

\noindent First, we define $L\subseteq [0,1]$ to be the left endpoints of complementary intervals of $K$. It is easy to see that both $L$ and $R$ are $\emptyset$-definable in $(\OR,K)$. Note also that elements in $L$ (or $R$) are left (resp. right) endpoints of complementary intervals of $K_n$ for some $n$.  We denote the set of left endpoints by $L_n$.

\begin{defn} Let $v: (L\cup R)\setminus \{0,1\}  \to \R$ map $d\in (L\cup R)\setminus \{0,1\}$ to the length of the complementary interval of $K$ one of whose endpoint $d$ is.
\end{defn}

\noindent Note that $v$ is $\emptyset$-definable in $(\OR,K)$. From the construction of $K$, in particular from $3q_n < q_{n+1}$, we directly get the following Lemma.

\begin{lem} \label{lem:v} Let $d \in R$. Then $v(d) = q_{n-1}^{-1} - 2q_n^{-1}$ if and only if $d\in R_n \setminus R_{n-1}$. The same statement holds with $R$ replaced by $L$ and $R_n$ by $L_n$.
\end{lem}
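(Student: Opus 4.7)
The plan is to prove both directions by unpacking the inductive construction of $K_n$ and tracking which endpoints are introduced at which stage.

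For the forward direction, suppose $d\in R_n\setminus R_{n-1}$. Then $d$ is a right endpoint of a complementary interval of $K_n$ that was absent at stage $n-1$, so it must have been produced when passing from $K_{n-1}$ to $K_n$. Unwinding the definition of $K_n$, this forces $d=c+q_{n-1}^{-1}-q_n^{-1}$ for some $c\in R_{n-1}$, and the corresponding complementary interval of $K_n$ is $I := (c+q_n^{-1},\, c+q_{n-1}^{-1}-q_n^{-1})$, of length $q_{n-1}^{-1}-2q_n^{-1}$. I would then verify that $I$ is exactly the complementary interval of $K$ having $d$ as its right endpoint. A straightforward induction shows that $K_i$ consists of $2^i$ closed pieces of length $q_i^{-1}$, and each subsequent step removes only open middles of such pieces; hence both endpoints of every piece of $K_i$, in particular $d$ and $c+q_n^{-1}$, are preserved in all later $K_m$, so they lie in $K$. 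Consequently $I$ itself is a complementary interval of $K$, and $v(d)=q_{n-1}^{-1}-2q_n^{-1}$.

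For the converse, fix $d\in R$ with $v(d)=q_{n-1}^{-1}-2q_n^{-1}$. Since $R=\bigcup_{m}R_m$ with the $R_m$ nested, there is a unique $m\geq 1$ with $d\in R_m\setminus R_{m-1}$. The forward direction gives $v(d)=q_{m-1}^{-1}-2q_m^{-1}$, so it suffices to show that the function $k\mapsto q_{k-1}^{-1}-2q_k^{-1}$ is injective. In fact I would show it is strictly decreasing: condition (A) gives $q_{k-1}^{-1}>3q_k^{-1}$, so
\[
(q_{k-1}^{-1}-2q_k^{-1})-(q_k^{-1}-2q_{k+1}^{-1})=(q_{k-1}^{-1}-3q_k^{-1})+2q_{k+1}^{-1}>0.
\]
Therefore $m=n$, and $d\in R_n\setminus R_{n-1}$ as required.

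The $L$-version is entirely symmetric: at step $i+1$, the left endpoint $c+q_{i+1}^{-1}$ of the removed gap is the new element of $L_{i+1}\setminus L_i$ associated with $c\in R_i$, and the argument proceeds verbatim with ``right'' replaced by ``left''. The only point where care is needed is verifying that the gap removed at stage $n$ is not later extended on either side, i.e., that both of its boundary points survive in $K$; this is the ``endpoints of pieces are preserved'' observation above, and I do not foresee any substantial obstacle beyond that bookkeeping.
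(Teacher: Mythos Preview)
Your proof is correct and follows exactly the approach the paper indicates: the paper simply asserts that the lemma follows ``from the construction of $K$, in particular from $3q_n < q_{n+1}$,'' and you have carefully unpacked precisely those two ingredients---the inductive description of which endpoints appear at which stage, and the use of condition~(A) to show $k\mapsto q_{k-1}^{-1}-2q_k^{-1}$ is strictly decreasing. (A minor quibble: you label the implication $d\in R_n\setminus R_{n-1}\Rightarrow v(d)=q_{n-1}^{-1}-2q_n^{-1}$ as the ``forward direction,'' whereas in the statement as written it is the reverse implication; this does not affect the mathematics.)
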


\noindent Hence for every $n\in \N$, both $L_n$ and $R_n$ are $\emptyset$-definable in $(\OR,K)$.
\begin{defn} Let $w: R \to L$ map $r \in R$ to the smallest $l\in L$ with $l>r$ and $v(l)\geq v(r)$ if such $l$ exists, and to $1$ otherwise.
\end{defn}

\noindent Since $v$ is definable in $(\OR,K)$, so is $w$.

\begin{cor} The set $Q$ is equal to $\{ w(r)-r \ : \ r \in R\}$ and hence is $\emptyset$-definable in $(\OR,K)$.
\end{cor}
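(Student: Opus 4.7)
The plan is to compute $w(r)-r$ directly from the inductive construction of $K$ and then deduce $\emptyset$-definability of $Q$ as a corollary. Fix $r\in R\setminus\{0\}$, and let $n\geq 1$ be the unique integer with $r\in R_n\setminus R_{n-1}$; by Lemma \ref{lem:v}, $v(r)=q_{n-1}^{-1}-2q_n^{-1}$. From the inductive definition, $r$ is the left endpoint of a surviving $K_n$-segment of length $q_n^{-1}$, namely $[r,r+q_n^{-1}]$, and this is the right half of the enclosing $K_{n-1}$-segment; in particular $r+q_n^{-1}$ is the right endpoint of that $K_{n-1}$-segment, so it equals either $1$ or a point of $L_{n-1}$. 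I intend to show that $w(r)=r+q_n^{-1}$, which yields $w(r)-r=q_n^{-1}$.

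The verification splits into two estimates, both driven by condition (A), $q_{k+1}>3q_k$. First, any $l\in L$ with $r<l<r+q_n^{-1}$ is the right endpoint of a strictly finer sub-segment first produced at some step $m>n$, so Lemma \ref{lem:v} gives $v(l)\leq q_n^{-1}-2q_{n+1}^{-1}$; a short calculation using (A) shows $q_n^{-1}-2q_{n+1}^{-1}<q_{n-1}^{-1}-2q_n^{-1}=v(r)$, disqualifying every such $l$. Second, when $r+q_n^{-1}<1$ the point $r+q_n^{-1}$ lies in $L_{m}$ for some $m\leq n-1$, whence $v(r+q_n^{-1})\geq q_{n-2}^{-1}-2q_{n-1}^{-1}$, which by the parallel (A)-estimate strictly exceeds $v(r)$; hence $r+q_n^{-1}$ is an eligible witness. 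In the boundary case $r+q_n^{-1}=1$ no eligible $l$ exists and the default clause of the definition of $w$ forces $w(r)=1=r+q_n^{-1}$. Either way $w(r)-r=q_n^{-1}$.

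Consequently $\{w(r)-r:r\in R\setminus\{0\}\}=\{q_n^{-1}:n\geq 1\}$. Since $w$ is $\emptyset$-definable in $(\OR,K)$, so is this set; applying the $\emptyset$-definable reciprocal map $x\mapsto x^{-1}$ and adjoining the constant $q_0=1$ recovers $Q$, completing the proof. The only delicate point is the twofold appeal to condition (A) to compare consecutive gap sizes; once those comparisons are in hand, the rest is routine bookkeeping with the inductive definition of $K_n$.
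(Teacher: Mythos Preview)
Your proof is correct and follows essentially the same route as the paper's: both compute $w(r)-r=q_n^{-1}$ for $r\in R_n\setminus R_{n-1}$, with you spelling out the two $v$-comparisons via condition (A) and the boundary case $r+q_n^{-1}=1$ where the paper simply invokes Lemma~\ref{lem:v} and the construction of $K_n$ in one line. Your final reciprocal step is arguably more careful than the paper, which writes $Q=\{w(r)-r:r\in R\}$ literally even though the right-hand side is $\{q_n^{-1}:n\geq1\}$; the $\emptyset$-definability of $Q$ follows either way.
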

\begin{proof}  Let $r\in R_n\setminus R_{n-1}$. By Lemma \ref{lem:v}, $w(r)$ is the smallest left endpoint of the complementary interval of $K_n$ larger than $r$. It follows immediately from the construction of $K_n$ that $w(r)-r = q_n^{-1}$. Hence $Q=\{ w(r)-r \ : \ r \in R\}$ and $Q$ is $\emptyset$-definable in $(\OR,K)$, since $w$ is.
\end{proof}

\noindent Remember that the predecessor function on $Q$ is denoted by $p_Q$. Since $Q$ is $\emptyset$-definable in $(\OR,K)$, so is $p_Q$.
For $q \in Q$, we set $R_q:= \{ r \in R \ : \ v(r)\geq p_Q(q)^{-1} - 2 q^{-1}\}$. By Lemma \ref{lem:v}, if $q=q_n$ for some $n$, then $R_q=R_n$. We immediately get $\emptyset$-definability of $e$ and $\epsilon$ in $(\OR,K)$.

\begin{cor} The function $e$ and the set $\epsilon$ are $\emptyset$-definable in $(\OR,K)$.
\end{cor}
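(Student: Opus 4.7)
The plan is to read off formulas for $e$ and $\epsilon$ from the uniformly $\emptyset$-definable family $\{R_q\}_{q \in Q}$. Since $R$, $v$, $Q$, and the predecessor function $p_Q$ have all been shown $\emptyset$-definable in $(\OR,K)$, the description $R_q = \{r \in R : v(r) \geq p_Q(q)^{-1} - 2q^{-1}\}$ gives a uniform $\emptyset$-definition of $R_q$ in the parameter $q$. Combined with the observation just above, namely that $R_{q_n} = R_n$ (which follows from Lemma \ref{lem:v} and the fact that $q_{k-1}^{-1} - 2q_k^{-1}$ is strictly decreasing in $k$), this is the only ingredient required.

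Next I would observe that, for $q = q_n \in Q$, the set $R_q = R_n$ is a finite subset of $[0,1]$ containing $0$, so for every $c \in K$ the set $\{r \in R_q : r \leq c\}$ is nonempty and finite. Its maximum therefore exists, and, by the very definition of $e$, coincides with $e(q,c)$. Hence
\[
e(q,c) \;=\; \max\{\, r \in R_q : r \leq c \,\}
\]
exhibits $e$ as $\emptyset$-definable in $(\OR,K)$.

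Finally, for $\epsilon$ I would simply unwind the definition: $(q,c) \in \epsilon$ iff $e(q,c) \in R_q \setminus R_{p_Q(q)}$, with the harmless convention $R_{p_Q(q_0)} = \emptyset$ to handle the base case $q = q_0$. Since $e$ is now $\emptyset$-definable and the family $\{R_q\}_{q \in Q}$ is uniformly $\emptyset$-definable, so is $\epsilon$. There is no genuine obstacle at this stage; the substantive work, namely pinning down $Q$, $v$, $p_Q$ and the identification $R_q = R_n$ inside $(\OR,K)$, has already been carried out above, and this corollary is the concluding piece of bookkeeping in that chain.
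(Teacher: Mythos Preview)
Your proposal is correct and follows exactly the approach the paper intends: the paper gives no explicit proof, merely remarking that $\emptyset$-definability of $e$ and $\epsilon$ follows immediately once the family $R_q$ is defined and identified with $R_n$ via Lemma~\ref{lem:v}. You have simply spelled out the one-line formulas $e(q,c)=\max\{r\in R_q:r\le c\}$ and $(q,c)\in\epsilon\iff e(q,c)\in R_q\setminus R_{p_Q(q)}$ that the paper leaves implicit.
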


\section{Preliminaries from o-minimality}

\subsection*{O-minimal structures} Throughout this paper, the reader is assumed to know the basic results about o-minimal structures and theories, as can be found in \cite{tametop}. The only o-minimal theory we will consider is the $\Cal L$-theory $T$. Let $M\models T$. For a subset $X\subseteq M$, we denote the \textbf{definable closure of $X$} in $M$ by $\dcl_{\Cal L}(X)$. When it is clear from the context which language $\Cal L$ is used, we simply write $\dcl(X)$. As is well known, in an o-minimal structure the definable closure operator is a pregeometry. We will make use of this fact routinely throughout the paper without further mentioning it.\newline

\noindent Every complete o-minimal theory expanding the theory of real closed fields has definable Skolem functions. Hence by extending the language $\Cal L$ and the theory $T$ by definitions, we may assume that $T$ has quantifier elimination and is universally axiomatizable, and that $\Cal L$ has no relation symbol other than $<$.

\subsection*{Limit points of images of $K^n$ under $\Cal L$-definable functions} We now recall some definitions and results from \cite{FKMS}. For details and proofs, the interested reader should consult the original source.\newline

\noindent  Define $\psi : [0,\infty) \to \R$ by
\[
\psi(t):= \left\{
          \begin{array}{ll}
            0, & \hbox{$t=0$;} \\
            e^{-1/t}, & \hbox{$0<t<1$;} \\
            t-1+e^{-1}, & \hbox{$t\geq 1$.}
          \end{array}
        \right.
\]
Note that $\psi$ is $\emptyset$-definable in $\Cal R$. For $m\in \Z$, we denote the $m$-th compositional iterate of $\psi$ by $\psi_m$. Note that $\lim_{k\to +\infty} \psi_m(q_k^{-1})/q_{k+1} = 0$ for every $m \in \N$. \newline

\noindent For $n\in \N$ and $l \in \Z$ define
\[
S_{n,l} = \{ x \in \R^n \ : \ 0 < x_n < \psi_l(x_{n-1}) < \dots < \psi_{(n-1)l}(x_1)\}.
\]
Again, note that every $S_{n,l}$ is $\emptyset$-definable in $\Cal R$. Let $\Cal T_n$ be the group of symmetries, regarded as linear transformation from $\R^n$ to $\R^n$ of the polyhedron inscribed in the unit ball of $\R^n$ whose vertices are the intersection of the unit sphere in $\R^n$ with $\{ tu \ : \ t > 0 \wedge u\in \{-1,0,1\}^n\}$.

\begin{fact}(cp. \cite[1.8]{FKMS}) \label{fact:1} Let $X \subseteq \R^n$ and $f: X \to [0,1]$ be definable in $\Cal R$. Then there is $j\in \N$ such that for all $y\in \cl(X)$ there is a $\delta \in \R_{>0}$ such that for every $m\leq n$ and for every $T \in \Cal T_n$ the restriction of $f$ to $B_{\delta}(y) \cap X \cap (y + T(S_{m,j}\times \{0\}^{n-m}))$ is continuous and extends continuously to the closure.
\end{fact}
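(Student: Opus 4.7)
The plan is to prove this by combining $\Cal L$-cell decomposition adapted to $f$ with the key observation that for $j$ large enough, each cone $y+T(S_{m,j}\times\{0\}^{n-m})$ is too thin (in the sense of iterated $\psi$) to cross any cell boundary as it approaches $y$. First I would apply cell decomposition to obtain a finite partition of $X$ into cells $C_1,\dots,C_N$ on each of which $f$ is continuous, together with a compatible decomposition of $\cl(X)$. The cell boundaries are graphs and limits of graphs of $\Cal L$-definable continuous functions, and the aim is to show that for $j$ sufficiently large and $\delta$ sufficiently small, the set $B_\delta(y)\cap(y+T(S_{m,j}\times\{0\}^{n-m}))$ meets only one top-dimensional cell.

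The exponentially bounded hypothesis is what makes this possible. For $\Cal R$ exponentially bounded, a standard consequence is that every positive $\Cal L$-definable function of one variable tending to $0$ at $0^+$ is bounded below by some iterate $\psi_l$; dually, every such function tending to $+\infty$ is bounded above by some $\exp_m$. Applied to the finitely many definable functions whose graphs appear in the cell decomposition near $y$, this provides a finite list of iteration indices, and choosing $j$ strictly larger than all of them forces every point of $S_{m,j}$ to have its coordinates decrease faster than any of the cell-boundary functions can vary, so the cone stays in a single chamber. Once the cone is contained in one cell $C_i$, the restriction $f|_{C_i}$ is continuous, so the restriction to the cone is continuous as well; since $f$ takes values in $[0,1]$ and the cone is definable and thin, the o-minimal monotonicity theorem applied along parametrizations of the cone gives well-defined limits along every sequence approaching a boundary point, yielding the required continuous extension.

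The principal obstacle I expect is \emph{uniformity}: obtaining a single $j$ that works simultaneously for all $y\in\cl(X)$, all $m\leq n$ and all $T\in\Cal T_n$. Since $\Cal T_n$ is finite and $m$ ranges over $\{0,1,\dots,n\}$, only the dependence on $y$ is genuinely nontrivial. Here I would invoke definable uniformity of cell decomposition and the fact that the parameters realizing continuity along the cone come from an $\Cal L$-definable family indexed by $y\in\cl(X)$; by o-minimality this family is itself partitioned into finitely many cells, and on each cell the required iteration index is bounded, so the supremum over $\cl(X)$ is finite. Extracting this uniform $j$ and matching it correctly with the finite collection of linear symmetries in $\Cal T_n$, while simultaneously inducting on dimension to handle the lower-dimensional cells appearing in the boundary of $X$, will be the most delicate bookkeeping step of the argument.
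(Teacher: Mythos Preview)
The paper does not prove this fact; it is quoted from \cite[1.8]{FKMS} as a black box. The only argument the paper supplies is for the uniformity of $j$ over $y\in\cl(X)$, and that argument is different from yours. The paper observes that the non-uniform version of \cite[1.8]{FKMS} holds in every exponentially bounded o-minimal structure, hence in every elementary extension of $\Cal R$; if no single $j$ worked for all $y$, compactness would produce an elementary extension containing a point $y$ for which every $j\in\N$ fails, contradicting the non-uniform statement there. This is short and requires nothing beyond knowing that the pointwise result transfers.

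Your sketch of the underlying proof (cell decomposition, exponential boundedness forcing the thin cones to stay inside a single cell) is in the right spirit and is roughly how \cite{FKMS} proceeds, but your uniformity step has a gap. You write that the ``required iteration index'' is bounded on each cell of a definable partition of $\cl(X)$, but the index $j$ is a natural number, not a definable real-valued quantity, so o-minimality alone does not hand you such a bound; you would need to exhibit a definable witness whose behaviour encodes $j$ and then argue via finiteness of types or something equivalent. The paper's compactness argument sidesteps this entirely: once you know the pointwise statement holds in all models, uniformity is automatic, with no need to track how $j$ depends definably on $y$.
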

\noindent Note that $j$ is chosen uniformly for all $y \in \cl(X)$. In \cite[1.8]{FKMS} $j$ could apriori depend on $y$. However, as stated directly after \cite[1.8]{FKMS}, the statement of \cite[1.8]{FKMS} holds in general exponentially bounded o-minimal structures, in particular in any elementary extensions of $\Cal R$. Thus by compactness one can easily check that there is a $j\in \N$ such that the conclusion of \cite[1.8]{FKMS} holds for this $j$ for every $y \in \cl(X)$.

\begin{fact}(cp. \cite[(ii) on p.1320]{FKMS})\label{fact:2}
For every $j>0$ there is $\delta >0$ such that for all $n>0$
\[
K^n - K^n \cap (-\delta,\delta)^n \subseteq \bigcup_{m\leq n} \bigcup_{T \in \Cal T_n} T(S_{m,j}\times \{0\}^{n-m}).
\]
\end{fact}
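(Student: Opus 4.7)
The plan is to exploit the extreme sparsity of the sequence $(q_k)$ from condition (C) together with the algebraic rigidity of condition (B) to show that the nonzero coordinates of vectors in $K^n - K^n$ near the origin come in a discrete sequence of well-separated scales $\sim q_k^{-1}$, so that after an appropriate symmetry in $\Cal T_n$ they assume the cuspidal form required by $S_{m,j}$.

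First, I would establish a scale lemma for $K - K$. Using the parametrization $c = \sum_{n \in X}(q_{n-1}^{-1} - q_n^{-1})$ of $c \in K$, every difference $a - b$ with $a, b \in K$ is expressible as $\sum_n c_n q_n^{-1}$ with $c_n \in \{-2, -1, 0, 1, 2\}$, and condition (B) rules out cancellations in such finite sums. Consequently, if $a \neq b$ and $k$ is the smallest index with $c_k \neq 0$ in the expansion, then $|a-b|$ is comparable to $q_k^{-1}$ up to bounded constants. Given $j > 0$, condition (C) supplies a threshold $N_j$ with $q_{k+1}^{-1} < \psi_j(q_k^{-1}) = 1/\exp_j(q_k)$ for every $k \geq N_j$; choose $\delta < q_{N_j}^{-1}$ so that any nonzero coordinate $|x_i| < \delta$ forces its leading index $k_i \geq N_j$.

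For $x \in (K^n - K^n) \cap (-\delta, \delta)^n$, let $I$ index the nonzero coordinates. Assuming the leading indices $\{k_i\}_{i \in I}$ are all distinct, I would pick $T \in \Cal T_n$---a signed permutation---that reorders and signs the nonzero coordinates into positive values at positions $1, \ldots, m = |I|$ in strictly decreasing magnitude, while the zero coordinates go to positions $m+1, \ldots, n$. The cuspidal chain defining $S_{m,j}$ then follows from the scale lemma and the choice of $N_j$ via $|x_{i(\ell+1)}| \leq q_{k_{i(\ell+1)}}^{-1} \leq q_{k_{i(\ell)}+1}^{-1} < \psi_j(q_{k_{i(\ell)}}^{-1}) \leq \psi_j(|x_{i(\ell)}|)$.

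The main obstacle is the case where two or more nonzero coordinates share the same leading index, making their magnitudes comparable and hence violating any strict cuspidal inequality. Resolving this requires the finer additive structure of $K - K$---namely that $x_i \pm x_{i'}$ for equal-level differences $x_i, x_{i'}$ has strictly higher leading index---combined with the full symmetry group $\Cal T_n$ of the inscribed polyhedron, which for small $n$ contains genuine rotations beyond the signed permutations (e.g., the $45^{\circ}$ rotations in $\Cal T_2 = D_8$). Handling equal-level clusters by an appropriate change of basis within $\Cal T_n$ separates them back into distinct scales, reducing to the distinct-index case. This combinatorial bookkeeping is the central technical step and is carried out in detail in \cite{FKMS}.
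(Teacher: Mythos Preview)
The paper does not actually prove this fact: it is quoted from \cite{FKMS} with the single remark that the argument there (stated for a $\Q$-linearly independent subset of $K$) carries over with only minor adjustment. Your proposal goes further, sketching the intended mechanism --- scale separation of $K-K$ coming from condition (C), followed by a symmetry from $\Cal T_n$ that orders the nonzero coordinates into the cuspidal chain $S_{m,j}$ --- and then, like the paper, defers the hard combinatorial step to \cite{FKMS}. In that sense your proposal and the paper's treatment agree.

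One specific claim in your sketch is not correct as stated, however. You assert that for two equal-level differences $x_i,x_{i'}$, one of $x_i\pm x_{i'}$ has strictly higher leading index. The leading coefficients of elements of $K-K$ in the expansion $\sum c_n q_n^{-1}$ range over $\{-2,-1,1,2\}$; if the two leading coefficients are, say, $1$ and $2$, then both the sum (leading coefficient $3$) and the difference (leading coefficient $-1$) remain at the same scale $q_k^{-1}$, so neither drops. Your appeal to the $45^\circ$ rotations in $\Cal T_2=D_8$ is suggestive but does not directly resolve this, and it is not clear how it would extend to $n\geq 3$. The genuine mechanism by which $\Cal T_n$ separates coincident scales is more subtle than a single $\pm$-combination. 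Since you explicitly hand this step off to \cite{FKMS}, it does not break your overall plan, but the heuristic you give for what happens there is misleading and should be corrected or simply omitted.
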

\noindent Fact \ref{fact:2} was proved in \cite{FKMS} not for $K$, but for a $\Q$-linearly independent subset of $K$. One can check that the proof only needs minor adjustment to give Fact \ref{fact:2}.
The next result we want to state is also only shown in \cite{FKMS} for a $\Q$-linearly independent subset of $K$. However, given Fact \ref{fact:2} the same proof works for $K$.

\begin{fact}(cp. \cite[Proof of Theorem on p.1319]{FKMS}) Every bounded unary definable set in $(\Cal R,K)$ either has interior or is Minkowski null.
\end{fact}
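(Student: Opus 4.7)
The plan is to follow the argument in the proof of \cite[Theorem on p.1319]{FKMS} essentially verbatim, using Facts \ref{fact:1} and \ref{fact:2} now available for $K$ itself rather than only for a $\Q$-linearly independent subset of $K$.

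Step 1 (reduction). A bounded unary set $A$ definable in $(\Cal R,K)$ is a finite union of sets of the form $f(Y \cap K^n)$ with $Y \subseteq \R^n$ and $f : Y \to \R$ both $\Cal L$-definable. This is the standard reduction for expansions of o-minimal structures by a fixed closed set, already used in \cite{FKMS}. If $A$ has no interior, then neither does any such piece, so it suffices to show each is Minkowski null.

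Step 2 (apply Facts \ref{fact:1} and \ref{fact:2}). Apply Fact \ref{fact:1} to $f$ to obtain a uniform $j \in \N$, and then invoke Fact \ref{fact:2} with this $j$ to find $\delta > 0$ such that every tuple of $K^n$ not lying in $(-\delta,\delta)^n$ sits inside one of finitely many sets of the form $T(S_{m,j}\times\{0\}^{n-m})$ with $T\in\Cal T_n$ and $m\leq n$. Since the same $j$ works at every limit point $y\in\cl(Y)$, a compactness argument (covering $\cl(K^n)$ by finitely many balls $B_\delta(y)$ and applying the cone decomposition of Fact \ref{fact:2} in each, after translation to $y$) reduces the task to bounding $f(K^n \cap (y + T(S_{m,j}\times\{0\}^{n-m})))$ for finitely many $y\in K^n$ and finitely many $T,m$.

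Step 3 (Minkowski-null estimate). On each such translated cone, $f$ is continuous and extends continuously to the closure. The defining inequalities $x_m < \psi_j(x_{m-1}) < \dots < \psi_{(m-1)j}(x_1)$, combined with the location of elements of $K$ (each complementary gap of $K_i$ has length $q_{i-1}^{-1}-2q_i^{-1}$), force the tuples $(x_1,\dots,x_m)\in K^m \cap S_{m,j}$ onto only a few \emph{scales} indexed by $Q$. Combining the o-minimal (exponentially bounded) control on $f$ with condition (C), $\exp_l(q_k)/q_{k+1}\to 0$ for every $l$, one then shows that the image under $f$ of the intersection with $K^n$ admits a cover by $o(r^{-\varepsilon})$ balls of radius $r$ for every $\varepsilon>0$ as $r\to 0^+$, i.e.\ is Minkowski null. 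Taking the finite union over $m$, $T$ and $y$ yields Minkowski nullity of $A$.

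The main obstacle is the explicit covering-number bound in Step 3; this is precisely the delicate calculation at the heart of \cite{FKMS}, and the sequence $(q_k)$ in Section \ref{section:setup} was tailored (via condition (C)) to make it work. Since Facts \ref{fact:1} and \ref{fact:2} have now been established for $K$ itself, the FKMS proof goes through essentially word for word, with no new ingredient required.
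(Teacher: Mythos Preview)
Your proposal is correct and takes the same approach as the paper: the paper does not give an independent proof of this Fact but simply observes (in the sentence preceding it) that once Fact~\ref{fact:2} is available for $K$ itself, the proof in \cite{FKMS} goes through unchanged. Your Steps~1--3 are exactly an expanded sketch of that FKMS argument, so you have in fact reconstructed what the paper merely cites.
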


\noindent We will collect a few easy corollaries of Fact \ref{fact:2}. Since these results were not stated in \cite{FKMS}, not even for $\Q$-linearly independent subsets of $K$, we will give proofs here.

\begin{lem}\label{lem:limitg} Let $X\subseteq \R^{l+n}$ and $f : X \to [0,1]$ be $\Cal L$-$\emptyset$-definable  and continuous. Then there are $\Cal L$-$\emptyset$-definable functions $g_1,\dots,g_k : \R^{l+n} \to \R$ such that for every $x \in \pi(X)$ for every $c \in K^n\cap \cl(X_x)$ for every $\varepsilon > 0$ there is $\delta >0$ such that
for $c' \in K^n \cap B_{\delta}(c)\cap X_x$
\[
f(x,c') \in B_{\varepsilon}(g_1(x,c)) \cup \dots \cup B_{\varepsilon}(g_k(x,c)).
\]
\end{lem}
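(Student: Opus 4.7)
The plan is to combine Fact~\ref{fact:1} and Fact~\ref{fact:2} in complementary dimensions. First, I would apply Fact~\ref{fact:1} to $f$ viewed as an $\Cal L$-$\emptyset$-definable function of all $l+n$ variables on $X\subseteq \R^{l+n}$; this produces a single $j\in\N$ that works uniformly at every point of $\cl(X)$. Second, I would apply Fact~\ref{fact:2} in dimension $n$ with the same $j$ to obtain $\delta^{*}>0$ such that whenever $c,c'\in K^{n}$ satisfy $0<\|c'-c\|_{\infty}<\delta^{*}$, the difference $c'-c$ lies in some $T'(S_{m,j}\times\{0\}^{n-m})$ with $1\le m\le n$ and $T'\in\Cal T_{n}$. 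The bridge between the two is the observation that each cone of the form $\{0\}^{l}\times T'(S_{m,j}\times\{0\}^{n-m})$ equals $\tilde T(S_{m,j}\times\{0\}^{l+n-m})$ for a suitable $\tilde T\in\Cal T_{l+n}$, obtained by composing the coordinate permutation moving positions $1,\dots,m$ to $l+1,\dots,l+m$ with the extension of $T'$ acting as the identity on the first $l$ coordinates; both factors are signed coordinate permutations and hence lie in $\Cal T_{l+n}$.

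For each of the finitely many pairs $(m,T')$ with $1\le m\le n$ and $T'\in\Cal T_{n}$, let $\tilde T\in\Cal T_{l+n}$ be the associated element above and define $g_{m,T'}\colon\R^{l+n}\to\R$ by setting $g_{m,T'}(x,c)$ to be the limit of $f(y)$ as $y\to(x,c)$ through $y\in X$ with $y-(x,c)\in\tilde T(S_{m,j}\times\{0\}^{l+n-m})$, and $0$ if no such limit exists; the limit condition is a first-order $\Cal L$-formula, so each $g_{m,T'}$ is $\Cal L$-$\emptyset$-definable. To handle the possibility $c'=c$, add to this list $g_{0}$ with $g_{0}(x,c):=f(x,c)$ if $(x,c)\in X$ and $g_{0}(x,c):=0$ otherwise. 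For the verification, fix $x\in\pi(X)$, $c\in K^{n}\cap\cl(X_{x})$ and $\varepsilon>0$. Since $(x,c)\in\cl(X)$, Fact~\ref{fact:1} produces $\delta_{x,c}>0$ such that each restriction $f|_{B_{\delta_{x,c}}(x,c)\cap X\cap((x,c)+\tilde T(S_{m,j}\times\{0\}^{l+n-m}))}$ extends continuously to its closure; by construction this extension equals $g_{m,T'}(x,c)$ at $(x,c)$ whenever the region in question accumulates at $(x,c)$. Choose $\delta_{m,T'}\in(0,\delta_{x,c}]$ small enough that $|f(y)-g_{m,T'}(x,c)|<\varepsilon$ for every $y$ in this cone within distance $\delta_{m,T'}$ of $(x,c)$, set $\delta:=\min(\delta^{*},\min_{m,T'}\delta_{m,T'})$, and take any $c'\in K^{n}\cap B_{\delta}(c)\cap X_{x}$. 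If $c'=c$ then $f(x,c')=g_{0}(x,c)$; if $c'\neq c$ then Fact~\ref{fact:2} places $c'-c$ in some $T'(S_{m,j}\times\{0\}^{n-m})$, so $(x,c')-(x,c)\in\tilde T(S_{m,j}\times\{0\}^{l+n-m})$, and the choice of $\delta_{m,T'}$ forces $|f(x,c')-g_{m,T'}(x,c)|<\varepsilon$.

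The main obstacle I anticipate is the coordinate bookkeeping between the two dimensions: converting the $n$-dimensional cone direction supplied by Fact~\ref{fact:2} (which encodes the $K$-specific geometry) into an $(l+n)$-dimensional cone of the precise form $\tilde T(S_{m,j}\times\{0\}^{l+n-m})$ required by Fact~\ref{fact:1}. One has to verify that the natural embedding $\Cal T_{n}\hookrightarrow\Cal T_{l+n}$ described above really lands in $\Cal T_{l+n}$; this rests on the polyhedron defining $\Cal T_{l+n}$ being invariant under coordinate permutations and sign flips, which is straightforward but essential. The secondary technicality is confirming $\Cal L$-$\emptyset$-definability of the $g_{m,T'}$, which reduces to writing out the ``limit exists and equals $L$'' clause as a first-order $\Cal L$-formula using the $\Cal L$-$\emptyset$-definability of $S_{m,j}$, $\tilde T$, $X$ and $f$.
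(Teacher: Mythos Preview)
Your proposal is correct and follows essentially the same approach as the paper: apply Fact~\ref{fact:1} in ambient dimension $l+n$ to get a uniform $j$, define the $g$'s as limits of $f$ along the finitely many cones, and use Fact~\ref{fact:2} in dimension $n$ to see that every nearby $c'\in K^n$ falls into one of these cones. The paper glosses over exactly the coordinate bookkeeping you spell out (the embedding $\Cal T_n\hookrightarrow\Cal T_{l+n}$) and handles the case $c'=c$ by remarking that continuity of $f$ forces each $g_{m,T}(x,\cdot)$ to extend $f(x,\cdot)$ rather than by adding your $g_0$, but these are minor presentational differences.
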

\begin{proof} Let $j>0$ as given by Fact \ref{fact:1}. Define $g_{m,T} : \R^{l+n} \to \R$ to be the function that maps $(x,y)$ to $
\lim_{z \to y} f(x,z)$, where $z$ ranges  over $y+\bigcup_{T \in \Cal T_n} T(S_{m,j}\times \{0\}^{n-m})\cap X_x$, if $y \in \cl(X_x)$ and $y+\bigcup_{T \in \Cal T_n} T(S_{m,j}\times \{0\}^{n-m})\cap X_x\neq \emptyset$, and to $0$ otherwise. By our choice of $j$, the function $g_{m,T}$ is well-defined and for every $x\in \R^l$ the function $g_{m,T}(x,-)$ extends $f(x,-)$ because of the continuity of $f$. Now take $x \in \R^l$, $c \in K^n$ and $\varepsilon > 0$. By Fact \ref{fact:2} we get $\delta>0$ small enough that
\[
K^n - c \cap (-\delta,\delta)^n \subseteq \bigcup_{m\leq n} \bigcup_{T \in \Cal T_n} T(S_{m,j}\times \{0\}^{n-m}).
\]
By Fact \ref{fact:1} we can reduce $\delta$ such that for every $m,T$ the function $f$ is continuous on $B_{\delta}(c) \cap \big(c+\bigcup_{T \in \Cal T_n} T(S_{m,j}\times \{0\}^{n-m})\cap X_x\big)$ and extends continuously to the closure. Hence by further reducing $\delta$, we have that for every $c' \in K^n \cap B_{\delta}(c)\cap \cl(X_x)$
\[
f(x,c') \in \bigcup_{m\leq n} \bigcup_{T \in \Cal T_n} B_{\varepsilon}(g_{m,T}(x,c)).
\]
\end{proof}

\begin{cor}\label{cor:limitpoints} Let $X\subseteq \R^{l+n}$ and $f : X \to [0,1]$ be $\Cal L$-$\emptyset$-definable and continuous. Let $g_1,\dots,g_k$ be given as in Lemma \ref{lem:limitg}.
Then
\[
\cl(f(x,X_x\cap K^n)) \subseteq \bigcup_{i=1}^k g_i(x,\cl(X_x)\cap K^n).
\]
For every $x \in \pi(X)$ and every $z\in \cl(f(x,X_x\cap K^n))$, the set
\[
\{ c \in K^n\cap \cl(X_x) \ : \ z=g_i(x,c) \hbox{ for some } i=1,\dots, k\}
\]
is closed.
%
\end{cor}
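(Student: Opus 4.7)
The plan is to prove the two claims in sequence. The inclusion follows from a pigeonhole argument on the finite list $g_1,\dots,g_k$, while the closedness requires a more delicate analysis using the cone structure of Facts \ref{fact:1} and \ref{fact:2}.

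For the inclusion, I would start with $z\in\cl(f(x,X_x\cap K^n))$ and extract a witness sequence $c_m\in X_x\cap K^n$ with $f(x,c_m)\to z$. Compactness of $K^n$ gives a subsequence converging to some $c\in K^n\cap\cl(X_x)$. Applying Lemma \ref{lem:limitg} at $c$ with $\varepsilon=1/r$ yields radii $\delta_r>0$; for $m$ large enough that $c_m\in B_{\delta_r}(c)$, the value $f(x,c_m)$ lies within $1/r$ of some $g_{i^{(r)}_m}(x,c)$, with $i^{(r)}_m\in\{1,\dots,k\}$. Finiteness of the index set combined with a standard diagonal extraction produces a single index $i$ and a subsequence along which $f(x,c_m)\to g_i(x,c)$; comparing with $f(x,c_m)\to z$ forces $z=g_i(x,c)\in g_i(x,\cl(X_x)\cap K^n)$.

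For the closedness, I would decompose the target set as $A=\bigcup_{i=1}^k A_i$ with $A_i:=\{c\in K^n\cap\cl(X_x) : g_i(x,c)=z\}$, and show each $A_i$ is closed. Given $c_m\in A_i$ with $c_m\to c$, write $g_i=g_{r_i,T_i}$ from the proof of Lemma \ref{lem:limitg}. The equality $g_i(x,c_m)=z$ is the continuous-extension value of $f(x,\cdot)$ along $X_x\cap(c_m+T_i(S_{r_i,j}))$, so for each $m$ I can pick $y_m\in X_x$ with $y_m-c_m\in T_i(S_{r_i,j})$, $|f(x,y_m)-z|<1/m$, and $\|y_m-c_m\|$ negligible compared with the cusp coordinates of $c_m-c$, giving $y_m\to c$ and $f(x,y_m)\to z$. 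Fact \ref{fact:2} applied to $c_m-c\in K^n-K^n$, together with a pigeonhole over the finitely many cones $T'(S_{r',j})$, lets me pass to a subsequence in which $c_m-c$ lies in a fixed cone $T'(S_{r',j})$; with the appropriate coordinate-wise control on the perturbations $y_m-c_m$, the sums $y_m-c$ will also eventually lie in $T'(S_{r',j'})$ for some $j'\geq j$. Fact \ref{fact:1} at $c$ then supplies a continuous extension of $f(x,\cdot)$ along this cone whose value at $c$ is by definition one of the $g_{i'}(x,c)$; the convergence $f(x,y_m)\to z$ along this cone forces $z=g_{i'}(x,c)$, placing $c$ in $A_{i'}\subseteq A$.

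The main obstacle is the cone-stability step in the closedness argument: the cusps $S_{r,j}$ are highly anisotropic, so a perturbation that is small in Euclidean norm can still violate the cusp inequalities in the narrowest coordinates. The technical heart is to select $y_m$ so that $y_m-c_m$ is dominated coordinate-by-coordinate (in the cusp basis of $T'(S_{r',j})$) by $c_m-c$, absorbing any remaining slack by enlarging the compositional iterate parameter from $j$ to $j'$; this should be feasible because the limit defining $g_i(x,c_m)=z$ leaves ample freedom in choosing $y_m$ inside $c_m+T_i(S_{r_i,j})\cap X_x$.
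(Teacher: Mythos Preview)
Your argument for the inclusion is correct and is essentially the paper's proof (the diagonal extraction is slightly more elaborate than needed, but harmless).

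For the closedness, your route diverges from the paper's and carries a real obstacle you have not removed. The paper does \emph{not} return to the cone geometry of Facts~\ref{fact:1} and~\ref{fact:2}; it proves the complement is open using only Lemma~\ref{lem:limitg}, applied twice. Concretely: take $c\in K^n\cap\cl(X_x)$ with $g_i(x,c)\neq z$ for every $i$, pick $\varepsilon$ with $2\varepsilon<\min_i|g_i(x,c)-z|$, and let $\delta>0$ come from Lemma~\ref{lem:limitg} at $c$. For any $d\in K^n\cap\cl(X_x)\cap B_{\delta/2}(c)$, a second application of Lemma~\ref{lem:limitg} at $d$ gives $\gamma<\delta/2$; any $c'\in K^n\cap X_x\cap B_\gamma(d)$ is then also in $B_\delta(c)$, so $f(x,c')$ is $\varepsilon$-close both to some $g_i(x,c)$ and to some $g_j(x,d)$. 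The triangle inequality forces every $g_j(x,d)$ to be within $2\varepsilon$ of some $g_i(x,c)$, hence away from $z$. No cusp analysis enters.

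Your approach, by contrast, requires the ``cone-stability'' step, and this step is not secured. Two specific problems: first, the auxiliary points $y_m$ lie in $X_x$ but not in $K^n$, so Fact~\ref{fact:2} says nothing about $y_m-c$; you cannot pigeonhole $y_m-c$ into one of the finitely many cusps that way. Second, the proposal to ``absorb slack by enlarging $j$ to $j'$'' goes in the wrong direction: for $j'\geq j$ one has $S_{r,j'}\subseteq S_{r,j}$, so larger $j'$ gives a \emph{narrower} cusp, not a wider one, while the $g_i$ are tied to the fixed $j$ of Fact~\ref{fact:1}. The freedom in choosing $y_m$ is only along the single cusp $T_i(S_{r_i,j})$ (a fixed direction, essentially), so when $c_m-c$ sits on the boundary between two $T'$-cusps, or in a lower-dimensional cusp $S_{r',j}\times\{0\}^{n-r'}$ while $T_i$ has a nonzero component in one of the last $n-r'$ coordinates, there is no evident way to force $y_m-c$ into any fixed $T''(S_{r'',j}\times\{0\}^{n-r''})$. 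The paper's two-application argument sidesteps all of this.
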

\begin{proof} Let $z\in \cl(f(x,X_x\cap K^n))$. Let $(c_j)_{j\in \N}$ be a sequence of elements in $K^n\cap X_x$ such that $\lim_{j \to \infty} f(x,c_j) =z$.
Since $K$ is bounded, we can assume $(c_j)_{j\in \N}$ converges. Since $\cl(X_x)\cap K^n$ is closed, there is $c \in \cl(X_x)\cap K^n$ such that $\lim_{j\to \infty} c_j = c$.
By Lemma \ref{lem:limitg}, $\lim_{j \to \infty} f(x,c_j) = g_i(x,c)$ for some $i\in \{1,\dots k\}$.\newline

\noindent For the second statement, let $x \in \pi(X)$, $z\in \cl(f(x,X_x\cap K^n))$ and suppose there is $c \in K^n \cap \cl(X_x)$ such that $z\neq g_i(x,c)$ for all $i=1,\dots,k$. Let $\varepsilon > 0$ be such that
$2\varepsilon < \min_{i=1,\dots,k} |g_i(x,c) - z|$. By Lemma \ref{lem:limitg} there is $\delta > 0$ such that $|f(x,c') - g_i(x,c)| < \varepsilon$ for all $c'\in K^n \cap X_x\cap B_{\delta}(c)$ and $i=1,\dots,k$.
Let $d \in K^n \cap \cl(X_x)\cap B_{\delta/2}(c)$. By Lemma \ref{lem:limitg} there is $\gamma > 0$ such that $\gamma < \delta/2$ and $|f(x,c') - g_i(x,d)| < \varepsilon$ for all $c'\in K^n \cap B_{\gamma}(c)\cap X_x$ and $i=1,\dots,k$. Let $c'\in K^n \cap X_x\cap B_{\gamma}(d)$. Since $\gamma < \delta/2$, $c' \in B_{\delta}(c)$. Then
\[
|g_i(x,d) -g_i(x,c)| \leq |g_i(x,d) - f(x,c')| + |g_i(x,c) - f(x,c')| < 2\varepsilon = |g_i(x,c) - z|.
\]
Thus $g_i(x,d) \neq z$ for $i=1,\dots,k$ and for all $d\in K^n \cap \cl(X_x)\cap B_{\delta/2}(c)$. Therefore the set in the second statement of the Corollary is closed.
\end{proof}

\subsection*{T-levels and T-convexity} We will now recall some less well known results about o-minimal theories. We start by a review of the notion of $T$-levels as introduced by Tyne \cite{Tyne}. For more details and proofs we refer to \cite{Tyne} and \cite{TyneJSL}. For this section, fix a model $M$ of $T$.

\begin{defn} Let $x \in M$. We write $0 \ll x$ if $x$ is greater than every element of $\dcl(\emptyset)$. For $0 \ll x \in M$, the \textbf{$T$-level of $x$}, denoted by $[x]$, is the convex hull in $M$ of the set of all values $f(x)$, with $f$ ranging over all $\Cal L$-$\emptyset$-definable strictly increasing and unbounded from above functions $f: M \to M$.
\end{defn}

\begin{fact}\cite[Corollary 3.11]{Tyne} Let $X\subseteq M$, $a \in M$ such that $[a] \cap X = \emptyset$. Then
\[
\{ x \in \dcl (X \cup \{a\}) \ : \ 0 \ll x \} = \bigcup_{0\ll x \in X} [x] \cup [a].
\]
\end{fact}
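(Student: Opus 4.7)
The plan is to prove the equality by two containments.

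For $\supseteq$, fix $x \in X \cup \{a\}$ with $0 \ll x$ and take $y \in [x]$. To check $0 \ll y$, pick some $\Cal L$-$\emptyset$-definable strictly increasing unbounded-above $f$ with $f(x) \leq y$ (such $f$ is available by the convex-hull definition of $[x]$). For any $d \in \dcl(\emptyset)$ we have $f^{-1}(d) \in \dcl(\emptyset)$, hence $f^{-1}(d) < x$ since $0 \ll x$, and applying $f$ gives $d = f(f^{-1}(d)) < f(x) \leq y$; thus $y$ exceeds every element of $\dcl(\emptyset)$. The remaining part, $y \in \dcl(X \cup \{a\})$, is the basic structural content of Tyne's $T$-level framework: under the running assumption of definable Skolem functions, each element of $[x]$ already lies in $\dcl(\{x\})$, so in particular in $\dcl(X \cup \{a\})$.

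For $\subseteq$, I would take $y \in \dcl(X \cup \{a\})$ with $0 \ll y$ and write $y = h(b_1, \ldots, b_n, a)$ for a finite tuple $b_1, \ldots, b_n$ from $X$ and some $\Cal L$-$\emptyset$-definable $h$, then induct on $n$. In the base case $n = 0$, the monotonicity theorem partitions the domain of $h(\cdot)$ into finitely many intervals on each of which $h$ is constant or strictly monotonic; I locate the piece containing $a$. In the constant case $y \in \dcl(\emptyset)$, contradicting $0 \ll y$. In the strictly monotonic case, unboundedness forces $y$ to be sandwiched between two $\Cal L$-$\emptyset$-definable strictly increasing unbounded functions of $a$, placing $y \in [a]$. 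For the inductive step I would peel off the dependence on the variable indexing the highest $T$-level among $b_1, \ldots, b_n, a$, applying the monotonicity theorem in that variable and using the hypothesis $[a] \cap X = \emptyset$ to keep $[a]$ distinct from each $[b_i]$.

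The main obstacle is the inductive step: proving that an $\Cal L$-$\emptyset$-definable combination of several infinite arguments must output a value in one of the $T$-levels of its arguments, and not in some new level squeezed between them. The separation hypothesis $[a] \cap X = \emptyset$ is essential here, guaranteeing that $a$ is outside every $T$-level of any element of $\dcl(X)$, so any genuine dependence of $y$ on $a$ forces $y \in [a]$. The technical heart is an exchange-type statement for $T$-levels — if $y \in \dcl(\{c_1, c_2\})$ with $0 \ll c_1, c_2$ in disjoint $T$-levels then $y \in [c_1] \cup [c_2]$ — which I would expect to be established earlier in Tyne's paper and which I would invoke to close the induction cleanly.
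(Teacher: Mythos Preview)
The paper does not prove this statement; it is quoted verbatim as a fact from Tyne's thesis \cite[Corollary 3.11]{Tyne}, so there is no in-paper proof to compare your proposal against.

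That said, your $\supseteq$ argument contains a genuine error. You assert that ``each element of $[x]$ already lies in $\dcl(\{x\})$,'' but this is false: $[x]$ is defined as the \emph{convex hull} in $M$ of the set $\{f(x) : f \text{ is } \Cal L\text{-}\emptyset\text{-definable, strictly increasing, unbounded}\}$, and in any sufficiently saturated model this convex set contains many elements not definable from $x$. (For a concrete picture, in a saturated real closed field with $0 \ll x$ there are elements $y$ with $x < y < x^2$ that are transcendental over $x$.) In fact the $\supseteq$ inclusion, read literally as an equality of subsets of $M$, fails for the same reason; the substantive content of the Fact is the $\subseteq$ direction, and that is the only direction the paper ever invokes (see the proofs of Lemma~\ref{lem:leveltype} and Proposition~\ref{lem:mathframma}). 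A cleaner formulation is that the set of $T$-levels meeting $\dcl(X \cup \{a\})$ is exactly $\{[x] : 0 \ll x \in X\} \cup \{[a]\}$.

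Your sketch for $\subseteq$ is on the right track: induction on the number of parameters, using the monotonicity theorem to reduce to the one-variable case, is the natural strategy. You are also right that the heart of the matter is showing that an $\Cal L$-$\emptyset$-definable function of arguments lying in pairwise disjoint $T$-levels cannot produce a value in a new $T$-level --- this is precisely the content developed in Tyne's thesis, and you correctly identify that you would need to invoke such a result rather than reprove it from scratch.
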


\noindent We will need the following generalization which can easily be deduced by induction on the size of $A$.

\begin{fact}\label{fact:tlevelfunction} Let $X\subseteq M$ and let $A\subseteq M$ be finite such that for all $a,b \in A$, $0\ll a$, $a\notin \bigcup_{0\ll x\in X} [x]$ and $[a]\neq [b]$ whenever $a\neq b$.
Then
\[
\{ x \in \dcl (X \cup A) \ : \ 0 \ll x \}  = \bigcup_{0\ll x \in X} [x] \cup \bigcup_{a \in A} [a].
\]
\end{fact}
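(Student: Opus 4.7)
The plan is to induct on $n = |A|$, with the base case $n = 0$ being trivial ($\dcl(X\cup A) = \dcl(X)$ and the right-hand side collapses to $\bigcup_{0 \ll x \in X}[x]$) and the base case $n = 1$ being precisely the preceding fact. So assume the statement has been proved whenever $|A| \le n$, and consider $A$ with $|A| = n+1$. Pick any $a \in A$ and write $A' = A \setminus \{a\}$. Applying the inductive hypothesis to $A'$ (whose hypotheses are inherited directly from those on $A$) yields
\[
\{ x \in \dcl(X \cup A') : 0 \ll x\} = \bigcup_{0 \ll x \in X}[x] \cup \bigcup_{b \in A'} [b].
\]

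Next, I want to invoke the preceding fact with parameter set $X \cup A'$ and element $a$, for which I must check that $[a] \cap (X \cup A') = \emptyset$. The key observation (immediate from the definition of $T$-level, since every $\Cal L$-$\emptyset$-definable strictly increasing unbounded-above function sends $\gg 0$ elements to $\gg 0$ elements) is that $[a]$ consists entirely of elements $\gg 0$, and that $T$-levels form an equivalence relation on the $\gg 0$ part of $M$; hence $y \in [a]$ iff $0 \ll y$ and $[y] = [a]$. Therefore $[a] \cap (X \cup A') = \emptyset$ is equivalent to the statement that no $0 \ll y \in X \cup A'$ satisfies $[y] = [a]$. The hypothesis $a \notin \bigcup_{0 \ll x \in X}[x]$ handles the $X$ side, and the hypothesis $[a] \neq [b]$ for all distinct $a,b \in A$ handles the $A'$ side.

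Applying the preceding fact now gives
\[
\{x \in \dcl(X \cup A) : 0 \ll x\} = \bigcup_{0 \ll x \in X \cup A'}[x] \cup [a],
\]
and since every element of $A'$ is $\gg 0$ the right-hand side rewrites as $\bigcup_{0 \ll x \in X}[x] \cup \bigcup_{b \in A}[b]$, completing the induction.

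The only non-bookkeeping step is the verification that $[a] \cap (X \cup A') = \emptyset$, and even there the work reduces to recalling that $T$-levels are equivalence classes on $\{x : 0 \ll x\}$; so I do not expect a real obstacle, only the need to state these basic properties of $T$-levels explicitly (which are standard from Tyne's papers cited above).
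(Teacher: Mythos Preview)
Your proof is correct and matches the paper's approach exactly: the paper simply states that the fact ``can easily be deduced by induction on the size of $A$'' from the preceding Corollary~3.11 of Tyne, and you have carried out precisely that induction, including the verification that $[a]\cap(X\cup A')=\emptyset$ needed to invoke the base case at each step. (A minor remark: your displayed inductive hypothesis is never actually used in the inductive step, since applying the preceding fact to the parameter set $X\cup A'$ already yields the desired right-hand side directly; and the $n=0$ case is not quite as immediate as you suggest, but is also unnecessary once you take $n=1$ as the base.)
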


\noindent Throughout this paper we will use the fact that given an elementary substructure of $M$ the $\Cal L$-type of a tuple of elements of $M$ over $X$ whose pairwise disjoint $T$-level do not intersect with $X$, is determined just by the order of the elements in the tuple. The next Lemma makes this statement precise.

\begin{lem}\label{lem:leveltype} Let $X\preceq M$ and let $a=(a_1,\dots,a_n),b=(b_1,\dots,b_n) \in M^n$ be such that
\begin{itemize}
\item[(i)] $0\ll a_1 < \dots < a_n$ and $0\ll b_1 < \dots < b_n$,
\item[(ii)] $a_i\notin \bigcup_{0\ll x\in X} [x]$ and $b_i\notin \bigcup_{0\ll x\in X} [x]$ for $i=1,\dots,n$,
\item[(iii)] $[a_i] \neq [a_j]$ and $[b_i] \neq [b_j]$ and
\item[(iv)] $\tp_{\Cal L}(a_i|X) = \tp_{\Cal L}(b_i|X)$.
\end{itemize}
Then $\tp_{\Cal L}(a|X) = \tp_{\Cal L}(b|X)$.
\end{lem}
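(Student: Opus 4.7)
The plan is to proceed by induction on $n$. The base case $n=1$ is exactly hypothesis (iv). For the inductive step, assuming $\tp_{\Cal L}(a_1,\dots,a_{n-1}|X) = \tp_{\Cal L}(b_1,\dots,b_{n-1}|X)$, there is a partial $\Cal L$-elementary map
\[
\sigma : \dcl(X \cup \{a_1,\dots,a_{n-1}\}) \to \dcl(X \cup \{b_1,\dots,b_{n-1}\})
\]
fixing $X$ pointwise and sending $a_i \mapsto b_i$ for $i < n$. Since $T$ has quantifier elimination and is o-minimal, a 1-type over an $\Cal L$-substructure is determined by the cut it realises. So to extend $\sigma$ by $a_n \mapsto b_n$ to an $\Cal L$-isomorphism -- which immediately yields $\tp_{\Cal L}(a|X) = \tp_{\Cal L}(b|X)$ -- it suffices to show that for every $c \in \dcl(X \cup \{a_1,\dots,a_{n-1}\})$ one has $c < a_n$ iff $\sigma(c) < b_n$.

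The easy case is when $c$ is not positive large in $M$. Then there exists $y \in \dcl(\emptyset) \subseteq X$ with $c \leq y$ (or $c \leq 0$); since $0 \ll a_n$ we get $c \leq y < a_n$, and since $\sigma$ fixes $y \in X$ we also get $\sigma(c) \leq y < b_n$. So assume $0 \ll c$. By Fact \ref{fact:tlevelfunction}, applied in the form that the positive large elements of $\dcl(X \cup \{a_1,\dots,a_{n-1}\})$ are exactly $\bigcup_{0\ll x\in X}[x] \cup \bigcup_{i<n}[a_i]$, the $T$-level $[c]$ is either $[a_i]$ for some $i < n$ or $[x]$ for some $0 \ll x \in X$.

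Suppose first $c \in [a_i]$ for some $i<n$. By (iii), $[a_i] \neq [a_n]$; $T$-levels are convex and disjoint, and since $a_i \in [a_i]$, $a_n \in [a_n]$ with $a_i < a_n$, this forces $[a_i]$ to lie entirely below $[a_n]$, so $c < a_n$. The crucial subclaim is that $\sigma$ preserves $T$-level membership: if $f_1, f_2$ are $\Cal L$-$\emptyset$-definable, strictly increasing, and unbounded from above with $f_1(a_i) \leq c \leq f_2(a_i)$, then applying the order-preserving $\Cal L$-isomorphism $\sigma$ gives $f_1(b_i) \leq \sigma(c) \leq f_2(b_i)$, so $\sigma(c) \in [b_i]$; by the same convexity argument $[b_i] < [b_n]$ and hence $\sigma(c) < b_n$. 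Suppose instead $c \in [x]$ for some $0\ll x \in X$. By (ii), $[x] \cap [a_n] = \emptyset = [x]\cap [b_n]$; by convexity and disjointness, $[x] < [a_n]$ iff $x < a_n$, and $[x] < [b_n]$ iff $x < b_n$; by (iv), these two conditions are equivalent. Since $\sigma$ fixes $x$, the same argument as before shows $\sigma(c) \in [x]$, and we conclude $c < a_n \Leftrightarrow [x] < [a_n] \Leftrightarrow [x] < [b_n] \Leftrightarrow \sigma(c) < b_n$.

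The main technical obstacle is really just the bookkeeping in the positive large case: verifying that $\sigma$ preserves $T$-level membership on its domain and that the pairwise disjointness and convexity of $T$-levels in (ii)--(iii), together with Fact \ref{fact:tlevelfunction}, provide enough rigidity so that the positions of $a_n$ and $b_n$ relative to all competitor levels are controlled by (iv). Once this is in place, the induction closes and we obtain $\tp_{\Cal L}(a|X) = \tp_{\Cal L}(b|X)$.
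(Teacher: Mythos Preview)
Your proposal is correct and follows essentially the same approach as the paper: both argue by induction on $n$, reduce the inductive step to comparing $a_n$ (resp.\ $b_n$) against arbitrary elements of $\dcl(X\cup\{a_1,\dots,a_{n-1}\})$ (resp.\ $\dcl(X\cup\{b_1,\dots,b_{n-1}\})$), invoke Fact~\ref{fact:tlevelfunction} to locate such elements in a $T$-level $[a_i]$ or $[x]$ with $x\in X$, and then use convexity of levels together with (ii)--(iv) to transfer the order relation. The only cosmetic difference is that the paper phrases the comparison via $\Cal L$-$X$-definable functions $f$ and the pair $f(a')$, $f(b')$, whereas you package this as a partial elementary map $\sigma$ and check it preserves $T$-level membership; these are the same argument in different notation.
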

\begin{proof} We prove the statement by induction on $n$. For $n=1$, the statement follows immediately from (iv). Now suppose the statement holds for $n-1$. Hence
\begin{equation}\label{eq:sametype}
\tp_{\Cal L} (a_1,\dots, a_{n-1}|X) = \tp_{\Cal L} (b_1,\dots, b_{n-1}|X).
\end{equation}
Let $f: M^{n-1} \to M$ be a $\Cal L$-$X$-definable function. We will write $a'$ for $(a_1,\dots, a_{n-1})$ and $b'$ for $(b_1,\dots,b_{n-1})$.
In order to show that $\tp_{\Cal L} (a_n|X,a') = \tp_{\Cal L} (b_n|X,b')$, it is enough to show that $f(a') \neq a_n$ and $f(b') \neq b_n$ and $f(a') < a_n  \hbox{ iff }  f(b')<b_n$.
By \eqref{eq:sametype} $0 \ll f(a')$ iff $0 \ll f(b')$. Since $0\ll a_n$ and $0\ll b_n$ by (ii), we can assume that $0\ll f(a')$ and  $0\ll f(b')$.
Then by Fact \ref{fact:tlevelfunction}, $f(a') \in  \bigcup_{i=1}^{n-1} [a_j] \cup \bigcup_{0\ll x\in X} [x]$,  and $f(b') \in  \bigcup_{i=1}^{n-1} [b_j] \cup \bigcup_{0\ll x\in X} [x]$.
Since $a_n\notin  \bigcup_{i=1}^{n-1} [a_j] \cup \bigcup_{0\ll x\in X} [x]$ and $b_n \notin \bigcup_{i=1}^{n-1} [b_j] \cup \bigcup_{0\ll x\in X} [x]$ by (ii) and (iii),
we have that $f(a')\neq a_n$ and $f(b')\neq b_n$. It is left to establish that $f(a') < a_n$  iff  $f(b')<b_n$. First suppose there is $j \in \{1,\dots,n\}$ such that $[f(a')] = [a_j]$. By \eqref{eq:sametype}, $[f(b')] = [b_j]$. Hence by (iii),
$f(a') < a_n$  iff $a_j < a_n$  iff $b_j < b_n$ iff $f(b') < b_n$. Now suppose there is $x \in X$ such that $[f(a')] = [x]$.  By \eqref{eq:sametype}, we have $[f(b')] = [x]$. By (ii) and (iv), $f(a') < a_n$ iff $x < a_n$ iff $x < b_n$ iff $f(b') < b_n$.
\end{proof}

\noindent We now turn our attention to the notion of a $T$-convex subring, which was introduced by van den Dries and Lewenberg \cite{LouLew}. For more details and proofs we refer the reader to \cite{LouLew} and its companion paper \cite{Lou}.

\begin{defn} A convex subring $V$ of $M$ is called \textbf{$T$-convex} if $f(V)\subseteq V$ for all $\Cal L$-$\emptyset$-definable functions $f : M \to M$.
\end{defn}

\begin{lem}\label{lem:tconvex} Let $a \in M$ and let  $U_a := \dcl(\emptyset) \cup \bigcup_{x \in M,[x] < [a]} [x]$. Then $V_a = U_a \cup -U_a$ is a $T$-convex subring. Its maximal ideal $\mathfrak{m}_a$ is $\{ x \in V_a \ : \ [|x|^{-1}] \geq [a]\}$.
\end{lem}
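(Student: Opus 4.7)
The plan is to verify in order that (i) $V_a$ is convex, (ii) $V_a$ is closed under $+$ and $\cdot$, (iii) $V_a$ is closed under every $\Cal L$-$\emptyset$-definable $f:M\to M$, and (iv) $\mathfrak{m}_a$ is as claimed. The facts about $T$-levels that I want to use throughout are: every $T$-level $[y]$ lies strictly above $\dcl(\emptyset)$ (since $0\ll y$); the $T$-levels are totally ordered in the sense that if $[x]<[y]$ then every element of $[x]$ is less than every element of $[y]$; each $[y]$ is convex; and each $[y]$ is closed under every $\Cal L$-$\emptyset$-definable strictly increasing unbounded function.

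Convexity of $V_a$ is then clear: $U_a$ is a totally ordered stack of convex pieces (the convex hull of $\dcl(\emptyset)$ in $M$ followed by the $T$-levels with $[x]<[a]$) and $V_a$ is its symmetrisation. For the subring property it suffices to treat positive elements by symmetry; given $0<y_1\leq y_2$ with $y_2\in[x]$, $[x]<[a]$, the $\Cal L$-$\emptyset$-definable strictly increasing unbounded maps $t\mapsto 2t$ and $t\mapsto t^2$ give $2y_2, y_2^2\in[x]$, so $y_1+y_2\leq 2y_2$ and $y_1 y_2\leq y_2^2$ both land in $V_a$ by convexity. The case $y_1\in\dcl(\emptyset)$ is handled analogously via $t\mapsto y_1 t$, and $y_1,y_2\in\dcl(\emptyset)$ is immediate since $\dcl(\emptyset)$ is already a subfield.

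The main obstacle is step (iii). The key technical claim to isolate is: for any $\Cal L$-$\emptyset$-definable $f:M\to M$ and any $t\in M$ with $0\ll t$, either $|f(t)|$ lies in the convex hull of $\dcl(\emptyset)$ or $[|f(t)|]\leq[t]$. By o-minimality, $|f|$ is eventually monotonic with the transition point in $\dcl(\emptyset)$; if that tail is bounded then $|f|$ is globally bounded by an element of $\dcl(\emptyset)$, while if the tail is strictly increasing and unbounded then restricted to it $|f|$ is itself $\Cal L$-$\emptyset$-definable, strictly increasing and unbounded, forcing $|f(t)|\in[t]$. Applying the claim to $x\in V_a$ with $|x|\in[y]$, $[y]<[a]$, yields $f(x)\in V_a$; and if $|x|$ already lies in the convex hull of $\dcl(\emptyset)$, continuity of $f$ on a $\emptyset$-definable bounded interval keeps $|f(x)|$ bounded by $\dcl(\emptyset)$ as well.

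For (iv), a convex subring of an ordered field is a valuation ring, so $\mathfrak{m}_a=\{0\}\cup\{x\in V_a\setminus\{0\}:1/x\notin V_a\}$. For nonzero $x\in V_a$, the condition $|x|^{-1}\notin U_a$ says $|x|^{-1}$ is neither in the convex hull of $\dcl(\emptyset)$ nor in any $T$-level $[y]$ with $[y]<[a]$; the first half gives $0\ll|x|^{-1}$ so that $[|x|^{-1}]$ is defined, and the total order on $T$-levels then makes the second half equivalent to $[|x|^{-1}]\geq[a]$, matching the claim.
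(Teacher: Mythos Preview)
Your argument is correct and substantially more explicit than the paper's own proof, which simply invokes Tyne's Lemma~10.2 for $T$-convexity and then reads off $\mathfrak m_a$ as the set of non-units. Your route has the virtue of being self-contained and exhibiting the mechanism (eventual monotonicity forces $|f(t)|$ either to be bounded by $\dcl(\emptyset)$ or to land in $[t]$), at the cost of some length; the paper's route buys brevity by outsourcing to the reference.

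Two small points to tighten. First, you correctly read $U_a$ as containing the convex hull of $\dcl(\emptyset)$ rather than $\dcl(\emptyset)$ literally; this is needed for convexity whenever $M$ has bounded elements outside $\dcl(\emptyset)$, and is the intended reading. Second, your key claim in step~(iii) concerns $|f(t)|$ for $0\ll t$, so to handle $x\in V_a$ with $x<0$ and $|x|\in[y]$ you should apply it to $t\mapsto f(-t)$ at $t=|x|$ rather than to $f$ at $x$ directly. Your appeal to continuity of $f$ in the bounded-$|x|$ case is also exactly right: the standard van den Dries--Lewenberg definition of $T$-convex only demands closure under \emph{continuous} $\emptyset$-definable maps, and without that hypothesis the map $t\mapsto 1/t$ (extended by $0$ at $0$) would send $a^{-1}\in\mathfrak m_a$ to $a\notin V_a$.
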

\begin{proof} The fact that $V_a$ is $T$-convex follows immediately from the definition of $V_a$ and \cite[Lemma 10.2]{Tyne}. Since $\mathfrak{m}_a$ is exactly the set of non-units of $V_a$, the description of $\mathfrak{m}_a$ in the Lemma holds. \end{proof}

\noindent For rest of this section fix $a\in A$ with $0\ll a$. We will introduce the following abbreviation which we will use throughout the paper. For $x \in M$, we denote the residue class of $x$ mod $\mathfrak{m}_a$ by $\overline{x}^a$. For a subset $X\subseteq M$, we write $\overline{X}^a$ for $\{ \overline{x}^a \ : \ x \in X\}$. By \cite[Remark 2.16]{LouLew} $\overline{V_a}^a$ expands naturally into model of $T$ that by \cite[Remark 2.11]{LouLew} is isomorphic to a tame\footnote{Tame here means tame in sense of \cite[p.76]{LouLew}.} substructure of $\Cal R$. For the rest of this paper, we will always consider $\overline{V_a}^a$ as a model $T$ in this way. Let $y\in V_a$ and $X\subseteq V_a$. When say $\overline{y}^a$ is $\dcl$-dependent over $\overline{X}^a$, we mean $\dcl$-dependent with respect to this $T$-model on $\overline{V_a}^a$. The following Lemma can easily be derived from \cite[Proposition 1.7]{Lou}.

\begin{lem}\label{lem:overlinedep} Let $x\in V_a^m, y \in V_a$. Then the following are equivalent:
\begin{itemize}
\item[(i)] there is a continuous $\Cal L$-$\emptyset$-definable $f: U\subseteq M^m \to M$ such that $x\in U$, $U$ is open and $[|f(x) - y|^{-1}]\geq [a]$,
\item[(ii)] $\overline{y}^a$ is $\dcl$-dependent over $\overline{x}^a$.
\end{itemize}
\end{lem}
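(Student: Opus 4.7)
The plan is to rewrite the level condition $[|f(x)-y|^{-1}]\geq [a]$ in residue-field terms and then transfer between definability in $M$ and definability in the induced $T$-model structure on $\overline{V_a}^a$ via \cite[Proposition 1.7]{Lou}.

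First I would note, using Lemma \ref{lem:tconvex}, that $[|f(x)-y|^{-1}]\geq [a]$ is equivalent to $f(x)-y\in\mathfrak{m}_a$, i.e.\ $\overline{f(x)}^a=\overline{y}^a$. In particular this forces $f(x)\in V_a$, which is automatic from $T$-convexity of $V_a$ applied to $x\in V_a^m$. So (i) becomes: there is a continuous $\Cal L$-$\emptyset$-definable $f$, defined on an open neighborhood $U$ of $x$ in $M^m$, whose value at $x$ represents the residue class $\overline{y}^a$.

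For (i) $\Rightarrow$ (ii), continuity of $f$ at $x$ together with $T$-convexity of $V_a$ imply that the map $\overline{z}^a\mapsto\overline{f(z)}^a$ is well-defined on an open neighborhood of $\overline{x}^a$ in $(\overline{V_a}^a)^m$, and in the $T$-model structure on $\overline{V_a}^a$ of \cite[\S2]{LouLew} this map is exactly the interpretation of the $\Cal L$-formula defining $f$. Evaluating at $\overline{x}^a$ yields $\overline{y}^a$, so $\overline{y}^a\in\dcl(\overline{x}^a)$ over $\overline{V_a}^a$. For (ii) $\Rightarrow$ (i), take an $\Cal L$-$\emptyset$-definable partial function $g$ on $(\overline{V_a}^a)^m$ with $g(\overline{x}^a)=\overline{y}^a$; by o-minimality of $T$ we may assume $g$ is continuous on some open neighborhood of $\overline{x}^a$. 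Since $\overline{V_a}^a\models T$, the same $\Cal L$-formula defines a continuous $\Cal L$-$\emptyset$-definable $f$ on an open neighborhood $U$ of $x$ in $M^m$; by \cite[Proposition 1.7]{Lou} the residue of $f$ agrees with $g$ on the relevant neighborhood, so $\overline{f(x)}^a=g(\overline{x}^a)=\overline{y}^a$, and undoing the translation from the first step recovers (i).

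The main hurdle is the precise invocation of \cite[Proposition 1.7]{Lou}: it is the nontrivial ingredient that synchronizes a continuous $\Cal L$-$\emptyset$-definable function on $M$ with its residue on $\overline{V_a}^a$ in the tame pair $(M,V_a)$, and without it the converse direction does not cleanly yield a function on $M$ whose value at $x$ witnesses the $T$-level inequality. Everything else is bookkeeping, moving between $M$, the $T$-convex subring $V_a$, and the induced $T$-model on the residue.
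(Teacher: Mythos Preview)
Your proposal is correct and matches the paper's approach: the paper does not give a proof at all but simply states that the lemma ``can easily be derived from \cite[Proposition 1.7]{Lou}'', and your write-up supplies exactly those details, translating the $T$-level condition into $f(x)-y\in\mathfrak{m}_a$ via Lemma~\ref{lem:tconvex} and then invoking \cite[Proposition 1.7]{Lou} in both directions to pass between continuous $\Cal L$-$\emptyset$-definable functions on $M$ and definable functions on the residue $T$-model $\overline{V_a}^a$.
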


\section{The theory and its consequences}

\label{section:theory}
In this section, we begin the study of the first-order theory of $(\Cal R,K,Q,\epsilon)$. We will define a theory $\TT$ in the language of the structure. In addition to deriving first consequences of this theory, we will prove that this structure is a model of $\TT$. The completeness of $\TT$ will be established later. We assume that the language $\Cal L$ of the underlying o-minimal structure $\Cal R$ already contains constant symbols for each element of $Q$ and for each element of $C$. From the construction of $Q$, we have that for every $n\in \N$ there is $m\in \N$ such that $s_{Q}(p)> \exp_n(p)$ for all $p\in Q$ with $p\geq q_{m}$. We denote the minimal such $m$ by $\omega(n)$. For $p\in \Q^n$ we denote by $\xi(p)$ the minimal $k\in \N$ such that
\[
\{\ \frac{s \cdot p}{2 (t \cdot p)+1}  \ : \ s,t \in \{-1,0,1\}^n,  2 (t \cdot p)\neq -1 \ \} \subseteq P_{k-1}.
\]

\subsection*{Notations and conventions} We consider structures
\[
\Cal M := (M,C,A,E),
\]
where $M$ is an $\Cal L$-structure, $C, A\subseteq M$ and $E \subseteq M^2$. Let $\Cal L_C$ be the language of this structure.\newline

\noindent Let $\Cal M$ be such a $\Cal L_C$-structure. With the usual abuse of notation, we will write $Q$ for the set of the interpretations of the constant symbols corresponding to elements in $Q$ and we will write $K$ for the set of the interpretations of the constant symbols corresponding to elements in $K$. Similarly for $q \in Q$ and $c\in K$, we will use $q$ for the interpretation of the constant corresponding to $q$ in $M$ and we will use $c$ for the interpretation of the constant corresponding to $c$ in $M$.\newline

\noindent In the following we want to restrict ourselves to $\Cal L_C$-structures that satisfy a certain $\Cal L_C$-theory. Before we can state this theory, we will have to introduce some further notations.

\begin{defn}\label{def:defofe} For every $c \in C$, let $S(c)$ be the set $\{ a \in A \ : \ E(a,c)\}$. Let $e : A \times C \to C$ be the function that maps $(a,c)$ to the unique $d \in C$ such that $S(d) = S(c)_{\leq a}$ if such $d$ exists, and to $0$ otherwise.
\end{defn}
\noindent One of the sentences in the $\Cal L_C$-theory we are going to define, will guarantee that the unique $d$ in the definition of $e$ will indeed always exist. We also introduce the following abbreviation: If $c=(c_1,\dots,c_n)\in C^n$ and $a \in A$, then we set $e(a,c):=\big(e(a,c_1),\dots,e(a,c_n)\big)$.

\begin{defn} For $a\in A$ and $c\in C$, we set
\[
\delta_{a,c} := \left\{
                  \begin{array}{ll}
                    1, & \hbox{if $E(a,c)$;} \\
                    0, & \hbox{otherwise.}
                  \end{array}
                \right.
\]
For $q=(q_1,\dots, q_n) \in \Q^n$ define $\mu_q : C^n \to A$ to be the function that maps $c=(c_1,\dots,c_n)$ to the minimum $a\in A$ such that $\sum_{i=1}^{n} q_i \delta_{a,c_i} \neq 0$, if such $a$ exists and $0$ otherwise.
\end{defn}

\noindent We remind the reader that given an order set $(Y,\prec)$, we denote the predecessor function on $Y$ by $p_Y$ and the successor function on $Y$ by $s_Y$ if such functions are well-defined. So whenever, $A$
is a closed and discrete subset of $M$, then $p_A$ and $s_A$ will denote the predecessor function and the successor function on $A$ with respect to $<$.

\subsection*{The theory} We are now ready to define the desired $\Cal L_C$-theory.
\begin{defn} Let $\TT$ be the $\Cal L_C$-theory consisting of the first-order $\Cal L_C$-sentences expressing the following statements:
\begin{enumerate}[(T1)]
\item  \label{axiom:o-minimal} $M\models T$,
\item  \label{axiom:c} $C\subseteq [0,1]$ is closed, has no isolated points and empty interior with $K\subseteq C$,
\item  \label{axiom:a} $A$ is an infinite, unbounded, closed and discrete subset of $M_{\geq 1}$ with initial segment $Q \subseteq A$,
\item  \label{axiom:fast} for all $n\in \N$ and all $a\in A$, if $a > \omega(n)$, then $s_{A}(a) > \exp_{n}(a)$,
\item \label{axiom:buechi} $(A,C,E,s_{A})\equiv (\N,\Cal P(\N),\in,s_{\N})$,
\item \label{axiom:buechi2} for all $c_1,\dots,c_n \in K$ and all $\Cal L_B$-formulas $\varphi$
\[
(Q,K,\epsilon,s_Q) \models \varphi(c_1,\dots,c_n) \hbox{ if and only if } (A,C,E,s_{A})\models \varphi(c_1,\dots,c_n).
\]
\item \label{axiom:ca} for all $c \in C$ and $a \in A$, $0\leq c - e(a,c) \leq a^{-1}$.
\item \label{axiom:ca2} for all $c,d\in C$ and $a\in A$,
\[
\hbox{if } 0 \leq d - e(a,c) \leq a^{-1},\hbox{ then } e(a,c)=e(a,d).
\]
\item \label{axiom:ca3} for all $c\in C$ and $a\in A$, $c - e(a,c) \in C$ and
\[
S(c-e(a,c)) = S(c)_{>a}.
\]
\item \label{axiom:succc} for all $c \in C$ and all $a,b \in A$, if $E(d,c)$ for all $d \in A \cap (a,b]$, then
\[
e(b,c) = e(a,c) + a^{-1} - b^{-1},
\]
\item \label{axiom:mu} for all $p \in \Q^n$, for all $c \in C^n$ and for all $a\in A$ with $a\geq \xi(p)$,
\begin{itemize}
\item[(i)] $p \cdot c =0$ if and only if  $\mu_p(c) = 0$.
\item[(ii)] if $0 < |p \cdot c| < a^{-1}$, then  $\mu_p(c) \geq a$.
\end{itemize}
\item \label{axiom:tau} for every $X\subseteq M^{l+n}$ and $f : X \to [0,1]$ $\Cal L$-$\emptyset$-definable continuous, and $g_1,\dots,g_k: M^{l+n}\to M$ be given as in Lemma \ref{lem:limitg}, and for every $x \in \pi(X)$ with $X_x\cap C^n\neq \emptyset$ and every $z \in \cl(f(x,X_x\cap C^n))$ there exists a lexicographically minimal $c \in C^n$ such that there is $i\in \{1,\dots,k\}$ with
$g_i(x,c) = z.$
\end{enumerate}
\end{defn}

\noindent One has to check that there is such a first-order $\Cal L_C$-theory $\TT$. In most cases it is routine to show that the above axioms can be expressed by first-order statements. Of course, statements like Axiom T\ref{axiom:fast}, Axiom T\ref{axiom:buechi} or Axiom T\ref{axiom:buechi2} have to be expressed by axiom schemes rather than by a single $\Cal L_C$-sentence.\\


\noindent Note that by Axiom T\ref{axiom:buechi} the unique $d$ in the definition of $e$ in Definition \ref{def:defofe} indeed always exists. By Axiom T\ref{axiom:buechi} we also get that the function $S$ is injective. Moreover Axiom T\ref{axiom:a} guarantees that the predecessor function $p_A$ and the successor $s_A$ on $A$ are well-defined. So in particular our use of $s_A$ in Axiom T\ref{axiom:fast} is unproblematic.

\begin{prop} $(\Cal R,K,Q,\epsilon)\models \TT$.
\end{prop}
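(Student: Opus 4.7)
The plan is to verify the axioms T\ref{axiom:o-minimal}--T\ref{axiom:tau} of $\TT$ one at a time, with the obvious interpretations $M = \R$, $C = K$, $A = Q$, $E = \epsilon$. The first seven axioms go quickly. T\ref{axiom:o-minimal} holds by hypothesis; T\ref{axiom:c} holds because $K$ is a Cantor set in $[0,1]$ by construction. Axiom T\ref{axiom:a} is built into the construction of $(q_k)$: property~(A) makes $Q$ a closed, discrete, unbounded subset of $[1, \infty)$, and here $A = Q$ so the initial segment condition is trivial. Axiom T\ref{axiom:fast} is a direct reformulation of property~(C) together with the definition of $\omega$. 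Axioms T\ref{axiom:buechi} and T\ref{axiom:buechi2} are immediate from Proposition~\ref{prop:isokb}, since $\beta = (g, h)$ is an isomorphism of the two-sorted structures. Axiom T\ref{axiom:ca} is the inequality noted directly after the definition of $e$.

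For axioms T\ref{axiom:ca2}--T\ref{axiom:succc}, I work from the identity $e(q_m, c) = \sum_{n \in X,\, n \leq m}(q_{n-1}^{-1} - q_n^{-1})$, valid whenever $c = h(X)$, noted at the end of the section that constructed $\beta$. Axiom T\ref{axiom:ca3} follows: $c - e(q_m, c) = h(X \cap \N_{>m}) \in K$, and its $\beta$-image is $X \cap \N_{>m}$, so $S(c - e(a, c)) = S(c)_{>a}$. Axiom T\ref{axiom:ca2} is the geometric fact that the basic intervals of $K_n$ are pairwise disjoint of length $q_n^{-1}$: if $d \in K$ satisfies $0 \leq d - e(a, c) \leq a^{-1}$, then $d$ lies in the same basic interval of $K_n$ as $c$, hence $e(a, d) = e(a, c)$. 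Axiom T\ref{axiom:succc} is a telescoping computation: if $\{n+1, \ldots, m\} \subseteq X$, then $e(q_m, c) - e(q_n, c) = \sum_{k=n+1}^{m}(q_{k-1}^{-1} - q_k^{-1}) = q_n^{-1} - q_m^{-1}$.

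Axiom T\ref{axiom:mu} is the main technical point and the only place where property~(B) is really used. Writing $c_i = h(X_i)$ and collecting terms of the form $q_k^{-1}$ in the expansion of $p \cdot c$, one obtains $p \cdot c = \sum_{k \geq 0} \alpha_k q_k^{-1}$ with $\alpha_0 = r_1$ and $\alpha_k = r_{k+1} - r_k$ for $k \geq 1$, where $r_n := \sum_{i} p_i \, \delta_{q_n, c_i}$. Each $\alpha_k$ then has the form $s \cdot p$ for some $s \in \{-1, 0, 1\}^n$, and hence lies in $P_{\xi(p) - 1}$ (take $t = 0$ in the set defining $\xi(p)$). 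Moreover, $\alpha_{m-1}$ is the first nonzero $\alpha$ precisely when $\mu_p(c) = q_m$. Now property~(B) supplies the lower bound $|\sum_{k=0}^{N} \alpha_k q_k^{-1}| > q_{N+1}^{-1}$ for $N \geq \xi(p) - 1$ whenever not all $\alpha_k$ with $k \leq N$ vanish, while property~(C) forces $q_{k+1}/q_k$ to grow faster than any constant depending on $p$, so the tail of the infinite series is dominated by the first nonzero term $\alpha_{m-1} q_{m-1}^{-1}$. Combining these two inputs yields both (i) and (ii). The main obstacle is the bookkeeping: $\xi(p)$ must be chosen so that the coefficients in our expansion simultaneously land in $P_{\xi(p) - 1}$ and so that, once $a \geq \xi(p)$, the super-exponential growth from (C) overwhelms the constants $\sum_i |p_i|$ and the denominators of the $\alpha_k$.

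Finally, axiom T\ref{axiom:tau} follows from Corollary~\ref{cor:limitpoints}. That result guarantees that the set $Z := \{c \in K^n \cap \cl(X_x) : g_i(x, c) = z \text{ for some } i\}$ is nonempty and closed, and hence is a compact subset of $[0, 1]^n$. A nonempty compact subset of $\R^n$ always admits a lexicographic minimum: inductively take the minimum of each coordinate on the nonempty compact slice of $Z$ obtained by fixing the previously chosen coordinates, using compactness at each step to guarantee the minimum is attained.
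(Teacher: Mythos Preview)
Your treatment of T\ref{axiom:o-minimal}--T\ref{axiom:succc} and T\ref{axiom:tau} is correct and in the same spirit as the paper's (the paper just cites Proposition~\ref{prop:isokb} for T\ref{axiom:buechi}--T\ref{axiom:succc} and Corollary~\ref{cor:limitpoints} for T\ref{axiom:tau}; your extra detail on extracting the lexicographic minimum from closedness plus compactness is fine).

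The gap is in T\ref{axiom:mu}. You correctly write $p\cdot c=\sum_{k\ge 0}\alpha_k q_k^{-1}$ with each $\alpha_k=s_k\cdot p$ for some $s_k\in\{-1,0,1\}^n$, and you observe, taking $t=0$ in the definition of $\xi(p)$, that $\alpha_k\in P_{\xi(p)-1}$. But the lower bound this extracts from property~(B), namely $\bigl|\sum_{k\le N}\alpha_k q_k^{-1}\bigr|>q_{N+1}^{-1}$, is too weak: from property~(A) alone the tail only satisfies $\bigl|\sum_{k>N}\alpha_k q_k^{-1}\bigr|\le \tfrac{3}{2}Mq_{N+1}^{-1}$ with $M=\sum_i|p_i|$, and once $M\ge 1$ the two estimates do not combine to give $p\cdot c\ne 0$, let alone $|p\cdot c|\ge a^{-1}$. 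Your appeal to~(C) does not repair this. Property~(C) is purely asymptotic in $k$, while $\xi(p)$ is defined entirely in terms of the $P_k$'s and has no connection to where~(C) has ``kicked in''; for the particular $a$ in the hypothesis of~(ii), and for small values of $\mu_p(c)$, you have no control over the ratio $q_m/q_{m-1}$ versus the constant $M/|\alpha_{m-1}|$.

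The paper's argument instead uses the $t\ne 0$ part of the definition of $\xi(p)$. With $M=\max_{t\in\{-1,0,1\}^n}t\cdot p$, the definition guarantees $\tfrac{s\cdot p}{2M+1}\in P_{\xi(p)-1}$ for every $s\in\{-1,0,1\}^n$. Applying~(B) to the rescaled coefficients $\tfrac{s_i\cdot p}{2M+1}$ yields the stronger bound $\bigl|\sum_{i=0}^{k-1}(s_i\cdot p)q_i^{-1}\bigr|\ge(2M+1)q_k^{-1}$. Since the single $i=k$ term contributes at most $Mq_k^{-1}$ and $|p\cdot(c-e(q_k,c))|\le Mq_k^{-1}$ by T\ref{axiom:ca}, subtraction gives $|p\cdot c|>q_k^{-1}$, contradicting the hypothesis of~(ii). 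Property~(C) plays no role here; the entire purpose of the denominators $2(t\cdot p)+1$ in the definition of $\xi(p)$ is precisely to manufacture the factor $2M+1$ so that~(B) alone beats the tail uniformly.
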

\begin{proof} It follows immediately from the definitions of $\Cal R$, $K$ and $Q$ that Axioms T\ref{axiom:o-minimal}-T\ref{axiom:fast} hold. One can easily deduce Axioms T\ref{axiom:buechi}-T\ref{axiom:succc} from Proposition \ref{prop:isokb}. Axiom T\ref{axiom:tau} follows from Corollary \ref{cor:limitpoints}. Axiom T\ref{axiom:mu} requires a bit more explanation. We will just show the second part of Axiom T\ref{axiom:mu}, because the first statement can be shown similarly. Let $p \in \Q^n$, $c\in K^n$ and $k\in \N$ such that $0<|p \cdot c| \leq q_k^{-1}$  and $q_k \geq \xi(p)$. Towards a contradiction, suppose $\mu_p(c) < q_k$.
Note that there are $s_0,\dots s_k \in \{-1,0,1\}^n$ such that
\begin{equation}\label{eq:prooftt1}
p \cdot e(q_k,c)  = \sum_{i=0}^{k} (s_i \cdot p) q_i^{-1}.
\end{equation}
Since $\mu_p(c) < q_k$, there is at least one $i<k$ such that $s_i \cdot p\neq 0$. By the algebraic independence of elements of $Q$, we get that $\sum_{i=0}^{k-1} (s_i \cdot p) q_i^{-1}\neq 0$.
Since $q_k \geq \xi(p)$,
\begin{equation}\label{eq:prooftt2}
|\sum_{i=0}^{k-1} (s_i \cdot p) q_i^{-1}| \geq \big( (2 \max_{t \in \{-1,0,1\}^n} (t \cdot p))+1\big) \ q_k^{-1}.
\end{equation}
Note that $|p\cdot (c- e(q_k,c)) |< \max_{t \in \{-1,0,1\}^n} (t \cdot p)\cdot q_k^{-1}$ (see Axiom T\ref{axiom:ca}). From this statement, \eqref{eq:prooftt1} and \eqref{eq:prooftt2}, the reader can now easily deduce $|p\cdot c|>q_k ^{-1}$.
\end{proof}

\noindent One of the main results of this paper is the following Theorem. It will be proved towards the end of the paper.

\begin{thm}\label{thm:complete} The theory $\TT$ is complete.
\end{thm}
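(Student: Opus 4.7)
The plan is to deduce the completeness of $\TT$ from the forthcoming quantifier-elimination result (Theorem \ref{thm:qe}), once one observes that the atomic diagram of the ``prime part'' of any model of $\TT$ is already pinned down by the axioms.

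First I would note that since $\Cal L$ already contains constants naming every element of $Q\cup K$, and since $T = \Th(\Cal R)$ is a complete $\Cal L$-theory, every atomic $\Cal L$-statement about tuples from $\dcl_{\Cal L}(\emptyset)\cup Q\cup K$ is decided by $T\subseteq \TT$. Next, Axiom T\ref{axiom:buechi2} directly forces the quantifier-free $\Cal L_B$-diagram of any tuple from $Q\cup K$ (in terms of the symbols $E$ and $s_A$) to coincide with the one in the intended structure $(Q,K,\epsilon,s_Q)$. Together these pin down the entire atomic $\Cal L_C$-diagram of $\dcl_{\Cal L}(\emptyset)\cup Q\cup K$ modulo $\TT$.

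I would then check that the additional function and relation symbols introduced by the definitional enrichment $\Cal L_C^{*}$ in which Theorem \ref{thm:qe} is proved (symbols for $e$, $p_A$, $s_A$, $\mu_p$, and for Skolem witnesses of Axiom T\ref{axiom:tau}) take values on the named constants that again lie in $\dcl_{\Cal L}(\emptyset)\cup Q\cup K$ and whose interpretations are fixed by $\TT$. For instance $e$ restricted to $Q\times K$ lands in $K$ and is pinned down by Axiom T\ref{axiom:buechi2}, while $\mu_p$ on tuples from $K$ is pinned down by Axiom T\ref{axiom:mu}; the $\Cal L$-reduct handles the rest. Hence no atomic $\Cal L_C^{*}$-sentence at closed terms is left undetermined by $\TT$.

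With quantifier elimination in hand, every $\Cal L_C^{*}$-sentence is then $\TT$-equivalent to a boolean combination of such closed atomic sentences, and so is decided by $\TT$; completeness follows. The main obstacle is not in this last step but in establishing Theorem \ref{thm:qe} itself: the real work is controlling the interaction between the o-minimal reduct and the B\"uchi reduct $(A,C,E,s_A)$ in an arbitrary model of $\TT$, which is where the sparseness conditions (A)--(C) on $Q$ together with the machinery of $T$-levels and $T$-convex subrings from the preliminaries come into play. Once Theorem \ref{thm:qe} is available, the passage to completeness reduces to the routine diagram computation sketched above.
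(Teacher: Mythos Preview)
Your overall plan---deduce completeness from quantifier elimination together with the observation that the theory pins down the diagram of the prime substructure---matches the paper's strategy. But there is a real gap in the step you call a ``routine diagram computation.''

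The quantifier elimination (Theorem~\ref{thm:qe}) is for $\TT^+$ in the language $\Cal L_C^+$, which contains the function symbols $\nu_f$ and $\tau_f$ of Definition~\ref{def:nutau} (these are what you call ``Skolem witnesses of Axiom~T\ref{axiom:tau}''; you also conflate $\Cal L_C^*$ and $\Cal L_C^+$ throughout). The value $\nu_f(x,y)$ is defined as a lexicographic minimum over $C^n\cap\cl(X_x)$, and in an arbitrary model of $\TT$ the set $C$ properly contains $K$. So it is not obvious---and you give no argument---why $\nu_f$ applied to a tuple of closed terms should land back in $\dcl_{\Cal L}(\emptyset)$, nor why its value is determined by the axioms. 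Asserting that the $\Cal L_C^+$-functions ``take values on the named constants that again lie in $\dcl_{\Cal L}(\emptyset)\cup Q\cup K$'' is exactly the point that needs proof.

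The paper does not attempt to show that $\dcl(\emptyset)$ is an $\Cal L_C^+$-substructure. Instead it verifies, via Proposition~\ref{cor:aclosure}, that $\dcl(\emptyset)$ is a \emph{special $\Cal L_C^*$-substructure} in each of two saturated models; then Lemma~\ref{lem:yspecial} upgrades the natural $\Cal L$-isomorphism between the two copies of $\dcl(\emptyset)$ to an $\Cal L_C^*$-isomorphism; and finally the embedding machinery behind Proposition~\ref{prop:extendbycplus} (in particular Lemma~\ref{lem:nusandtaus}, which controls how $\nu_f,\tau_f$ interact with $\Cal L_C^*$-embeddings) is used to extend this to a partial $\Cal L_C^+$-isomorphism. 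Only then does quantifier elimination yield elementary equivalence. So the passage from QE to completeness is not a diagram check: it reuses the same nontrivial embedding lemmas that underlie Theorem~\ref{thm:qe} itself.
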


\subsection*{Consequences of $\TT$} In this subsection we establish first consequences of $\TT$. So throughout this subsection, let $\Cal M \models \TT$.  We start by collecting some results about the function $e$, in particular how $e$ interacts with the arithmetic operations on $M$.

\begin{lem}\label{lem:zero} The unique element $c\in C$ with $S(c)=\emptyset$ is $0$.
\end{lem}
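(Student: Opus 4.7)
The plan is to prove existence (that $S(0)=\emptyset$) and uniqueness (that no other element of $C$ has empty $S$-value) separately, using Axioms T\ref{axiom:c}, T\ref{axiom:ca}, T\ref{axiom:ca3} for the first and Axiom T\ref{axiom:buechi} for the second.

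First I would note that $0\in C$: the left endpoint of $K_{0}=[0,1]$ is preserved through every step of the construction of $K$, so $0\in K$, and $0$ is among the constant symbols of $\Cal L_C$; the $K\subseteq C$ clause of Axiom T\ref{axiom:c} then forces $0\in C$ in $\Cal M$. Next I would show $e(a,0)=0$ for every $a\in A$. By Definition~\ref{def:defofe} the function $e$ maps into $C\subseteq[0,1]$ (the ``otherwise'' branch also returns the value $0\in C$), so $e(a,0)\geq 0$. On the other hand Axiom T\ref{axiom:ca} with $c=0$ gives $0\leq -e(a,0)\leq a^{-1}$, whence $e(a,0)\leq 0$. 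Combining the two bounds yields $e(a,0)=0$. Applying Axiom T\ref{axiom:ca3} with $c=0$ now gives
\[
S(0)=S(0-e(a,0))=S(0)_{>a} \quad \text{for every } a\in A.
\]
If some $b\in S(0)$ existed, choosing $a:=b\in A$ would force $b\in S(0)_{>b}$, i.e.\ $b>b$, a contradiction; hence $S(0)=\emptyset$.

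For uniqueness, I would appeal to Axiom T\ref{axiom:buechi}: the extensionality $\Cal L_B$-sentence
\[
\forall u\,\forall v\;\bigl(\forall t\,(\in(t,u)\leftrightarrow\in(t,v))\to u=v\bigr)
\]
is true in $\Cal B$, so by Axiom T\ref{axiom:buechi} it holds in $(A,C,E,s_A)$. This is precisely the statement that $c\mapsto S(c)$ is injective on $C$, so any $c\in C$ with $S(c)=\emptyset=S(0)$ must equal $0$, completing the proof.

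No serious obstacle arises; the argument is essentially a direct chase through the axioms. The only mild subtlety is tracking the value of $e(a,0)$ through both branches of Definition~\ref{def:defofe}, but both branches deliver $e(a,0)\in C\subseteq[0,1]$, which is all that is needed to combine with Axiom T\ref{axiom:ca} and conclude $e(a,0)=0$.
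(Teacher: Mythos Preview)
Your argument is correct, but the existence half differs from the paper's. The paper obtains $S(0)=\emptyset$ in one line from Axiom T\ref{axiom:buechi2}: since $0\in K$ and the $\Cal L_B$-sentence $\forall t\,\neg\in(t,0)$ holds in the standard structure $(Q,K,\epsilon,s_Q)$ (where $h(\emptyset)=0$), the transfer axiom pushes it to $(A,C,E,s_A)$. You instead derive $S(0)=\emptyset$ from the ``metric'' axioms T\ref{axiom:c}, T\ref{axiom:ca}, T\ref{axiom:ca3} via the intermediate computation $e(a,0)=0$. Your route is more self-contained---it does not rely on knowing what happens in the standard model---while the paper's is a one-line transfer. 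The uniqueness half (injectivity of $S$ from Axiom T\ref{axiom:buechi}) is the same in both.
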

\begin{proof} By Axiom T\ref{axiom:buechi2} $S(0)=\emptyset$. Uniqueness follows from the injectivity of $S$.
\end{proof}

\begin{lem}\label{lem:allones} Let $c\in C$ and $a \in A$ be such that $E(b,c)$ for all $b\in A$ with $b>a$. Then $c = e(a,c)+a^{-1}$.
\end{lem}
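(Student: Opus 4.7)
The plan is to set $d := c - e(a,c) \in M$ and show $d = a^{-1}$, which is precisely the desired identity $c = e(a,c) + a^{-1}$. Axiom T\ref{axiom:ca} already supplies $0 \leq d \leq a^{-1}$, so the entire task reduces to excluding the possibility $d < a^{-1}$.

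The first step is to feed the hypothesis into Axiom T\ref{axiom:succc}. Fix any $b \in A$ with $b > a$. By assumption $E(b',c)$ holds for every $b' \in A$ with $b' > a$, so in particular for every $b' \in A \cap (a,b]$, and therefore T\ref{axiom:succc} applies and yields $e(b,c) = e(a,c) + a^{-1} - b^{-1}$. Subtracting this from $c$ rewrites as
\[
c - e(b,c) = d - a^{-1} + b^{-1}.
\]
Now applying Axiom T\ref{axiom:ca} with $b$ in place of $a$ gives $0 \leq c - e(b,c)$, equivalently the estimate
\[
d \geq a^{-1} - b^{-1} \quad \text{for every } b \in A \text{ with } b > a.
\]

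The final step is a short contradiction argument using unboundedness of $A$. Suppose $d < a^{-1}$; then $\eta := a^{-1} - d$ is a strictly positive element of $M$, so $\eta^{-1} \in M$. By Axiom T\ref{axiom:a}, $A$ is unbounded in $M$, so we may choose $b \in A$ with $b > \max(a,\eta^{-1})$. For this $b$ we have $b^{-1} < \eta = a^{-1} - d$, which contradicts the estimate of the previous paragraph. Hence $d = a^{-1}$.

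The argument is essentially formal and I do not expect a real obstacle. The only point worth some care is that one must read unboundedness of $A$ internally in $\Cal M$ (not in $\R$, since $\Cal M$ may be a non-standard model), so that $\eta^{-1}$ is always dominated by some element of $A$; this is exactly the content of Axiom T\ref{axiom:a}.
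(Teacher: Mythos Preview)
Your proof is correct and follows essentially the same approach as the paper: both apply Axiom T\ref{axiom:succc} to obtain $e(b,c)=e(a,c)+a^{-1}-b^{-1}$, combine this with the bound from Axiom T\ref{axiom:ca}, and then use unboundedness of $A$ (Axiom T\ref{axiom:a}) to force $c-e(a,c)=a^{-1}$. The paper packages the last step via the triangle inequality $|c-(e(a,c)+a^{-1})|\leq 2b^{-1}$, while you split it into the two one-sided bounds, but this is only a cosmetic difference.
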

\begin{proof} Let $b \in A$ with $b>a$. By Axiom T\ref{axiom:succc}, $e(b,c) = e(a,c) + a^{-1} - b^{-1}$. Thus
\[
|c - (e(a,c)+a^{-1})| \leq  |c - e(b,c)| + |e(b,c) - (e(a,c)+a^{-1})| \leq 2b^{-1}.
\]
Since $A$ is unbounded in $M$ by Axiom T\ref{axiom:a}, $c=e(a,c)+a^{-1}$.
\end{proof}

\begin{cor}\label{cor:ainv} Let $a,b \in A$. Then $a^{-1}$ is the unique element $c$ in $C$ with $S(c)=A_{>a}$, and
\[
a^{-1} - e(b,a^{-1}) = \left\{
                \begin{array}{ll}
                  a^{-1}, & \hbox{if $b\leq a$;} \\
                  b^{-1}, & \hbox{otherwise.}
                \end{array}
              \right.
\]
\end{cor}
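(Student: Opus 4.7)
The plan is to identify $a^{-1}$ as the unique $c \in C$ with $S(c) = A_{>a}$ first, and then derive the displayed formula by a short case split on whether $b \leq a$ or $b > a$.

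First I would use Axiom T\ref{axiom:buechi} to transfer from $\Cal B = (\N,\Cal P(\N),\in,s_{\N})$ the statement that for every element of the first sort there is an element of the second sort whose $\in$-preimage is the set of everything strictly above it. This produces some $c^* \in C$ with $S(c^*) = A_{>a}$, and uniqueness of such a $c^*$ follows from injectivity of $S$, which also transfers from $\Cal B$. Since $E(b,c^*)$ holds for all $b \in A$ with $b > a$, Lemma~\ref{lem:allones} gives $c^* = e(a,c^*) + a^{-1}$. By the definition of $e$, $S(e(a,c^*)) = S(c^*)_{\leq a} = \emptyset$, so $e(a,c^*) = 0$ by Lemma~\ref{lem:zero}. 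Hence $c^* = a^{-1}$, which gives the first assertion.

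For the displayed formula I would distinguish two cases. If $b \leq a$, then $S(e(b,a^{-1})) = A_{>a} \cap A_{\leq b} = \emptyset$, so $e(b,a^{-1}) = 0$ by Lemma~\ref{lem:zero}, and the first case follows. If $b > a$, then every $d \in A \cap (a,b]$ lies in $A_{>a} = S(a^{-1})$, so Axiom T\ref{axiom:succc} applies and telescopes $e(b,a^{-1})$ to $e(a,a^{-1}) + a^{-1} - b^{-1}$; the same argument as before identifies $e(a,a^{-1}) = 0$, and the second case follows.

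I do not anticipate any substantial obstacle here: the corollary is a bookkeeping consequence of the previous two lemmas together with Axiom T\ref{axiom:succc} and the elementary equivalence supplied by Axiom T\ref{axiom:buechi}. The only conceptual point worth flagging is that the inclusion $a^{-1} \in C$ is not a consequence of the field arithmetic on $M$ and has to be routed through the auxiliary element $c^*$ produced by the B\"uchi-theoretic axiom.
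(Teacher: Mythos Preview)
Your proof is correct and follows essentially the same line as the paper's for the first assertion and for the case $b\leq a$. For the case $b>a$ you take a slightly different route: you apply Axiom~T\ref{axiom:succc} directly to obtain $e(b,a^{-1}) = e(a,a^{-1}) + a^{-1} - b^{-1} = a^{-1} - b^{-1}$, whereas the paper instead invokes Axiom~T\ref{axiom:ca3} to see that $a^{-1} - e(b,a^{-1}) \in C$, computes $S(a^{-1} - e(b,a^{-1})) = S(a^{-1})_{>b} = A_{>b} = S(b^{-1})$, and then appeals to injectivity of $S$. Your argument is a shade more direct since it avoids reusing the first part of the corollary (applied to $b$) and the injectivity step; the paper's version, on the other hand, illustrates how Axiom~T\ref{axiom:ca3} feeds into such computations, which is a pattern used again later. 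Either way the difference is cosmetic.
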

\begin{proof}  Let $c$ be the unique element in $C$ such that $S(c)=A_{>a}$. Note that $e(b,c)=0$ by Lemma \ref{lem:zero} for all $b\leq a$. By Lemma \ref{lem:allones}, $c = e(a,c) + a^{-1} = a^{-1}$.
Now suppose $b>a$. Then $a^{-1} - e(b,a^{-1})$ is in $C$ by Axiom T\ref{axiom:ca3} and
\[
S(a^{-1} - e(b,a^{-1})) = S(a^{-1})_{>b} = S(b^{-1}).
\]
By injectivity of $S$, we get $a^{-1} - e(b,a^{-1})=b^{-1}$.
\end{proof}

\begin{lem}\label{lem:eab} Let $c\in C$ and $a,b\in A$ with $a<b$. Then
\[
c - e(a,c) - e(b,c-e(a,c)) = c - e(b,c).
\]
\end{lem}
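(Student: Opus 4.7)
The plan is to read off both sides of the desired identity as elements of $C$ and then invoke injectivity of $S$ (which follows from Axiom T\ref{axiom:buechi}, since the analogous statement holds in $\Cal B$ where $S$ is the identity on $\Cal P(\N)$).

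First, I would introduce the abbreviation $c' := c - e(a,c)$. Applying Axiom T\ref{axiom:ca3} to $c$ and $a$ gives $c' \in C$ together with the identity $S(c') = S(c)_{>a}$. Next, I would apply Axiom T\ref{axiom:ca3} a second time, now to $c'$ and $b$, to obtain $c' - e(b,c') \in C$ with
\[
S\bigl(c' - e(b,c')\bigr) = S(c')_{>b} = \bigl(S(c)_{>a}\bigr)_{>b} = S(c)_{>b},
\]
where the last equality uses the assumption $a < b$.

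On the other hand, a direct application of Axiom T\ref{axiom:ca3} to $c$ and $b$ yields $c - e(b,c) \in C$ with $S\bigl(c - e(b,c)\bigr) = S(c)_{>b}$. So the two elements $c - e(b,c)$ and $c - e(a,c) - e(b, c - e(a,c))$ of $C$ have the same image under $S$. By the injectivity of $S$, they must be equal, which is exactly the identity claimed by the lemma.

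The only subtlety is that one has to make sure $S$ is indeed injective on $C$; this is noted in the paper right after the axiom list as a consequence of Axiom T\ref{axiom:buechi}, so no further obstacle should arise. I do not expect any hard step here: the proof is essentially a two-line bookkeeping argument that transfers the set-theoretic identity $X_{>b} = (X_{>a})_{>b}$ (for $a<b$) from $\Cal B$ back to the arithmetic of $C$ via the axioms governing $e$ and $S$.
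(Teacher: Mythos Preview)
Your proposal is correct and follows essentially the same argument as the paper: repeated applications of Axiom T\ref{axiom:ca3} to compute $S$ of each side, followed by injectivity of $S$ (via Axiom T\ref{axiom:buechi}) to conclude equality. Your write-up is in fact slightly cleaner than the paper's, which contains a minor typo in the displayed chain of equalities.
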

\begin{proof} By Axiom T\ref{axiom:ca3} $c-e(a,c) \in C$. Thus $e(b,c-e(a,c)) \in C$. Again by Axiom T\ref{axiom:ca3} $S(c - e(b,c)) = S(c)_{>b}$ and  $S(c-e(a,c)) = S(c)_{>a}$.
By Axiom T\ref{axiom:ca3} once more $c - e(a,c) - e(b,c-e(a,c))= S(c-e(a,c))_{>b} = S(c)_{>b}$. By Axiom T\ref{axiom:buechi} $c - e(a,c) - e(b,c-e(a,c)) = c - e(b,c)$.
\end{proof}

\begin{lem}\label{lem:succc} Let $c \in C$ and $a,b \in A$. If $\neg E(d,c)$ for all $d \in A \cap (a,b]$, then $e(b,c) = e(a,c)$.
\end{lem}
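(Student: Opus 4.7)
The plan is to reduce to the already-proved identity in Lemma \ref{lem:eab} by showing that under the hypothesis the ``middle piece'' $e(b, c - e(a,c))$ is forced to vanish.

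Assume $a < b$ (the case $a = b$ is trivial, and if $b < a$ the statement is typically read with $A \cap (a,b] = \emptyset$ and $a,b$ swapped). First, by Axiom T\ref{axiom:ca3}, $c - e(a,c) \in C$ and
\[
S\bigl(c - e(a,c)\bigr) = S(c)_{>a}.
\]
Now consider $e\bigl(b, c - e(a,c)\bigr)$. By the definition of $e$ (Definition \ref{def:defofe}), this is the unique $d \in C$ with
\[
S(d) = S\bigl(c-e(a,c)\bigr)_{\leq b} = S(c)_{>a} \cap A_{\leq b} = S(c) \cap (a,b].
\]
The hypothesis $\neg E(d,c)$ for all $d \in A \cap (a,b]$ says precisely that $S(c) \cap (a,b] = \emptyset$. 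Therefore $S(d) = \emptyset$, and by Lemma \ref{lem:zero} (uniqueness of the element of $C$ with empty $S$-value, using Axiom T\ref{axiom:buechi}) we conclude
\[
e\bigl(b, c - e(a,c)\bigr) = 0.
\]

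Finally, applying Lemma \ref{lem:eab} to $a < b$ gives
\[
c - e(a,c) - e\bigl(b, c - e(a,c)\bigr) = c - e(b,c),
\]
and substituting the vanishing computed above yields $c - e(a,c) = c - e(b,c)$, hence $e(a,c) = e(b,c)$, as required.

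There is no real obstacle: the whole point is that the previous lemmas have already been set up so that $e(a,-)$ and $e(b,-)$ ``split'' the initial segment of $S(c)$ at $a$ and $b$, and cutting out an empty slab $S(c) \cap (a,b]$ via the injectivity of $S$ forces equality. The only thing one must be careful about is the degenerate ordering of $a$ and $b$, which is handled by the trivial case $a = b$.
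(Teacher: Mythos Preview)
Your proof is correct, but it is considerably more roundabout than the paper's. The paper simply observes directly from the definition of $e$ that $S(e(a,c)) = S(c)_{\le a}$ and $S(e(b,c)) = S(c)_{\le b}$; the hypothesis $S(c)\cap(a,b]=\emptyset$ then gives $S(e(a,c)) = S(e(b,c))$, and injectivity of $S$ (from Axiom~T\ref{axiom:buechi}) finishes immediately. You instead pass through Axiom~T\ref{axiom:ca3}, Lemma~\ref{lem:zero}, and Lemma~\ref{lem:eab}, which works but re-derives via subtraction what the definition of $e$ gives in one step. Note also that Lemma~\ref{lem:eab} itself is proved using exactly the same injectivity-of-$S$ argument, so your route ultimately rests on the same fact but packages it less efficiently.

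One small remark on your handling of the degenerate case: if $b<a$ the interval $(a,b]$ is empty and the hypothesis is vacuous, but then the conclusion need not hold, so the lemma should really be read with $a\le b$ (as it is in every application in the paper). Your parenthetical about ``swapping $a$ and $b$'' is not quite right, though this does not affect the substantive case you treat.
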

\begin{proof} From the assumptions on $a$ and $b$ we can directly conclude that $S(e(a,c)) = S(e(b,c))$. By Axiom T\ref{axiom:buechi} $e(a,c)=e(b,c)$.
\end{proof}

\begin{cor} \label{cor:succc} Let $c \in C$ and $a \in A$. Then
\[
e(s_{A}(a),c) = e(a,c) + \delta_{s_{A}(a),c}(a^{-1} - s_{A}(a)^{-1}).
\]
\end{cor}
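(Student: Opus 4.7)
The plan is to split into two cases depending on whether $E(s_A(a),c)$ holds, i.e., on the value of $\delta_{s_A(a),c}\in\{0,1\}$. The key observation is that the interval $A\cap(a,s_A(a)]$ contains the single element $s_A(a)$, so in each case a single already-stated lemma or axiom applies directly.

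First I would treat the case $\delta_{s_A(a),c}=1$, meaning $E(s_A(a),c)$. Then applying Axiom T\ref{axiom:succc} with $b=s_A(a)$ (noting that the hypothesis ``$E(d,c)$ for all $d\in A\cap(a,b]$'' reduces to $E(s_A(a),c)$) yields
\[
e(s_A(a),c)=e(a,c)+a^{-1}-s_A(a)^{-1},
\]
which is exactly the claimed equality in this case.

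Next I would handle the case $\delta_{s_A(a),c}=0$, i.e., $\neg E(s_A(a),c)$. Here $A\cap(a,s_A(a)]=\{s_A(a)\}$, so the hypothesis of Lemma \ref{lem:succc} (with $b=s_A(a)$) holds, and it gives $e(s_A(a),c)=e(a,c)$. This matches the formula since the second summand vanishes.

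There is no real obstacle here; the corollary is essentially a bookkeeping consequence of Axiom T\ref{axiom:succc} and Lemma \ref{lem:succc} applied to the minimal nontrivial gap in $A$.
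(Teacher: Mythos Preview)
Your proposal is correct and follows exactly the paper's approach: split on whether $E(s_A(a),c)$ holds, applying Axiom T\ref{axiom:succc} in the positive case and Lemma \ref{lem:succc} in the negative case.
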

\begin{proof} The statement follows immediately from Lemma \ref{lem:succc} when $\neg E(s_A(a),c)$, and from Axiom T\ref{axiom:succc} when $E(s_A(a),c)$.
\end{proof}

\noindent We get the following Lemma directly from Lemma \ref{lem:succc} and Corollary \ref{cor:succc}.

\begin{lem}\label{lem:largejump} Let $c \in C$ and let $a,b \in A$ be such that $a<b$ and $b$ is the minimal element in $A$ with $b>a$ and $E(b,c)$.
Then $e(b,c) = e(a,c) + p_A(b)^{-1} - b^{-1}.$
\end{lem}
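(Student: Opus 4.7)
The plan is a two-step reduction that packages together the two immediately preceding results, Lemma \ref{lem:succc} and Corollary \ref{cor:succc}, at the pivot point $p_A(b)$.

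First I would observe that the minimality of $b$ forces $\neg E(d,c)$ for every $d \in A \cap (a,b)$, and in particular for every $d \in A \cap (a, p_A(b)]$. Note that this interval is empty precisely when $a = p_A(b)$, which is harmless. Applying Lemma \ref{lem:succc} with $a$ and $p_A(b)$ in place of $a$ and $b$ then yields
\[
e(p_A(b), c) = e(a, c),
\]
where the statement holds trivially in the degenerate case $a = p_A(b)$.

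Second, since $b = s_A(p_A(b))$ and $E(b,c)$ (so $\delta_{b,c}=1$), Corollary \ref{cor:succc} applied at $p_A(b)$ gives
\[
e(b,c) = e(p_A(b), c) + p_A(b)^{-1} - b^{-1}.
\]
(Equivalently, one may invoke Axiom T\ref{axiom:succc} directly on the single-point interval $(p_A(b), b]$, on which $E(d,c)$ holds for every element $d$.) Substituting the first equation into the second delivers the desired identity
\[
e(b,c) = e(a,c) + p_A(b)^{-1} - b^{-1}.
\]

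There is no serious obstacle here; the only point requiring a moment of care is ensuring the case $a = p_A(b)$ is covered by Lemma \ref{lem:succc}, which it is since the quantifier over $d \in A \cap (a, p_A(b)]$ ranges over the empty set and hence holds vacuously, giving $e(p_A(b),c) = e(a,c)$ as an instance of $a=p_A(b)$.
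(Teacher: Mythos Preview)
Your proof is correct and follows essentially the same approach as the paper, which states that the lemma is obtained directly from Lemma~\ref{lem:succc} and Corollary~\ref{cor:succc}. Your explicit handling of the degenerate case $a = p_A(b)$ is a nice touch but not strictly necessary, since Lemma~\ref{lem:succc} imposes no ordering hypothesis on its two elements of $A$.
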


\noindent We now collect a few easy corollaries of Axiom T\ref{axiom:mu}.

\begin{lem}\label{lem:axiommu} Let $c\in C^n$, $q\in \Q^n$ and $a \in A$ such that $a< \mu_q(c)$. Then $q\cdot e(a,c)=0$.
\end{lem}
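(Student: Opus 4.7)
The strategy is to invoke Axiom T\ref{axiom:mu}(i), but applied to the tuple $e(a,c)\in C^{n}$ rather than to $c$ itself. Although the axiom is stated inside the scope of ``for all $a'\in A$ with $a'\geq \xi(q)$'', clause (i) does not actually involve $a'$; combined with the unboundedness of $A$ (Axiom T\ref{axiom:a}), which ensures some such $a'$ exists, the axiom simplifies to the unconditional equivalence $p\cdot x = 0 \iff \mu_p(x)=0$ for every $p\in\Q^m$ and $x\in C^m$. Applied to $p=q$ and $x=e(a,c)$, it reduces the lemma to verifying $\mu_q(e(a,c)) = 0$.

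The computation of $\mu_q(e(a,c))$ is then a direct unwinding of Definition \ref{def:defofe}. Since $S(e(a,c_i)) = S(c_i)_{\leq a}$, for every $b\in A$ the indicator $\delta_{b,e(a,c_i)}$ equals $\delta_{b,c_i}$ when $b\leq a$ and vanishes when $b>a$. Consequently any $b$ witnessing $\sum_{i}q_i\delta_{b,e(a,c_i)}\neq 0$ would have to lie in $A_{\leq a}$ and satisfy $\sum_i q_i\delta_{b,c_i}\neq 0$; but such a $b$ cannot exist because, by the minimality defining $\mu_q(c)$ and the assumption $a<\mu_q(c)$, no element of $A_{\leq a}$ makes the sum nonzero. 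Hence $\mu_q(e(a,c))=0$ and Axiom T\ref{axiom:mu}(i) gives $q\cdot e(a,c)=0$.

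I do not foresee a serious obstacle: the whole argument is a short manipulation of the definitions together with one application of the axiom. The only subtle point worth highlighting is that one should \emph{not} be distracted by the quantifier on $a'$ in Axiom T\ref{axiom:mu} and try to split into cases according to whether the given $a$ is $\geq \xi(q)$ or not; because clause (i) is free of $a'$, such a case split is unnecessary and the single application above suffices.
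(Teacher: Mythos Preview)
Your proposal is correct and follows the same approach as the paper: show $\mu_q(e(a,c))=0$ and then invoke Axiom T\ref{axiom:mu}(i). The paper's proof is just a two-line version of yours; your extra care in unwinding $\delta_{b,e(a,c_i)}$ from Definition \ref{def:defofe} and in noting that the quantifier on $a'\geq \xi(q)$ is harmless for clause (i) simply makes explicit what the paper leaves to the reader.
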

\begin{proof} Since $a<\mu_q(c)$, $\mu_q(e(a,c))=0$. By Axiom T\ref{axiom:mu}, $q\cdot e(a,c)=0$.
\end{proof}

\begin{cor}\label{cor:mu} Let $q=(q_1,\dots, q_n) \in \Q^n$ and  $c=(c_1,\dots,c_n) \in C$ such that $\mu_q(c)>0$. Then
\[
q\cdot c =  \sum_{i=1}^{n} q_i \delta_{\mu_q(c),c_i}(p_{A}(\mu_q(c))^{-1} - \mu_q(c)^{-1}) + q \cdot(c - e(\mu_q(c),c)).
\]
\end{cor}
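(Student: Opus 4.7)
Write $a := \mu_q(c)$ and split each coordinate via Axiom T\ref{axiom:ca3} as $c_i = e(a,c_i) + (c_i - e(a,c_i))$, so that
\[
q \cdot c \;=\; q \cdot e(a,c) \;+\; q \cdot (c - e(a,c)).
\]
The whole content of the statement is therefore to evaluate $q \cdot e(a,c)$ as $\sum_{i=1}^n q_i \delta_{a,c_i}(p_A(a)^{-1}-a^{-1})$, which is exactly the ``increment'' contributed by moving from the predecessor stage of $a$ to $a$.

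To produce this increment, apply Corollary \ref{cor:succc} coordinate by coordinate (with $p_A(a)$ in place of $a$ and $a = s_A(p_A(a))$): for each $i$,
\[
e(a, c_i) \;=\; e(p_A(a), c_i) \;+\; \delta_{a,c_i}\bigl(p_A(a)^{-1} - a^{-1}\bigr).
\]
Summing against $q$ gives
\[
q \cdot e(a,c) \;=\; q \cdot e(p_A(a), c) \;+\; \sum_{i=1}^{n} q_i \delta_{a,c_i}\bigl(p_A(a)^{-1} - a^{-1}\bigr).
\]
It thus remains to show $q \cdot e(p_A(a), c) = 0$. But $p_A(a) < a = \mu_q(c)$ by definition, so Lemma \ref{lem:axiommu} applies and delivers exactly this vanishing. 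Substituting back yields the desired identity.

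The only subtle point is the degenerate case $a = \min A$, in which $p_A(a)$ is undefined; this situation however cannot arise in the hypothesis $\mu_q(c) > 0$ in any interesting way, since when $a$ is the minimum of $A$ one has $e(p_A(a), c) = 0$ by convention and the argument goes through verbatim with the appropriate reading of $p_A(a)^{-1}$. There is essentially no obstacle here beyond careful bookkeeping; the proof is a direct computation combining the one-step recursion for $e$ (Corollary \ref{cor:succc}) with the minimality property built into $\mu_q$ and expressed through Axiom T\ref{axiom:mu} (used via Lemma \ref{lem:axiommu}).
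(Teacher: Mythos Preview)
Your proof is correct and follows exactly the paper's approach: the paper's proof is the single sentence ``The statement is a direct consequence of Corollary \ref{cor:succc} and Lemma \ref{lem:axiommu},'' and you have simply written out that direct consequence in detail. The aside about the degenerate case $a=\min A$ is unnecessary and a bit muddled (the paper does not address it either), but the core argument is right.
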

\begin{proof} The statement is a direct consequence of Corollary \ref{cor:succc} and Lemma \ref{lem:axiommu}.
\end{proof}



\begin{lem}\label{lem:eoflincomb} Let $a\in A$, $c=(c_1,\dots,c_n) \in C^n$, $d \in C$ and $p\in \Q^n$ such that $d = p\cdot c$. Then $e(a,d) = p \cdot e(a,c)$.
\end{lem}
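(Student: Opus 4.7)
The plan is to pass from the linear relation $d = p\cdot c$ to a digit-level identity via Axiom T\ref{axiom:mu}(i), transfer that identity to the tail tuple $(c-e(a,c),\,d-e(a,d))$, and then translate back to a linear identity. Concretely, I would first apply Axiom T\ref{axiom:mu}(i) to the $(n+1)$-tuple $(c_1,\dots,c_n,d)\in C^{n+1}$ with coefficient vector $(p_1,\dots,p_n,-1)\in\Q^{n+1}$: since $(p,-1)\cdot(c,d) = p\cdot c - d = 0$ by hypothesis, the axiom gives $\mu_{(p,-1)}\bigl((c,d)\bigr) = 0$, which unwinds to the digit identity
\[
\sum_{i=1}^{n} p_i\,\delta_{b,c_i} \;=\; \delta_{b,d} \qquad \text{for every } b\in A.
\]

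Next I would pass to the truncated tuple $\bigl(c_1-e(a,c_1),\dots,c_n-e(a,c_n),\,d-e(a,d)\bigr)$, whose entries all lie in $C$ by Axiom T\ref{axiom:ca3}. That same axiom also says $S(c'-e(a,c')) = S(c')_{>a}$ for each $c'\in\{c_1,\dots,c_n,d\}$, and injectivity of $S$ (a consequence of Axiom T\ref{axiom:buechi}) then gives
\[
\delta_{b,\,c'-e(a,c')} \;=\; \begin{cases}\delta_{b,c'}, & b>a,\\ 0, & b\leq a,\end{cases}
\]
for every $b\in A$. Combined with the digit identity from the first step, this yields $\sum_i p_i\,\delta_{b,\,c_i-e(a,c_i)} - \delta_{b,\,d-e(a,d)} = 0$ for every $b\in A$, hence $\mu_{(p,-1)}\bigl((c-e(a,c),\,d-e(a,d))\bigr) = 0$.

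Finally, applying Axiom T\ref{axiom:mu}(i) in the reverse direction to the truncated tuple converts this back into $p\cdot(c-e(a,c)) = d - e(a,d)$, and using $p\cdot c = d$ then rearranges to $p\cdot e(a,c) = e(a,d)$. The main point I expect to have to verify carefully is the digit compatibility $\delta_{b,\,c'-e(a,c')}=\delta_{b,c'}$ for $b>a$; everything else is formal manipulation. The threshold condition $a'\geq\xi(\cdot)$ required by Axiom T\ref{axiom:mu}(i) is not a real obstacle, since the biconditional there is independent of the chosen $a'$ and $A$ is unbounded by Axiom T\ref{axiom:a}, so an admissible $a'$ always exists.
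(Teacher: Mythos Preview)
Your proposal is correct and follows essentially the same strategy as the paper: convert the linear relation to a digit identity via Axiom~T\ref{axiom:mu}(i), transfer it, and convert back. The only cosmetic difference is that the paper works directly with the heads $e(a,c_i)$ and $e(a,d)$ (using $\delta_{b,e(a,c')}=\delta_{b,c'}$ for $b\le a$ and $0$ otherwise, from the definition of $e$), whereas you route through the tails $c_i-e(a,c_i)$ and $d-e(a,d)$ via Axiom~T\ref{axiom:ca3} and then subtract at the end; this costs you one extra line but is otherwise the same argument.
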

\begin{proof}
 By Axiom T\ref{axiom:mu} $\mu_{(-1,p)}(d,c)=0$. By the definition of $\mu$, $\delta_{b,d} = \sum_{i=1}^n p_i \delta_{b,c_i}$ for all $b \in A$. Note that for every $c' \in C$ and $b\in A$, $\delta_{b,c'} = \delta_{b,e(a,c')}$ if $b\leq a$, and $\delta_{b,e(a,c')}=0$ otherwise. Therefore we have that $0 = - \delta_{b,e(a,d)} + \sum_{i=1}^n p_i \delta_{b,e(b,c_i)}$ for every $b\in A$. Hence $\mu_{(-1,p)}(e(a,d),e(a,c_1),\dots,e(a,c_n))=0$. By Axiom T\ref{axiom:mu} the conclusion of the Lemma follows.
\end{proof}

\noindent Because $e(b,e(a,c))=e(b,c)$ for all $c\in C$ and $a,b\in A$ with $b\leq a$, the following Corollary can be deduced directly from Lemma \ref{lem:eoflincomb}.

\begin{cor}\label{cor:eoflincomb} Let $a,b\in A$ with $b \leq a$, $c \in C^n$, $d \in C$ and $p\in \Q^n$ such that $e(a,d) = p\cdot e(a,c)$. Then
$e(b,d) = p \cdot e(b,c)$.
\end{cor}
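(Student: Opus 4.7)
The plan is to follow the hint in the paper and reduce the statement to a direct application of Lemma \ref{lem:eoflincomb} together with the basic fact that iterated applications of $e$ collapse, namely $e(b,e(a,c'))=e(b,c')$ whenever $b\leq a$ and $c'\in C$.

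First, I would verify that commutation-type identity. Given $b\leq a$ and $c'\in C$, Axiom T\ref{axiom:ca3} together with the definition of $e$ give that $S(e(a,c'))=S(c')_{\leq a}$, and likewise $S(e(b,e(a,c')))=S(e(a,c'))_{\leq b}=S(c')_{\leq a}\cap S(c')_{\leq b}=S(c')_{\leq b}=S(e(b,c'))$. Since $S$ is injective by Axiom T\ref{axiom:buechi}, we conclude $e(b,e(a,c'))=e(b,c')$. Applied componentwise to the tuple $c\in C^n$, this gives $e(b,e(a,c))=e(b,c)$.

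Next, since $e(a,d)\in C$ and $e(a,c)\in C^n$ by construction, the hypothesis $e(a,d)=p\cdot e(a,c)$ is an identity of the form required by Lemma \ref{lem:eoflincomb}. Applying that lemma with $b$ in the role of the parameter, $e(a,d)$ in the role of $d$, and $e(a,c)$ in the role of $c$, yields
\[
e(b,e(a,d))=p\cdot e(b,e(a,c)).
\]
Finally, applying the collapsing identity from the first step to both sides converts this into $e(b,d)=p\cdot e(b,c)$, as required.

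There is no real obstacle here; the only place where one has to be a little careful is to check that the hypotheses of Lemma \ref{lem:eoflincomb} really are satisfied in this reduction, and this just amounts to observing that $e$ takes values in $C$ (so that the inputs to the lemma are again elements of $C$ and $C^n$). Everything else is symbolic manipulation of $S$-sets using injectivity of $S$.
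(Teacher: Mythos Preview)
Your proof is correct and follows exactly the approach the paper indicates: use the collapsing identity $e(b,e(a,c'))=e(b,c')$ for $b\leq a$ (which the paper states just before the corollary), then apply Lemma \ref{lem:eoflincomb} with $e(a,d)$ and $e(a,c)$ in place of $d$ and $c$. One minor remark: the identity $S(e(a,c'))=S(c')_{\leq a}$ comes straight from Definition \ref{def:defofe} rather than from Axiom T\ref{axiom:ca3}, but this does not affect the argument.
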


\subsection*{Complementary intervals}
Another set of important and still rather easy consequences of $\TT$ concerns the set of complementary interval of $C$. By a \textbf{complemenatary interval} of $C$, we mean an interval $\big(x,y\big)$ between two elements $x,y \in M$ such that $\big(x,y\big)\cap C =\emptyset$ and $x,y \in C$. Given $a\in A$ and $c\in C$, we will first consider complementary intervals strictly between $e(a,c)$ and $e(a,c)+a^{-1}$. Afterwards we will consider complementary intervals that have one of these two points as an endpoint.

\begin{lem}\label{lem:complintervals} Let $a\in A$ and $c\in C$. Then
\[
\Big(e(a,c)+s_A(a)^{-1},e(a,c) + a^{-1} - s_{A}(a)^{-1}\Big)
\] is a complementary interval of $C$.
\end{lem}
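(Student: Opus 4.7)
The plan is to show separately that (a) both endpoints $x := e(a,c)+s_A(a)^{-1}$ and $y := e(a,c)+a^{-1}-s_A(a)^{-1}$ belong to $C$, and (b) no element of $C$ lies strictly between them. The key technical devices will be Axiom T\ref{axiom:buechi} (to realize prescribed $S$-sets as images of $C$), Axiom T\ref{axiom:ca3} together with Corollary \ref{cor:ainv} (to identify such realizations with concrete expressions in the arithmetic of $M$), and Corollary \ref{cor:succc} together with Axiom T\ref{axiom:ca2} (to force the claimed rigidity around $e(a,c)$).

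For (a) I would argue each endpoint separately, by pinning down the relevant $S$-set and using injectivity of $S$. Using Axiom T\ref{axiom:buechi} and the first-order expressibility of the required subsets of $A$, pick $x',y'\in C$ with $S(x')=S(e(a,c))\cup A_{>s_A(a)}$ and $S(y')=S(e(a,c))\cup\{s_A(a)\}$. For $x'$, apply Axiom T\ref{axiom:ca3} at $s_A(a)$: since $s_A(a)\notin S(x')$, one computes $S(e(s_A(a),x'))=S(e(a,c))$ and $S(x'-e(s_A(a),x'))=A_{>s_A(a)}$; then Corollary \ref{cor:ainv} gives $x'-e(s_A(a),x')=s_A(a)^{-1}$, and injectivity forces $e(s_A(a),x')=e(a,c)$. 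Thus $x'=e(a,c)+s_A(a)^{-1}=x\in C$. For $y'$, observe that $e(s_A(a),y')=y'$ because $S(y')\subseteq A_{\leq s_A(a)}$, and then Corollary \ref{cor:succc} with $\delta_{s_A(a),y'}=1$ yields
\[
y' \;=\; e(s_A(a),y') \;=\; e(a,y') + (a^{-1}-s_A(a)^{-1}) \;=\; e(a,c) + a^{-1}-s_A(a)^{-1} \;=\; y,
\]
using injectivity of $S$ to get $e(a,y')=e(a,c)$.

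For (b), suppose toward a contradiction that $c'\in C$ with $x<c'<y$. Then $0<c'-e(a,c)<a^{-1}$, so Axiom T\ref{axiom:ca2} (applied with the given $c$ and with $d:=c'$) yields $e(a,c')=e(a,c)$. Now split on $\delta_{s_A(a),c'}$ via Corollary \ref{cor:succc}: if $\delta_{s_A(a),c'}=0$ then $e(s_A(a),c')=e(a,c)$, but this makes $c'-e(s_A(a),c')>s_A(a)^{-1}$, contradicting Axiom T\ref{axiom:ca}; if $\delta_{s_A(a),c'}=1$ then $e(s_A(a),c')=e(a,c)+a^{-1}-s_A(a)^{-1}>c'$, again contradicting Axiom T\ref{axiom:ca} (which requires $c'\geq e(s_A(a),c')$). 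Either way we get a contradiction, so $(x,y)\cap C=\emptyset$.

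The main obstacle, and where the argument really uses the full strength of $\TT$, is part (a): the axioms do not grant additivity of $C$ directly, so one must combine the ``combinatorial'' content of Axiom T\ref{axiom:buechi} with the ``arithmetic'' content of Corollaries \ref{cor:ainv} and \ref{cor:succc} to exhibit the correct element of $C$ and simultaneously compute its value in $M$. Once those identifications are made, part (b) reduces to a short case analysis.
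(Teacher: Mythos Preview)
Your proof is correct and follows essentially the same approach as the paper's. Both arguments use Axiom T\ref{axiom:buechi} to realize elements of $C$ with the right $S$-sets and then identify them arithmetically (the paper cites Lemma~\ref{lem:allones} where you invoke its consequence Corollary~\ref{cor:ainv}), and part (b) is identical: apply Axiom T\ref{axiom:ca2} to get $e(a,c')=e(a,c)$, split via Corollary~\ref{cor:succc} on $\delta_{s_A(a),c'}$, and derive a contradiction with Axiom T\ref{axiom:ca} in each case.
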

\begin{proof} It follows immediately from Corollary \ref{cor:succc}, Axiom T\ref{axiom:buechi} and Lemma \ref{lem:allones} that the two endpoints of the interval are in $C$. Suppose there is $d \in C$ in the interval. By Axiom T\ref{axiom:ca2} $e(a,d)=e(a,c)$. By Corollary \ref{cor:succc} $e(s_{A}(a),d) \in \{e(a,c), e(a,c) + a^{-1} - s_{A}(a)^{-1}\}$. Hence by Axiom T\ref{axiom:ca}
either $d \in \big[e(a,c),e(a,c)+s_A(a)^{-1}\big]$ or $d\in \big[e(a,c) + a^{-1} - s_{A}(a)^{-1},e(a,c) + a^{-1}\big]$. This contradicts our assumption on $d$.
\end{proof}

\noindent We will now use Lemma \ref{lem:complintervals} to show that $e(a,c)+a^{-1}$ is the left endpoint of a complementary interval. The right endpoint of this interval will be depend on whether or not $E(a,c)$ holds.

\begin{cor}\label{cor:complintervals} Let $a,b \in A$ and $c\in C$ such that $b\in A$ is maximal in $A$ with $b\leq a$ and $\neg E(b,c)$. Then
$\Big(e(a,c)+a^{-1},e(a,c) +a^{-1} + p_A(b)^{-1} - 2b^{-1}\Big)$ is a complementary interval of $C$.
\end{cor}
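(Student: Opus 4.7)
The plan is to reduce this to Lemma \ref{lem:complintervals} applied at $p_A(b)$, since the form of the claimed interval (left endpoint $e(a,c)+a^{-1}$, length $p_A(b)^{-1}-2b^{-1}$) matches exactly what Lemma \ref{lem:complintervals} produces at the $T$-level $p_A(b)$ once we rewrite things in terms of $e(p_A(b),c)$.

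First I would show the anchoring identity $e(a,c)+a^{-1} = e(b,c)+b^{-1}$. If $b=a$ this is trivial; otherwise $b<a$, and the maximality of $b$ forces $E(d,c)$ for every $d\in A\cap (b,a]$, so Axiom T\ref{axiom:succc} gives $e(a,c)=e(b,c)+b^{-1}-a^{-1}$, which rearranges to the claimed identity. Next I would use $\neg E(b,c)$ together with Corollary \ref{cor:succc} applied at $p_A(b)$: since $\delta_{b,c}=0$, the correction term vanishes and $e(b,c)=e(p_A(b),c)$.

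Then I would apply Lemma \ref{lem:complintervals} to the pair $(p_A(b),c)$. Since $s_A(p_A(b))=b$, that lemma asserts that
\[
\Big(e(p_A(b),c)+b^{-1},\ e(p_A(b),c)+p_A(b)^{-1}-b^{-1}\Big)
\]
is a complementary interval of $C$. Substituting $e(p_A(b),c)=e(b,c)$ by the second step, and then $e(b,c)+b^{-1}=e(a,c)+a^{-1}$ by the first step, the left endpoint becomes $e(a,c)+a^{-1}$ and the right endpoint becomes $e(a,c)+a^{-1}+p_A(b)^{-1}-2b^{-1}$, which is the desired interval.

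No step looks truly hard; the main thing to watch is the degenerate case in which $b$ is the minimum of $A$ (so $p_A(b)$ is undefined), and I would presume, as the statement implicitly does, that $b$ has a predecessor in $A$ (equivalently $b>1$), or else handle this boundary case separately by noting that such $b$ would force $c=0$ via Lemma \ref{lem:zero} and the hypotheses.
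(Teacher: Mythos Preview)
Your proof is correct and essentially identical to the paper's: both establish $e(b,c)=e(p_A(b),c)$ from $\neg E(b,c)$, apply Lemma \ref{lem:complintervals} at $p_A(b)$, and use Axiom T\ref{axiom:succc} to rewrite $e(b,c)+b^{-1}$ as $e(a,c)+a^{-1}$. The only differences are that you order the steps slightly differently and cite Corollary \ref{cor:succc} where the paper cites Lemma \ref{lem:succc}, which is immaterial.
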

\begin{proof}
Since $\neg E(b,c)$, $e(p_A(b),c)=e(b,c)$ by Lemma \ref{lem:succc}. Therefore by Lemma \ref{lem:complintervals}
$\Big(e(b,c)+b^{-1},e(b,c) + p_A(b)^{-1} - b^{-1}\Big)$ is a complementary interval of $C$. Since $e(a,c)+a^{-1} = e(b,c)+b^{-1}$ by Axiom T\ref{axiom:succc}, the statement of the Corollary follows.
\end{proof}

\noindent Note that if $a$ in the previous Corollary satisfies $\neg E(a,c)$, the right endpoint of the interval is $e(a,c)+p_A(a)^{-1}-a^{-1}$. Finally we will now show that $e(a,c)$ is the right endpoint of a complementary interval of $C$.

\begin{lem}\label{lem:complintervals2} Let $a,b\in A$ and $c\in C$ such that $b$ is maximal in $A$ with $b\leq a$ and $E(b,c)$. Then
$\Big(e(a,c)-p_A(b)^{-1}+2b^{-1},e(a,c)\Big)$ is a complementary interval of $C$.
\end{lem}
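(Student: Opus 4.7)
The plan is to reduce the claim to Lemma~\ref{lem:complintervals} applied at $b' := p_A(b)$, after first translating the problem from $a$ to $b$. The underlying picture is that $e(a,c)$ agrees with $e(b,c)$ because no new ``digits'' of $c$ are activated strictly between $b$ and $a$, so the complementary interval just to the left of $e(a,c)$ is really the one just to the left of $e(b,c)$, which in turn sits inside the level-$b'$ gap described by Lemma~\ref{lem:complintervals}.

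First I would establish $e(a,c) = e(b,c)$. By the maximality of $b$, every $d \in A \cap (b,a]$ satisfies $\neg E(d,c)$ (this set is empty if $b=a$, in which case the equality is trivial). Lemma~\ref{lem:succc} then gives $e(a,c) = e(b,c)$. Consequently it suffices to show that
\[
\big(e(b,c) - p_A(b)^{-1} + 2b^{-1},\; e(b,c)\big)
\]
is a complementary interval of $C$.

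Next, set $b' := p_A(b)$, so that $b = s_A(b')$. Since $E(b,c)$, Corollary~\ref{cor:succc} (equivalently Axiom~T\ref{axiom:succc}) yields
\[
e(b,c) = e(b',c) + b'^{-1} - b^{-1}.
\]
Substituting this expression for $e(b,c)$ on both endpoints of the target interval and simplifying shows that the interval equals
\[
\big(e(b',c) + b^{-1},\; e(b',c) + b'^{-1} - b^{-1}\big),
\]
since the left endpoint becomes $(e(b',c) + b'^{-1} - b^{-1}) - b'^{-1} + 2b^{-1} = e(b',c) + b^{-1}$ and the right endpoint becomes $e(b',c) + b'^{-1} - b^{-1}$.

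Finally I would invoke Lemma~\ref{lem:complintervals} with $a$ there replaced by $b'$: it asserts that $\big(e(b',c) + s_A(b')^{-1},\, e(b',c) + b'^{-1} - s_A(b')^{-1}\big)$ is a complementary interval of $C$. Since $s_A(b') = b$, this is exactly the interval displayed above, completing the proof. The only step requiring mild care is the existence of $b' = p_A(b)$, which is fine whenever $b$ is not the minimum of $A$; this is where I expect to need to either invoke the ambient setup (so that the minimum of $A$ does not cause the hypothesis of the lemma to hold) or record the minimum case as a trivial exception. Beyond that, the argument is a direct bookkeeping exercise, so I do not anticipate a substantive obstacle.
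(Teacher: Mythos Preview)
Your proof is correct and follows essentially the same route as the paper: reduce to Lemma~\ref{lem:complintervals} at $p_A(b)$ after using Lemma~\ref{lem:succc} and Corollary~\ref{cor:succc} to rewrite $e(a,c)$ as $e(p_A(b),c)+p_A(b)^{-1}-b^{-1}$. The paper's version is simply terser and does not discuss the minimum-of-$A$ edge case (which it likewise leaves implicit).
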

\begin{proof} By Lemma \ref{lem:complintervals} $\Big(e(p_A(b),c)+b^{-1},e(p_A(b),c) + p(b)^{-1} - b^{-1}\Big)$ is a complementary interval of $C$. By Corollary \ref{cor:succc} $e(a,c)=e(b,c) = e(p_A(b),c) + p_A(b)^{-1} - b^{-1}$. The statement of the Lemma follows.
\end{proof}

\section{The closest element in $A$ and $C$}
As before, let $\Cal M \models \TT$. In this section, we will study two important definable functions $\lambda$ and $\nu$ and their interaction with the $\Cal L$-structure on $M$.

\begin{defn} Let $\lambda : M \to A$ map $x\in M$ to $\max A \cap (-\infty,x]$ if this maximum exists, and to $0$ otherwise.
Let $\nu : M \to C$ map $x\in M$ to $\max C \cap (-\infty,x]$ if this maximum exists, and to $0$ otherwise.
\end{defn}

\noindent Since $A$ and $C$ are closed, the maximum in the definition of $\lambda$ exists for all $x \geq 1$ and the maximum in the definition of $\nu$ exists for all $x\geq 0$. The two main results of this section are as follows. The first result is that two distinct elements of $A$ have to lie in different $T$-levels. This will allows us to show that elements of $C$ that are $\Q$-linearly independent over $K$, are also $\dcl$-independent over $K$. The second main results states that if $a \in A$ and $x,y \in M$ and $x-y \in \mathfrak{m}_a$, then $e(a,\nu(y))$ can be expressed in terms of $e(a,\nu(x))$.

\subsection*{T-levels and $A$} In this subsection we will study consequences of Axiom T\ref{axiom:fast}, in particular on the $T$-levels of $M$ and on the function $\lambda$. Because $M$ is exponentially bounded, $Q\subseteq \dcl(\emptyset)$ and Axiom T\ref{axiom:fast}, we get directly the following Lemma.

\begin{lem}\label{lem:llq} Let $a \in A$. Then $0 \ll a$ iff $a \notin Q$.
\end{lem}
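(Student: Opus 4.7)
The plan is to handle each direction separately; both are short.

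For $a\in Q\Rightarrow 0\not\ll a$: the convention fixed at the start of Section~\ref{section:theory} puts a constant symbol in $\Cal L$ for every $q\in Q$, so $Q\subseteq\dcl_{\Cal L}(\emptyset)$. If $a\in Q$ then $a\in\dcl(\emptyset)$, and since $a$ is not strictly greater than itself, $a$ is not strictly greater than every element of $\dcl(\emptyset)$, i.e.\ $0\not\ll a$.

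For $a\in A\setminus Q\Rightarrow 0\ll a$: Axiom T\ref{axiom:a} gives $a>q$ for every $q\in Q$, so it suffices to show that $Q$ is cofinal in $\dcl_{\Cal L}(\emptyset)$. Given $y\in\dcl_{\Cal L}(\emptyset)$ defined by some $\Cal L$-formula $\varphi(x)$ with $T\models\exists!\,x\,\varphi(x)$, I would pass to the standard model $\Cal R$, where $y^{\Cal R}$ is a specific real number; since $q_{k+1}>3q_k$ forces $q_k\to\infty$, we have $y^{\Cal R}<q_k$ in $\Cal R$ for some $k$. The $\Cal L$-$\emptyset$-sentence $\forall x\,(\varphi(x)\to x<q_k)$ then transfers to $M$ by completeness of $T=\Th(\Cal R)$, yielding $y<q_k<a$ in $M$.

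Following the paper's hint more literally, one can instead argue via exponential boundedness and Axiom T\ref{axiom:fast}: by exponential boundedness, each $y\in\dcl_{\Cal L}(\emptyset)$ lies below $\exp_n(q)$ for some $n\in\N$ and $q\in Q$ (since $\dcl(\emptyset)$-elements are built by $\Cal L$-definable operations from constants in $\Cal L$, and such functions grow no faster than some iterated exponential, while nonconstant extra parameters can be bounded by an element of $Q$). Taking $n'\geq n$ large enough that $\omega(n')\geq q$ and applying Axiom T\ref{axiom:fast} to $\omega(n')\in Q\subseteq A$, the hypothesis $a>\omega(n')$ gives $a\geq s_A(\omega(n'))>\exp_{n'}(\omega(n'))\geq\exp_n(q)>y$. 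Either version is routine; there is no substantive obstacle, which matches the paper's claim that the lemma follows ``directly'' from the three listed ingredients.
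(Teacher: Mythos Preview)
Your first argument is correct and is essentially what the paper has in mind: the forward direction is immediate from $Q\subseteq\dcl(\emptyset)$, and for the backward direction you use that $Q$ is an initial segment of $A$ (Axiom T\ref{axiom:a}) together with the fact that $Q$ is cofinal in $\dcl(\emptyset)$, the latter established by transfer from $\Cal R$ via completeness of $T$. This is clean and complete.

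Your second paragraph, however, should be dropped rather than offered as an alternative. There are two concrete errors. First, $\omega(n')$ is by definition a natural number (the minimal index $m$ such that $s_Q(p)>\exp_{n'}(p)$ for all $p\ge q_m$), not an element of $Q$, so the assertion ``$\omega(n')\in Q\subseteq A$'' is false; in general $\omega(n')\notin A$, and nothing guarantees that $\omega(n')\to\infty$ as $n'\to\infty$, so ``taking $n'$ large enough that $\omega(n')\ge q$'' may be impossible. Second, the displayed chain $a\ge s_A(\omega(n'))>\exp_{n'}(\omega(n'))$ misapplies Axiom T\ref{axiom:fast}: that axiom says that for $b\in A$ with $b>\omega(n')$ one has $s_A(b)>\exp_{n'}(b)$, whereas you are invoking it at $b=\omega(n')$ itself, which is neither in $A$ nor strictly above $\omega(n')$. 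Since your first argument already suffices, simply delete the second one.
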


\noindent Lemma \ref{lem:llq} will be used routinely throughout this paper. We now show that if $a,b \in A\setminus Q$ and $a\neq b$, then $a$ and $b$ have to lie in different $T$-levels.

\begin{lem}\label{lem:oneaperlevel} Let $a,b \in A$ be such that $a\neq b$, $0 \ll a$ and $0 \ll b$. Then $[a] \cap [b] = \emptyset$.
\end{lem}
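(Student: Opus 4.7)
The plan is to reduce the claim to showing that, for distinct $a,b\in A$ with $0\ll a$ and $0\ll b$, say $a<b$, the element $b$ cannot lie in $[a]$. Since the $T$-levels partition $M_{\gg 0}$ (as recorded in Fact \ref{fact:tlevelfunction}, taking $X=\emptyset$ and inducting on two singletons), it suffices to prove $b\notin [a]$. By the definition of $[a]$ as a convex hull and because the identity lies among the $\Cal L$-$\emptyset$-definable strictly increasing unbounded functions, membership $b\in [a]$ with $a<b$ is equivalent to the existence of an $\Cal L$-$\emptyset$-definable strictly increasing and unbounded $f\colon M\to M$ such that $f(a)\geq b$.

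So I would argue by contradiction: assume such an $f$ exists. The idea now is to use the exponential boundedness of $\Cal R$ together with Axiom T\ref{axiom:fast}. Because $\Cal R$ is exponentially bounded, there exists $m\in\N$ with $f(x)\leq \exp_m(x)$ for all sufficiently large real $x$; this is expressed by a first-order $\Cal L$-sentence in $T$, so since $\Cal M\models T$ by Axiom T\ref{axiom:o-minimal}, the same $m$ works in $\Cal M$, with threshold some $x_0\in\dcl(\emptyset)$. The hypothesis $0\ll a$ means $a$ exceeds every element of $\dcl(\emptyset)$, so in particular $a>x_0$ and therefore $f(a)\leq \exp_m(a)$.

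Next I would invoke Axiom T\ref{axiom:fast} with this same $m$. Since $\omega(m)\in\N\subseteq \dcl(\emptyset)$ and $0\ll a$, we have $a>\omega(m)$, and the axiom yields $s_A(a)>\exp_m(a)$. Combining this with the previous step gives $s_A(a)>f(a)\geq b$. But $b\in A$ and $a<b$ forces $b\geq s_A(a)$, a contradiction. Hence no such $f$ exists, $b\notin [a]$, and so $[a]\cap[b]=\emptyset$.

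There is no real obstacle here; the argument is a direct linking of two pieces of data. The only point deserving care is the transfer of exponential boundedness from $\Cal R$ to the arbitrary model $\Cal M\models\TT$: one has to note that for each $\Cal L$-$\emptyset$-definable $f$ the bound $f(x)\leq \exp_m(x)$ eventually in $x$ is a single first-order $\Cal L$-sentence (with a quantifier over the threshold), hence is in $T$ and available in $\Cal M$. After this observation, Axiom T\ref{axiom:fast} together with $Q\subseteq \dcl(\emptyset)$ (so that $\omega(m)$ is a concrete parameter) does all the work.
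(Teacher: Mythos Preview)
Your argument is correct and follows essentially the same route as the paper: assume $a<b$ and $b\in[a]$, pick an $\Cal L$-$\emptyset$-definable strictly increasing unbounded $f$ with $f(a)\geq b$, bound $f$ by some $\exp_m$ using exponential boundedness (transferred to $\Cal M$ via $T$), and then invoke Axiom~T\ref{axiom:fast} to get $s_A(a)>\exp_m(a)\geq f(a)\geq b$, contradicting $b\in A$ with $b>a$. The paper phrases the contradiction symmetrically (it records both $b<s_A(a)$ and $b>p_A(a)$ to conclude $b=a$), but given the reduction to $a<b$ your one-sided version already suffices; your explicit remark on transferring exponential boundedness from $\Cal R$ to $\Cal M$ is a point the paper leaves implicit.
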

\begin{proof} With out loss of generality, assume that $a<b$. Suppose towards a contradiction that the conclusion of the Lemma fails. Then there are $\Cal L$-$\emptyset$-definable strictly increasing functions $f,g : M \to M$ such that $f(a) < b < g(a)$. Since $g$ is strictly increasing, it is invertible. Since $0\ll a$ and $0\ll b$, we get $s_A(a) > g(a)$ and $p_A(a) < f(a)$ by Axiom T\ref{axiom:fast} and exponential boundedness of $M$. Hence $p_A(a) < b < s_A(a)$. This contradicts $a\neq b$.
\end{proof}

\noindent We immediately get the following Corollary.

\begin{cor}\label{cor:oneaperlevel} Let $a_1,\dots,a_n \in A$ such that $0 \ll a_1 < a_2 < \dots < a_n$. Then
\[
[a_j] \cap \bigcup_{i\neq j} [a_i] = \emptyset.
\]
\end{cor}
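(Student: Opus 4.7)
The plan is to reduce Corollary \ref{cor:oneaperlevel} directly to the pairwise statement in Lemma \ref{lem:oneaperlevel} by a routine union argument. First I would fix an index $j \in \{1,\dots,n\}$ and an arbitrary $i \neq j$. Since $a_1 < a_2 < \dots < a_n$, the elements $a_i$ and $a_j$ are distinct. Moreover, because $0 \ll a_1$ and $a_1 \leq a_i, a_j$, and because the cut defining $0 \ll x$ is upward-closed (anything larger than an element of $\dcl(\emptyset)$-cofinally-above type remains above $\dcl(\emptyset)$), we have $0 \ll a_i$ and $0 \ll a_j$.

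With these hypotheses verified, Lemma \ref{lem:oneaperlevel} applies and yields $[a_i] \cap [a_j] = \emptyset$. Taking the union of these empty intersections over all $i \neq j$ gives
\[
[a_j] \cap \bigcup_{i \neq j} [a_i] = \bigcup_{i \neq j} \bigl( [a_j] \cap [a_i] \bigr) = \emptyset,
\]
which is exactly the conclusion.

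There is no real obstacle here; the only thing worth double-checking is that $0 \ll a_i$ for every $i$, which is immediate from $a_1 \leq a_i$ combined with $0 \ll a_1$ and the fact that the class $\{x : 0 \ll x\}$ is closed upward. So the proof is essentially a one-line invocation of Lemma \ref{lem:oneaperlevel}, and I would write it as such without any further machinery.
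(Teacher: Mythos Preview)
Your proposal is correct and matches the paper's approach: the paper simply states that the corollary follows immediately from Lemma~\ref{lem:oneaperlevel}, and your argument spells out exactly that reduction via the pairwise disjointness and a union over $i\neq j$. The upward-closure observation for $0\ll x$ is the only thing to verify, and you handle it correctly.
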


\noindent Throughout the paper we will not only have to compare elements of $A$, but also their inverse. The following Lemma states that any $\Q$-linear combination of inverses of elements of $A$ is always dominated by the inverse of the smallest element.

\begin{cor}\label{cor:qlevel} Let $a_1,\dots,a_n \in A$ be such that $0 \ll a_1 < a_2 < \dots < a_n$, and let $(q_1,\dots, q_n) \in \Q^n$ with $q_1\neq 0$. Then for every $\varepsilon \in \Q_{>0}$ there are $u_1,u_2 \in \Q$ such that $u_1 a_1^{-1} < \sum_{i=1}^{n}  q_i a_i^{-1} < u_2 a_1^{-1}$ and $|q_1 -u_i|< \varepsilon$ for $i=1,2$.
\end{cor}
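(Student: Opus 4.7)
The plan is to exploit the $T$-level separation provided by Corollary \ref{cor:oneaperlevel} to show that the tail $\sum_{i=2}^n q_i a_i^{-1}$ is negligible compared with $a_1^{-1}$, so that the whole sum is pinned, up to relative error $\varepsilon$, to $q_1 a_1^{-1}$. The key preliminary observation is that since $[a_1]\cap [a_2]=\emptyset$, $a_1<a_2$, and $0\ll a_1$, every $\Cal L$-$\emptyset$-definable strictly increasing and unbounded $f:M\to M$ satisfies $f(a_1)<a_2$: otherwise $a_2$ would be sandwiched between $a_1$ and $f(a_1)$ and hence lie in the convex set $[a_1]$. In particular, for each $N\in\N$ the scaling $x\mapsto Nx$, which is $\Cal L$-$\emptyset$-definable because $\Cal R$ extends $\OR$, gives $Na_1<a_2$.

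Given $\varepsilon\in\Q_{>0}$, I will first pick $N\in\N$ with $N\varepsilon>2(n-1)\max_{2\leq i\leq n}|q_i|$. The observation above yields $a_2>Na_1$, and because $a_i\geq a_2>0$ for every $i\geq 2$ we also obtain $a_i^{-1}\leq a_2^{-1}<a_1^{-1}/N$. The triangle inequality then gives
\[
\Big|\sum_{i=2}^n q_i a_i^{-1}\Big| \leq (n-1)\max_{2\leq i\leq n}|q_i|\cdot a_2^{-1} < \tfrac{\varepsilon}{2}\, a_1^{-1}.
\]

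Writing $\sum_{i=1}^n q_i a_i^{-1}=q_1a_1^{-1}+\sum_{i=2}^n q_i a_i^{-1}$ and using the previous bound, I conclude
\[
\bigl(q_1-\tfrac{\varepsilon}{2}\bigr)a_1^{-1} \;<\; \sum_{i=1}^n q_i a_i^{-1} \;<\; \bigl(q_1+\tfrac{\varepsilon}{2}\bigr)a_1^{-1},
\]
so the choices $u_1:=q_1-\varepsilon$ and $u_2:=q_1+\varepsilon$ are in $\Q$ and satisfy both $|q_1-u_i|<\varepsilon$ and the required strict inequalities. There is no real obstacle: the only nontrivial ingredient is the $T$-level separation from Corollary \ref{cor:oneaperlevel}, which reduces the estimate to the elementary bound $a_2>Na_1$. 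The hypothesis $q_1\neq 0$ is not used in the argument; it is presumably recorded because later applications will shrink $\varepsilon$ below $|q_1|$ to force $u_1,u_2$ to have the sign of $q_1$, thereby determining the sign of the sum.
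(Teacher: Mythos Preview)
Your argument is essentially the paper's own proof with the details filled in: the paper simply records that Corollary~\ref{cor:oneaperlevel} gives $[a_1]<[a_2]<\dots<[a_n]$, hence $ra_1<sa_i$ and $sa_i^{-1}<ra_1^{-1}$ for all $r,s\in\Q_{>0}$ and $i>1$, from which the statement follows easily. There is one trivial slip at the end: with $u_i:=q_1\pm\varepsilon$ you get $|q_1-u_i|=\varepsilon$, not $<\varepsilon$; since you already proved $(q_1-\tfrac{\varepsilon}{2})a_1^{-1}<\sum_i q_ia_i^{-1}<(q_1+\tfrac{\varepsilon}{2})a_1^{-1}$, just take $u_1:=q_1-\tfrac{\varepsilon}{2}$ and $u_2:=q_1+\tfrac{\varepsilon}{2}$.
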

\begin{proof} By Corollary \ref{cor:oneaperlevel} $[a_1] < [a_2] < \dots < [a_n]$. Thus for every $r,s \in \Q_{>0}$ and $i>1$, we have $ra_1 < sa_i$ and $sa_i^{-1} < ra_1^{-1}$. The statement of the Corollary follows easily.
\end{proof}

\begin{lem}\label{lem:valsum} Let $q=(q_1,\dots, q_n) \in \Q^n$ and $c=(c_1,\dots,c_n) \in C^n$ such that $0 \ll \mu_q(c)$. Then $[|q\cdot c|^{-1}] = [p_{A}(\mu_q(c))]$.
\end{lem}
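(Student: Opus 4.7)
The plan is to apply Corollary \ref{cor:mu} with $a:=\mu_q(c)$ to write
\[
q\cdot c \;=\; r\bigl(p_{A}(a)^{-1}-a^{-1}\bigr)\;+\;q\cdot\bigl(c-e(a,c)\bigr),
\]
where $r:=\sum_{i=1}^{n} q_i\delta_{a,c_i}\in\Q$ is nonzero by the definition of $\mu_q$. I will show that the first summand is comparable, up to a standard positive rational factor, to $p_{A}(a)^{-1}$, while the second is negligible. Since multiplication by a positive rational is an $\Cal L$-$\emptyset$-definable strictly increasing unbounded self-map of $M$, sandwiching $|q\cdot c|$ between two standard positive rational multiples of $p_{A}(a)^{-1}$ will give $[|q\cdot c|^{-1}]=[p_{A}(a)]$ at once.

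The one nonroutine step is to secure $0\ll p_{A}(a)$. Because $Q$ is an infinite initial segment of $A$, no element of $A$ can lie strictly between consecutive elements of $Q$, so $s_{A}(q_k)=q_{k+1}$ for every $k$. Hence $p_{A}(a)=q_k\in Q$ would force $a=q_{k+1}\in Q$, contradicting $0\ll a$ via Lemma \ref{lem:llq}. Thus $p_{A}(a)\notin Q$, and Lemma \ref{lem:llq} gives $0\ll p_{A}(a)$. In particular $p_{A}(a)>\omega(n)$ for every $n\in\N$, and Axiom T\ref{axiom:fast} yields $a=s_{A}(p_{A}(a))>\exp_{n}(p_{A}(a))$ for every $n$, so $a/p_{A}(a)$ exceeds every standard positive rational.

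The remaining estimates are Cauchy-style. Setting $N:=\sum_{i=1}^{n}|q_i|$, Axiom T\ref{axiom:ca} gives $|c_i-e(a,c_i)|\le a^{-1}$ for each $i$, and hence $|q\cdot(c-e(a,c))|\le N a^{-1}$. Since $\max\{2,\,4N/|r|\}$ is a standard rational, the previous paragraph furnishes $a>\max\{2,\,4N/|r|\}\cdot p_{A}(a)$, whence $p_{A}(a)^{-1}-a^{-1}\ge\tfrac12 p_{A}(a)^{-1}$ and $N a^{-1}\le\tfrac14|r|\,p_{A}(a)^{-1}$. The triangle inequality now gives
\[
\tfrac14|r|\,p_{A}(a)^{-1}\;\le\;|q\cdot c|\;\le\;2|r|\,p_{A}(a)^{-1},
\]
which is exactly the comparability required and completes the proof.

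The main (minor) obstacle is verifying $0\ll p_{A}(a)$: this pins down the correct scale at which $q\cdot c$ lives and is what unlocks Axiom T\ref{axiom:fast}. Once that is in hand, the remaining bounds are routine arithmetic using only Axioms T\ref{axiom:fast} and T\ref{axiom:ca} together with the fact that $|r|$ and $N$ are standard.
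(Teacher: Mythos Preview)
Your argument is correct and follows essentially the same route as the paper: apply Corollary~\ref{cor:mu} at $a=\mu_q(c)$, bound the tail $q\cdot(c-e(a,c))$ via Axiom~T\ref{axiom:ca}, and then absorb all $a^{-1}$ terms into a rational multiple of $p_A(a)^{-1}$. The paper packages this last step by quoting Corollary~\ref{cor:qlevel}, whereas you unfold it directly using Axiom~T\ref{axiom:fast}; in doing so you explicitly verify $0\ll p_A(a)$, a hypothesis the paper silently relies on when invoking Corollary~\ref{cor:qlevel}.
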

\begin{proof} 
For ease of notation, we set  $b := \mu_q(c)$. By Corollary \ref{cor:mu}
\begin{align*}
q \cdot c &= \sum_{i=1}^{n} q_i  \delta_{b,c_i} (p_{A}(b)^{-1} - b^{-1}) + q \cdot (c - e(b,c))\\
&= \sum_{i=1}^{n}  q_i  \delta_{b,c_i} p_{A}(b)^{-1} - \sum_{i=1}^{n} q_i  \delta_{b,c_i} b^{-1} +  q \cdot (c - e(b,c)).
\end{align*}
Set $u := \sum_{i=1}^{n} q_i \delta_{b,c_i}$. By Axiom T\ref{axiom:ca} we have $0 \leq c_i - e(b,c_i) \leq b^{-1}$ for each $i=1,\dots,n$. Hence there are $v_1,v_2 \in \Q$ such that $u p_{A}(b)^{-1} + v_1 b^{-1} \leq q \cdot c \leq u p_{A}(b)^{-1} + v_2 b^{-1}$. By Corollary \ref{cor:qlevel} there are $u_1,u_2 \in \Q$ such that $u_1 p_{A}(b)^{-1} \leq q\cdot c \leq u_2 p_{A}(b)^{-1}$. Thus $[|q \cdot c|^{-1}] = [p_{A}(b)].$ 
\end{proof}

\begin{cor}\label{lem:eoflincombmoda} Let $a\in A$, $c \in C^n$ and $q\in \Q^n$ such that $0\ll a$ and $q\cdot c\in \mathfrak{m}_a$. Then
$q \cdot e(a,c)=0$.
\end{cor}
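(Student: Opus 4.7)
The plan is to handle $q\cdot c = 0$ via Lemma~\ref{lem:eoflincomb} (applied with $d = 0 \in C$, using Lemma~\ref{lem:zero} to identify $e(a,0) = 0$), and to derive a contradiction when $q\cdot c \neq 0$ by identifying $\mu_q(e(a,c))$ with $\mu_q(c)$ and invoking Lemma~\ref{lem:valsum}.

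Assume $q \cdot c \neq 0$ and, for contradiction, $q \cdot e(a,c) \neq 0$. The crucial remark is that by Axiom T\ref{axiom:ca3}, $S(e(a,c_i)) = S(c_i)_{\leq a}$, so $\delta_{d, e(a,c_i)}$ agrees with $\delta_{d, c_i}$ for $d \leq a$ and vanishes for $d > a$; hence $\mu_q(e(a,c)) = \mu_q(c)$ when $\mu_q(c) \leq a$, and $\mu_q(e(a,c)) = 0$ otherwise. Axiom T\ref{axiom:mu}(i), applicable since $0 \ll a$ and $\xi(q) \in \N$, then forces $b := \mu_q(c) = \mu_q(e(a,c))$ and $0 < b \leq a$.

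To streamline the case analysis, observe that since $Q$ is an initial segment of $A$ we have $s_A(q_k) = q_{k+1}$ for each $k$, so $A \setminus Q$ is closed under $p_A$; in particular $0 \ll b$ forces $0 \ll p_A(b)$ (and similarly for $a$). If $0 \ll b$, Lemma~\ref{lem:valsum} yields $[|q \cdot c|^{-1}] = [p_A(b)]$; monotonicity and $b \leq a$ give $p_A(b) \leq p_A(a)$, while Lemma~\ref{lem:oneaperlevel} combined with Axiom T\ref{axiom:fast} gives $[p_A(a)] < [a]$; chaining produces $[|q \cdot c|^{-1}] \leq [p_A(a)] < [a]$, contradicting $q \cdot c \in \mathfrak{m}_a$. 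If instead $b \in Q$, pick $a' \in Q$ with $a' > b$ and $a' \geq \xi(q)$ (possible since $q_k \to \infty$); Axiom T\ref{axiom:mu}(ii) then gives $|q \cdot c| \geq a'^{-1}$, a specific positive rational, so $(q \cdot c)^{-1} \in V_a$ and $q \cdot c \notin \mathfrak{m}_a$.

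The main obstacle is recognizing the right case split: $T$-levels and Lemma~\ref{lem:valsum} are only available for elements exceeding everything in $\dcl(\emptyset)$, so one must first dispose of the possibility that $b$ is a ``finite'' element of $Q$ by a direct appeal to Axiom T\ref{axiom:mu}(ii). Once this is observed, the two subcases combine cleanly.
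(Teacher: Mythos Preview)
Your argument is correct and uses the same essential ingredients as the paper (Axiom T\ref{axiom:mu} and Lemma~\ref{lem:valsum}), but the paper organizes them more efficiently. Instead of assuming $q\cdot e(a,c)\neq 0$ for contradiction and then splitting into the cases $b\in Q$ versus $0\ll b$, the paper proceeds directly: from $q\cdot c\in\mathfrak{m}_a$ and $\mu_q(c)>0$ one has $0<|q\cdot c|<p_A(a)^{-1}$ (since $p_A(a)^{-1}$ is a unit of $V_a$), so Axiom T\ref{axiom:mu}(ii) applied at $p_A(a)$ gives $\mu_q(c)\geq p_A(a)$, whence $0\ll\mu_q(c)$ immediately; Lemma~\ref{lem:valsum} then yields $[|q\cdot c|^{-1}]=[p_A(\mu_q(c))]\geq [a]$, forcing $p_A(\mu_q(c))\geq a$, i.e.\ $\mu_q(c)>a$, and Lemma~\ref{lem:axiommu} finishes. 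This single pass replaces both your contradiction setup and your case split, and for the degenerate case the paper simply notes $\mu_q(c)=0\Rightarrow\mu_q(e(a,c))=0$ rather than invoking Lemma~\ref{lem:eoflincomb}. Your detour through the case $b\in Q$ (handled via a second application of Axiom T\ref{axiom:mu}(ii)) is valid but unnecessary once one observes that $\mu_q(c)\geq p_A(a)$ already rules it out.
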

\begin{proof} If $\mu_q(c) = 0$, then so is $\mu_q(e(a,c))$. By Axiom T\ref{axiom:mu} $q \cdot e(a,c)=0$. Therefore we have reduced to the case that $\mu_q(c)>0$. Since $0\ll a$,
we get that $0 < |q \cdot c| < p_A(a)^{-1}$. By Axiom T\ref{axiom:mu} $\mu_{q}(c) \geq p_A(a)$. In particular, $0 \ll \mu_q(c)$. By Lemma \ref{lem:valsum},  $[|q\cdot c|^{-1}]=[p_A(\mu_q(c))]$.
Because $q\cdot c \in \mathfrak{m_a}$, $a \leq p_A(\mu_q(c))$. By Lemma \ref{lem:axiommu} we finally get $q \cdot e(a,c)=0$.
\end{proof}

\noindent The following Corollary of Lemma \ref{lem:oneaperlevel} essentially shows that it is enough to understand $\lambda$ for a single member of each $T$-level of $M$.

\begin{cor}\label{cor:lambda} Let $x,y \in M$ such that $0 \ll x$, $0\ll y$ and $[x]=[y]$. Then
\begin{equation}\label{eq:level1}
\lambda(y) = \left\{
               \begin{array}{ll}
                 s_{A}(\lambda(x)), & \hbox{$y \geq s_{A}(\lambda(x))$;} \\
                 p_{A}(\lambda(x)), & \hbox{$y < \lambda(x)$;}\\
                \lambda(x), & \hbox{otherwise.}
               \end{array}
             \right.
\end{equation}
\end{cor}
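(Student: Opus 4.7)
The plan is to set $a := \lambda(x)$ and split into the three cases in the piecewise definition. The guiding principle is that in an exponentially bounded theory, consecutive elements of $A$ lying above $\dcl(\emptyset)$ belong to different $T$-levels, so the $T$-level of $y$ being equal to that of $x$ pins $\lambda(y)$ to at most three possibilities: $p_A(a)$, $a$, or $s_A(a)$. The "otherwise" case $a \leq y < s_A(a)$ is then immediate from the definition of $\lambda$, while the other two cases require an upper/lower bound on $y$ coming from $[x]=[y]$ together with a gap estimate from Axiom T\ref{axiom:fast}.

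The preliminary step is to record two facts about $a$. First, since $0 \ll x$, no $q_n \in Q$ lies between $x$ and $+\infty$, so $\lambda(x) = \max A \cap (-\infty,x] \notin Q$, hence $0 \ll a$ by Lemma \ref{lem:llq}. Second, I need $0 \ll p_A(a)$ so that Axiom T\ref{axiom:fast} can be invoked at $p_A(a)$; this follows from the initial-segment hypothesis on $Q$, because if $p_A(a) \in Q$, say $p_A(a) = q_n$, then the minimality argument using $Q \subseteq A$ forces $s_A(q_n) = q_{n+1}$, so $a = q_{n+1} \in Q$, contradicting $0 \ll a$.

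For the case $y \geq s_A(a)$: from $[x]=[y]$ and the convex-hull description of $T$-levels together with exponential boundedness, there is $m \in \N$ and an $\Cal L$-$\emptyset$-definable strictly increasing unbounded $f \leq \exp_m$ with $y \leq f(x)$. Since $x < s_A(a)$ and $0 \ll a$ gives $s_A(a) > a > \omega(m)$, Axiom T\ref{axiom:fast} applied at $s_A(a)$ yields $s_A(s_A(a)) > \exp_m(s_A(a)) > f(x) \geq y$, so $\lambda(y) = s_A(a)$. For the case $y < a$: by $[x]=[y]$ and exponential boundedness there is $m \in \N$ and a strictly increasing unbounded $\Cal L$-$\emptyset$-definable $g \leq \exp_m$ with $x \leq g(y)$. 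If $y < p_A(a)$, then $a \leq x \leq g(y) < g(p_A(a)) \leq \exp_m(p_A(a))$; but $0 \ll p_A(a) > \omega(m)$, so Axiom T\ref{axiom:fast} gives $\exp_m(p_A(a)) < s_A(p_A(a)) = a$, a contradiction. Hence $p_A(a) \leq y < a$, so $\lambda(y) = p_A(a)$.

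The only genuinely delicate point is the second preliminary observation, namely that $p_A(a) \notin Q$. Once that is in hand, Axiom T\ref{axiom:fast} applies uniformly at both $s_A(a)$ and $p_A(a)$ and the cases follow cleanly from exponential boundedness of the functions realizing the equality $[x]=[y]$.
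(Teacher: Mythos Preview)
Your proof is correct. It differs from the paper's argument in that you bypass Lemma~\ref{lem:oneaperlevel} and apply Axiom~T\ref{axiom:fast} directly at $s_A(a)$ and at $p_A(a)$. The paper instead argues once and for all (in Lemma~\ref{lem:oneaperlevel}) that distinct elements of $A$ above $\dcl(\emptyset)$ lie in disjoint $T$-levels, and then proves the corollary by a short case split on whether $\lambda(x)\in[x]$ or $\lambda(x)<[x]$: in either case at most one element of $A$ can lie in $[x]$, which immediately bounds $y$ between $p_A(\lambda(x))$ and $s_A(s_A(\lambda(x)))$. Your route is more hands-on and requires the extra preliminary $0\ll p_A(a)$ (which you correctly deduce from $Q$ being an initial segment of $A$), whereas the paper's use of Lemma~\ref{lem:oneaperlevel} makes that step unnecessary. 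Both arguments are sound; the paper's is shorter because the growth estimate has already been packaged into Lemma~\ref{lem:oneaperlevel}.
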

\begin{proof} First consider the case $[x]=[\lambda(x)]$. By Lemma \ref{lem:oneaperlevel}, $\lambda(x)$ is the only element of $A$ with that property. Therefore $p_A(\lambda(x))< z < s_A(\lambda(x))$ for every $z\in [x]$.
Hence \eqref{eq:level1} holds for $y$. Now suppose $\lambda(x) < z$ for all $z \in [x]$. By Lemma \ref{lem:oneaperlevel}, $s_{A}(\lambda(x))$ is the only element of $A$ that can possibly lie in $[x]$. Again, it follows easily that \eqref{eq:level1} holds for $y$.
\end{proof}



\noindent Let $Z \subseteq C$. In the following, we say that $c\in C^n$ is \textbf{$\Q$-linearly independent over $Z$} if there are no $d\in Z^m$, $p\in \Q^m$ and non-zero $q\in \Q^n$ such that $(p,q)\cdot(d,c) = 0$.

\begin{lem}\label{lem:distinctskies} Let $c=(c_1,\dots,c_n) \in C^n$ be $\Q$-linearly independent over $K$. Then there are
\begin{itemize}
\item $d=(d_1,\dots,d_n) \in K^n$,
\item tuples $r_1,\dots,r_n,q_1,\dots,q_n \in \Q^n$ and
\item pairwise distinct $a_1,\dots,a_n \in A$
\end{itemize}
such that for every $i\leq n$,
\begin{itemize}
\item $q_{i,i} \neq 0$, $q_{i,j}=0$ if $j>i$, and
\item $0 \ll \mu_{(r_i,q_i)} (d,c) = a_i$.
\end{itemize}
\end{lem}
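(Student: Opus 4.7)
The plan is to split the proof into two stages: first pick each $d_i \in K$ so that $c_i - d_i$ is infinitesimal, and then inductively build the triangular combinations $(r_i, q_i)$ by a Gaussian-elimination-style cancellation, guaranteeing pairwise distinct non-standard $\mu$-values.

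For Stage 1, for each $i \leq n$ I set $X_i := \{n \in \N : q_n \in S(c_i)\} \subseteq \N$ and $d_i := h(X_i) \in K$, where $h : \Cal P(\N) \to K$ is the bijection from Section \ref{section:setup}. Since $d_i$ has a constant symbol in $\Cal L$, Axiom T\ref{axiom:buechi2} applied to the $\Cal L_B$-formula $E(q_n, y)$ for every $n$ (with $q_n$ expressible in $\Cal L_B$ as $s_{\N}^n$ of the minimum) gives $S(d_i) \cap Q = S(c_i) \cap Q$ in $\Cal M$. Consequently $e(q, c_i) = e(q, d_i)$ for every $q \in Q$, and Axiom T\ref{axiom:ca} yields $|c_i - d_i| \leq 2q^{-1}$ for all $q \in Q$, so $c_i - d_i$ is infinitesimal. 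Since $\Q$-linear independence over $K$ forbids $c_i \in K$, we have $c_i \neq d_i$ and hence $a'_i := \mu_{(-1,1)}(d_i, c_i) \in A \setminus Q$, i.e., $0 \ll a'_i$.

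For Stage 2, I construct $(r_i, q_i) \in \Q^{2n}$ and $a_i$ inductively on $i$. Put $(r_1, q_1) := (-e_1, e_1)$ and $a_1 := a'_1$. For $i > 1$, initialize $(r_i, q_i) := (-e_i, e_i)$. While $b := \mu_{(r_i, q_i)}(d, c)$ equals some previously constructed $a_j$ with $j < i$, let $\gamma_i, \gamma_j \in \Q \setminus \{0\}$ be the rational coefficients of $(p_A(b)^{-1} - b^{-1})$ in the level-$b$ summands of $(r_i, q_i) \cdot (d, c)$ and $(r_j, q_j) \cdot (d, c)$ (both non-zero since their $\mu$ equals $b$) and replace $(r_i, q_i)$ by $(r_i, q_i) - (\gamma_i/\gamma_j)(r_j, q_j)$; this cancels the level-$b$ contribution so $\mu$ strictly increases past $b$. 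As $\mu$ strictly increases through the finite set $\{a_1, \dots, a_{i-1}\}$, the iteration terminates after at most $i-1$ replacements with some $\mu \notin \{a_1, \dots, a_{i-1}\}$, which I take as $a_i$. Triangularity is preserved throughout because $q_{j,i} = 0$ and $q_{j,k} = 0$ for $k > j$, so $q_{i,i}$ stays non-zero and $q_{i,k} = 0$ for $k > i$; moreover $(r_i, q_i) \cdot (d, c)$ cannot vanish (else a nontrivial $\Q$-linear relation with $q_{i,i} \neq 0$ would contradict the hypothesis), so $a_i \in A$ is well-defined, and dominating a non-standard value it lies in $A \setminus Q$.

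The main obstacle is Stage 1: producing a standard element $d_i \in K$ that is infinitesimally close to $c_i$. This is the essential place where the automata-theoretic axiom T\ref{axiom:buechi2} is invoked: it guarantees that for every (possibly externally defined) $X \subseteq \N$, the standard element $h(X)$ has a constant symbol in $\Cal L$ whose interpretation in $\Cal M$ carries exactly the truncation $S(h(X)) \cap Q = X$, even though $X$ need not be internally definable in $\Cal M$. Once this approximation is secured, Stage 2 is a routine inductive cancellation.
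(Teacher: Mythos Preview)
Your proof is correct and follows essentially the same strategy as the paper's: the choice of $d_i\in K$ matching $S(c_i)$ on $Q$ via Axiom~T\ref{axiom:buechi2}, followed by an induction with a Gaussian-elimination cancellation to force the $\mu$-values apart. The only difference is organizational: the paper orders $a_1<\dots<a_{n-1}$, picks the \emph{maximal} index $l$ for which some combination with $q_{n}\neq 0$ hits $a_l$, and performs a single cancellation against $(r_l,q_l)$, invoking maximality to conclude the new $\mu$ avoids all $a_j$; you instead iterate cancellations until $\mu$ leaves $\{a_1,\dots,a_{i-1}\}$, which terminates since $\mu$ strictly increases through a finite set.
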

\begin{proof} For $i=1,\dots, n$ let $d_i$ be the unique element in $K$ such that for all $q \in Q$, we have $E(q,d_i)$ if and only if $E(q,c_i)$. The existence of such $d_i$'s follows from Axiom T\ref{axiom:buechi2}. We directly get from Axiom T\ref{axiom:mu} and $\Q$-linear independence of $c_i$ over $K$ that $0 \ll \mu_{(-1,1)}(d_i,c_i)$. We now show the statement of the Lemma by induction on $n$. \newline
For $n=1$, the conclusion of the Lemma holds with $r_{1,1}=-1$, $q_{1,1}=1$.\newline
Now suppose that the statement holds for $n-1$. Then there are pairwise distinct $a_1,\dots, a_{n-1}\in A$  and tuples
$r_1,\dots,r_{n-1},q_1,\dots,q_{n-1}\in \Q^n$ such that the conclusion of the Lemma holds for $i\leq n-1$. Without loss of generality, we can assume that $a_1 < \dots < a_{n-1}$. Let $l \in \{0,1,\dots,n-1\}$ be maximal such that there is $p=(p_1,\dots, p_{n}),s=(s_1,\dots,s_n) \in \Q^n$ with $p_{n} \neq 0$ and $\mu_{(s,p)}(d,c) = a_l$, if such an $l$ exists, and $0$ otherwise.\newline
If $l=0$, then $\mu_{(0,\dots,0,-1),(0,\dots,0,1)}(d,c) \notin \{a_1,\dots,a_{n-1}\}$. Hence the statement holds with $q_{n} =(0,\dots,0,1)$ and $r_n =(0,\dots,0,-1)$.\newline
Now consider the case that $l>0$. Set
\[
t : = \frac{\sum_{j=1}^{n} s_j \delta_{a_l,d_j} + \sum_{j=1}^{n} p_j \delta_{a_l,c_j}}{\sum_{j=1}^{n} r_{l,j} \delta_{a_l,d_j} + \sum_{j=1}^n q_{l,j} \delta_{a_l,c_j}}.
\]
The denominator is non-zero, because $\mu_{(r_l,q_l)}(d,c)=a_l$. Let $v=(v_1,\dots,v_n),w=(w_1,\dots,w_n)\in \Q^{n}$ be such that
\[
v := tr_l - s \hbox{ and } w :=tq_l - p.
\]
We will now show that $\mu_{(v,w)}(d,c)>a_l$. Since $c_1,\dots,c_n$ are $\Q$-linearly independent over $K$, $\mu_{(v,w)}(d,c) \neq 0$ by Axiom T\ref{axiom:mu}. Now let $b < a_l$.
Since $\mu_{(r_l,q_l)}(d,c)=\mu_{(s,p)}(c)=a_l$, we have
\begin{align*}
\sum_{j=1}^{n} v_j \delta_{b,d_j} + \sum_{j=1}^{n} w_j \delta_{b,c_j} &= \sum_{j=1}^{n} tr_{l,j} \delta_{b,d_j} - \sum_{j=1}^{n} s_j \delta_{b,d_j}  + \sum_{j=1}^{n} tq_{l,j} \delta_{b,c_j}- \sum_{j=1}^{n} p_j \delta_{b,c_j}\\
&= t\big(\sum_{j=1}^{n} r_{l,j} \delta_{b,d_j} + \sum_{j=1}^{n} q_{l,j} \delta_{b,c_j}\big) - \big(\sum_{j=1}^{n} s_j \delta_{b,d_j} + \sum_{j=1}^{n} p_j \delta_{b,c_j}\big)\\
& = t0 - 0 = 0.
\end{align*}
By our choice of $t$, we also get
\begin{align*}
\sum_{j=1}^{n} v_j & \delta_{a_l,d_j} + \sum_{j=1}^{n} w_j \delta_{a_l,c_j} = \sum_{j=1}^{n} tr_{l,j} \delta_{a_l,d_j}  - \sum_{j=1}^{n} s_j \delta_{a_l,d_j} + \sum_{j=1}^{n} tq_{l,j} \delta_{a_l,c_j} - \sum_{j=1}^{n} p_j \delta_{a_l,c_j}\\
&= t\big(\sum_{j=1}^{n} r_{l,j} \delta_{a_l,d_j} + \sum_{j=1}^{n} q_{l,j} \delta_{a_l,c_j}\big) - \big(\sum_{j=1}^{n} s_j \delta_{a_l,d_j} + \sum_{j=1}^{n} p_j \delta_{a_l,c_j}\big)=0.
\end{align*}
Thus $\mu_{(v,w)}(d,c)> a_l$. Since $l$ was chosen to be maximal, $\mu_{(v,w)}(c) \notin \{a_1,\dots, a_n\}$.
\end{proof}
\noindent By inspection of the proof of Lemma \ref{lem:distinctskies}, the reader can see that $d\in K^n$ is only used to make sure that $0\ll a_i$.  If this statement is dropped from the conclusion, the above proof still gives the following Lemma. Because we weaken the conclusion, we are also able to weaken the assumption of $\Q$-linear independence over $K$ to just $\Q$-linear independence.

\begin{lem}\label{lem:distinctskies2} Let $c=(c_1,\dots,c_n) \in C^n$ be $\Q$-linearly independent. Then there are
 $r_1,\dots,r_n \in \Q^n$, pairwise distinct $a_1,\dots,a_n \in A$ such that for every $i\leq n$,
\begin{itemize}
\item $q_{i,i} \neq 0$, $q_{i,j}=0$ if $j>i$, and
\item $\mu_{q_i} (c) = a_i$.
\end{itemize}
\end{lem}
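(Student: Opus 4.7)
The plan is to mimic the proof of Lemma \ref{lem:distinctskies} but strip out everything involving the tuple $d \in K^n$, which was used there solely to upgrade the conclusion to $0 \ll a_i$ via Lemma \ref{lem:llq}. Since here we only need distinct $a_i \in A$ (no $T$-level assertion) and we only assume $\Q$-linear independence of $c$ rather than $\Q$-linear independence over $K$, the same combinatorial book-keeping with the $\delta_{a,c_j}$ should go through verbatim after deleting the $d_j$'s. The proof will proceed by induction on $n$.

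For the base case $n=1$, take $q_1 = (1,0,\dots,0)$. Then $\Q$-linear independence of $c$ forces $c_1 \neq 0$, so Axiom T\ref{axiom:mu}(i) gives $a_1 := \mu_{q_1}(c) > 0$, and the triangularity conditions are trivial.

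For the inductive step, assume we have pairwise distinct $a_1 < \cdots < a_{n-1}$ in $A$ and triangular $q_1,\dots,q_{n-1} \in \Q^n$ with $\mu_{q_i}(c) = a_i$. Let $l \in \{0,1,\dots,n-1\}$ be the maximal index such that some $p \in \Q^n$ with $p_n \neq 0$ satisfies $\mu_p(c) = a_l$, taking $l=0$ if no such index exists. If $l=0$, then $q_n := (0,\dots,0,1)$ already works: $\mu_{q_n}(c)$ exists and is positive by Axiom T\ref{axiom:mu}(i) applied to $c_n \neq 0$, and it does not lie in $\{a_1,\dots,a_{n-1}\}$ by the definition of $l$. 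If $l>0$, pick such a $p$ and set
\[
t := \frac{\sum_{j=1}^{n} p_j \delta_{a_l,c_j}}{\sum_{j=1}^{n} q_{l,j} \delta_{a_l,c_j}}, \qquad w := tq_l - p.
\]
The denominator is nonzero because $\mu_{q_l}(c)=a_l$, and $w_n = -p_n \neq 0$ since $q_{l,n}=0$ (as $n>l$). The computation in the proof of Lemma \ref{lem:distinctskies} shows that $\sum_j w_j \delta_{b,c_j} = 0$ for all $b \leq a_l$, so $\mu_w(c) > a_l$. By $\Q$-linear independence of $c$, $w \cdot c \neq 0$, hence $\mu_w(c) > 0$ by Axiom T\ref{axiom:mu}(i). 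Finally, the maximality of $l$ excludes $\mu_w(c) \in \{a_{l+1},\dots,a_{n-1}\}$, so setting $q_n := w$ and $a_n := \mu_w(c)$ completes the induction.

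I do not anticipate any serious obstacle: the argument is essentially a pruning of Lemma \ref{lem:distinctskies}. The one point worth noting is that in Lemma \ref{lem:distinctskies} the auxiliary tuple $d$ ensured that each $a_i$ lay outside $Q$, hence satisfied $0 \ll a_i$ by Lemma \ref{lem:llq}; without this tuple, the $a_i$ produced here may well be in $Q$, which is exactly why the conclusion is weaker. Nothing in the inductive step uses the position of $a_i$ relative to $Q$, so the argument transfers cleanly.
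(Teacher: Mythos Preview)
Your proposal is correct and is precisely the approach the paper takes: the paper does not give a separate proof but simply remarks that by inspection of the proof of Lemma~\ref{lem:distinctskies}, the tuple $d\in K^n$ is used only to force $0\ll a_i$, so dropping it (and correspondingly weakening the hypothesis from $\Q$-linear independence over $K$ to plain $\Q$-linear independence) yields the present lemma. Your write-up makes this pruning explicit and carries out the induction cleanly; the only cosmetic point is that the ordering $a_1<\cdots<a_{n-1}$ is not literally delivered by the inductive hypothesis, but (as in the paper) one may simply take $l$ to be the index for which $a_l$ is largest among those admitting a $p$ with $p_n\neq 0$ and $\mu_p(c)=a_l$, and the argument is unaffected.
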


\noindent Lemma \ref{lem:distinctskies} is a crucial result and will play an important role later on. The reason is that it allows us transform any given $\Q$-linearly independent tuple of elements of $C$ via linear operations into a tuple of elements of $C$ whose inverse are in different $T$-levels. Among other things the resulting elements of $C$ will not only be $\Q$-linearly independent, but also $\dcl$-independent.

\begin{prop}\label{lem:mathframma} Let $c=(c_1,\dots,c_n) \in C^n$ and let $f: M^n \to M$ be $\Cal L$-$\emptyset$-definable. Then there are $r\in \Q^n, q \in \Q^m$ and $d\in K^m$ such that for every $a\in A$ with $0\ll a$
\[
f(c) \in \mathfrak{m}_a \setminus \{0\} \Rightarrow (r,q) \cdot (d,c) \in \mathfrak{m}_a.
\]
\end{prop}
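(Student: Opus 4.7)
The plan is to proceed by induction on $n$, reducing to $c$ being $\Q$-linearly independent over $K$ and then applying the ``change of basis'' supplied by Lemma \ref{lem:distinctskies} together with Fact \ref{fact:tlevelfunction}. For the reduction, a nontrivial $\Q$-linear relation $q\cdot c + r\cdot d' = 0$ with $d' \in K^k$ allows one to solve for some $c_i$; substituting into $f$ yields an $\Cal L$-$\emptyset$-definable function of the remaining components of $c$ (since every element of $K$ is in $\dcl(\emptyset)$), and the inductive hypothesis applies, lifting trivially by padding the coefficient of $c_i$ with zero. The base case in which $c$ collapses entirely into $\dcl(\emptyset)$ is vacuous: then $f(c) \in \dcl(\emptyset)$, so either $f(c) = 0$ or $|f(c)|^{-1} \in \dcl(\emptyset)$ is not $\gg 0$, and $f(c) \in \mathfrak{m}_a\setminus\{0\}$ for $0 \ll a$ cannot hold.

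Assume then that $c$ is $\Q$-linearly independent over $K$, and apply Lemma \ref{lem:distinctskies} to obtain $d \in K^n$, tuples $r_1,\dots,r_n,q_1,\dots,q_n \in \Q^n$ with $(q_{i,j})$ upper-triangular and with nonzero diagonal, and pairwise distinct $a_1,\dots,a_n \in A$ with $0 \ll a_i$ and $\mu_{(r_i,q_i)}(d,c) = a_i$. Set $y_i := (r_i,q_i)\cdot(d,c)$. By Lemma \ref{lem:valsum}, $[|y_i|^{-1}] = [p_A(a_i)]$; since $Q$ is an unbounded initial segment of $A$, each $p_A(a_i)$ again lies in $A\setminus Q$ and so is $\gg 0$, and Corollary \ref{cor:oneaperlevel} then gives pairwise disjoint $T$-levels for the $|y_i|^{-1}$. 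Invertibility of $(q_{i,j})$ over $\Q$ places $c$ in the $\Q$-span of $d$ and the $y_i$, so $f(c) \in \dcl(y_1,\dots,y_n,d) = \dcl(y_1,\dots,y_n)$, using $d \subseteq \dcl(\emptyset)$. Since each $y_i$ is nonzero (by Axiom T\ref{axiom:mu}, as $\mu_{(r_i,q_i)}(d,c) = a_i \neq 0$) and has sign in $\{-1,1\} \subseteq \dcl(\emptyset)$, the map $x \mapsto \pm x^{-1}$ witnesses $\dcl(y_i) = \dcl(|y_i|^{-1})$, and hence $|f(c)|^{-1} \in \dcl(|y_1|^{-1},\dots,|y_n|^{-1})$.

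Now suppose, in the only nonvacuous case, that $f(c) \in \mathfrak{m}_a \setminus \{0\}$ for some $0 \ll a$, so $|f(c)|^{-1} \gg 0$. Applying Fact \ref{fact:tlevelfunction} with $X = \emptyset$ to the finite set $\{|y_1|^{-1},\dots,|y_n|^{-1}\}$ forces $[|f(c)|^{-1}] = [|y_j|^{-1}]$ for some $j \in \{1,\dots,n\}$; in particular $[|y_j|^{-1}] \geq [a]$ and hence $y_j \in \mathfrak{m}_a$. Because the index $j$ is determined by the single element $f(c)$ and does not depend on $a$, the choice $r := r_j$, $q := q_j$ together with the $d \in K^n$ from Lemma \ref{lem:distinctskies} yields $(r,q)\cdot(d,c) = y_j \in \mathfrak{m}_a$ uniformly in all $0 \ll a$ with $f(c) \in \mathfrak{m}_a\setminus\{0\}$. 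The main technical point is the identification $\dcl(y_1,\dots,y_n) = \dcl(|y_1|^{-1},\dots,|y_n|^{-1})$—which rests on the signs of the $y_i$ lying in $\dcl(\emptyset)$—since this is what allows Fact \ref{fact:tlevelfunction} to pin down the $T$-level of $|f(c)|^{-1}$ inside a finite list; once this identification is in hand, the uniformity of the choice in $a$ is automatic.
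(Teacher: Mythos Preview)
Your proof is correct and follows essentially the same approach as the paper: reduce to $\Q$-linear independence over $K$, apply Lemma \ref{lem:distinctskies}, pass to the inverses $v_i = |y_i|^{-1}$ lying in pairwise disjoint $T$-levels, and invoke Fact \ref{fact:tlevelfunction}. If anything, your write-up is more careful than the paper's on two points: you explicitly argue that $0 \ll p_A(a_i)$ (needed to apply Corollary \ref{cor:oneaperlevel} to the $v_i$), and you make the uniformity in $a$ transparent by first fixing the index $j$ with $[|f(c)|^{-1}] = [v_j]$ via Fact \ref{fact:tlevelfunction}, whereas the paper's phrasing (``If $[v_i] \geq [a]$ for some $i$ \dots'') superficially reads as if $i$ might depend on $a$.
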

\begin{proof}
We can directly reduce to the case that $c_1,\dots, c_n$ are $\Q$-linearly independent over $K$. By Lemma \ref{lem:distinctskies} there are $d \in K^n$, $q_1,\dots,q_n,r_1,\dots,r_n \in \Q^n$ and pairwise distinct $a_1,\dots,a_n \in A$ such that for $i=1,\dots,n$, $q_{i,i}\neq 0$, $q_{i,j}=0$ for $j>i$ and $0\ll\mu_{r_i,q_i}(d,c) = a_i$.
By Lemma \ref{lem:valsum} we get that $[|(r_i,q_i) \cdot (d,c)|^{-1}] = [p_A(a_i)]$. For $i=1,\dots,n$, set $v_i :=|(r_i,q_i) \cdot (d,c)|^{-1}$. Since $a_1,\dots, a_n$ are pairwise distinct and $0\ll a_i$, we get $[v_i] \cap [v_j] = \emptyset$ for $i\neq j$ by Corollary \ref{cor:oneaperlevel}. Since $q_{i,i}\neq 0$ for each $i$ and $d\in K^n$, there is a $\Cal L$-$\emptyset$-definable function $g: M^n \to M$ such that $g(v_1,\dots, v_n) = f(c)^{-1}$.\newline
Let $a \in A$ be such that $0\ll a$ and suppose that $f(c) \in \mathfrak{m}_a$.  If $[v_i]\geq  [a]$ for some $i$, then $(r_i,q_i) \cdot (d,c) \in \mathfrak{m}_a$. Therefore the conclusion of the Proposition holds with $(r,q)=(r_i,q_i)$. We have reduced to the case that for all $i\in \{1,\dots,n\}$, $[v_i] < [a]$. Since $[v_i]\cap [v_j] = \emptyset$ for $i\neq j$, we conclude using Fact \ref{fact:tlevelfunction} that  $[g(v_1,\dots,v_n)] < [a]$. Hence $[f(c)^{-1}] < [a]$. This contradicts $f(c) \in \mathfrak{m}_a$.
\end{proof}

\begin{cor}\label{cor:mathframma}  Let $c=(c_1,\dots,c_{n-1}) \in C^{n-1}$ and let $f: M^{n-1} \to M$ be $\Cal L$-$\emptyset$-definable such that $f(c)\in C$.  Then there are $r\in \Q^m,q \in \Q^{n-1}$ and $d\in K^m$ such that
\[
f(c) = (r,q) \cdot (d,c).
\]
\end{cor}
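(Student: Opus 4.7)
My plan is to prove Corollary \ref{cor:mathframma} by iteratively invoking Proposition \ref{lem:mathframma}, peeling off leading-order terms of $f(c)$ until nothing remains, and using a finiteness result on $T$-levels in $\dcl(\emptyset,c)$ to force termination.

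First I would reduce to the case that $c = (c_1,\dots,c_{n-1})$ is $\Q$-linearly independent over $K$. Any nontrivial $\Q$-linear relation lets me solve for some $c_j$ as a $\Q$-linear combination of the remaining variables and elements of $K$; substituting this into $f$ produces a new $\Cal L$-$\emptyset$-definable function of fewer variables, and the claim follows by induction on $n$.

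Under the linear-independence assumption, applying Lemma \ref{lem:distinctskies} to $c$ yields $d^{*} \in K^{n-1}$, tuples $r^{*}_i, q^{*}_i \in \Q^{n-1}$ with $q^{*}_{i,i} \neq 0$, and pairwise distinct $a^{*}_i \in A$ with $0 \ll a^{*}_i$. Setting $w_i := (r^{*}_i, q^{*}_i) \cdot (d^{*}, c)$, Lemma \ref{lem:valsum} gives $[|w_i|^{-1}] = [p_A(a^{*}_i)]$, so the $T$-levels $[|w_i|^{-1}]$ are pairwise distinct by Corollary \ref{cor:oneaperlevel}. Since $q^{*}_{i,i} \neq 0$, we have $\dcl(\emptyset, c) = \dcl(\emptyset, w_1, \dots, w_{n-1})$, and then Fact \ref{fact:tlevelfunction} confines the $T$-levels of every $\gg 0$ element of $\dcl(\emptyset, c)$ to the finite set $\{[|w_i|^{-1}] : 1 \leq i \leq n-1\}$; this finiteness will drive termination.

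Now set $f_0 := f$ and iterate. If $f_k(c) = 0$, stop and output $f(c) = \sum_{j<k}(r_j \cdot d_j + q_j \cdot c)$, which has the desired form after consolidating the tuples. Otherwise apply Proposition \ref{lem:mathframma} to $f_k$ to obtain $(r_k, q_k, d_k)$, and put $f_{k+1}(x) := f_k(x) - r_k \cdot d_k - q_k \cdot x$. Proposition \ref{lem:mathframma} guarantees the weak inequality $[|f_{k+1}(c)|^{-1}] \geq [|f_k(c)|^{-1}]$. The main obstacle is upgrading this to a strict inequality whenever $f_{k+1}(c) \neq 0$, since otherwise the iteration need not terminate. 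To overcome this I would re-examine the proof of Proposition \ref{lem:mathframma}: it writes $f_k(c)^{-1} = g(v_1, \dots, v_{n-1})$ with $v_i := |w_i|^{-1}$ in pairwise distinct $T$-levels, and identifies a dominant index $j$ where $[v_j] = [f_k(c)^{-1}]$. Choosing $(r_k, q_k, d_k)$ in correspondence with this dominant index and exploiting the hypothesis $f(c) \in C$ via Lemma \ref{lem:overlinedep}, so as to match the leading residue in $\overline{V_{w_j^{-1}}}$ by an appropriate rational scalar, should kill the leading term and produce the required strict increase. Since only $n-1$ distinct $T$-levels are available in $\dcl(\emptyset,c)$, the iteration must then terminate in at most $n-1$ steps, giving the desired representation $f(c) = r \cdot d + q \cdot c$.
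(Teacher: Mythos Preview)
Your iterative approach has a real gap that you yourself flag but do not close. Proposition \ref{lem:mathframma} only gives the weak inequality $[|f_{k+1}(c)|^{-1}] \geq [|f_k(c)|^{-1}]$, and your sketch for upgrading it to a strict inequality (``match the leading residue \ldots\ by an appropriate rational scalar'') is not justified: there is no reason the residue of $f_k(c)$ in $\overline{V_a}^a$ should be a \emph{rational} multiple of some $\overline{w_j}^a$, and Lemma \ref{lem:overlinedep} speaks of $\dcl$-dependence, not $\Q$-linear dependence. Worse, after the first subtraction $f_1(c) = f(c) - (r_0,q_0)\cdot(d_0,c)$ need not lie in $C$, so the hypothesis $f(c)\in C$ is lost and cannot be re-invoked at later stages. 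Finally, even granting strict increase, exhausting the $n-1$ available $T$-levels only forces $|f_{n-1}(c)|^{-1}$ to be not $\gg 0$, which is far from $f_{n-1}(c)=0$; a non-infinitesimal nonzero remainder (think of something like $c_1^2$) is not ruled out by your argument.

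The paper avoids all of this with a one-line contradiction: assuming the conclusion fails, reduce to the case that the \emph{extended} tuple $(c_1,\dots,c_{n-1},f(c))\in C^n$ is $\Q$-linearly independent over $K$, and apply Lemma \ref{lem:distinctskies} to this $n$-tuple (this is exactly where $f(c)\in C$ is used). One obtains $n$ combinations $v_1,\dots,v_n$ whose inverses lie in $n$ pairwise distinct $T$-levels. But the triangular shape of the $q_i$'s means $v_1,\dots,v_{n-1}$ already generate $c$ over $\dcl(\emptyset)$, hence $v_n\in\dcl(v_1,\dots,v_{n-1})$, contradicting Fact \ref{fact:tlevelfunction}. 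The key move you miss is feeding $f(c)$ itself into Lemma \ref{lem:distinctskies} rather than treating it as an external target to be approximated.
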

\begin{proof} Suppose not. We can easily reduce to the case that $c_1,\dots,c_{n-1},f(c)$ are $\Q$-linearly independent over $K$.  By Lemma \ref{lem:distinctskies} there are $d \in K^n$, $q_1,\dots,q_n,r_1,\dots,r_n \in \Q^n$ and pairwise distinct $a_1,\dots,a_n \in A$ such that for $i=1,\dots,n$, $q_{i,i}\neq 0$, $q_{i,j}=0$ for $j>i$ and $0\ll\mu_{(r_i,q_i)}(d,c,f(c)) = a_i$. By Lemma \ref{lem:valsum} we get that $[|(r_i,q_i) \cdot (d,c,f(c))|^{-1}] = [p_A(a_i)]$. For $i=1,\dots,n$, set $v_i :=|(r_i,q_i) \cdot (d,c,f(c))|^{-1}$. Since $a_1,\dots, a_n$ are pairwise distinct and $0\ll a_i$, we get $[v_i] \cap [v_j] = \emptyset$ for $i\neq j$ by Corollary \ref{cor:oneaperlevel}. Since $d\in K^n$, there is a $\Cal L$-$\emptyset$-definable function $g: M^n \to M$ such that $g(v_1,\dots, v_{n-1}) = v_n$. This contradicts Fact \ref{fact:tlevelfunction}.
\end{proof}

\subsection*{Understanding $\nu$} We now turn our attention to $\nu$. By Lemma \ref{lem:complintervals}, if $x \in [0,1]\setminus C$ and $a\in A$ such that
\[
x \in \Big[e(p_A(a),\nu(x)) + a^{-1}, e(p_A(a),\nu(x)) + p_A(a)^{-1} - a^{-1}\Big),
\]
then $\nu(x) = e(p_A(a),\nu(x)) + a^{-1}$. We will show that for every $x \in [0,1]\setminus C$ there is such an $a\in A$ and that $a$ is $\Cal L_B$-definable from $\nu(x)$. We first establish the following Lemma.

\begin{lem}\label{lem:nuexplicit} Let $x \in (0,1)\setminus C$. Then there is $a\in A$ such that $\nu(x)= e(a,\nu(x)) + a^{-1}$.
\end{lem}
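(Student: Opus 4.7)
Set $L:=\nu(x)$; since $x\in(0,1)\setminus C$ and $0\in K\subseteq C$, we have $0\le L<x$, and $(L,R)$ is a complementary interval of $C$ for some $R\in C\cap(x,1]$. My plan is to split on whether $E(\cdot,L)$ eventually holds on $A$ or fails cofinally, and in the second case to manufacture an element of $C$ in $(L,x)$, contradicting maximality of $L$.

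Case A: there exists $a\in A$ with $A_{>a}\subseteq S(L)$, i.e.\ $E(b,L)$ for every $b\in A$ with $b>a$. Then Lemma \ref{lem:allones} gives $L=e(a,L)+a^{-1}$ immediately, and $a$ is the desired witness.

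Case B: $A\setminus S(L)$ is cofinal in $A$ (this is the negation of Case A since $A$ is discrete). Because $A$ is unbounded in $M$ and $x-L>0$, I would pick $a\in A\setminus S(L)$ with $a>(x-L)^{-1}$; note that $a>1=\min A$ automatically since $x,L\in[0,1]$, so $p_A(a)$ is well defined. Because $\neg E(a,L)$, Lemma \ref{lem:succc} (applied to the interval $(p_A(a),a]$) gives $e(a,L)=e(p_A(a),L)$. Lemma \ref{lem:complintervals} applied with $a'=p_A(a)$ and $c=L$ then identifies
\[
\bigl(e(p_A(a),L)+a^{-1},\ e(p_A(a),L)+p_A(a)^{-1}-a^{-1}\bigr)
\]
as a complementary interval of $C$, whence its left endpoint $e(a,L)+a^{-1}$ lies in $C$. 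Axiom T\ref{axiom:ca} gives $L\le e(a,L)+a^{-1}$. If equality holds, then $a$ witnesses the conclusion; otherwise $e(a,L)+a^{-1}>L$, while also $e(a,L)+a^{-1}\le L+a^{-1}<x$, producing a point of $C$ strictly inside $(L,x)$ and contradicting $L=\max C\cap(-\infty,x]$. So Case B forces either the desired equality or a contradiction, and in every case some $a\in A$ satisfies $\nu(x)=e(a,\nu(x))+a^{-1}$.

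The main obstacle is the second case: one must recognize that the natural candidate $e(a,L)+a^{-1}$, which by Axiom T\ref{axiom:ca} is squeezed into $[L,L+a^{-1}]$, is not merely some element of $M$ but actually lies in $C$. This is exactly what Lemma \ref{lem:complintervals} supplies once we rewrite $e(a,L)$ as $e(p_A(a),L)$ via Lemma \ref{lem:succc}; the choice $a>(x-L)^{-1}$ is then precisely what forces this $C$-point into the open gap $(L,x)$, so that only the equality $L=e(a,L)+a^{-1}$ remains consistent with the definition of $\nu(x)$.
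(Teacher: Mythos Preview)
Your proof is correct and takes a genuinely different route from the paper's. The paper proceeds constructively: it sets $a=\lambda((x-\nu(x))^{-1})$, so that $s_A(a)^{-1}<x-\nu(x)\le a^{-1}$, and then does a case split on whether $x<e(a,\nu(x))+a^{-1}$ or $x\ge e(a,\nu(x))+a^{-1}$, showing that $s_A(a)$ works in the first case and $a$ in the second. Your argument instead splits on the tail behaviour of $S(\nu(x))$: if $E(b,\nu(x))$ holds for all sufficiently large $b$, Lemma~\ref{lem:allones} applies directly; otherwise you manufacture, via Lemma~\ref{lem:complintervals}, a point of $C$ strictly between $\nu(x)$ and $x$, contradicting the definition of $\nu$. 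Your approach is shorter and more conceptual, while the paper's is more explicit about which $a$ witnesses the conclusion. A minor remark: in your Case~B the ``equality'' subcase $L=e(a,L)+a^{-1}$ is in fact vacuous, since by Axiom~T\ref{axiom:ca3} and Corollary~\ref{cor:ainv} it would force $S(L)_{>a}=A_{>a}$, putting you back in Case~A; but the proof is complete as written without observing this.
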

\begin{proof} Let $a = \lambda((x-\nu(x))^{-1})$. Because $\nu(x)\neq x$, $a\neq 0$. Indeed, $a$ is the unique element in $A$ such that $s_A(a)^{-1} <x - \nu(x) < a^{-1}$. By Axiom T\ref{axiom:ca}
\[
x < \nu(x) + a^{-1} \leq e(a,\nu(x)) + a^{-1} + a^{-1} =  e(a,\nu(x)) +2a^{-1}.
\]
First consider the case that $x < e(a,\nu(x)) + a^{-1}$. Note that
\[
e(a,\nu(x)) + s_A(a)^{-1} \leq \nu(x) + s_A(a)^{-1} < x.
\]
We will now show that $x< e(a,\nu(x)) +a^{-1} - s_A(a)^{-1}$. Suppose not. Then $x \geq e(a,\nu(x)) +a^{-1} - s_A(a)^{-1}$. Since $ e(a,\nu(x)) +a^{-1} - s_A(a)^{-1} \in C$, we get
$\nu(x) \geq e(a,\nu(x)) +a^{-1} - s_A(a)^{-1}$.
Then
\begin{align*}
x - \nu(x) &\leq  x -(e(a,\nu(x))+a^{-1} - s_A(a)^{-1})\\
&< e(a,\nu(x)) + a^{-1}-(e(a,\nu(x))+a^{-1} - s_A(a)^{-1}) = s_A(a)^{-1}.
\end{align*}
This contradicts our choice of $a$. Hence
\[
x \in \Big(e(a,\nu(x))+s_A(a^{-1}),  e(a,\nu(x)) +a^{-1} - s_A(a)^{-1}\Big).
\]
Thus $\nu(x) = e(a,\nu(x))+s_A(a^{-1})$ by Lemma \ref{lem:complintervals}. Since $e(s_A(a),e(a,\nu(x))=e(a,\nu(x))$ by the definition of $e$ and
$0< \nu(x)-e(a,\nu(x)) \leq s_A(a)^{-1}$, we get from Axiom T\ref{axiom:ca2} that $e(s_A(a),\nu(x)) = e(s_A(a),e(a,\nu(x))=e(a,\nu(x))$. So $\nu(x) = e(s_A(a),\nu(x))+s_A(a^{-1})$. Now consider that $x \geq e(a,\nu(x)) + a^{-1}$. Then $x \in \Big(e(a,\nu(x)) + a^{-1},e(a,\nu(x)) +2a^{-1}\Big)$. By Corollary \ref{cor:complintervals} $\nu(x) = e(a,\nu(x)) + a^{-1}$.
\end{proof}

\noindent We immediately get the following Corollary. This Corollary implies that the interpretation of $\nu$ in $(\Cal R,K)$ is Borel.

\begin{cor}\label{cor:nuboreldef} Let $x \in [0,1]$ and $c \in C$. Then $\nu(x)=c$ iff either
\begin{itemize}
\item $x\in C$ and $c=x$, or
\item there exists $a \in A$ and $d \in C$ with $d=e(a,d)$ and $c=d+a^{-1}$.
\end{itemize}
\end{cor}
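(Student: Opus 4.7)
The corollary is essentially a restatement of Lemma \ref{lem:nuexplicit} packaged as a Borel characterization of the graph of $\nu$. The plan is to split the forward direction into the two natural cases ($x \in C$ versus $x \in [0,1] \setminus C$), apply Lemma \ref{lem:nuexplicit} in the second case, and then verify that the element $d = e(a,\nu(x))$ produced there satisfies the idempotency condition $d = e(a,d)$.

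For the forward direction, if $x \in C$ then $\nu(x) = x$ gives $c = x$ immediately. If $x \in (0,1] \setminus C$, Lemma \ref{lem:nuexplicit} produces $a \in A$ with $\nu(x) = e(a,\nu(x)) + a^{-1}$, and I set $d := e(a,\nu(x))$, which lies in $C$ by Definition \ref{def:defofe}. The identity $d = e(a,d)$ follows from $e(a, e(a, c')) = e(a,c')$ for every $c' \in C$, which in turn reduces to a set-theoretic computation using Axiom T\ref{axiom:buechi}:
\[
S\bigl(e(a, e(a,c'))\bigr) = S\bigl(e(a,c')\bigr)_{\leq a} = \bigl(S(c')_{\leq a}\bigr)_{\leq a} = S(c')_{\leq a} = S\bigl(e(a,c')\bigr),
\]
and the injectivity of $S$ yields the equality.

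For the reverse direction, the first case is again immediate. For the second case, the nontrivial point is verifying that $c = d + a^{-1}$ actually belongs to $C$, since then the rest follows from the definition of $\nu$ together with the description of the complementary interval to the right of $c$ from Corollary \ref{cor:complintervals}. To see $c \in C$, Axiom T\ref{axiom:buechi} produces a (unique) element $c_0 \in C$ with $S(c_0) = S(d) \cup A_{>a}$; then $e(a, c_0)$ has $S$-value $S(c_0)_{\leq a} = S(d)$, hence $e(a,c_0) = d$ by injectivity of $S$. Since $E(b, c_0)$ holds for every $b > a$, Lemma \ref{lem:allones} gives $c_0 = e(a,c_0) + a^{-1} = d + a^{-1} = c$, so $c \in C$ as required.

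The main obstacle is purely bookkeeping: keeping the set-level dictionary $S(e(a,c)) = S(c)_{\leq a}$ in sync with the arithmetic identity $c = d + a^{-1}$ supplied by Lemma \ref{lem:allones}. Once those two levels are matched up, everything is a direct repackaging of Lemma \ref{lem:nuexplicit}, and no further technical input from the theory $\TT$ beyond Axiom T\ref{axiom:buechi} is required.
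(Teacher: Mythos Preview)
Your forward direction is correct and matches the paper's intent: the case split on $x \in C$ versus $x \notin C$, the appeal to Lemma~\ref{lem:nuexplicit}, and the verification that $d := e(a,\nu(x))$ satisfies $e(a,d)=d$ via injectivity of $S$ are all sound.

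The reverse direction, however, has a genuine gap --- and it is not really your fault, because the statement as printed is defective. The second bullet does not mention $x$ at all: it only asserts the existence of $a \in A$ and $d \in C$ with $d = e(a,d)$ and $c = d + a^{-1}$. Taken literally, this would force $\nu(x) = c$ for \emph{every} $x \in [0,1]$ whenever $c$ happens to be a left endpoint of a complementary interval, which is absurd. Your argument correctly establishes $c \in C$, but the sentence ``the rest follows from the definition of $\nu$ together with the description of the complementary interval to the right of $c$'' cannot succeed: to get $\nu(x) = c$ you would still need $c \le x$ and $(c,x] \cap C = \emptyset$, neither of which is available from the hypotheses as stated.

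What is evidently intended (and what suffices for the application in the proof of Theorem~B, namely that the graph of $\nu$ is Borel) is an additional clause in the second bullet constraining $x$, for instance $x \notin C$ together with a condition placing $x$ in the complementary interval whose left endpoint is $c$. The paper itself offers no proof beyond ``immediate,'' so it does not address this either. You should flag the missing hypothesis explicitly rather than paper over it; once the second bullet is repaired to include the obvious condition on $x$, your argument (supplemented by Corollary~\ref{cor:complintervals} to identify the complementary interval) goes through.
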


\begin{cor}\label{cor:nuexplicit} Let $x \in [0,1]\setminus C$ and $d\in A$ such that $d$ is maximal in $A$ with $\neg E(d,\nu(x))$. Then $\nu(x) = e(p_A(d),\nu(x)) + d^{-1}$.
\end{cor}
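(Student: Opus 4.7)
The plan is to derive the identity directly from the two structural lemmas about $e$ already established in this section, namely Lemma \ref{lem:allones} and Lemma \ref{lem:succc}, bypassing the case analysis used in Lemma \ref{lem:nuexplicit}.

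The first step is to unpack the hypothesis: since $d$ is maximal in $A$ with $\neg E(d,\nu(x))$, we have $E(b,\nu(x))$ for every $b \in A$ with $b > d$. Feeding this into Lemma \ref{lem:allones} with $c = \nu(x)$ and $a = d$ yields
\[
\nu(x) \;=\; e(d,\nu(x)) + d^{-1}.
\]
This already expresses $\nu(x)$ as $e(a,\nu(x))+a^{-1}$ for a specific $a \in A$, but we want the point where $e$ is evaluated to be $p_A(d)$, not $d$ itself.

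The second step converts $e(d,\nu(x))$ into $e(p_A(d),\nu(x))$ using Lemma \ref{lem:succc}. Taking $a = p_A(d)$ and $b = d$ there, the set $A \cap (p_A(d),d]$ consists of the single element $d$, and by hypothesis $\neg E(d,\nu(x))$; hence the lemma applies and gives $e(d,\nu(x)) = e(p_A(d),\nu(x))$. Substituting into the identity from the previous paragraph delivers the desired equality $\nu(x) = e(p_A(d),\nu(x)) + d^{-1}$.

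The only point requiring care is the existence of $p_A(d)$, i.e., that $d$ is not the minimum $q_0 = 1$ of $A$. If $d = 1$ were the maximal element with $\neg E(d,\nu(x))$, then the first step would yield $\nu(x) = e(1,\nu(x)) + 1$, and since $e(1,\nu(x)) \in C \subseteq [0,1]$ together with $\nu(x) \in [0,1]$ forces $e(1,\nu(x)) = 0$ and $\nu(x) = 1$; but $1 \in K \subseteq C$, so $\nu(x) = 1$ would give $x \geq 1$ and hence $x = 1 \in C$, contradicting $x \in [0,1]\setminus C$. Thus $p_A(d)$ is well defined, and the argument above goes through. I expect no substantive obstacle here: the corollary is essentially an immediate repackaging of Lemma \ref{lem:allones} via the one-step shift supplied by Lemma \ref{lem:succc}.
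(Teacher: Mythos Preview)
Your argument is correct. The core steps---apply Lemma~\ref{lem:allones} with $a=d$, then shift from $e(d,\nu(x))$ to $e(p_A(d),\nu(x))$ using $\neg E(d,\nu(x))$ via Lemma~\ref{lem:succc}---are exactly what the paper does in its last line (though the paper compresses both steps into a single citation of Lemma~\ref{lem:allones}).

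The one genuine difference is that you bypass Lemma~\ref{lem:nuexplicit} entirely. The paper first invokes that lemma to produce some $a\in A$ with $\nu(x)=e(a,\nu(x))+a^{-1}$, deduces from this (via the converse direction, using Axiom~T\ref{axiom:ca3} and injectivity of $S$) that $E(b,\nu(x))$ holds for all $b>a$, and hence that a maximal $d$ with $\neg E(d,\nu(x))$ \emph{exists}. You instead read the existence of $d$ as part of the hypothesis of the corollary, which is a fair reading of the statement as written, and proceed directly. Your route is shorter and more transparent for the statement as stated; the paper's route has the advantage of simultaneously establishing that such a $d$ always exists for $x\in[0,1]\setminus C$, which is what later applications (e.g.\ Proposition~\ref{cor:aclosure} and Claim~\ref{claim:four}) actually need. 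Your handling of the boundary case $d=q_0$ is a nice addition that the paper omits.
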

\begin{proof} By Lemma \ref{lem:nuexplicit} there is $a\in A$ such that $\nu(x) = e(a,\nu(x)) + a^{-1}$. By Lemma \ref{lem:allones} we get that $E(b,\nu(x))$ for all $b\in A$ with $b>a$. Hence there is a maximal $d \in A$ such that $\neg E(d,\nu(x))$. Then by Lemma \ref{lem:allones}
$\nu(x) = e(p_A(d),\nu(x)) + d^{-1}$. 
\end{proof}

\noindent We now establish that we can express the image of a $\Q$-linear combination of elements of $C$ under $\nu$ as a $\Q$-linear combination of images of the elements under $e$.

\begin{lem}\label{lem:nusum} Let $c=(c_1,\dots, c_n) \in C^n$, $q=(q_1,\dots, q_n) \in \Q^n$ with $0 < q \cdot c < 1$ and let $a$ be the minimal element in $A$ such that $\sum_{i=1}^{n} q_i \delta_{s(a),c_i} \notin \{0,1\}$. If $0 \ll a$,  then
\[
\nu(q \cdot c)= \left\{
                              \begin{array}{ll}
                                s(a)^{-1} + q \cdot e(a,c), & \hbox{if $0<\sum_{i=1}^n q_i \delta_{s(a),c_i}<1$;} \\
                                a^{-1} + q \cdot e(a,c), & \hbox{if $1<\sum_{i=1}^n q_i \delta_{s(a),c_i}$;}\\
                             b^{-1} + q \cdot e(b,c), & \hbox{otherwise,}
                             \end{array}
                            \right.
\]
where $b$ is the largest element in $A$ with $b \leq a$ such that $\sum_{i=1}^{n} q_i \delta_{b,c_i}=1$.
\end{lem}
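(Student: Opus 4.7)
The plan is to decompose $q\cdot c$ via Corollary~\ref{cor:succc} as
\[
q\cdot c \;=\; d + \sigma\bigl(a^{-1} - s_A(a)^{-1}\bigr) + \rho,
\]
where $d:=q\cdot e(a,c)$, $\sigma:=\sum_{i} q_i\delta_{s_A(a),c_i}$, and $\rho:=q\cdot(c-e(s_A(a),c))$, and then to locate $q\cdot c$ relative to the complementary intervals of $C$ supplied by Lemmas~\ref{lem:complintervals} and~\ref{lem:complintervals2} and Corollary~\ref{cor:complintervals}.

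Two preliminary facts drive the argument. First, $d\in C$: the minimality of $a$ together with $0<q\cdot c<1$ forces $\sigma_b:=\sum_{i} q_i\delta_{b,c_i}\in\{0,1\}$ for every $b\in A$ with $b\leq a$; by Axiom~T\ref{axiom:buechi} there is $d^*\in C$ with $S(d^*)=\{b\leq a:\sigma_b=1\}$, and a direct computation in the spirit of Lemma~\ref{lem:eoflincomb} gives $\mu_{(-1,q)}(d^*,e(a,c))=0$, whence $d=d^*\in C$ by Axiom~T\ref{axiom:mu}(i). Second, $\rho\in\mathfrak{m}_a$: by Axiom~T\ref{axiom:ca3}, each component of $c-e(s_A(a),c)$ lies in $C$ with $S$-set contained in $A_{>s_A(a)}$, so $\mu_q(c-e(s_A(a),c))>s_A(a)$ whenever $\rho\neq 0$; Lemma~\ref{lem:valsum} then yields $[|\rho|^{-1}]\geq[s_A(a)]$, and since $0\ll a$, Axiom~T\ref{axiom:fast} gives $[s_A(a)]>[a]$, so $|\rho|$ is much smaller than $a^{-1}$.

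With these estimates, each case reduces to identifying the complementary interval of $C$ containing $q\cdot c$. In Case~1 ($0<\sigma<1$), Lemma~\ref{lem:complintervals} applied to $d$ and $a$ produces the complementary interval $(d+s_A(a)^{-1},\,d+a^{-1}-s_A(a)^{-1})$; the term $\sigma(a^{-1}-s_A(a)^{-1})$ is positive of order $\Theta(a^{-1})$ and dominates both $\rho$ and $s_A(a)^{-1}$, placing $q\cdot c$ strictly inside. In Case~2 ($\sigma>1$), Lemma~\ref{lem:allones} gives $d+a^{-1}\in C$, and Corollary~\ref{cor:complintervals} identifies a complementary interval of $C$ starting at $d+a^{-1}$ whose length dominates $a^{-1}$ by Axiom~T\ref{axiom:fast}; the term $(\sigma-1)a^{-1}$ places $q\cdot c$ inside. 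In Case~3 (the ``otherwise'' case, forcing $\sigma<0$), Lemma~\ref{lem:complintervals2} applied to $d$ yields the complementary interval ending at $d$ with left endpoint $d-p_A(b)^{-1}+2b^{-1}$; since $\sigma a^{-1}$ is negative of order $\Theta(a^{-1})$ and $p_A(b)^{-1}\gg a^{-1}$ (by Axiom~T\ref{axiom:fast} and $b\leq a$), $q\cdot c$ lies inside, and a final invocation of Corollary~\ref{cor:succc} together with $\sigma_b=1$ rewrites this left endpoint in the form asserted in the statement.

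The delicate bookkeeping is concentrated in Case~3, where one converts the explicit left endpoint given by Lemma~\ref{lem:complintervals2} into the claimed expression involving $q\cdot e(b,c)$; the crucial observation is that $\sigma_{b'}=0$ for $b<b'\leq a$ (from the maximality of $b$) collapses $q\cdot e(a,c)$ to $q\cdot e(b,c)$, which combined with $\sigma_b=1$ and the incremental formula of Corollary~\ref{cor:succc} at level $b$ delivers the stated identity.
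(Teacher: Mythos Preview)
Your proposal is correct and follows essentially the same route as the paper: build $d:=q\cdot e(a,c)\in C$ from Axiom~T\ref{axiom:buechi} and Axiom~T\ref{axiom:mu}, decompose $q\cdot c-d$ via Corollary~\ref{cor:succc}/\ref{cor:mu}, bound the tail, and in each of the three cases locate $q\cdot c$ in the appropriate complementary interval of $C$. The only cosmetic differences are that you phrase the tail estimate through Lemma~\ref{lem:valsum} and the ideal $\mathfrak{m}_a$ (the paper uses Corollary~\ref{cor:qlevel} directly), and in Case~3 you invoke Lemma~\ref{lem:complintervals2} where the paper unwinds that lemma and appeals to Lemma~\ref{lem:complintervals}; both yield the same interval $(e(p_A(b),d)+b^{-1},\,d)$.

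One small caution about your last sentence. Carrying out your ``final invocation'' literally gives
\[
d-p_A(b)^{-1}+2b^{-1}\;=\;q\cdot e(p_A(b),c)+b^{-1},
\]
using $d=q\cdot e(b,c)$ together with Corollary~\ref{cor:succc} and $\sigma_b=1$. This is the same endpoint the paper's chain of inequalities produces, namely $e(p_A(b),d)+b^{-1}$. It differs from the displayed expression $b^{-1}+q\cdot e(b,c)$ by exactly $p_A(b)^{-1}-b^{-1}$, so the printed ``otherwise'' clause should read $b^{-1}+q\cdot e(p_A(b),c)$ (equivalently $e(p_A(b),d)+b^{-1}$); your derivation is fine, it is the statement that carries a typo. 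This does not affect the only downstream use of the lemma (in Proposition~\ref{cor:aclosure}), where all that matters is that $\nu(q\cdot c)$ lies in the structure generated by $c$, $b$, and $p_A(b)$.
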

\begin{proof} Let $d \in C$ be such that for all $a' \in A$, the following statement is true: if $a'\leq a$, then $E(a',d) \hbox{ iff } \sum_{i=1}^{n} q_i \delta_{a,c_i} = 1$, and if $a'>a$, then $\neg E(a',d)$. By Axiom T\ref{axiom:buechi} such a $d$ exists. Hence $\delta_{a',d} = \sum_{i=1}^{n} q_i \delta_{a',c_i}$ for all $a' \leq a$, and thus $\mu_{(q,-1)}(c,d)=s_A(a)$ by our choice of $a$. Since $e(a,d)=d$, we get that
$d=q \cdot e(a,c)$ by Axiom T\ref{axiom:mu}. By Corollary \ref{cor:mu}
\[
q \cdot c -d =  \sum_{i=1}^n q_i \delta_{s_A(a),c_i} (a^{-1} -s_A(a)^{-1}) + q \cdot (c- e(s_A(a),c)).
\]
Set $u:=  \sum_{i=1}^n q_i \delta_{s_A(a),c_i}$. By Axiom T\ref{axiom:ca}, $c_i -e(s_A(a),c_i) < s_A(a)^{-1}$ for each $i=1,\dots,n$.
First consider the case that $0<u<1$. By Corollary \ref{cor:qlevel} there are $u_1,u_2 \in \Q$ with $0<u_1<u_2<1$ such that $u_1 a^{-1} < q\cdot c -d < u_2 a^{-1}$. Since $s_A(a)>ra$ for every $r\in \Q_{>0}$, we get
\begin{align*}
d &< d + s_A(a)^{-1} < d + u_1 a^{-1} < q\cdot c < d + u_2 a^{-1} < d + a^{-1} - s_A(a)^{-1}.
\end{align*}
By Lemma \ref{lem:complintervals} we get $\nu(q \cdot c) = d + s_A(a)^{-1}$. Now suppose that $u > 1$. By Corollary \ref{cor:qlevel}
there are $u_1,u_2 \in \Q$ with $1<u_1<u_2$ such that $u_1 a^{-1} <q \cdot c -d < u_2 a^{-1}$. Suppose $E(a',d)$ holds for all $a' \leq a$. Then $d = 1 - a^{-1}$ and $q\cdot c >1$. Hence we can assume there is
$a'\in A$ maximal such that $a'\leq  a$ and $\neg E(a',d)$. Then $e(a,d)+a^{-1} = e(a',d) + a'^{-1}$ by Axiom T\ref{axiom:succc}. Since $\neg E(a',d)$, $e(p_A(a'),d)=e(a',d)$. Thus
\begin{align*}
e(p_A(a'),d) + a'^{-1} &= d + a^{-1} < d + u_1 a^{-1} < q \cdot c\\
&< d + u_2 a^{-1} < e(p_A(a'),d) + p_A(a')^{-1} - a'^{-1}.
\end{align*}
Hence by Lemma \ref{lem:complintervals} $\nu(q\cdot c) = d + a^{-1}$. Suppose that $u < 0$. There are $u_1,u_2 \in \Q$ with $u_1<u_2<0$ such that $u_1 a^{-1} <q\cdot c -d < u_2 a^{-1}$.
Let $b\in A$ be the largest element in $A$ with $b \leq a$ such that $E(b,d)$. Because $q\cdot c>0$, such a $b$ exists. By Lemma \ref{lem:succc} and  Corollary \ref{cor:succc} $d = e(b,d) = e(p_A(b),d) + p_A(b)^{-1} - b^{-1}$.
By Axiom T\ref{axiom:fast} $b^{-1} < p_A(b)^{-1} - b^{-1} +u_1 a^{-1}$.
Therefore
\begin{align*}
e(p_A(b),d) + b^{-1}& <  e(p_A(b),d) + p_A(b)^{-1} - b^{-1} +u_1 a^{-1} \\
&< d + u_1a^{-1}< q \cdot c < d+ u_2 a^{-1}\\
&< d =e(p_A(b),d) + p_A(b)^{-1} - b^{-1}.
\end{align*}
Hence $\nu(q \cdot c) = d + b^{-1}$ by Lemma \ref{lem:complintervals}.
\end{proof}

\subsection*{Closed elements in images under $\Cal L$-definable functions} Before finishing this section, we need to mention two further classes of $\Cal L_C$-definable functions. While $\nu$ maps an element $z$ of $(0,1)$ to the left endpoint of the complementary interval of $C$ in whose closure $z$ lies, we also have to understand functions that map $z$ to left and right endpoints of complementary intervals of the closure of the image of $C^n$ under a $\Cal L$-definable function. Here Axiom T\ref{axiom:tau} is the key.

\begin{defn} \label{def:nutau}
Let $f : X\subseteq M^{l+n} \to [0,1]$ be $\Cal L$-$\emptyset$-definable and continuous. Let $g_1,\dots,g_k : M^{l+n}\to M$ be as in Axiom T\ref{axiom:tau}.
Define $\nu_f : M^{l+1} \to C^n$ to map $(x,y) \in M^{l+1}$ to the lexicographically minimal $c\in C^n\cap \cl(X_x)$ such that there is $i\in \{1,\dots,k\}$ with
\[
g_i(x,c) = \sup_{d \in C^n \cap X_x, f(x,d)\leq y} f(x,d),
\]
when $C^n \cap X_x\neq \emptyset$, and to $0$ otherwise. Let $\tau_f : M^{l+1} \to C^n$ map $(x,y) \in M^{l+1}$ to the lexicographically minimal $c\in C^n\cap \cl(X_x)$ such that there is $i\in \{1,\dots,k\}$ with
\[
g_i(x,c) = \inf_{d \in C^n \cap X_x, f(x,d)\geq y} f(x,d),
\]
when $C^n \cap X_x\neq \emptyset$, and to $0$ otherwise.
\end{defn}

\noindent The existence of the lexicographically minimal elements of $C^n$ in Definition \ref{def:nutau} follows immediately from Axiom T\ref{axiom:tau} when $x\in \pi(X)$ and $y$ is bounded above and below by an element of $f(x,C^n \cap X_x)$. One can deduce the following Lemma easily from Definition \ref{def:nutau}.

\begin{lem} Let $f : X\subseteq M^{l+n} \to [0,1]$ be $\Cal L$-$\emptyset$-definable and continuous, and let $x \in \pi(X)$ and $y \in M$. Let $g_1,\dots,g_k : M^{l+n}\to M$ be as in Axiom T\ref{axiom:tau}. If there are $c,d \in C^n \cap X_x$ such that $f(x,c) < y < f(x,d)$, then there are $i,j \in \{1,\dots,k\}$
\[
\Big(g_i(x,\nu_f(x,y)),g_j(x,\tau_f(x,y))\Big) \cap f(C^n \cap X_x) = \emptyset.
\]
\end{lem}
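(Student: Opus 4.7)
The plan is to unwind the definitions of $\nu_f$ and $\tau_f$ directly and then argue by cases. Set
\[
s := \sup \{ f(x,d) : d \in C^n \cap X_x, \ f(x,d) \leq y \}, \qquad t := \inf \{ f(x,d) : d \in C^n \cap X_x, \ f(x,d) \geq y \}.
\]
The hypothesis that some $c, d \in C^n \cap X_x$ satisfy $f(x,c) < y < f(x,d)$ guarantees that both the set over which $s$ is taken and the set over which $t$ is taken are nonempty and bounded, and also that $s \leq y \leq t$. Since $s$ and $t$ are limit points of $f(x, C^n \cap X_x)$, Corollary \ref{cor:limitpoints} (together with Axiom T\ref{axiom:tau}, which guarantees the lex-minima exist) ensures there are $i, j \in \{1,\dots,k\}$ with $g_i(x, \nu_f(x,y)) = s$ and $g_j(x, \tau_f(x,y)) = t$.

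It then remains to verify that $(s,t) \cap f(x, C^n \cap X_x) = \emptyset$. I would argue by contradiction: suppose $z \in (s,t)$ and $z = f(x,e)$ for some $e \in C^n \cap X_x$. Since $M$ is an ordered field, either $z \leq y$ or $z \geq y$. In the first case, $e$ lies in the set over which the supremum defining $s$ is taken, so $z \leq s$, contradicting $s < z$. In the second case, $e$ lies in the set over which the infimum defining $t$ is taken, so $z \geq t$, contradicting $z < t$. Either way we obtain a contradiction, which yields the conclusion.

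There is no substantial obstacle here; the statement is essentially a reformulation of the definitions of $\nu_f$ and $\tau_f$ together with the elementary observation that a sup/inf separates the pre-sup values from the post-inf values. The only small subtlety worth mentioning is making sure that the lex-minima in Definition \ref{def:nutau} actually exist for the particular values $s$ and $t$; this is precisely the content of Axiom T\ref{axiom:tau} applied to the closures of $\{ f(x,d) \leq y \} \cap C^n \cap X_x$ and $\{ f(x,d) \geq y \} \cap C^n \cap X_x$, whose closures are nonempty by the hypothesis on $c$ and $d$.
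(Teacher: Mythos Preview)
Your argument is correct and matches the paper's intent: the paper itself gives no proof, only remarking that the lemma follows easily from Definition~\ref{def:nutau}, and your unwinding of the sup/inf together with the elementary case split is exactly what is meant. One small point: the citation of Corollary~\ref{cor:limitpoints} is misplaced, since that corollary is stated for $\Cal R$ and $K$, not for an arbitrary model $\Cal M\models\TT$; but you do not actually need it, because once Axiom~T\ref{axiom:tau} guarantees the lex-minima exist, the equalities $g_i(x,\nu_f(x,y))=s$ and $g_j(x,\tau_f(x,y))=t$ are immediate from the definition of $\nu_f$ and $\tau_f$.
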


\noindent Consequently for every $y$ in the convex closure of $f(x,C^n \cap X_x)$ but not in $f(x,C^n \cap X_x)$, there are $i,j$ such that $g_i(x,\nu_f(x,y))$ and $g_j(x,\tau_f(x,y))$ are the endpoints of the complementary interval of $f(x,C^n \cap X_x)$ whose closure contains $y$. Thus $\nu_f(x,-)$ and $\tau_f(x,-)$ are constant on complementary intervals of $f(x,C^n \cap X_x)$.


\begin{cor} Let $f : X\subseteq M^{l+n} \to [0,1]$ be $\Cal L$-$\emptyset$-definable and continuous, $x\in \pi(X)$ and  let $g_1,\dots,g_k : M^{l+n}\to M$ be as in Axiom T\ref{axiom:tau}. Then
\begin{itemize}
\item [(i)] $\nu_f(x,-)$ and $\tau_f(x,-)$ are continuous on $M \setminus \cl(f(x,X_x\cap K^n))$,
\item [(ii)] if $y \in \cl(f(x,X_x\cap K^n))$, then $y = g_i(x,\nu_f(x,y))$ or $y = g_i(x,\tau_f(x,y))$ for some $i$.
\end{itemize}
\end{cor}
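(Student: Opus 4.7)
The plan is to derive both parts from the immediately preceding Lemma together with the defining sup/inf in Definition~\ref{def:nutau} and the existence of lexicographic minima guaranteed by Axiom~T\ref{axiom:tau}. A small preliminary remark: since $K\subseteq C$ by Axiom~T\ref{axiom:c}, the set $\cl(f(x,X_x\cap C^n))$ contains $\cl(f(x,X_x\cap K^n))$, and what the argument really uses is continuity on the complement of the former (which is a subset of the complement of the latter, so the conclusion of (i) follows a fortiori). Throughout I fix $x \in \pi(X)$.

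For part (i), fix $y \notin \cl(f(x, X_x \cap C^n))$. Either $y$ lies outside the convex closure of $f(x, X_x\cap C^n)$, in which case for every $y'$ in a small enough neighborhood of $y$ one of the two sets $\{d \in C^n \cap X_x : f(x,d) \leq y'\}$, $\{d \in C^n \cap X_x : f(x,d) \geq y'\}$ remains empty while the other is unchanged; so by Definition~\ref{def:nutau} both $\nu_f(x,-)$ and $\tau_f(x,-)$ are locally constant at $y$. Otherwise $y$ sits in some complementary interval $(y_1,y_2)$ of $f(x, X_x\cap C^n)$. Choose witnesses $c,d \in C^n \cap X_x$ with $f(x,c) < y < f(x,d)$ and apply the preceding Lemma: the interval $(g_i(x,\nu_f(x,y)),g_j(x,\tau_f(x,y)))$ is disjoint from $f(x,C^n\cap X_x)$, and it must in fact be $(y_1,y_2)$. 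For any $y' \in (y_1,y_2)$ the defining sup and inf in Definition~\ref{def:nutau} depend only on which complementary interval $y'$ belongs to, so the lexicographically minimal selections $\nu_f(x,y')$ and $\tau_f(x,y')$ coincide with $\nu_f(x,y)$ and $\tau_f(x,y)$ respectively. Thus both maps are constant on $(y_1,y_2)$ and so continuous at $y$.

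For part (ii), suppose $y \in \cl(f(x, X_x\cap C^n))$. Then both
\[
\sup\{f(x,d) : d \in C^n\cap X_x,\ f(x,d) \leq y\} \quad \text{and} \quad \inf\{f(x,d) : d \in C^n\cap X_x,\ f(x,d) \geq y\}
\]
equal $y$. Applying Axiom~T\ref{axiom:tau} with $z := y$ supplies a lexicographically minimal $c \in C^n\cap \cl(X_x)$ together with an index $i$ such that $g_i(x,c) = y$; by construction this $c$ is exactly $\nu_f(x,y)$ (respectively $\tau_f(x,y)$), yielding the desired equation.

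The main delicate point will be part (i) in the boundary case where $y$ sits at the edge of the convex closure: I need to make sure the sup and inf branches in Definition~\ref{def:nutau} agree on local constancy even when one side falls back to the default value $0$. This reduces to checking that the neighborhoods of $y$ in question avoid $\cl(f(x,X_x\cap C^n))$ entirely, which is exactly the hypothesis, so the only required book-keeping is that the correct one-sided limit of $\nu_f$ or $\tau_f$ exists and equals the fallback value — a straightforward consequence of Definition~\ref{def:nutau}.
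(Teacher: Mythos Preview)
The paper states this Corollary without proof, and your argument is the intended one: local constancy on complementary intervals (already noted just before the Corollary) plus a direct appeal to Axiom~T\ref{axiom:tau} and the definition of $\nu_f,\tau_f$.

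Two points deserve correction. First, your ``a fortiori'' remark has the inclusion backwards. You correctly observe that $M\setminus\cl(f(x,X_x\cap C^n))\subseteq M\setminus\cl(f(x,X_x\cap K^n))$, but proving continuity on the \emph{smaller} set does not yield continuity on the \emph{larger} one. In fact the appearance of $K^n$ in the statement is almost certainly a typo for $C^n$: the entire surrounding discussion (Definition~\ref{def:nutau}, the preceding Lemma, and the comment after it) works with $C^n$, and in a nonstandard model with $K\subsetneq C$ part~(i) as literally written can fail at points of $\cl(f(x,X_x\cap C^n))\setminus\cl(f(x,X_x\cap K^n))$. Your argument proves exactly the $C^n$ version, which is what is needed.

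Second, in part~(ii) you assert that both the sup and the inf equal $y$. This need not hold: if $y$ is, say, the infimum of $\cl(f(x,X_x\cap C^n))$ and is not attained, the set $\{d:f(x,d)\le y\}$ may be empty. What is true is that at least one of the two equals $y$ (since $y$ is a limit from at least one side, or is attained), and this is all you need for the disjunctive conclusion of~(ii). Adjust the sentence accordingly.
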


\section{B\"uchi expressibility}
Recall that the language of $(\N,\Cal P(\N),\in,s_{\N})$ was denote by $\Cal L_B$. Let $\Cal M\models \TT$.
We will write $\Cal B(M)$ for the $\Cal L_B$-structure $(A,C,E,s_{A})$. In this section, we will study this structure in detail. In particular, we will show that certain algebraic conditions on elements of $C$ are equivalent to statements expressible in $\Cal B(M)$.

\begin{lem}\label{lem:definv} Let $a \in A$ and $c \in C$. Then
\begin{itemize}
\item[(i)] $a^{-1}$ is $\Cal L_B$-definable from $a$,
\item[(ii)] $c-e(a,c)$ is $\Cal L_B$-definable from $a$ and $c$.
\end{itemize}
\end{lem}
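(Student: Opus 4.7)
The plan is to observe that both quantities have already been characterized intrinsically in terms of their $S$-sets, and that these characterizations are $\Cal L_B$-formulas. Recall from the excerpt that the usual order on $A$ is $\Cal L_B$-definable (as it is definable in $\Cal B$) and that the function $S$ is injective by Axiom T\ref{axiom:buechi}.

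For (i), I will invoke Corollary \ref{cor:ainv}, which tells us that $a^{-1}$ is the unique element of $C$ whose $S$-set equals $A_{>a}$. Hence $a^{-1}$ is the unique $x$ satisfying the $\Cal L_B$-formula
\[
\phi_1(x;a) \;:=\; C(x) \;\wedge\; \forall b\bigl(A(b) \rightarrow \bigl(E(b,x) \leftrightarrow b>a\bigr)\bigr),
\]
which uses only $\Cal L_B$-primitives together with the parameter $a$. Uniqueness is guaranteed by injectivity of $S$.

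For (ii), Axiom T\ref{axiom:ca3} states that $c-e(a,c)\in C$ and $S(c-e(a,c))=S(c)_{>a}$. So $c-e(a,c)$ is the unique element $x$ of $C$ satisfying
\[
\phi_2(x;a,c) \;:=\; C(x) \;\wedge\; \forall b\bigl(A(b) \rightarrow \bigl(E(b,x) \leftrightarrow (b>a \wedge E(b,c))\bigr)\bigr),
\]
again an $\Cal L_B$-formula with parameters $a,c$, and again unique by injectivity of $S$.

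There is no real obstacle here: the content of the lemma is packaged already in Corollary \ref{cor:ainv} and Axiom T\ref{axiom:ca3}, and the only thing to check is that the defining predicates (membership in $C$ and $A$, the order on $A$, the successor $s_A$, and $E$) are all part of $\Cal L_B$ — which they are by definition of $\Cal B(M)=(A,C,E,s_A)$, since $<$ on $A$ is $\Cal L_B$-definable as noted earlier in the excerpt.
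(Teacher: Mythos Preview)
Your proof is correct and follows essentially the same approach as the paper: both invoke Corollary \ref{cor:ainv} for (i) and the characterization $S(c-e(a,c))=S(c)_{>a}$ together with injectivity of $S$ for (ii), observing that these conditions are expressible by $\Cal L_B$-formulas. Your citation of Axiom T\ref{axiom:ca3} for (ii) is in fact the right one; the paper's reference to T\ref{axiom:ca2} there appears to be a typo.
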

\begin{proof} For (i), by Corollary \ref{cor:ainv} $a^{-1}$ is the unique element $d$ in $C$ such that $E(b,d)$ holds iff $b> a$ for all $b$. Since this property is $\Cal L_B$-definable over $a$, $a^{-1}$ is $\Cal L_B$-definable from $a$. For (ii), note that by Axiom T\ref{axiom:ca2} $c-e(a,c)$ is the unique element $d \in C$ with $S(d)=S(c)_{>a}$. Because this property is $\Cal L_B$-definable over $a$ and $c$, $c-e(a,c)$ is $\Cal L_B$-definable from $a$ and $c$.
\end{proof}

\begin{prop}\label{prop:buechiexp} Let $q=(q_1,\dots, q_n)\in \Q^n$. There are $\Cal L_B$-formulas $\chi_1(x,y)$, $\chi_2(x,y)$, $\varphi(x,y)$, $\psi(x,y)$, $\theta(x)$, $\omega(x)$ such that for every $a \in A$ and every $c=(c_1,\dots,c_n)\in C^n$
\begin{itemize}
\item[(i)] $\sum_{i=1}^{n} q_i \delta_{a,c_i} = 0$ iff $\Cal B(M) \models \chi_1(c,a)$.
\item[(ii)] $\sum_{i=1}^{n} q_i \delta_{a,c_i} > 0$ iff $\Cal B(M) \models \chi_2(c,a)$.
\item[(iii)] $\mu_q(c) \geq a$ iff $\Cal B(M) \models \varphi(c,a)$,
\item[(iv)] $\mu_q(c) = a$ iff $\Cal B(M) \models \psi(c,a)$,
\item[(v)] $q \cdot c = 0$ iff $\Cal B(M) \models \theta(c)$,
\item[(vi)] $q \cdot c \in C$ iff $\Cal B(M) \models \omega(c)$,
\end{itemize}
\end{prop}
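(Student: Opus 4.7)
The key observation that drives everything is that $\delta_{a,c_i} \in \{0,1\}$, so the sum $\sum_{i=1}^n q_i \delta_{a,c_i}$ is determined by the set $S := \{i : E(a,c_i)\}$ and takes the value $\sum_{i \in S} q_i$. Hence for each fixed rational $v$, the condition $\sum_{i=1}^n q_i \delta_{a,c_i} = v$ is equivalent to the finite disjunction
\[
\bigvee_{S \subseteq \{1,\dots,n\},\ \sum_{i\in S}q_i = v} \Big(\bigwedge_{i \in S} E(a,c_i) \wedge \bigwedge_{i \notin S} \neg E(a,c_i)\Big),
\]
which is patently an $\mathcal{L}_B$-formula in the variables $c = (c_1,\dots,c_n)$ and $a$. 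Taking $v = 0$ yields $\chi_1$, taking the disjunction over $v > 0$ yields $\chi_2$, and the analogous formula for $v = 1$ will be needed in part (vi); call it $\chi_{=1}$.

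For the remaining items I would use in addition the standard fact that the order on $A$ is $\mathcal{L}_B$-definable from $s_A$: by elementary equivalence with $(\mathbb{N},\mathcal{P}(\mathbb{N}),\in,s_\mathbb{N})$ (Axiom T\ref{axiom:buechi}), $b \le a$ is expressed by ``every $X$ containing $b$ and closed under $s_A$ contains $a$.'' Granting this, I would set
\[
\varphi(c,a) := (\forall b \in A)(b < a \to \chi_1(c,b)) \wedge (\exists b \in A)(b \ge a \wedge \neg \chi_1(c,b)),
\]
\[
\psi(c,a) := (\forall b \in A)(b < a \to \chi_1(c,b)) \wedge \neg \chi_1(c,a),
\]
both of which directly encode the definition of $\mu_q$. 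For part (v), Axiom T\ref{axiom:mu}(i) combined with the unboundedness of $A$ in $M$ gives $q \cdot c = 0 \iff \mu_q(c) = 0 \iff (\forall b \in A)\, \chi_1(c,b)$, so $\theta(c) := (\forall b \in A)\, \chi_1(c,b)$ works.

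The only step that requires genuine content beyond this is part (vi). The plan is to prove $q \cdot c \in C$ if and only if $\sum_{i=1}^n q_i\delta_{b,c_i} \in \{0,1\}$ for every $b \in A$, so that $\omega(c) := (\forall b \in A)(\chi_1(c,b) \vee \chi_{=1}(c,b))$ does the job. The forward direction is immediate: if $d := q\cdot c \in C$, then $(-1,q)\cdot(d,c) = 0$, so by Axiom T\ref{axiom:mu}(i) $\mu_{(-1,q)}(d,c) = 0$, i.e.\ $\delta_{b,d} = \sum q_i \delta_{b,c_i}$ for every $b$, and $\delta_{b,d}\in\{0,1\}$ since $d \in C$. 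The converse is where the main obstacle lies: from the combinatorial condition one must manufacture an actual element of $C$. Here I would exploit that the schema
\[
(\forall \bar y)(\exists z)(\forall x)(E(x,z) \leftrightarrow \phi(x,\bar y))
\]
for each $\mathcal{L}_B$-formula $\phi$ holds in $(\mathbb{N},\mathcal{P}(\mathbb{N}),\in,s_\mathbb{N})$ (the power set literally contains every first-order-definable subset), and therefore, being a first-order axiom schema, transfers to $\mathcal{B}(M)$ by Axiom T\ref{axiom:buechi}. Applying comprehension with $\phi(x,c) := \chi_{=1}(c,x)$ produces $d \in C$ with $\delta_{b,d} = \sum q_i\delta_{b,c_i}$ for every $b \in A$; then $\mu_{(-1,q)}(d,c) = 0$, so by Axiom T\ref{axiom:mu}(i) we obtain $q\cdot c = d \in C$. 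The only subtlety to watch is that the comprehension instance one needs must genuinely be a first-order $\mathcal{L}_B$-formula, which $\chi_{=1}$ is by the construction of the first paragraph.
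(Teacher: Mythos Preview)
Your argument is correct. Parts (i)--(v) are essentially the paper's proof verbatim; the paper dispatches (i) and (ii) with a single sentence, gives the same $\varphi$, $\psi$, $\theta$ as you do (minus your extra conjunct in $\varphi$ guarding against $\mu_q(c)=0$, which is arguably an improvement), and invokes Axiom T\ref{axiom:mu}(i) for (v) exactly as you do.

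The only genuine difference is (vi). The paper does something slicker: it observes that by (v) applied to the tuple $(-1,q)\in\Q^{n+1}$ there is an $\Cal L_B$-formula $\theta'(x,y)$ such that $\Cal B(M)\models\theta'(c,d)$ iff $d=q\cdot c$, and then simply sets $\omega(x):=\exists y\in C\ \theta'(x,y)$. Your route---characterizing $q\cdot c\in C$ by the first-sort condition $\sum_i q_i\delta_{b,c_i}\in\{0,1\}$ for all $b$, and then invoking the comprehension schema inherited from $(\N,\Cal P(\N),\in,s_\N)$ via Axiom T\ref{axiom:buechi} to manufacture the witness $d$---is correct and in fact proves the same existential, but it unpacks what the paper leaves implicit. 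The paper's version is one line because $\Cal L_B$ already allows quantification over the second sort, so there is no need to eliminate the $\exists y\in C$; your version has the minor advantage of producing a formula with no second-sort quantifier, at the cost of the extra comprehension step.
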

\begin{proof} It is easy to see that there are $\Cal L_B$-formulas $\chi_1(x,y),\chi_2(x,y)$ that satisfy (i) and (ii). For (iii), let $\varphi(x,y)$ be the formula $\forall z \in A \ (z < y) \rightarrow \chi_1(x,z)$. It follows immediately from the definition of $\mu$ that (iii) holds with this choice of $\varphi$. For (iv), let $\psi(x,y)$ be $\neg \chi_1(x,y) \wedge \varphi(x,y)$. Again it follows immediately from the definition of $\mu$ that (iv) holds for this $\psi$. For (v), let $\theta(x)$ be the formula $\forall y \in A \ \chi_1(x,y)$. We now show that (v) holds. First, suppose that $\Cal B(M) \models \theta(c)$. By the definitions of $\mu$ and $\chi_1$ we have that $\mu_q(c) =0$. By Axiom T\ref{axiom:mu} $q \cdot c = 0$. Now suppose that $q \cdot c = 0$. Then by Axiom T\ref{axiom:mu} $\mu_q(c)=0$.
Thus $\Cal B(M) \models \theta(c)$. For (vi), note that by (v) there is a $\Cal L_B$-formula $\theta'(x,y)$ such that $\Cal B(M) \models \theta'(c,d)$ iff $d=q \cdot c$. Now set $\omega(x)$ to be the $\Cal L_B$-formula $\exists y \in C \ \theta'(x,y)$. Statement (vi) follows.
\end{proof}

\noindent The $\Cal L_B$-formulas in Proposition \ref{prop:buechiexp} dependent on the given tuple $q$.


\section{Towards quantifier elimination}

In this section the first steps toward a quantifier elimination statement for $\TT$ are made. We will not show that the theory $\TT$ has quantifier elimination in the language $\Cal L_C$. Rather we extend this language and theory by definitions to a language $\Cal L_C^+$ and a theory $\TT^+$ and show quantifier elimination for this expansion. \newline

\noindent For this section, we fix a model $\Cal M=(M,C,A,E)\models \TT$. In the following we will consider substructures of $M$. Whenever $X$ is a $\Cal L_C$-substructure of $\Cal M$, we denote by $C(X)$ and $A(X)$ the interpretation of the symbols for $C$ and $A$ in $X$. Since $X$ is a substructure, $C(X) = X\cap C$ and $A(X) = X \cap A$. Whenever $A(X)$ is closed under $s_A$, we will write $\Cal B(X)$ for the two-sorted structure $(C(X),A(X),E|_{A(X)\times C(X)},s_{A(X)})$. We write that $a\in A$ is $\Cal L_B$-definable from $X$ whenever $a$ is $\Cal L_B$-definable from $A(X)\cup C(X)$ in $\Cal B(M)$.

\subsection*{Languages and theories} We will now introduce the expansions of $\Cal L_C$ and $\TT$. Since $(A,C,E,s_{A})$ is definable in $\Cal M$, it is easy to see that for every $\Cal L_B$-formula $\varphi(x_1,\dots,x_m,y_1,\dots,y_n)$, where the variables $x_1,\dots,x_m$ are of the first sort and the variables $y_1,\dots, y_n$ are of the second sort, there is a $\Cal L_C$-formula $\varphi^C$ such that
\[
\Cal M \models \varphi^C(a,c) \hbox{ iff } a \in A^m, c\in C^n \hbox{ and } \Cal B(M) \models \varphi(a,c).
\]
Whenever $X,Y \subseteq M$ , we simply write $\tp_{\Cal L_B}(Y | X)$ for $\tp_{\Cal L_B}(\Cal B(Y) | \Cal B(X))$.\newline

\noindent Let $\Cal L_C^B$ be the language $\Cal L_C$ augmented by $n$-ary predicate symbols $P_{\varphi}$ for each $\Cal L_B$-formula $\varphi$ in $n$ free variables. Let $\TT^B$ be the $\Cal L_C^B$-theory extending $\TT$ by axioms
\[
\forall x  \ (P_{\varphi}(x) \leftrightarrow \varphi^C(x))
\]
for every $\Cal L_B$-formula $\varphi$. \newline

\noindent Let $\Cal L_C^*$ be the language $\Cal L_C^B$ augmented by function symbols for $\lambda$, $\nu$, $e$ and for every $\Cal L_B$-$\emptyset$-definable $g: A^m \times C^n \to A$. Let $\Cal L_C^{+}$ be the language $\Cal L_C^*$ augmented by function symbols for $\nu_f$ and $\tau_f$ for every $\Cal L$-$\emptyset$-definable function $f$ that satisfies the assumption of Definition \ref{def:nutau}. For each of the functions symbols $f$ added let $\varphi_f(x,y)$ be the $\Cal L_C$-formula defining the $\Cal L_C$-definable function corresponding to $f$. Let $\TT^+$ be the $\Cal L_C^*$-theory extending $\TT^B$ by axioms
\[
\forall x  \ (\varphi_f(x,y) \leftrightarrow f(x)=y)
\]
for each new function symbol in $\Cal L_C^+$.\newline

\noindent Note that every model of $\TT$ naturally extends to a model of $\TT^+$. From now on we will regard each $\TT$-model also as a $\TT^+$-model whenever needed. If $X\subseteq M$ is a $\Cal L_C^*$-substructure of $M$, we write $X \unlhd^* M$. If $X\subseteq M$ is even a $\Cal L_C^+$-substructure of $M$, we write $X \unlhd^+ M$. Our main quantifier elimination results can now be stated as follows.

\begin{thm}\label{thm:qe} $\TT^+$ has quantifier elimination.
\end{thm}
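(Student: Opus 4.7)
The plan is to apply the standard embedding criterion for quantifier elimination: fix two models $\Cal M_1,\Cal M_2\models\TT^+$ with $\Cal M_2$ sufficiently saturated, a common $\Cal L_C^+$-substructure $X\unlhd^+\Cal M_i$, and an element $a\in M_1\setminus X$; the goal is to extend the identity on $X$ to a $\Cal L_C^+$-embedding sending $a$ to some $b\in M_2$. Since $T$ already has quantifier elimination and is universally axiomatizable in $\Cal L$, the substructure $X$ is automatically $\Cal L$-elementary in each $\Cal M_i$, which licenses the use of Lemma \ref{lem:leveltype}. The remaining data to match concern the Büchi predicates $P_\varphi$, the $\Cal L_B$-definable functions into $A$, and the arithmetic functions $\lambda,\nu,e,\nu_f,\tau_f$ evaluated at terms involving $a$.

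I would first dispose of the cases $a\in A(M_1)$ and $a\in C(M_1)$. For $a\in A(M_1)\setminus Q$, Lemma \ref{lem:llq} gives $0\ll a$ and Lemma \ref{lem:oneaperlevel} places $a$ on a fresh $T$-level, so it suffices to realize the $\Cal L_B$-type of $a$ over $\Cal B(X)$ in $\Cal B(\Cal M_2)$; this is possible by Axiom T\ref{axiom:buechi} and saturation, and Lemma \ref{lem:leveltype} promotes the match to the $\Cal L$-type over $X$. That the additional $\Cal L_C^+$-functional data matches then follows from Lemma \ref{lem:definv} and Proposition \ref{prop:buechiexp}, which re-express the relevant arithmetic data in $\Cal L_B$. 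The case $a\in C(M_1)\setminus A(M_1)$ is analogous: $a$ is determined by $S(a)\subseteq A(M_1)$, and matching the $\Cal L_B$-type yields the required $b\in C(M_2)$.

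The main case is $a\notin A(M_1)\cup C(M_1)$. The strategy is to close $X\cup\{a\}$ under the new function symbols of $\Cal L_C^+$ in stages, realizing each new auxiliary element in $\Cal M_2$ as we go. Concretely, for each $\Cal L$-term $t(a,x)$ with $x\in X$, we must match $\lambda(t)$, $\nu(t)$, $\nu_f(x,t)$, $\tau_f(x,t)$, and the relevant $e$-values; every such auxiliary value lies in $A\cup C$ and is realized in $\Cal M_2$ by the two previous cases. Once all these are realized consistently, what remains is to realize the pure $\Cal L$-type of $a$ over the enlarged substructure $X'$, which by Lemma \ref{lem:leveltype} is determined by the position of $a$ in the order and in the $T$-level hierarchy of $X'$; this is achievable by saturation (treating separately the subcase where $[a]$ falls strictly between existing $T$-levels of $X'$ and the subcase where $[a]$ coincides with one, which is handled by the $T$-convex residue field machinery recalled after Lemma \ref{lem:tconvex}).

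The main obstacle is the termination and consistency of this closure: enlarging $X$ by $\nu(t(a))$ introduces a new element of $C$ whose $S$-set depends on $a$ and on $X$, and this may trigger further new auxiliary $A$-sort elements via $e$, $\lambda$, and the $\Cal L_B$-definable functions built into $\Cal L_C^*$. The design of $\Cal L_C^+$ is precisely tuned to cut off this cascade. By Corollary \ref{cor:mathframma}, every $\Cal L$-definable map into $C$ is already a $\Q$-linear combination over $K$, so iteration does not escape the $\Q$-linear hull; by Proposition \ref{lem:mathframma} the value $\lambda(f(a,x))$ is controlled by the $T$-levels of those $\Q$-linear pieces; and by Lemma \ref{lem:nusum} the function $\nu$ has a closed form on such combinations that is $\Cal L_B$-definable over $\Cal B(X)$. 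Together these should bundle all the cascading auxiliary data into a single finitely generated $\Cal L_B$-type over $\Cal B(X)$, which is realized in $\Cal B(\Cal M_2)$ by Axiom T\ref{axiom:buechi} and saturation; verifying that this bundling is complete and consistent is the heart of the work.
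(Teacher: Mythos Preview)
Your overall strategy (the embedding test for QE) is correct, and you cite the right supporting lemmas, but your decomposition differs from the paper's in a way that leaves a real gap.

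The paper does \emph{not} do a three-way case split on a single new element. Instead it makes one organizational move that you are missing: using Proposition~\ref{prop:extendbycplus} (built on Proposition~\ref{prop:extendbyc} and Corollary~\ref{cor:extendbymanyc}), it first extends $X$ so that \emph{all} of $C(M)$ lies inside the domain. Only then does it pick an arbitrary $u\in M$ and extend by it. Once $C(M)\subseteq X$, this last step is nearly trivial: one realizes $\tp_{\Cal L}(u|X)$ in $\Cal N$, and the only thing to check is that no new element of $C(N)$ appears in $\dcl(\beta(X)\cup\{v\})$, which is exactly what $\nu_f,\tau_f$ and Lemma~\ref{lem:nusandtaus} are designed to rule out.

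Your case 2 is where the discrepancy bites. You write that for $a\in C(M_1)$, ``matching the $\Cal L_B$-type yields the required $b\in C(M_2)$.'' Lemma~\ref{lem:ctype1} does give $\beta\tp_{\Cal L}(a|X)=\tp_{\Cal L}(b|\beta(X))$, but this is far from an $\Cal L_C^+$-embedding of the generated substructure: adding $a$ forces in new $A$-elements (via $\lambda$ on $\Cal L$-terms in $a$), new $C$-elements (via $e(a',a)$ for those new $a'\in A$, and via $\nu_f,\tau_f$), and one must show the resulting $\dcl$ is $\unlhd^+\Cal M$ and that the embedding respects all of it. The paper spends Proposition~\ref{prop:extendbyc} on exactly this: a two-stage construction that first adjoins a countable tower $U=\bigcup U_i$ of $A$-elements together with all $e(U,C(X))$ (using Proposition~\ref{prop:abimpl} to match $\Cal L$-types), verifies via Proposition~\ref{cor:aclosure} that the result is $\unlhd\Cal M$, and only then adjoins $c$ itself (using Proposition~\ref{prop:cbimpl}). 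Closure under $\nu_f,\tau_f$ then requires a further $\omega$-iteration (Proposition~\ref{prop:extendbycplus}). None of this collapses to ``a single finitely generated $\Cal L_B$-type''; the closure is genuinely infinite and its correctness rests on Proposition~\ref{cor:aclosure} and Lemma~\ref{lem:yspecial}, which you do not invoke.

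Your case 3, as you yourself flag, inherits the same cascade problem, and your appeal to Corollary~\ref{cor:mathframma} is misplaced there: that corollary controls $\Cal L$-definable functions on $C^n$ landing in $C$, but your $a$ is not in $C$, so terms $t(a,x)$ are not of that shape. The paper sidesteps this entirely by arranging $C(M)\subseteq X$ first, after which case~3 no longer needs any cascade argument at all.
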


\subsection*{$\Cal L_C^+$-substructures} Towards proving Theorem \ref{thm:qe} substructures of $\Cal M$ in the extended languages $\Cal L_C^*$ and $\Cal L_C^+$ will be studied in this subsection. We are in particular interested in the question how to extend $\Cal L_C^+$-substructures to a $\Cal L_C^*$-substructure that contains a given subset of $C$. Before making this statement precise, we will prove several Lemmas. We remind the reader that $\dcl$ denotes the definable closure operator in the o-minimal reduct $M$.

%

\begin{lem}\label{lem:basis} Let $X \unlhd^+ \Cal M$. Then there is $Z\subseteq X$ such that $Z$ is $\dcl$-independent over $C$ and $X=\dcl(Z\cup C(X))$.
\end{lem}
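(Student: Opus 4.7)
The plan is to extract $Z$ as a basis over $C(X)$ with respect to the $\dcl$-pregeometry from the o-minimal reduct. The two ingredients are: (i) $\mathcal{L}_C^+$-substructures of $\mathcal{M}$ are closed under $\dcl$, and (ii) $\dcl$ is a pregeometry in any o-minimal expansion of a real closed field.

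For (i), recall that at the start of Section 3 the language $\mathcal{L}$ was extended by definitions so that $T$ admits definable Skolem functions; hence an $\mathcal{L}$-substructure of $\mathcal{M}$ is precisely a $\dcl$-closed subset of $M$. Since $\mathcal{L} \subseteq \mathcal{L}_C^+$ and $X \unlhd^+ \mathcal{M}$, in particular $X$ is an $\mathcal{L}$-substructure, so $\dcl(X) = X$.

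Next, using (ii), I would apply Zorn's lemma to choose a subset $Z \subseteq X$ that is maximal with respect to being $\dcl$-independent over $C(X)$. By construction $Z$ satisfies the first conclusion of the lemma. For the equality $X = \dcl(Z \cup C(X))$, the inclusion $\supseteq$ is immediate because $Z \cup C(X) \subseteq X$ and $X$ is $\dcl$-closed. For the reverse inclusion, suppose toward a contradiction that there exists $x \in X \setminus \dcl(Z \cup C(X))$. Then by the exchange property of the pregeometry, $Z \cup \{x\}$ would still be $\dcl$-independent over $C(X)$, contradicting the maximality of $Z$. Therefore every element of $X$ lies in $\dcl(Z \cup C(X))$, giving the required equality.

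There is essentially no obstacle here beyond correctly identifying that the statement is the standard basis lemma for a pregeometry relative to the set $C(X)$; the only point requiring care is the appeal to definable Skolem functions to ensure $\dcl(X) = X$, which is already in the bank from the preliminaries. If the intended reading of \textquotedblleft $\dcl$-independent over $C$\textquotedblright{} is literally over all of $C$ rather than over $C(X)$, then one still uses the same $Z$ and argues that any $\dcl$-dependence of an element of $Z$ on the rest of $Z$ together with $C$ would, because $Z \subseteq X$ and $X$ is $\dcl$-closed, witness a dependence over $C(X)$ already; but I expect the notation in the statement simply abbreviates $C(X)$.
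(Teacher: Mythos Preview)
Your argument has a genuine gap: the statement asks for $Z$ to be $\dcl$-independent over the \emph{full} set $C=C(M)$, not merely over $C(X)$, while simultaneously requiring $X=\dcl(Z\cup C(X))$. These two conditions pull in opposite directions, and the standard pregeometry basis argument only gives you one of them.

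If you take $Z\subseteq X$ maximal $\dcl$-independent over $C(X)$, the exchange argument yields $X=\dcl(Z\cup C(X))$, but there is no reason $Z$ should be independent over all of $C$. Conversely, if (as the paper does) you take $Z\subseteq X$ maximal $\dcl$-independent over $C$, then maximality only gives $x\in\dcl(z,c)$ for some $z\in Z^m$ and $c\in C^n$ with $c$ possibly \emph{outside} $C(X)$; you still need to replace $c$ by a tuple from $C(X)$. Your closing remark that ``$Z\subseteq X$ and $X$ is $\dcl$-closed'' bridges this gap is incorrect: $\dcl$-closure of $X$ says that $\dcl$ of elements of $X$ stays in $X$, not that a witness $c\in C^n$ to $x\in\dcl(z,c)$ can be taken inside $X$.

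The paper closes this gap using the extra $\Cal L_C^+$-structure, which is precisely why the hypothesis is $X\unlhd^+\Cal M$ rather than $X\unlhd^*\Cal M$. Given $x=f(z,c)$ with $f$ continuous on an open cell and $c\in C^n$, one has $x\in\cl(f(z,X_z\cap C^n))$, so by the definition of $\tau_f$ and the functions $g_1,\dots,g_k$ from Axiom~T\ref{axiom:tau} there is $i$ with $g_i(z,\tau_f(z,x))=x$. Since $z,x\in X$ and $X$ is closed under $\tau_f$, the tuple $\tau_f(z,x)$ lies in $C(X)^n$, and hence $x\in\dcl(Z\cup C(X))$. This use of $\tau_f$ is the missing idea in your proposal.
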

\begin{proof} Let $Z\subseteq X$ be maximal such that $Z$ is $\dcl$-independent over $C$. It is left to show that $X=\dcl(Z\cup C(X))$. Let $x \in X$. Without loss generality, we can assume that $x \in [0,1]$.
 By maximality of $Z$, there is $z \in Z^m$ and $c\in C^n$ such that $x\in \dcl(z,c)$. By o-minimality of $T$ we can assume that there is an $\emptyset$-definable open cell $U$, an $\Cal L$-$\emptyset$-definable continuous function $f: U \to [0,1]$ such that $(z,c) \in U$ and $f(z,c)=x$. Let $g_1,\dots, g_k$ be as in Definition \ref{def:nutau}. Since $f(z,c)=x$, there is $i\in \{1,\dots,k\}$ such that $g_i(z,\tau_f(z,x))=x$. Because $x,z \in X$ and $X \unlhd^+ \Cal M$, $\tau_f(z,x)\in C(X)^n$. Therefore $x \in \dcl(Z\cup C(X))$.
\end{proof}

\begin{lem}\label{lem:corv} Let $X\unlhd^+M$, $z\in (X\cap [0,1])^m$, $y \in X$ and $a\in A$ such that $0\ll a$, $\overline{y}^a$ is $\dcl$-dependent over $\overline{z}^a$ and $\overline{C}^{a}$. Then $\overline{y}^a$ is $\dcl$-dependent over $\overline{z}^a$ and $\overline{C(X)}^{a}$.
\end{lem}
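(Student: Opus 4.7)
The plan is to exploit that $X$ is closed under the functions $\nu_f$ and $\tau_f$ of $\Cal L_C^+$. By Lemma~\ref{lem:overlinedep} applied to the hypothesis (together with the finitariness of $\dcl$ on $\overline{V_a}^a$), there exist a continuous $\Cal L$-$\emptyset$-definable $f:U\to M$, with $U\subseteq M^{m+n}$ open, and a tuple $c\in C^n$ with $(z,c)\in U$ and $y-f(z,c)\in\mathfrak{m}_a$. The goal is to replace $c$ by a tuple in $C(X)$ preserving this near-equality, and then reverse Lemma~\ref{lem:overlinedep}.

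After a preparatory normalization---shifting $y$ by $\nu(y)\in C(X)$ to land in $[0,1)$ and composing $f$ with an $\Cal L$-$\emptyset$-definable bijection $M\to[0,1]$---I would reduce to $f:U\to[0,1]$ and $y\in X\cap[0,1]$, while keeping the difference $y-f(z,c)$ inside $\mathfrak{m}_a$. Then split into cases. Suppose first $f(z,c)\leq y$. Then $c$ witnesses that $\{d\in C^n\cap U_z:f(z,d)\leq y\}$ is nonempty, so by Axiom~T\ref{axiom:tau} there exist a lexicographically minimal $c'\in C^n\cap\cl(U_z)$ and an index $i$ with
\[
g_i(z,c')=\sup\{\,f(z,d):d\in C^n\cap U_z,\ f(z,d)\leq y\,\},
\]
where $g_1,\dots,g_k$ are the functions of Lemma~\ref{lem:limitg}. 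This supremum is bracketed between $f(z,c)$ and $y$, so $g_i(z,c')-y\in\mathfrak{m}_a$. Because $\nu_f\in\Cal L_C^+$ and $z,y\in X$, in fact $c'=\nu_f(z,y)\in C(X)^n$, and reversing Lemma~\ref{lem:overlinedep} via $g_i$ gives $\overline{y}^a\in\dcl(\overline{z,c'}^a)\subseteq\dcl(\overline{z}^a\cup\overline{C(X)}^a)$. The case $f(z,c)>y$ is symmetric, using $\tau_f$ in place of $\nu_f$.

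The main technical obstacle is the normalization step: $y$ is only known to lie in $V_a$, not in any $\dcl(\emptyset)$-bounded region, so the bijection onto $[0,1]$ must be built from members of $X$ (such as $\nu(y)$ or $\lambda(|y|)$) rather than from absolute constants, and one needs exponential boundedness of $\Cal R$ to verify that the Lipschitz factors introduced do not escape $V_a$. Everything else is routine bookkeeping.
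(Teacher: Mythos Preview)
Your proposal is correct and follows essentially the same approach as the paper: use Lemma~\ref{lem:overlinedep} to produce a continuous $\Cal L$-$\emptyset$-definable $f$ and $c\in C^n$ with $f(z,c)-y\in\mathfrak{m}_a$, then invoke $\nu_f$ or $\tau_f$ (which land in $C(X)^n$ because $X\unlhd^+ M$) and apply Lemma~\ref{lem:overlinedep} in reverse. The paper does not split into the cases $f(z,c)\leq y$ versus $f(z,c)>y$ explicitly, but simply asserts that one of $g_i(z,\nu_f(z,y))$ or $g_i(z,\tau_f(z,y))$ lies within $\mathfrak{m}_a$ of $y$; your case analysis makes this explicit.

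The one place you diverge is in overestimating the difficulty of the normalization step. The paper disposes of it in a single clause (``We can assume that $y\in[0,1]$''), and this is justified: pick any continuous $\Cal L$-$\emptyset$-definable bijection $\phi:M\to(0,1)$ and replace $y$ by $\phi(y)\in X\cap(0,1)$. Since the residue map $V_a\to\overline{V_a}^a$ is compatible with continuous $\Cal L$-$\emptyset$-definable functions (this is part of how $\overline{V_a}^a$ inherits its $T$-model structure, cf.\ \cite{LouLew}), one has $\overline{\phi(y)}^a=\phi(\overline{y}^a)$, so $\dcl$-dependence over $\overline{z}^a\cup\overline{C}^a$ (resp.\ $\overline{C(X)}^a$) is unaffected. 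No Lipschitz estimates or parameters from $X$ are needed. Similarly, forcing $f$ to have codomain $[0,1]$ is immediate by postcomposing with the $1$-Lipschitz retraction $t\mapsto\max(0,\min(t,1))$.
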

\begin{proof} We can assume that $y \in [0,1]$. By Lemma \ref{lem:overlinedep} there is an open $\Cal L$-$\emptyset$-cell $U$, a continuous $\Cal L$-$\emptyset$-definable function $f : U\to M$ and $c\in C^n$ such that
 $[|f(z,c)-y|^{-1}]\geq [a]$. Let $g_1,\dots,g_k$ be given as in Definition \ref{def:nutau}. Then there is $i\in \{1,\dots,k\}$ such that $[|g_i(z,\nu_f(z,y))-y|^{-1}]\geq [a]$ or $[|g_i(z,\tau_f(z,y))-y|^{-1}]\geq [a]$.
Since $X\unlhd^+ M$, $\nu_g(x,y),\tau_g(x,y) \in C(X)^n$. It can easily be deduced from Lemma \ref{lem:overlinedep} that $\overline{y}^a$ is $\dcl$-dependent over $\overline{z}^a$ and $\overline{C(X)}^{a}$.
\end{proof}

\begin{lem}\label{lem:corv2} Let $X \unlhd^+ \Cal M$, let $c \in C^n$, $a\in A$ such that $0\ll a$ and $\overline{c}^a$ is $\dcl$-dependent over $\overline{X}^a$. Then $\overline{c}^a$ is $\dcl$-dependent over $\overline{C(X)}^a$.
\end{lem}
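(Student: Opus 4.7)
My plan is to use Lemma \ref{lem:basis} to fix a $\dcl$-basis of $X$ over $C(X)$, convert the hypothesis into a residue $\dcl$-dependence that involves finitely many basis elements, and then peel those basis elements off one at a time by combining the exchange property of the $\dcl$-pregeometry on $\overline{V_a}^a$ with Lemma \ref{lem:corv}.

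First I would invoke Lemma \ref{lem:basis} to obtain $Z \subseteq X$, $\dcl$-independent over $C$, with $X = \dcl(Z \cup C(X))$, and then pre-compose with a fixed $\Cal L$-$\emptyset$-definable bijection $M \to (0,1)$ (which preserves definable closure) to arrange $Z \subseteq X \cap (0,1) \subseteq V_a$. Since $\overline{c}^a$ is $\dcl$-dependent over $\overline{X}^a$, some component $\overline{c_i}^a$ satisfies $\overline{c_i}^a \in \dcl(\overline{X}^a \cup \overline{c'}^a)$, where $c'$ denotes $c$ with its $i$-th coordinate removed; the goal is to show the same relation with $\overline{X}^a$ replaced by $\overline{C(X)}^a$.

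Next I would convert this residue dependence into a concrete $\Cal L$-witness. By Lemma \ref{lem:overlinedep} there is a continuous $\Cal L$-$\emptyset$-definable $f$ and $x \in (X \cap V_a)^m$ with $[|f(x,c') - c_i|^{-1}] \geq [a]$. Using $X = \dcl(Z \cup C(X))$, I would write each $x_k = g_k(z^{(k)}, d^{(k)})$ with $z^{(k)} \in Z^*$ and $d^{(k)} \in C(X)^*$, substitute into $f$, and apply Lemma \ref{lem:overlinedep} once more to land in the residue pregeometry with $\overline{c_i}^a \in \dcl(\overline{z}^a \cup \overline{C(X)}^a \cup \overline{c'}^a)$ for a single tuple $z \in Z^s$.

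The core of the argument is then an induction on $s$ that eliminates one $\overline{z_j}^a$ at a time from the base. If $\overline{c_i}^a \in \dcl(\overline{C(X)}^a \cup \overline{c'}^a)$ we are done; otherwise minimality produces a coordinate $z_j$ of $z$ on which $\overline{c_i}^a$ genuinely depends, and the exchange property in the $\dcl$-pregeometry of the residue model $\overline{V_a}^a$ gives $\overline{z_j}^a \in \dcl((\overline{z}^a \setminus \overline{z_j}^a) \cup \overline{C(X)}^a \cup \overline{c'}^a \cup \overline{c_i}^a)$. Since $c_i$ and the coordinates of $c'$ all lie in $C$, the right-hand side is contained in $(\overline{z}^a \setminus \overline{z_j}^a) \cup \overline{C}^a$, so Lemma \ref{lem:corv} (applied with $y := z_j \in X$ and the other $z$-entries, which lie in $(0,1)$ by construction) upgrades this to $\overline{z_j}^a \in \dcl((\overline{z}^a \setminus \overline{z_j}^a) \cup \overline{C(X)}^a)$. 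Substituting back into the dependence of $\overline{c_i}^a$ removes $\overline{z_j}^a$ from the base, decreases $s$, and closes the induction.

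The only real obstacle is purely bookkeeping: every element appearing under a residue bar must lie in $V_a$ so that the residue map, and hence the residue pregeometry, are defined, and every invocation of Lemma \ref{lem:corv} requires its $z$-argument inside $(X \cap [0,1])^m$ (equivalently in $V_a$, after one further $\Cal L$-definable normalization). Arranging these inclusions at the outset by shrinking $Z$ into $(0,1)$ and using that $C \subseteq [0,1] \subseteq V_a$ disposes of the issue; once those placements are secured, the remainder is a formal exchange-plus-oracle step in a pregeometry.
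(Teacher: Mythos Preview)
Your argument is correct and rests on exactly the same two ingredients as the paper's proof --- the exchange property of the residue-field pregeometry and Lemma~\ref{lem:corv} --- but the paper packages them more economically. Rather than passing through Lemma~\ref{lem:basis}, normalizing $Z$ into $(0,1)$, and then running an induction that peels off one $\overline{z_j}^a$ at a time, the paper simply chooses from the outset a tuple $z \in (X\cap[0,1])^l$ and $d \in C(X)^m$ with $\overline{c}^a$ $\dcl$-dependent over $\overline{(z,d)}^a$ \emph{and} $\overline{z}^a$ already $\dcl$-independent over $\overline{C(X)}^a$. A single application of the contrapositive of Lemma~\ref{lem:corv} (to each coordinate of $z$) upgrades this to $\overline{z}^a$ $\dcl$-independent over all of $\overline{C}^a$; since $(d,c)\in C^{m+n}$, one exchange step then forces $\overline{c}^a$ to be $\dcl$-dependent over $\overline{d}^a\subseteq\overline{C(X)}^a$. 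Your inductive peeling is precisely the unrolled form of that one exchange, so nothing is wrong, but the detour through Lemma~\ref{lem:basis} and the explicit back-and-forth via Lemma~\ref{lem:overlinedep} (with the continuity bookkeeping you flag at the end) can be bypassed entirely.
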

\begin{proof} Take $z \in (X\cap [0,1])^l$, $d \in C(X)^m$ such that $\overline{c}^a$ is $\dcl$-dependent over $\overline{(z,d)}^{a}$ and $\overline{z}^a$ is $\dcl$-independent over $\overline{C(X)}^{a}$.  By Lemma \ref{lem:corv} $\overline{z}^{a}$ is $\dcl$-independent over $\overline{C}^{a}$. Since $(d,c) \in C^{m+n}$ and $\overline{c}^a$ is $\dcl$-dependent over $\overline{(z,d)}^{a}$, $\overline{c}^a$ is $\dcl$-dependent over $\overline{d}^{a}$.
\end{proof}


\begin{lem}\label{lem:eavf0} Let $X\unlhd^+ \Cal M$, $c \in C^n$ and $f: M^n \to M$ be $\Cal L$-$X$-definable such that $f(c)\in C$. Then there are
$x \in C(X)^m$ and $p\in \Q^m, q \in \Q^n$ such that
\[
f(c) = (p,q)\cdot (x,c).
\]
\end{lem}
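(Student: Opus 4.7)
The plan is to reduce this statement to Corollary \ref{cor:mathframma}, which handles the $\Cal L$-$\emptyset$-definable case, by eliminating those parameters of $f$ that come from $X$ but not from $C(X)$. The intuition is that a parameter $\dcl$-independent from $C$ cannot genuinely affect the value of $f(c)$ once that value is known to lie in $C$.

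First, I would invoke Lemma \ref{lem:basis} to write $X = \dcl(Z \cup C(X))$ with $Z \subseteq X$ being $\dcl$-independent over $C$. Since $f$ is $\Cal L$-$X$-definable, there exist $z \in Z^l$, $d \in C(X)^{m_0}$ and an $\Cal L$-$\emptyset$-definable function $g : M^{l+m_0+n} \to M$ with $f(c) = g(z,d,c)$.

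The key step is to show that $f(c) \in \dcl(d \cup c)$. Set $B := \{d_1,\dots,d_{m_0},c_1,\dots,c_n\} \subseteq C$. Since $Z$ is $\dcl$-independent over $C \supseteq B$, one has $\dim(z/B) = l$, and because $f(c) \in \dcl(z \cup B)$ also $\dim(z,f(c)/B) = l$. If $f(c) \notin \dcl(B)$, then extending $\{f(c)\}$ to a basis of $\dcl(z \cup \{f(c)\})$ over $B$ would select $l-1$ coordinates $z_{i_1},\dots,z_{i_{l-1}}$ of $z$, forcing the remaining coordinate $z_{i_l}$ into $\dcl(B \cup \{f(c), z_{i_1},\dots,z_{i_{l-1}}\}) \subseteq \dcl(C \cup \{z_{i_1},\dots,z_{i_{l-1}}\})$, which contradicts $\dim(z/C) = l$. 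Hence $f(c) \in \dcl(d \cup c)$.

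By definable Skolem functions in $T$, one then obtains an $\Cal L$-$\emptyset$-definable $g' : M^{m_0+n} \to M$ with $g'(d,c) = f(c)$. Since $(d,c) \in C^{m_0+n}$ and $g'(d,c) \in C$, Corollary \ref{cor:mathframma} produces $d' \in K^{m_1}$, $r \in \Q^{m_1}$ and a tuple in $\Q^{m_0+n}$, which I split as $(q'',q)$ with $q'' \in \Q^{m_0}$ and $q \in \Q^n$, such that $f(c) = r\cdot d' + q''\cdot d + q\cdot c$. Because $\Cal L$ contains a constant symbol for every element of $K$, we have $K \subseteq \dcl(\emptyset) \subseteq C(X)$, so $x := (d',d) \in C(X)^{m_1+m_0}$; setting $p := (r,q'') \in \Q^{m_1+m_0}$ yields $f(c) = (p,q)\cdot(x,c)$ as desired. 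The main obstacle to watch for is the pregeometry argument of the second paragraph; everything else is bookkeeping around the invocation of Corollary \ref{cor:mathframma}.
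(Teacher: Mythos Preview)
Your proof is correct and follows essentially the same approach as the paper: use Lemma~\ref{lem:basis} to replace the $X$-parameters by elements of $Z \cup C(X)$, eliminate the $Z$-coordinates via the pregeometry exchange argument (using that $f(c)\in C$ and $Z$ is $\dcl$-independent over $C$), and then apply Corollary~\ref{cor:mathframma}. The paper compresses all of this into one sentence (``Because $(c,f(c))\in C^{n+1}$, it follows from Lemma~\ref{lem:basis} that\dots''), leaving the exchange step implicit, whereas you spell it out carefully; otherwise the arguments coincide.
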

\begin{proof} Because $(c,f(c))\in C^{n+1}$, it follows from Lemma \ref{lem:basis} that there are $x\in C(X)^m$ and a $\Cal L$-$\emptyset$-definable function $g: M^{m+n} \to M$ such that $g(x,c) =f(c)$.
By Corollary \ref{cor:mathframma} we can extend $x$ by elements of $K$ such that there are $p\in \Q^m, q \in \Q^n$ with $f(c) = (p,q)\cdot (x,c)$.
\end{proof}

\noindent Hence any $\Cal L$-dependence among elements of $C$ over a $\Cal L_C^+$-substructure is just a $\Q$-linear dependence. We immediately get the following Corollary.

\begin{cor}\label{cor:eavf0} Let $X\unlhd^+\Cal M$, $D\subseteq C$ and $Y:=\dcl(X\cup D)$. Then every element in $C(Y)$ is a $\Q$-linear combination of elements in $C(X) \cup D$.
\end{cor}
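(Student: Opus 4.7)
The plan is to derive Corollary \ref{cor:eavf0} as an almost immediate application of Lemma \ref{lem:eavf0}. Fix $y \in C(Y)$. Since $Y = \dcl(X \cup D)$, there exist a tuple $d = (d_1,\dots,d_n) \in D^n$ and an $\Cal L$-$X$-definable function $f : M^n \to M$ such that $y = f(d)$. Because $D \subseteq C$, the tuple $d$ lies in $C^n$, and by hypothesis $y = f(d) \in C$. Thus the hypotheses of Lemma \ref{lem:eavf0} are satisfied with $c := d$.

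Applying Lemma \ref{lem:eavf0} yields $x \in C(X)^m$ and rational tuples $p \in \Q^m$, $q \in \Q^n$ with
\[
y \;=\; f(d) \;=\; (p,q) \cdot (x,d) \;=\; \sum_{i=1}^m p_i x_i \;+\; \sum_{j=1}^n q_j d_j.
\]
Since each $x_i$ lies in $C(X)$ and each $d_j$ lies in $D$, this exhibits $y$ as a $\Q$-linear combination of elements of $C(X) \cup D$, which is the desired conclusion.

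There is really no obstacle here, since the substantive content was already absorbed into Lemma \ref{lem:eavf0} (which in turn rests on Corollary \ref{cor:mathframma}, itself a consequence of the $T$-level machinery together with the fact that $\Q$-linearly independent elements of $C$ are $\dcl$-independent). The only point worth checking carefully when writing this out is that any element of $\dcl(X \cup D)$ can be represented as $f(d)$ with $f$ being $\Cal L$-$X$-definable and $d \in D^n$; this is immediate from the definition of $\dcl$ once one absorbs all parameters from $X$ into the defining formula of $f$.
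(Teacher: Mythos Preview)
Your proof is correct and is essentially the same as the paper's: the paper states that the corollary follows immediately from Lemma \ref{lem:eavf0}, and your argument spells out exactly this immediate deduction.
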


\begin{lem}\label{lem:eavf1} Let $X\unlhd^+\Cal M$, let $c \in C^n, d\in C, a\in A$ with $0\ll a$ and let $f: M^n \to M$ be $\Cal L$-$X$-definable such that $f(c)-d \in \mathfrak{m}_a$. Then there are
$x \in C(X)^m$ and $p\in \Q^m, q \in \Q^n$ such that $e(a,d) = (p,q)\cdot (e(a,x),e(a,c))$.
\end{lem}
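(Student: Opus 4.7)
The conclusion follows from Corollary~\ref{lem:eoflincombmoda} applied to the tuple $(x,c,d)\in C^{m+n+1}$ with coefficients $(p,q,-1)$, provided we produce $x\in C(X)^m$, $p\in\Q^m$, and $q\in\Q^n$ satisfying $d-(p\cdot x+q\cdot c)\in\mathfrak{m}_a$. The task therefore reduces to proving a mod-$\mathfrak{m}_a$ analogue of Lemma~\ref{lem:eavf0}: the $\Cal L$-$X$-definability of $d$ from $c$ modulo $\mathfrak{m}_a$ forces $d$ to be $\Q$-linearly approximable modulo $\mathfrak{m}_a$ from $C(X)\cup\{c_1,\dots,c_n\}$.

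The plan is to mimic the proof of Lemma~\ref{lem:eavf0}, substituting Proposition~\ref{lem:mathframma} for Corollary~\ref{cor:mathframma}. First invoke Lemma~\ref{lem:basis} to write $f(y)=F(z,w,y)$ with $F$ an $\Cal L$-$\emptyset$-definable function, $z\in Z^k$ for some $Z\subseteq X$ that is $\dcl$-independent over $C$, and $w\in C(X)^l$. If $F(z,w,c)=d$ exactly, Lemma~\ref{lem:eavf0} combined with Lemma~\ref{lem:eoflincomb} already yields the identity. Otherwise $F(z,w,c)-d\in\mathfrak{m}_a\setminus\{0\}$; after reducing to the case where $(w,c,d)\in C^{l+n+1}$ is $\Q$-linearly independent over $K$ (any exact dependence either yields $e(a,d)$ directly via Lemma~\ref{lem:eoflincomb} or eliminates a coordinate and allows iteration), apply Lemma~\ref{lem:distinctskies} to $(w,c,d)$ to obtain $d^*\in K^{l+n+1}$, triangular coefficient tuples $(r_i,q_i)\in\Q^{l+n+1}$, and pairwise distinct $a_i\in A$ with $0\ll a_i$, so that the values $v_i:=|(r_i,q_i)\cdot(d^*,w,c,d)|^{-1}$ lie in pairwise distinct $T$-levels $[p_A(a_i)]$.

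Solving $(w,c,d)$ as a $\Q$-linear function of $(d^*,v_1^{-1},\dots,v_{l+n+1}^{-1})$ expresses $F(z,w,c)-d$ as an $\Cal L$-$\emptyset$-definable function of $z$, $d^*$, and the $v_i^{-1}$'s. Invoking Fact~\ref{fact:tlevelfunction} together with the $\dcl$-independence of $Z$ over $C$ (which, via Fact~\ref{fact:tlevelfunction} applied to $\dcl(C)$, forces the $T$-levels $[z_j]$ to be pairwise disjoint from the $[v_i]$'s), we deduce $[(F(z,w,c)-d)^{-1}]\in\bigcup_j[z_j]\cup\bigcup_i[v_i]$. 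In the main sub-case $[(F(z,w,c)-d)^{-1}]=[v_{i_*}]$ the relation $(r_{i_*},q_{i_*})\cdot(d^*,w,c,d)\in\mathfrak{m}_a$ yields the desired approximation whenever the $d$-coefficient is nonzero (guaranteed for $i_*=l+n+1$ by the triangularity in Lemma~\ref{lem:distinctskies}); otherwise the relation lies among $(d^*,w,c)$ alone and permits dimensional reduction by eliminating the dependent coordinate of $w$ or $c$ and substituting into $F$.

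The main obstacle is the remaining sub-case $[(F(z,w,c)-d)^{-1}]=[z_j]$ for some $j$. I expect to eliminate it by a preliminary reduction: apply Lemma~\ref{lem:corv2} to the $\Cal L_C^+$-closure $X_1:=\dcl^+(X\cup\{c_1,\dots,c_n\})\unlhd^+\Cal M$ and the singleton $d\in C$. Since $\overline{d}^a=\overline{f(c)}^a\in\dcl(\overline{X_1}^a)$ trivially, the lemma delivers $\overline{d}^a\in\dcl(\overline{C(X_1)}^a)$, and Lemma~\ref{lem:overlinedep} then produces an $\Cal L$-$\emptyset$-definable continuous $G$ and a tuple $y\in C(X_1)^s$ with $G(y)-d\in\mathfrak{m}_a$, free of the problematic $Z$-parameters. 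Running the dimensional-reduction argument above with $G$ in place of $F$ yields $d\equiv r'\cdot d^*+p\cdot y\pmod{\mathfrak{m}_a}$, after which each $y_i\in C(X_1)$ is iteratively reduced modulo $\mathfrak{m}_a$ to a $\Q$-linear combination of elements of $C(X)\cup\{c_1,\dots,c_n\}$ using the structural axioms of $\TT$ governing $e$, $\nu$, $\nu_f$, and $\tau_f$; this recursive reduction, which must terminate with strictly decreasing $\Cal L_C^+$-definitional complexity, is the technically delicate part.
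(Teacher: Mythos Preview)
Your overall plan---reduce to a relation $d-(p\cdot x+q\cdot c)\in\mathfrak{m}_a$ and then invoke Corollary~\ref{lem:eoflincombmoda}---is the right one, and you correctly identify that the obstruction is the presence of the non-$C$ parameters $z\in X$. But your proposed treatment of the ``$[(F(z,w,c)-d)^{-1}]=[z_j]$'' sub-case does not work. Passing to $X_1:=\dcl^+(X\cup\{c_1,\dots,c_n\})$ and invoking Lemma~\ref{lem:corv2} there produces $y\in C(X_1)^s$, and you then need to reduce each $y_i$ modulo $\mathfrak{m}_a$ to a $\Q$-linear combination from $C(X)\cup\{c_1,\dots,c_n\}$. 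That reduction is precisely the content of the lemma you are trying to prove (and of the later structural results such as Corollary~\ref{cor:eavf0} and Proposition~\ref{cor:aclosure}); the appeal to ``strictly decreasing $\Cal L_C^+$-definitional complexity'' is not a proof, and the argument is circular. A secondary problem: your claim that the $T$-levels $[z_j]$ are disjoint from the $[v_i]$ does not follow from $\dcl$-independence of $Z$ over $C$---two elements can share a $T$-level without any $\dcl$-relation between them---and Fact~\ref{fact:tlevelfunction} does not give this.

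The paper's route avoids all of this by eliminating $z$ \emph{before} the $T$-level analysis. The point is that the proof of Lemma~\ref{lem:corv2} goes through verbatim with extra $C$-parameters on the side: from $\overline d^a\in\dcl(\overline z^a,\overline w^a,\overline c^a)$ with $z\in X$, $w\in C(X)$, choose $z$ so that $\overline z^a$ is $\dcl$-independent over $\overline{C(X)}^a$; by Lemma~\ref{lem:corv} it is then $\dcl$-independent over $\overline C^a$, and since $(w,c,d)\in C^{l+n+1}$, exchange gives $\overline d^a\in\dcl(\overline w^a,\overline c^a)$. Lemma~\ref{lem:overlinedep} then yields an $\Cal L$-$\emptyset$-definable $g$ with $g(x,c,d)\in\mathfrak{m}_a$ and $x\in C(X)^m$, and now Proposition~\ref{lem:mathframma} applies directly to the $C$-tuple $(x,c,d)$ (with the usual reduction to $\Q$-linear independence over $K$, which handles the $d$-coefficient). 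This is three lines; your detour through Lemma~\ref{lem:distinctskies} and $X_1$ is unnecessary once the $z$-parameters are gone.
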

\begin{proof} By Lemma \ref{lem:corv2} and Lemma \ref{lem:overlinedep} there are $x\in C(X)^m$ and $\Cal L$-$\emptyset$-definable function $g: M^{m+n+1} \to M$ such that $g(x,c,d) \in \mathfrak{m}_a$. Then by Proposition \ref{lem:mathframma} we can extend $x$ by elements from $K$ such that there are $p\in \Q^m, q \in \Q^n$ with $d - (p \cdot x + q \cdot c) \in \mathfrak{m}_a$. By Corollary \ref{lem:eoflincombmoda}
$e(a,d) = (p,q)\cdot (e(a,x),e(a,c))$.
\end{proof}

\subsection*{Extensions by elements of $C$} In the following we will need to consider a special kind of $\Cal L_C^*$-substructures of $\Cal M$. These substructures are given by an extension of a $\Cal L_C^+$-substructure by elements of $C$.

\begin{defn} A subset $X \subseteq M$ is a \textbf{special $\Cal L_C^*$-substructure} if there are $D\subseteq C$ and $Z\subseteq M$ such that $\dcl(Z \cup D) \unlhd^* M$  and $Z$ is either $\emptyset$ or $Z \unlhd^+ M$. In this case, we write $X \unlhd M$.
\end{defn}
\noindent While Lemma \ref{lem:eavf0} and Lemma \ref{lem:eavf1} do not generalize to arbitrary $\Cal L_C^*$-substructures, both statements hold for special substructures as can easily be checked.

\begin{lem}\label{lem:eavf02} Let $X\unlhd \Cal M$, $c \in C^n$ and $f: M^n \to M$ be $\Cal L$-$X$-definable such that $f(c)\in C$. Then there are
$x \in C(X)^m$ and $p\in \Q^m, q \in \Q^n$ such that
\[
f(c) = (p,q)\cdot (x,c).
\]
\end{lem}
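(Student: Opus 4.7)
The plan is to reduce Lemma \ref{lem:eavf02} to the case already handled in Lemma \ref{lem:eavf0}. Since $X$ is a special $\Cal L_C^*$-substructure, by definition there exist $D \subseteq C$ and $Z \subseteq M$ with $X = \dcl(Z \cup D)$, where either $Z = \emptyset$ or $Z \unlhd^+ \Cal M$. I will treat the nontrivial case $Z \unlhd^+ \Cal M$ first; the case $Z = \emptyset$ is analogous with Corollary \ref{cor:mathframma} used in place of Lemma \ref{lem:eavf0}.

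Since $f$ is $\Cal L$-$X$-definable and $X = \dcl(Z \cup D)$, parameter finiteness allows me to pick a finite tuple $d \in D^k$ and an $\Cal L$-$Z$-definable function $h : M^{k+n} \to M$ such that $f(c) = h(d,c)$. Here $(d,c) \in C^{k+n}$ and $h(d,c) = f(c) \in C$ by hypothesis. Because $Z \unlhd^+ \Cal M$, Lemma \ref{lem:eavf0} applies to $Z$, $h$, and the tuple $(d,c)$: there are $y \in C(Z)^l$ and rationals $p \in \Q^l$, $(r,q) \in \Q^k \times \Q^n$ such that
\[
f(c) = h(d,c) = (p,r,q) \cdot (y,d,c).
\]

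Now $C(Z) \subseteq C(X)$ since $Z \subseteq X$, and $D \subseteq C(X)$ by construction, so the concatenation $x := (y,d) \in C(X)^{l+k}$. Relabelling coefficients gives $f(c) = (p',q) \cdot (x,c)$ with $p' := (p,r) \in \Q^{l+k}$ and $q \in \Q^n$, which is the desired conclusion. For the case $Z = \emptyset$, $X = \dcl(D)$ and $f$ is $\Cal L$-$d$-definable for some finite $d \in D^k$, so $f(c) = h(d,c)$ for an $\Cal L$-$\emptyset$-definable $h$; Corollary \ref{cor:mathframma} then writes $h(d,c)$ as a $\Q$-linear combination of $(d,c)$ and finitely many elements of $K$, all of which lie in $C(X)$ (elements of $K$ being named by constants in $\Cal L$).

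I do not expect any serious obstacle: the point is simply that the two extra ingredients distinguishing a special $\Cal L_C^*$-substructure from a $\Cal L_C^+$-substructure, namely the set $D$ of $C$-elements attached to $Z$ and the fact that parameters of $f$ may come from $\dcl(Z \cup D)$, can both be absorbed into the source tuple $(d,c) \in C^{k+n}$ to which Lemma \ref{lem:eavf0} is directly applied. The mildly delicate bookkeeping is making sure that the finitely many parameters needed to define $f$ really can be pulled out of $D$ rather than out of $\dcl(Z \cup D)$, but this follows from the $\dcl$-definitions of elements of $X$ over $Z \cup D$ being absorbed into the function symbol $h$.
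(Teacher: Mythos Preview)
Your proof is correct and is precisely the argument the paper has in mind: the paper does not give a separate proof of Lemma~\ref{lem:eavf02} but simply remarks that Lemma~\ref{lem:eavf0} ``holds for special substructures as can easily be checked.'' Your reduction---absorbing the finitely many $D$-parameters into the source tuple and then invoking Lemma~\ref{lem:eavf0} over the $\Cal L_C^+$-substructure $Z$ (or Corollary~\ref{cor:mathframma} when $Z=\emptyset$)---is exactly that easy check.
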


\begin{lem}\label{lem:eavf12} Let $X\unlhd \Cal M$, let $c \in C^n, d\in C, a\in A$ with $0\ll a$ and let $f: M^n \to M$ be $\Cal L$-$X$-definable such that $f(c)-d \in \mathfrak{m}_a$. Then there are
$x \in C(X)^m$ and $p\in \Q^m, q \in \Q^n$ such that $e(a,d) = (p,q)\cdot (e(a,x),e(a,c))$.
\end{lem}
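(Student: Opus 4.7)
The plan is to adapt the proof of Lemma \ref{lem:eavf1} to the special substructure setting by absorbing the extra $C$-parameters in the definition of $\unlhd$ into the $C^n$-argument tuple. First I would unpack the definition of special substructure: write $X = \dcl(Z \cup D)$ with $D \subseteq C$ and either $Z = \emptyset$ or $Z \unlhd^+ \Cal M$. Since $f$ is $\Cal L$-$X$-definable, there exist $d' = (d'_1, \ldots, d'_l) \in D^l$ and an $\Cal L$-$Z$-definable function $h : M^{l+n} \to M$ (read as $\Cal L$-$\emptyset$-definable when $Z = \emptyset$) with $f(c) = h(d', c)$; this is straightforward parameter extraction, treating the $D$-elements as free variables. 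Note $(d', c) \in C^{l+n}$ and $h(d', c) - d = f(c) - d \in \mathfrak{m}_a$.

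In the principal case $Z \unlhd^+ \Cal M$, I would apply Lemma \ref{lem:eavf1} directly to the substructure $Z$, the tuple $(d', c)$, the element $d$, the index $a$, and the function $h$. This yields $x' \in C(Z)^r$, $p \in \Q^r$, and $q' = (q'_1, q'_2) \in \Q^l \times \Q^n$ satisfying
\[
e(a, d) = p \cdot e(a, x') + q'_1 \cdot e(a, d') + q'_2 \cdot e(a, c).
\]
Since $C(Z) \subseteq C(X)$ (as $Z \subseteq X$) and $D \subseteq C(X)$, setting $x := (x', d') \in C(X)^{r+l}$ and $\tilde p := (p, q'_1) \in \Q^{r+l}$ gives the desired identity $e(a, d) = (\tilde p, q'_2) \cdot (e(a, x), e(a, c))$.

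When $Z = \emptyset$ the function $h$ is $\Cal L$-$\emptyset$-definable, and the proof of Lemma \ref{lem:eavf1} transcribes almost verbatim without recourse to Lemma \ref{lem:corv2}: the inclusion $X = \dcl(D) \subseteq \dcl(C(X))$ forces $\overline{X}^a \subseteq \dcl(\overline{C(X)}^a)$, so Lemma \ref{lem:overlinedep} produces $x \in C(X)^m$ and an $\Cal L$-$\emptyset$-definable $g$ with $g(x, c, d) \in \mathfrak{m}_a$; Proposition \ref{lem:mathframma} extends $x$ by elements of $K \subseteq \dcl(\emptyset) \subseteq C(X)$ and supplies $p \in \Q^m, q \in \Q^n$ with $d - (p \cdot x + q \cdot c) \in \mathfrak{m}_a$; and Corollary \ref{lem:eoflincombmoda} concludes. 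The main obstacle, exactly as in the proof of Lemma \ref{lem:eavf1}, is the reorganization ensuring the $\Q$-linear combination produced by Proposition \ref{lem:mathframma} can be normalized so that $d$ has coefficient $1$; the parenthetical ``can easily be checked'' in the excerpt reflects the fact that once this point is settled for Lemma \ref{lem:eavf1}, the only extra work in Lemma \ref{lem:eavf12} is the bookkeeping of the $D$-parameters described in the second paragraph.
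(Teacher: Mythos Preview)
Your proposal is correct and matches the paper's intended argument: the paper gives no explicit proof of Lemma~\ref{lem:eavf12}, merely remarking that the statement of Lemma~\ref{lem:eavf1} ``hold[s] for special substructures as can easily be checked,'' and your reduction---absorbing the $D$-parameters into the $C$-tuple so that Lemma~\ref{lem:eavf1} (or its proof, when $Z=\emptyset$) applies over $Z$---is precisely that check. Your observation that the normalization of the coefficient of $d$ is already an issue in Lemma~\ref{lem:eavf1} and introduces nothing new here is also apt.
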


\begin{defn} A set $Z\subseteq M$ is \textbf{$A$-closed} if $Z$ is closed under $\lambda$ and contains all $a \in A$ that are $\Cal L_B$-definable from $Z$.
\end{defn}

\noindent We are now ready to state the main result of this subsection.

\begin{prop}\label{cor:aclosure} Let $X \unlhd\Cal M$, $Y\subseteq \Cal M$ and $D\subseteq C$  such that
\begin{itemize}
\item [(i)] $Y = \dcl(X\cup D)$,
\item [(ii)] $D$ is closed under $e(a,-)$ for each $a \in A(Y)$, and
\item [(iii)] $Y$ is $A$-closed.
\end{itemize}
Then $Y\unlhd \Cal M$.
\end{prop}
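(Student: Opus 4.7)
The plan is to exhibit $Y$ as a special substructure by reusing the decomposition of $X$. Since $X \unlhd M$, write $X = \dcl(Z \cup D_X)$ with $Z$ either empty or $Z \unlhd^+ M$ and $D_X \subseteq C$. Setting $D' := D_X \cup D \subseteq C$ gives $Y = \dcl(Z \cup D')$, so the task reduces to verifying $Y \unlhd^* M$, i.e., closure of $Y$ under every function symbol of $\Cal L_C^*$. Closure under the $\Cal L$-symbols is automatic from $Y = \dcl(\cdot)$, the predicates $P_\varphi$ require no closure, and the $A$-closedness hypothesis (iii) delivers closure under $\lambda$ and under every $\Cal L_B$-$\emptyset$-definable map $g : A^m \times C^n \to A$. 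There are two operations left: $e$ and $\nu$.

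Closure under $e$ proceeds as follows. Given $a \in A(Y)$ and $c \in C(Y)$, Lemma \ref{lem:eavf02} produces $x \in C(X)^{\ell}$, $d \in D^n$ and rationals $p, q$ with $c = p \cdot x + q \cdot d$, and Lemma \ref{lem:eoflincomb} spreads $e(a, -)$ across this equality. Each summand $e(a, d_j)$ lies in $D$ by hypothesis (ii), and when $a \in A(X)$ each summand $e(a, x_i)$ lies in $X$ because $X \unlhd^* M$. For the remaining case $a \in A(Y) \setminus A(X)$ the idea is to invoke Lemma \ref{lem:eavf12} (with $x_i$ as the target $C$-element and an appropriate $\Cal L$-$X$-definable $f$) to re-express $e(a, x_i)$ as a $\Q$-linear combination in $e(a, C(X)) \cup e(a, D)$, whose pieces already sit in $Y$ by the preceding observation and (ii).

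Closure under $\nu$ is the main obstacle. Given $y \in Y \cap ((0, 1) \setminus C)$, Lemma \ref{lem:nuexplicit} furnishes a specific $\hat{a} \in A$ with $\nu(y) = e(\hat{a}, \nu(y)) + \hat{a}^{-1}$. The strategy is first to show $\hat{a} \in A(Y)$: since $\hat{a}$ is determined by the complementary interval of $C$ containing $y$ via the parametrizations in Lemmas \ref{lem:complintervals} and \ref{lem:complintervals2} and Corollary \ref{cor:complintervals}, it should be $\Cal L_B$-definable from $A(Y) \cup C(Y)$, at which point $A$-closedness forces $\hat{a} \in A(Y)$. Writing $y = f(z, d'')$ with $f$ an $\Cal L$-$Z$-definable function and $d'' \in (D')^n$, Lemma \ref{lem:eavf12} then presents $e(\hat{a}, \nu(y))$ as a $\Q$-linear combination of terms $e(\hat{a}, x_i)$ with $x_i \in C(X)$ and $e(\hat{a}, d_j)$ with $d_j \in D$; by the closure under $e$ established above together with (ii), these summands all belong to $Y$, so $\nu(y) = e(\hat{a}, \nu(y)) + \hat{a}^{-1} \in Y$.

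The most delicate step is the $\Cal L_B$-characterization of $\hat{a}$ from parameters already in $Y$; this is where the arithmetic of the preceding section, particularly Lemma \ref{lem:nusum} (which algebraically computes $\nu$ of a $\Q$-linear combination of $C$-elements) and Proposition \ref{lem:mathframma} (which extracts $\mathfrak{m}_a$-representations), has to be combined to locate the complementary interval of $y$ using only elements already present in $Y$. Everything else reduces to bookkeeping on top of the decomposition $Y = \dcl(Z \cup D')$ and the $T$-level calculus.
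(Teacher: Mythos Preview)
Your plan has the right skeleton—reduce to closure under $e$ and $\nu$ and call on Lemmas \ref{lem:eavf02}, \ref{lem:eoflincomb}, \ref{lem:eavf12}, \ref{lem:nusum}—but the $\nu$-closure argument has a genuine gap, and the fix you sketch does not close it.

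The circularity is here: you want $\hat a\in A(Y)$ because it is ``$\Cal L_B$-definable from $A(Y)\cup C(Y)$,'' yet the only $\Cal L_B$-description of $\hat a$ available is in terms of $\nu(y)$ (it is the largest $a$ with $\neg E(a,\nu(y))$), and $\nu(y)\in Y$ is precisely what you are trying to establish. Invoking Lemma \ref{lem:nusum} and Proposition \ref{lem:mathframma} in the abstract does not help: those results feed on elements of $C$ you already possess, not on $\nu(y)$. There is a second problem with your Step~3: to apply Lemma \ref{lem:eavf12} with target $\nu(y)$ you need $y-\nu(y)\in\mathfrak m_{\hat a}$, but $y$ may sit near the \emph{right} endpoint of its complementary interval, in which case $|y-\nu(y)|^{-1}$ has $T$-level $[p_A(\hat a)]$ rather than $[\hat a]$.

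The paper's argument is substantially more hands-on. Writing $y=f(c)$ with $c\in D^n$, one sets $a=\max\{a_1,a_2\}$ where $a_1,a_2\in A(Y)$ are obtained by applying $\lambda$ to the reciprocals of the distances from $y$ to the two endpoints of its complementary interval (so $a\in A(Y)$ by (iii), not by any $\Cal L_B$-argument). Lemma \ref{lem:eavf12} then yields $z=(p,q)\cdot(x,c)\in Y$ with $x\in C(X)^m$ agreeing with one of those endpoints modulo $\mathfrak m_a$, and Lemma \ref{lem:nusum} together with (iii) gives $\nu(z)\in Y$. The remaining work—the real content you are missing—is a case analysis on the position of $z$ relative to the interval $[e(p_A(b),\nu(y)),\,e(p_A(b),\nu(y))+p_A(b)^{-1}]$: in each case one exhibits $b$ (or $p_A(b)$) as $\lambda$ of an explicit difference of elements already in $Y$, so $b\in A(Y)$ by (iii), and then reconstructs $\nu(y)$ from $\nu(z)$, $b$, and $e(p_A(b),\nu(z))$.

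A smaller point: your proposed repair for $e$-closure when $a\in A(Y)\setminus A(X)$—feed $x_i$ back through Lemma \ref{lem:eavf12}—only returns another combination containing terms $e(a,x')$ with $x'\in C(X)$, so nothing is gained. (The paper is equally terse here, asserting ``By (i) $e(a,y)\in Y$'' without further comment.)
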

\begin{proof}  Because $D\subseteq C$, it is only left to show that $Y$ is closed under $\nu$ and $e(a,-)$ for every $a \in A(Y)$. We start by showing the latter statement. Let $y \in C(Y)$ and $a \in A(Y)$. By (i) there is a $\Cal L$-$X$-definable function $f: M^n \to M$ such that $y=f(c)$ for some $c=(c_1,\dots,c_n) \in D^n$. By Lemma \ref{lem:eavf02} there are  $x\in C(X)^m$, $p\in \Q^m$ and $q \in \Q^n$ such that $y = (p,q)\cdot(x,c)$. By Lemma \ref{lem:eoflincomb} $e(a,y) = (p,q) \cdot (e(a,x),e(a,c))$. By (ii) $e(a,c) \in D^n$. By (i) $e(a,y)\in Y$.\newline

\noindent We will now check that $Y$ is closed under $\nu$. Let $y\in Y \cap (0,1)$. We immediately reduce to the case that $\nu(y)\neq y$. By (i) there is an $\Cal L$-$X$-definable function $f: M^n \to M$ such that $y=f(c)$ for some $c\in D^n$. By Corollary \ref{cor:nuexplicit} there is $b\in A$ maximal such that $\neg E(b,\nu(f(c)))$ and $\nu(f(c))=e(p_A(b),\nu(f(c)))+b^{-1}$. We can assume that $0 \ll b$. Hence
\begin{align*}
e(p_A(b),\nu(f(c)) &< e(p_A(b),\nu(f(c))) + b^{-1} = \nu(f(c))\\
 &< f(c) < e(p_A(b),\nu(f(c))) + p_A(b)^{-1} -  b^{-1} \\
 &< e(p_A(b),\nu(f(c))) + p_A(b)^{-1}.
\end{align*}
We first observe that $\nu(f(c)) + p_A(b)^{-1} -2b^{-1} = e(p_A(b),\nu(f(c))) + p_A(b)^{-1} -  b^{-1}$. Set
\begin{align*}
a_1 &:= \lambda( (f(c) - \nu(f(c))^{-1} ),\\
a_2 &:= \lambda( (e(p_A(b),\nu(f(c))) + p_A(b)^{-1} -  b^{-1} - f(c))^{-1} ).
\end{align*}
Note that $a_1,a_2 \geq p_A(b)$. Set $a:= \max \{a_1,a_2\}$. Thus either
\[
\nu(f(c)) - f(c) \in \mathfrak{m}_a \hbox{ or } \nu(f(c)) + p_A(b)^{-1} -  2b^{-1} - f(c) \in \mathfrak{m}_a.
\]
By Lemma \ref{lem:eavf12} there are $x\in C(X)^m$, $p\in \Q^m$ and $q \in \Q^n$ such that either
\[
\nu(f(c)) - (p,q)\cdot (x,c) \in \mathfrak{m}_a \hbox{ or } \nu(f(c)) + p_A(b)^{-1} -  2b^{-1} - (p,q)\cdot (x,c) \in \mathfrak{m}_a
\]
Set $z:=(p,q)(x,c)$. By Lemma \ref{lem:nusum} and assumption (iii), we have that $\nu(z) \in Y$. We will now consider several different cases. In each case we will conclude that $\nu(f(c)) \in Y$. The arguments that follow are not complicated, but the details are tiresome. Because the arguments itself are not crucial for the rest of the paper, the reader might prefer to skip them or just read the the first two case which already contain the main ideas.\newline

\noindent First, consider the case that $e(p_A(b),\nu(f(c))) + b^{-1} \leq z  < e(p_A(b),\nu(f(c))) + p_A(b)^{-1} - b^{-1}$. By Lemma \ref{lem:complintervals}
$\nu(z) = e(p_A(b),\nu(f(c)) + b^{-1}$. Hence $\nu(f(c))=\nu(z)\in Y$.\newline

\noindent Now consider that $z < e(p_A(b),\nu(f(c)))$. Let $d \in A$ be maximal such that $d \leq p_A(b)$ and $E(d,\nu(f(c)))$. By Lemma \ref{lem:complintervals2}
\begin{equation}\label{eq:yspecialeq1}
C \cap \big( e(p_A(b),\nu(f(c))) - p_A(d)^{-1} + 2d^{-1},e(p_A(b),\nu(f(c))\big) = \emptyset.
\end{equation}
Because $0< e(p_A(b),\nu(f(c)))-z < \nu(f(c)) - z \in \mathfrak{m}_a$ and $a \geq p_A(b) > p_A(d)$, we get that $\nu(z) = e(p_A(b),\nu(f(c)))-p_A(d)^{-1}+2d^{-1}.$ Since $z<e(p_A(b),\nu(f(c)))$, we have $\nu(f(c))-z>b^{-1}$. Since $\nu(f(c))-z \in \mathfrak{m_a}$, $f(c)-\nu(f(c)) < p_A(b)^{-1} + 2b^{-1}$ and $a \geq p_A(b)$, we can deduce that $b^{-1} <f(c) - z < (p_A(p_A(b)))^{-1}.$ Hence $\lambda((f(c)-z)^{-1})$ has to be either $p_A(b)$ or $p_A(p_A(b))$. Therefore $b \in Y$, since $\lambda((f(c)-z)^{-1})\in Y$ and $Y$ is closed under $p_A$ and $s_A$. Let $d' \in A$ be the largest element in $A$ such that $\neg E(d',\nu(z))$. Because $\nu(z) \in Y$ and $Y$ is $A$-closed, $d' \in Y$. By Corollary \ref{cor:nuexplicit}, $\nu(z) = e(p_A(d'),\nu(z))+d'^{-1}$ and $e(p_A(b),\nu(f(c))) = e(p_A(d'),\nu(z)) + p_A(d')^{-1} -d'^{-1}$ by \eqref{eq:yspecialeq1}. Thus $e(p_A(b),\nu(f(c)))\in Y$. Since $b \in Y$ and  $\nu(f(c))=e(p_A(b),\nu(f(c)))+b^{-1}$, we get that $\nu(f(c)) \in Y$.\newline

\noindent Consider the case that $z > e(p_A(b),\nu(f(c))) + p_A(b)^{-1}$. Note that
\[
z-(e(p_A(b),\nu(f(c)))+p_A(b)^{-1}) <z - (\nu(f(c)) + p_A(b)^{-1} -  2b^{-1}) \in \mathfrak{m}_a.
\]
Since $a \geq p_A(b)$, we get from Corollary \ref{cor:complintervals} that $\nu(z)=e(p_A(b),\nu(f(c)))+p_A(b)^{-1}$. Because $f(c) - (\nu(f(c)) + p_A(b)^{-1} -  2b^{-1}) \in \mathfrak{m}_a$, $z- (\nu(f(c)) + p_A(b)^{-1} -  2b^{-1}) \in \mathfrak{m}_a$ and $\nu(z)- (\nu(f(c)) + p_A(b)^{-1} -  2b^{-1})=b^{-1}$, we can again deduce that $b^{-1} \leq z- f(c) < p_A(p_A(b))^{-1}$. Hence $\lambda((z-f(c))^{-1})$ has to be either $p_A(b)$ or $p_A(p_A(b))$. Consequently $p_A(b), b \in Y$ as argued above. Because $\nu(z) \in Y$ and $e(p_A(b),\nu(f(c)))+p_A(b)^{-1}\in Y$, we get that $\nu(f(c))\in Y$.\newline

\noindent Now suppose that $z\in \big[ e(p_A(b),\nu(f(c))),\nu(f(c))\big)$ and $a_1=p_A(b)$. It follows immediately that $a=p_A(b)$ and $\nu(f(c))-z\in \mathfrak{m}_{p_A(b)}$. Since $\lambda(f(c) - \nu(f(c)))^{-1} = p_A(b)$, $\lambda((f(c)-z)^{-1})$ is either $p_A(b)$ or $p_A(p_A(b))$. Thus $b$ and $p_A(b)$ are in $Y$. Because $e(p_A(b),\nu(f(c)))\leq z<e(p_A(b),\nu(f(c)))+p_A(b)^{-1}$, we can conclude that $e(p_A(b)),\nu(f(c)) = e(p_A(b)),\nu(z))\in Y$. Hence $\nu(f(c))\in Y$.\newline

\noindent Suppose that $z\in \big[ e(p_A(b),\nu(f(c))),\nu(f(c))\big)$ and $a_1>p_A(b)$. Again it follows easily that $a=a_1$ and  $\nu(f(c))-z\in \mathfrak{m}_{a}$. Since $\lambda((f(c) - \nu(f(c)))^{-1}) = a$, we get that $s_A(a)^{-1} \leq f(c)-z < p_A(a)^{-1}.$ Hence $\lambda((f(c)-z)^{-1})$ is either $a$ or $p_A(a)$. Thus $a \in Y$. Because $\nu(f(c))-z\in \mathfrak{m}_{a}$, $\nu(f(c)) - p_A(a)^{-1} < z < \nu(f(c))$. Since $\nu(f(c)) = e(p_A(b),\nu(f(c))) + b^{-1}$, we have $\nu(f(c)) - p_A(a)^{-1} = e(p_A(b),\nu(f(c)) + b^{-1}- p_A(a)^{-1}$. Because of $a \geq p_A(b)$ we can deduce from Axiom T\ref{axiom:succc} and Lemma \ref{lem:allones} that
\[
e(p_A(a),\nu(f(c))) = e(p_A(b),\nu(f(c)) + b^{-1}- p_A(a)^{-1}=\nu(f(c)) - p_A(a)^{-1}.
\]
Since $\nu(f(c)) - p_A(a)^{-1} < z < \nu(f(c))$, we have that $e(p_A(a),\nu(z)) = \nu(f(c)) - p_A(a)^{-1}$ by Axiom T\ref{axiom:ca2}. From $p_A(a) \in Y$ we conclude that $e(p_A(a),\nu(z)) \in Y$. Hence $\nu(f(c))\in Y$.\newline

\noindent The case that $z\in [ e(p_A(b),\nu(f(c)))+p_A(b)^{-1}-b^{-1},\nu(f(c))+p_A(b)^{-1}\big)$ can be handled similarly to the last two cases. We leave the details to the reader.
\end{proof}

\subsection*{Interaction between elements of $A$ and substructures} We finish this section with two Lemmas on the interplay of elements of $A$ and $\Cal L_C^*$-substructures. These results will be used in the next section.

\begin{lem}\label{lem:aspecial} Let $X \unlhd \Cal M$ and $a \in A$. If $[a]=[x]$ for some $x\in X$, then $a \in A(X)$.
\end{lem}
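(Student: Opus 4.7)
The claim is essentially that special substructures are saturated with respect to the $T$-levels they already hit in $A$. The plan is to exploit two ingredients: the uniqueness-per-level result for elements of $A$ lying far above $\dcl(\emptyset)$ (Lemma~\ref{lem:oneaperlevel} and Corollary~\ref{cor:lambda}), and the fact that $X \unlhd^* M$ guarantees closure under $\lambda$, $s_A$, and $p_A$, since $\lambda$ is an explicit symbol of $\Cal L_C^*$ and both $s_A, p_A$ are $\Cal L_B$-$\emptyset$-definable functions $A \to A$ whose symbols were included in $\Cal L_C^*$.

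First I would reduce to the case where $a \notin Q$. Indeed, the very formation of the $T$-level $[a]$ requires $0 \ll a$, so under the hypothesis $[a]=[x]$ we automatically have $a \notin Q$ by Lemma~\ref{lem:llq}, and correspondingly $0 \ll x$. (If one prefers to treat the degenerate case where $a \in Q$ is allowed, then $Q \subseteq \dcl(\emptyset) \subseteq X$ handles it immediately.)

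Next I would apply Corollary~\ref{cor:lambda} to $x$ and to $y := a$: from $0 \ll x$, $0 \ll a$, and $[x]=[a]$ we obtain that $\lambda(a)$ must be one of $\lambda(x)$, $s_A(\lambda(x))$, $p_A(\lambda(x))$. But $\lambda(a) = a$ because $a \in A$. Therefore $a \in \{\lambda(x),\,s_A(\lambda(x)),\,p_A(\lambda(x))\}$.

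Finally, since $X \unlhd \Cal M$ implies $X \unlhd^* \Cal M$, the set $X$ is closed under $\lambda$, $s_A$, and $p_A$. Hence $\lambda(x)\in A(X)$ and also $s_A(\lambda(x)), p_A(\lambda(x))\in A(X)$, so $a \in A(X)$ in every case. There is no genuine obstacle here; the only subtlety is to notice that $s_A$ and $p_A$ have been absorbed into $\Cal L_C^*$, so closure under them is automatic for any $\Cal L_C^*$-substructure — which is what permits the three-way alternative coming out of Corollary~\ref{cor:lambda} to land inside $A(X)$.
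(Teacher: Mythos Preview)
Your proof is correct and follows essentially the same approach as the paper's proof, which invokes Lemma~\ref{lem:llq} to reduce to $0\ll a$ and then cites Corollary~\ref{cor:lambda} directly. You have simply unpacked what that citation means: the three-way alternative $a\in\{\lambda(x),s_A(\lambda(x)),p_A(\lambda(x))\}$ together with closure of $X$ under $\lambda$, $s_A$, $p_A$.
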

\begin{proof} By Lemma \ref{lem:llq} we can assume that $0\ll a$. Let $x\in X$ such that $[a_i]=[x]$. Since $X\unlhd \Cal M$, we get $a_i \in A(X)$ by Corollary \ref{cor:lambda}.
\end{proof}

\begin{lem}\label{lem:inx} Let $X\unlhd\Cal M$. Let $c=(c_1,\dots, c_n) \in C(X)^n$, $(a_1,\dots,a_n) \in A$ and $p,q \in \Q^n$ such that
\[
\mu_{(p,q)}\big(a_1^{-1},\dots, a_n^{-1},c_1-e(a_1,c_1),\dots,c_n-e(a_n,c_n)\big) \in A(X).
\]
Then $\big(a_1^{-1},\dots, a_n^{-1},c_1-e(a_1,c_1),\dots,c_n-e(a_n,c_n)\big)$ is $\Q$-linearly dependent over $C(X)$.
\end{lem}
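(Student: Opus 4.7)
The plan is to derive a nontrivial $\Q$-linear identity exhibiting the tuple
\[
(d, e) := (a_1^{-1}, \dots, a_n^{-1},\ c_1 - e(a_1, c_1), \dots, c_n - e(a_n, c_n))
\]
as $\Q$-linearly dependent over $C(X)$. By Corollary~\ref{cor:ainv} and Axiom T\ref{axiom:ca3}, every coordinate lies in $C$; write $d_i := a_i^{-1}$, $e_i := c_i - e(a_i, c_i)$, and $b := \mu_{(p,q)}(d, e) \in A(X)$. (Note $b > 1$: at $a = 1 \in A$ every $\delta_{1, d_i}$ and $\delta_{1, e_i}$ vanishes because $a_i \geq 1$, so $\mu_{(p,q)}$ cannot equal $1$.)

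The key algebraic step is to apply Lemma~\ref{lem:axiommu} at $a = p_A(b) < b$, giving $(p, q) \cdot e(p_A(b), (d, e)) = 0$, and then to use the one-step formula of Corollary~\ref{cor:succc} to pass from $p_A(b)$ to $b$; subtracting yields
\[
(p, q) \cdot e(b, (d, e)) \;=\; S\bigl(p_A(b)^{-1} - b^{-1}\bigr),
\]
where $S := \sum_i p_i \delta_{b, d_i} + \sum_i q_i \delta_{b, e_i}$ is nonzero by the very definition of $b = \mu_{(p,q)}(d, e)$. I then plan to expand the left-hand side coordinate-by-coordinate using Corollary~\ref{cor:ainv} (which gives $e(b, d_i) = d_i - b^{-1}$ if $a_i < b$, and $0$ otherwise) and Lemma~\ref{lem:eab} (which gives $e(b, e_i) = e(b, c_i) - c_i + e_i$ if $a_i < b$, and $0$ otherwise). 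Rearranging produces an identity of the form
\[
\sum_{a_i < b} \bigl(p_i d_i + q_i e_i\bigr) \;=\; \Phi,
\]
where $\Phi$ is a $\Q$-linear combination of $p_A(b)^{-1}$, $b^{-1}$, and the $c_i, e(b, c_i)$ with $a_i < b$.

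Two routine checks complete the proof. First, each term appearing in $\Phi$ lies in $C(X)$: since $X$ is a $\Cal L_C^*$-substructure it is closed under $p_A$ and under $e$, so $b, p_A(b) \in A(X)$ and $e(b, c_i) \in C(X)$; and the identity $a^{-1} = 1 - e(a, 1)$ for $a \in A(X)$ (which follows from $S(1) = A_{>1}$ together with Corollary~\ref{cor:ainv}) places $p_A(b)^{-1}, b^{-1}$ in $C(X)$. Second, the left-hand side is a \emph{nontrivial} combination of coordinates of $(d, e)$: since $\delta_{b, d_i}$ and $\delta_{b, e_i}$ both vanish whenever $a_i \geq b$, the condition $S \neq 0$ forces at least one of the coefficients $p_i$ or $q_i$ with $a_i < b$ to be nonzero. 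The genuinely delicate ingredient is this first bookkeeping step --- specifically, the observation that inverses of elements of $A(X)$ fall into $C(X)$ via the function symbol $e$ and the constant $1 \in C$; the main algebraic identity is then essentially forced by Lemma~\ref{lem:axiommu} combined with a single application of Corollary~\ref{cor:succc}.
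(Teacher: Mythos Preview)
Your argument is correct and follows essentially the same route as the paper: both proofs set $b=\mu_{(p,q)}(d,e)$, invoke Lemma~\ref{lem:axiommu} at $p_A(b)$, and then unwind the coordinates via Corollary~\ref{cor:ainv} and Lemma~\ref{lem:eab} to land in the $\Q$-span of $C(X)$. The only cosmetic difference is that the paper works directly with $(d,e)-e(p_A(b),(d,e))$, whereas you pass one step further to $(p,q)\cdot e(b,(d,e))$ via Corollary~\ref{cor:succc}; your choice has the mild advantage that the resulting split is $a_i<b$ versus $a_i\geq b$, so the nontriviality of the dependence follows immediately from $S\neq 0$ without having to treat the borderline case $a_i=p_A(b)$ separately.
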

\begin{proof} Set $b=\mu_{(p,q)}\big(a_1^{-1},\dots, a_n^{-1},c_1-e(a_1,c_1),\dots,c_n-e(a_n,c_n)\big)$.
Because $b\in A(X)$ and $X\unlhd \Cal M$, $p_A(b) \in A(X)$ and $c_i-e(p_{A}(b),c_i) \in X$. By Corollary \ref{cor:ainv} we have that for all $a\in A$
\[
a^{-1} - e(p_{A}(b),a^{-1}) = \left\{
                                     \begin{array}{ll}
                                       a^{-1}, & \hbox{if $p_A(b) \leq a$;} \\
                                       p_A(b)^{-1}, & \hbox{otherwise.}
                                     \end{array}
                                   \right.
\]
By Lemma \ref{lem:axiommu} and Lemma \ref{lem:eab}
\begin{align*}
&\sum_{i=1}^{n} p_i a_i^{-1} +\sum_{i=1}^{n}  q_i \big(c_i -e(a_i,c_i)\big)\\
 &= \sum_{i=1}^n p_i \big(a_i^{-1} - e(p_{A}(b),a_i^{-1})\big) + \sum_{i=1}^{n} q_i \big(c_i - e(a_i,c_i) - e(p_{A}(b),c_i - e(a_i,c_i))\big)\\
&=\sum_{p_A(b)\leq a_i} p_i a_i^{-1} + \sum_{p_A(b) > a_i} p_i p_{A}(b)^{-1} + \sum_{i=1}^{n} q_i (c_i - e(p_{A}(b),c_i)).
\end{align*}
The statement of the Lemma follows.
\end{proof}

\section{Quantifier elimination}

In this section we prove Theorem \ref{thm:complete} and Theorem \ref{thm:qe}. The actual proof will use several embedding lemmas that we will establish first. Let $\kappa = |\Cal L_C^+|$ and let $\Cal M,\Cal N\models \TT^+$ such that $|\Cal M|\leq \kappa$ and $\Cal N$ is $\kappa^+$-saturated. Let $X\unlhd \Cal M$ and suppose that $\beta: X \to \Cal N$ is an $\Cal L_C^*$-embedding.


\subsection*{Types of elements of $A$} We first consider types of elements of $A(M)$ over $X$.

\begin{lem}\label{lem:aspecial2} Let $a=(a_1,\dots,a_n) \in A(M)^n$ and $b=(b_1,\dots,b_n) \in A(N)^n$.
Then
\[
\beta \tp_{\Cal L_B}(a|X)=\tp_{\Cal L_B}(b|\beta(X)) \Rightarrow \beta \tp_{\Cal L}(a|X)=\tp_{\Cal L}(b|\beta(X)).
\]
\end{lem}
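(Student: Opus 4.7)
My plan is to reduce to Lemma \ref{lem:leveltype}. First I would preprocess the tuples: for each $i$ with $a_i \in A(X)$, the atomic $\Cal L_B$-formula $y = a_i$ has parameter in $X$, so the $\Cal L_B$-type agreement forces $b_i = \beta(a_i)$, and these coordinates trivially match under $\beta$. Since $a_i = a_j$ iff $b_i = b_j$, I may also remove repetitions and reindex so that the remaining $a_i$'s are pairwise distinct; after sorting, they form a strictly increasing sequence (and the corresponding $b_i$'s do too, by $\Cal L_B$-preservation of $<$ on $A$). Let $a' \in A(M)^m$ and $b' \in A(N)^m$ denote the reduced sub-tuples.

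Next I would verify the hypotheses of Lemma \ref{lem:leveltype} for $a', b'$ over $X$ and $\beta(X)$ respectively. Quantifier elimination for $T$ and the fact that $X$ is an $\Cal L$-substructure give $X \preceq M$ and $\beta(X) \preceq N$. Since $\Cal L$ has a constant for each element of $Q$, $Q \subseteq \dcl(\emptyset) \subseteq X$, so by Lemma \ref{lem:llq} we have $0 \ll a'_i$ and $0 \ll b'_i$, yielding (i). For (ii): if $[a'_i] = [x]$ for some $0 \ll x \in X$, then Corollary \ref{cor:lambda} together with the closure of $X$ under $\lambda, s_A, p_A$ (all present in $\Cal L_C^*$) forces $\lambda(a'_i) \in A(X)$; since $a'_i \in A$ gives $\lambda(a'_i) = a'_i$, this yields $a'_i \in A(X)$, contradicting the preprocessing. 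The analogous argument handles the $b'_i$'s. Condition (iii) is immediate from Corollary \ref{cor:oneaperlevel}.

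The main task is condition (iv): $\beta \tp_{\Cal L}(a'_i|X) = \tp_{\Cal L}(b'_i|\beta(X))$ for each $i$. By quantifier elimination of $T$ and $X$ being $\Cal L$-closed, $\tp_{\Cal L}(a'_i|X)$ is determined by the cut of $a'_i$ in $X$. For $y \in X$ with $y < 1$ both $a'_i > y$ and $b'_i > \beta(y)$ hold since $A \subseteq M_{\geq 1}$. For $y \geq 1$ we have $\lambda(y) \in A(X)$, so $a'_i < y$ is equivalent to $a'_i < \lambda(y)$ (the equality case being excluded by $a'_i \notin A(X)$), an $\Cal L_B$-comparison over $A(X)$ that is preserved under $\beta$. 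Thus the two cuts agree.

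Applying Lemma \ref{lem:leveltype} (working inside a common sufficiently saturated extension, or directly, since its inductive proof adapts verbatim to comparing $\Cal L$-types across the $\Cal L$-isomorphism $\beta: X \to \beta(X)$) yields $\beta \tp_{\Cal L}(a'|X) = \tp_{\Cal L}(b'|\beta(X))$, and combined with the trivial match on the excised coordinates this gives the full conclusion. The principal obstacle I anticipate is condition (iv): extracting full $\Cal L$-cuts from $\Cal L_B$-data works only because $X$ is closed under $\lambda$ and $a'_i \notin A(X)$ rules out ties with $\lambda(y)$; everything else is routine bookkeeping against Lemma \ref{lem:aspecial} and Corollary \ref{cor:oneaperlevel}.
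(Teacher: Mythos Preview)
Your proposal is correct and follows essentially the same approach as the paper. The paper's proof reduces to $n=1$ via Corollary~\ref{cor:oneaperlevel} and Lemma~\ref{lem:leveltype}, then proves the $n=1$ case by the same cut argument you use for condition (iv): sandwich $a$ between $s_A(\lambda(x))$ and $\lambda(y)$ using closure of $X$ under $\lambda, s_A, p_A$, and transfer via the $\Cal L_B$-type. Your verification of condition (ii) is exactly Lemma~\ref{lem:aspecial}, which you could simply cite.
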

\begin{proof} By Lemma \ref{lem:llq} we can assume that $0\ll a_i$ and $0\ll b_i$ for $i=1,\dots, n$. By Corollary \ref{cor:oneaperlevel} and Lemma \ref{lem:leveltype} it is enough show that the statement of the Lemma holds for $n=1$. Let $a \in A(M)$ and $b\in A(N)$ be such that $\beta \tp_{\Cal L_B}(a|X)=\tp_{\Cal L_B}(b|\beta(X))$. We can immediately reduce to the case that $a\notin X$ and $b\notin \beta(X)$.
It is left to show that $b$ lies in the image of the cut of $a$ over $X$ under $\beta$. Suppose there are $x,y \in X$ such that $x<a<y$. Because $X$ is closed under $\lambda,p_A$ and $s_A$, $s_A(\lambda(x))\in X$ and $\lambda(y)\in X$. Since $a\notin X$, we get that $s_A(\lambda(x)) < a < \lambda(y)$. Because $b$ satisfies $\beta \tp_{\Cal L_B}(a/X)$ and $\beta$ is a $\Cal L_C^*$-embedding, we conclude that $s_A(\lambda(\beta(x))) < b < \lambda(\beta(y))$. Hence $\beta(x)<b<\beta(y)$.
\end{proof}

\begin{cor}\label{cor:aspecial} Let $c \in M$, $d\in N$, $a \in A(M)\setminus A(X)$ and $b\in A(N)\setminus A(\beta(X))$ such that $[c]=[a]$ and $[d]=[b]$. Then
 \[
\beta \tp_{\Cal L_B}(a|X) = \tp_{\Cal L_B}(b|\beta(X)) \Rightarrow \beta \tp_{\Cal L}(c|X) = \tp_{\Cal L}(d|\beta(X)).
\]
\end{cor}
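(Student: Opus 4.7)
The plan is to reduce the statement to a cut-comparison argument in the o-minimal reduct. First I would apply Lemma \ref{lem:aspecial2} to the one-tuples $(a)$ and $(b)$ to upgrade the hypothesis $\beta\tp_{\Cal L_B}(a|X) = \tp_{\Cal L_B}(b|\beta(X))$ to $\beta\tp_{\Cal L}(a|X) = \tp_{\Cal L}(b|\beta(X))$. Since $X \unlhd \Cal M$ is in particular an $\Cal L$-substructure and $T$ has quantifier elimination and definable Skolem functions, $X$ is an $\Cal L$-elementary substructure of $M$ with $\dcl_{\Cal L}(X) = X$, so the $\Cal L$-type of the single element $c$ over $X$ is determined by its cut: for each $t \in X$, whether $c < t$, $c = t$, or $c > t$. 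It then suffices to verify that the cuts of $c$ over $X$ and of $d$ over $\beta(X)$ correspond under $\beta$.

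The key structural observation is that Lemma \ref{lem:aspecial} together with $a \notin A(X)$ forces $[a] \neq [x]$ for every $x \in X$ with $0 \ll x$, and symmetrically $[b] \neq [y]$ for every $0 \ll y \in \beta(X)$. Note also that $0 \ll a$ by Lemma \ref{lem:llq} (since $Q \subseteq A(X)$), so $[a]$, and therefore $[c] = [a]$, is defined; similarly for $[d] = [b]$. Now fix $t \in X$. When $0 \ll t$, we have $[t] \neq [a] = [c]$; since distinct $T$-levels are convex and linearly ordered, this yields
\[
c < t \ \Longleftrightarrow\ [c] < [t] \ \Longleftrightarrow\ [a] < [t] \ \Longleftrightarrow\ a < t,
\]
and by the $\Cal L$-type agreement from step one, $a < t$ iff $b < \beta(t)$. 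Running the same level comparison for $d$ and $\beta(t)$ (legal because $[d] = [b]$ is disjoint from all $T$-levels arising from $\beta(X)$) gives $b < \beta(t)$ iff $d < \beta(t)$, and the chain yields $c < t$ iff $d < \beta(t)$. Moreover $c \neq t$ and $d \neq \beta(t)$ because the relevant $T$-levels are distinct.

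It remains to handle $t \in X$ with $0 \not\ll t$. Such a $t$ is bounded above by some $m \in \dcl_{\Cal L}(\emptyset)$; because $\beta$ is an $\Cal L$-embedding between two models of $T$, it fixes $\dcl_{\Cal L}(\emptyset)$ pointwise (under the canonical identification) and preserves $<$, so $\beta(t)$ is bounded above by the corresponding $m$ in $N$. Since $0 \ll c$ and $0 \ll d$, both $c > t$ and $d > \beta(t)$, and neither equals the corresponding $t$ or $\beta(t)$. Combining the two cases, $c$ and $d$ realise matching cuts, which finishes the proof. I do not anticipate a real obstacle here; the mildest subtlety is the bookkeeping of the $0 \not\ll t$ case, where one must note that $\beta$ transports the bounding element of $\dcl_{\Cal L}(\emptyset)$ faithfully, so the comparison with the "small" part of $X$ is automatic.
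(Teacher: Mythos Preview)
Your proof is correct and follows essentially the same route as the paper: both arguments use Lemma~\ref{lem:aspecial} to rule out any $x \in X$ with $[x]=[a]=[c]$, deduce that $a$ and $c$ (respectively $b$ and $d$) realise the same cut over $X$ (respectively $\beta(X)$), and then invoke Lemma~\ref{lem:aspecial2} to match the $\Cal L$-types of $a$ and $b$. The only difference is that you spell out the case $0\not\ll t$ explicitly, whereas the paper leaves this implicit in the claim ``$x<a<y$ iff $x<c<y$ for all $x,y\in X$''.
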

\begin{proof} Since $a \notin A(X)$ and $[a]=[c]$, there is no $x \in X$ such that $[x]=[c]$ by Lemma \ref{lem:aspecial}. Therefore $x<a<y$ iff $x<c< y$ for all $x,y \in X$. Similarly one shows that $\beta(x)<b<\beta(y)$ iff $\beta(x)<d< \beta(y)$ for all $x,y \in X$. By Lemma \ref{lem:aspecial2} $\beta \tp_{\Cal L}(a|X)=\tp_{\Cal L}(b|\beta(X))$. Thus $\beta \tp_{\Cal L}(c|X) = \tp_{\Cal L}(d|\beta(X))$.
\end{proof}

\noindent Later we will have to extend $X$ not only by elements of $A(M)$, but also by their images under $e(-,z)$ for certain $z \in C(M)$. The next Lemma shows that for $a\in A^n$ the $\Cal L_B$-type of $a$ over $X$ does not only determine the $\Cal L$-type of $a$, but also the $\Cal L$-type of $e(a,c)$ and $a$ over $X$ for every $c\in C(X)$.

\begin{prop}\label{prop:abimpl} Let $c=(c_1,\dots, c_n) \in C(X)^n$ and $a=(a_1,\dots,a_n)\in A(M)^n,b=(b_1,\dots,b_n)\in A(N)^n$.
 If $\beta \tp_{\Cal L_B}(a|X)=\tp_{\Cal L_B}(b|\beta(X))$, then
\[
\beta \tp_{\Cal L}(a,e(a_1,c_1),\dots,e(a_n,c_n)|X)=\tp_{\Cal L}(b,e(b_1,\beta(c_1)),\dots,e(b_n,\beta(c_n))|\beta(X)).
\]
\end{prop}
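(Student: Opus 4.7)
My plan is to prove this by induction on $n$, using Lemma \ref{lem:aspecial2} for the $a$-part and reducing the $e(a_i,c_i)$ components via a $T$-level analysis of $r_i := c_i - e(a_i,c_i)$ before feeding an appropriate tuple to Lemma \ref{lem:leveltype}.

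First I would perform trivial reductions. If $a_i \in A(X)$, then the $\Cal L_B$-hypothesis forces $b_i = \beta(a_i) \in A(\beta(X))$ (recall $A(X)$ is closed under $p_A, s_A$), and since $e$ is a symbol of $\Cal L_C^*$ and $X \unlhd \Cal M$ we have $e(a_i,c_i) \in X$ and $e(b_i,\beta(c_i)) \in \beta(X)$, so that component is carried by $\beta$ already. Further, the condition ``$e(a_i,c_i)=c_i$'' is equivalent (by Axiom T\ref{axiom:ca3} and Lemma \ref{lem:zero}) to the $\Cal L_B$-statement $S(c_i) \cap A_{>a_i} = \emptyset$, hence is preserved by the $\Cal L_B$-hypothesis; when it holds, the contribution of $e(a_i,c_i)$ is $c_i \in X$. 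So I may assume each $a_i \notin A(X)$ (whence $0 \ll a_i$ by Lemma \ref{lem:llq}) and each $r_i > 0$, and order $a_1 < \dots < a_n$.

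Next I would analyze $T$-levels. By Corollary \ref{cor:oneaperlevel}, $[a_1],\dots,[a_n]$ are pairwise disjoint; by Lemma \ref{lem:aspecial}, none meets $\bigcup_{0\ll x\in X}[x]$. Applying Lemma \ref{lem:valsum} with $q=(1)$ to $r_i \in C$ gives $[r_i^{-1}]=[p_A(d_i)]$, where $d_i := \min S(c_i)_{>a_i}$. Since $d_i$ is $\Cal L_B$-definable from $(a_i,c_i)$ (Lemma \ref{lem:definv}), both the element $d_i$ and whether $d_i \in A(X)$ are determined by the hypothesis; an argument tracing $e(a_i,c_i)$ through Axiom T\ref{axiom:ca3} shows that $d_i \in A(X)$ would force $e(a_i,c_i) \in X$ via the $\Cal L_C^*$-closure of $X$, contradicting the reduction, so we may assume $p_A(d_i) \notin A(X)$ and hence $[p_A(d_i)] \cap [x] = \emptyset$ for all $x \in X$ by Lemma \ref{lem:aspecial}. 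The core step then is to apply Lemma \ref{lem:leveltype} (after passing, if necessary, to an elementary $\Cal L$-extension of $X$ inside $\Cal M$ and a matching one inside $\Cal N$ to satisfy its hypothesis) to the tuple $(a_1,\dots,a_n, r_1^{-1},\dots,r_n^{-1})$. Since $c_i \in X$ we have $e(a_i,c_i) = c_i - r_i$, so the resulting $\Cal L$-type equality transfers back to the tuple $(a,e(a,c))$, giving the conclusion.

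The main obstacle is the level-coincidence bookkeeping. One must rule out $[a_i]=[r_j^{-1}]$ (equivalently $a_i = p_A(d_j)$) and $[r_i^{-1}]=[r_j^{-1}]$ (equivalently $p_A(d_i)=p_A(d_j)$), so that Lemma \ref{lem:leveltype} applies to a tuple with pairwise distinct $T$-levels. When coincidences occur, I would mimic the iterative construction of Lemma \ref{lem:distinctskies2}: replace $r_i^{-1}$ by an $\Cal L$-$X$-combination of $r_i^{-1}$ and the coinciding element that lives in a fresh $T$-level, checking that each such reshuffle is $\Cal L_B$-definable (hence preserved by $\beta$) and that the original tuple is $\Cal L$-recoverable over $X$ from the reshuffled one. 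A secondary subtlety is that Lemma \ref{lem:leveltype} is stated for $X \preceq M$ while in our setting $X \unlhd \Cal M$ only; this would be handled by first choosing, via $\kappa^+$-saturation of $\Cal N$, a common elementary extension where both types can be realized coherently, then arguing that the equality of $\Cal L$-types descends to $X$.
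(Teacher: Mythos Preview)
Your overall architecture is right (reduce to a $T$-level computation and invoke Lemma~\ref{lem:leveltype}), but the execution diverges from the paper's at the decisive point, and as written the level-coincidence step has a real gap.

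The paper does \emph{not} work with the tuple $(a_1,\dots,a_n,r_1^{-1},\dots,r_n^{-1})$. Instead it passes to
\[
u=\big(a_1^{-1},\dots,a_n^{-1},\,c_1-e(a_1,c_1),\dots,c_l-e(a_l,c_l)\big)\in C^{\,n+l},
\]
having first reordered so that these entries are $\Q$-linearly independent over $C(X)$, and having disposed of the $\Q$-linearly dependent tails via Proposition~\ref{prop:buechiexp}(v). The point of staying inside $C$ is that the three tools you need are all stated for $C$-tuples: Lemma~\ref{lem:distinctskies2} produces a triangular $\Q$-linear change of basis $r_1,\dots,r_{n+l}$ with pairwise distinct $\mu_{r_i}(u)$; Lemma~\ref{lem:inx} converts $\Q$-linear independence of $u$ over $C(X)$ into $\mu_{r_i}(u)\notin A(X)$; and Lemma~\ref{lem:valsum} reads off $[\,|r_i\cdot u|^{-1}]=[p_A(\mu_{r_i}(u))]$. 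Those three together feed Corollary~\ref{cor:aspecial} and then Lemma~\ref{lem:leveltype}.

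Your proposed reshuffle ``replace $r_i^{-1}$ by an $\Cal L$-$X$-combination of $r_i^{-1}$ and the coinciding element'' cannot be controlled the same way: Lemma~\ref{lem:valsum} gives $T$-level information only for $\Q$-linear combinations of elements of $C$, not for $\Cal L$-combinations of their inverses, so after your reshuffle you have no mechanism to locate the new $T$-level, nor to guarantee it avoids $\bigcup_{0\ll x\in X}[x]$. Concretely, if $c_1,c_2\in C(X)$ and $a_1=a_2=a\notin A(X)$ with $S(c_1)_{>a}=S(c_2)_{>a}$, then $r_1=r_2$, $d_1=d_2\notin A(X)$, yet no nontrivial combination of $r_1^{-1},r_2^{-1}$ produces a fresh level; the paper handles this by dropping $r_2$ as $\Q$-linearly dependent. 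More generally you are missing both the $\Q$-linear-independence reduction and the appeal to Lemma~\ref{lem:inx}, and without them you cannot certify the hypotheses of Lemma~\ref{lem:leveltype} after reshuffling.

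Two minor points. Your worry about $X\preceq M$ is unnecessary: $T$ has quantifier elimination and is universal, and $X\unlhd\Cal M$ is in particular an $\Cal L$-substructure, hence $X\preceq M$. And the induction on $n$ does no work; the paper proves the statement in one pass once $u$ is set up.
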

\begin{proof}   By Lemma \ref{lem:llq} we can assume that $0\ll a_i$ and $0\ll b_i$ for $i=1,\dots, n$. We can easily reduce to the case that $a_i \neq a_j$ and $b_i \neq b_j$ for $i,j \in \{1,\dots,n\}$ and $i\neq j$.
By Corollary \ref{cor:oneaperlevel} $[a_i] \neq [a_j]$ and $[b_i] \neq [b_j]$ for $i\neq j$. Suppose there is $x\in X$ such that $[a_i]=[x]$. Since $X \unlhd \Cal M$, we get $a_i \in A(X)$ by Corollary \ref{cor:lambda}. Thus $b_i \in A(\beta(X))$, because $b_i$ satisfies $\beta \tp_{\Cal L_B}(a_i|X)$. Therefore we can assume that there is no $x \in X$ with $[a_i]=[x]$. Similarly we can also assume that there is no $x\in X$ with $[b_i]=[\beta(x)]$.\\
After reordering $c_1,\dots,c_n$, we can assume that there is $l\in \N$ such that $a_1^{-1}$,$\dots$, $a_n^{-1},c_1-e(a_1,c_1),\dots,c_l - e(a_l,c_l)$ are $\Q$-linearly independent over $C(X)$, and for all $k>l$, $c_k-e(a_k,c_k)$ is $\Q$-linearly dependent over $C(X)$ and $a_1^{-1},\dots a_n^{-1},c_1-e(a_1,c_1),\dots,c_l - e(a_l,c_l)$.
Because  $a_i^{-1}$ and $c_i-e(a_i,c_i)$ are $\Cal L_B$-definable over $X\cup \{a_i\}$ for each $i$ and $\beta \tp_{\Cal L_B}(a|X)=\tp_{\Cal L_B}(b|\beta(X))$, we deduce from Proposition \ref{prop:buechiexp}(v) that
$c_k-e(b_k,c_k)$ satisfies the same $\Q$-linear dependency over $C(\beta(X))$ and $b_1^{-1},\dots b_n^{-1},\beta(c_1)-e(b_1,\beta(c_1)),\dots,\beta(c_l) - e(b_l,\beta(c_l))$.
%
It is only  left to show that
\[
\beta \tp_{\Cal L}(a,e(a_1,c_1),\dots,e(a_l,c_l)|X)=\tp_{\Cal L}(b,e(b_1,\beta(c_1)),\dots,e(b_l,\beta(c_l))|\beta(X)).
\]
We will use the following abbreviations
\begin{align*}
u &:= \big(a_1^{-1},\dots,a_n^{-1},c_1-e(a_1,c_1),\dots,c_l-e(a_{l},c_l)\big),\\
v &:= \big(b_1^{-1},\dots,b_n^{-1},\beta(c_1)-e(b_1,\beta(c_1)),\dots,\beta(c_l)-e(b_{l},\beta(c_l))\big).
\end{align*}
Because $c_i \in X$ for each $i$, it is enough to show that $\beta \tp_{\Cal L}(u|X)=\tp_{\Cal L}(v|\beta(X))$. Since $\beta \tp_{\Cal L_B}(a|X)=\tp_{\Cal L_B}(b|\beta(X))$, we have $\beta \tp_{\Cal L_B}(u|X)=\tp_{\Cal L_B}(v|\beta(X))$ by Lemma \ref{lem:definv}. By $\Q$-linear independence of $u$ and Lemma \ref{lem:distinctskies2} there are tuples of rational numbers $r_1,\dots,r_{n+l}\in\Q^{n+l}$ with $r_{i,i}\neq 0$ and $r_{i,j}=0$ for $j>i$ such that $0\neq \mu_{r_i}(u) \neq \mu_{r_j}(u)$ for $i\neq j$. Because $\mu_r$ is a $\Cal L_B$-definable function for every $r\in \Q^m$ and $\beta \tp_{\Cal L_B}(u|X)=\tp_{\Cal L_B}(v|\beta(X))$, we have that $0\neq\mu_{r_i}(v) \neq \mu_{r_j}(v)$ for $i\neq j$. Moreover, from Lemma \ref{lem:inx} and $u$ being $\Q$-linearly independent over $C(X)$ we conclude that $\mu_{r_i}(u)\notin X$ for each $i$. Since $\beta \tp_{\Cal L_B}(u|X)=\tp_{\Cal L_B}(v|\beta(X))$, we get that $\mu_{r_i}(v)\notin \beta(X)$ for each $i$. This implies that $0\ll \mu_{r_i}(u)$ and $0\ll \mu_{r_i}(v)$. By Lemma \ref{lem:valsum} $[|r_i\cdot u|^{-1}]=[p_A(\mu_{r_i}(u))]$ and $[|r_i \cdot v|^{-1}]=[p_A(\mu_{r_i}(v))]$. Since $\beta \tp_{\Cal L_B}(u|X)=\tp_{\Cal L_B}(v|\beta(X))$, $\beta \tp_{\Cal L_B}(p_A(\mu_{r_i}(u))|X)=\tp_{\Cal L_B}(p_A(\mu_{r_i}(v))|\beta(X))$. By Corollary \ref{cor:aspecial} $\beta \tp_{\Cal L}(r_i\cdot u|X) = \tp_{\Cal L}(r_i \cdot v|\beta(X))$. Hence by Lemma \ref{lem:leveltype}
\[
\beta\tp_{\Cal L}(r_1\cdot u,\dots, r_n \cdot u|X) = \tp_{\Cal L}(r_1\cdot v,\dots, r_n \cdot v|\beta(X)).
\]
Because $r_{i,i}\neq 0$ for each $i$ and $r_{i,j}=0$ for $j>i$, $\beta \tp_{\Cal L}(u|X)=\tp_{\Cal L}(v|\beta(X))$.
\end{proof}

\subsection*{Types of elements of $C$} Now consider types of elements of $C(M)$ over $X$. In contrast to the results about types of tuples of elements of $A$, we will only consider types of a single element of $C(M)$.

\begin{lem}\label{lem:ctype1} Let $c \in C(M)$ and $d\in C(N)$. Then
\[
\beta \tp_{\Cal L_B}(c|X) = \tp_{\Cal L_B}(d|\beta(X)) \Rightarrow \beta \tp_{\Cal L}(c|X)=\tp_{\Cal L}(d|\beta(X)).
\]
\end{lem}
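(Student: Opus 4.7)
By quantifier elimination for $T$ in $\Cal L$ and $\Cal L$-definable closedness of $X$ (which holds since $X \unlhd \Cal M$), the $\Cal L$-type of a single element over $X$ is determined by its cut in $X$. So the goal reduces to showing that $\tp_{\Cal L_B}(c|X)$ determines the cut of $c$ in $X$: applying $\beta$ and the hypothesis will then yield the matching cut for $d$ in $\beta(X)$, and the $\Cal L$-type equality follows. If $c \in X$, then $c \in C(X)$ and the $\Cal L_B$-hypothesis forces $d = \beta(c)$; so assume $c \notin X$.

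The first key step is an intermediate claim: the ambient $M$-order on $C$ is itself $\Cal L_B$-definable. By the elementary equivalence $(A,C,E,s_A) \equiv (\N,\Cal P(\N),\in,s_\N)$ (Axiom~T\ref{axiom:buechi}), the $\Cal L_B$-structure supports symmetric differences inside $C$ and minima of nonempty $S$-subsets, so for distinct $c_1, c_2 \in C$ there is a well-defined least $a \in A$ at which $c_1$ and $c_2$ disagree in $E$-membership. Using Axioms~T\ref{axiom:ca}--T\ref{axiom:ca3} and T\ref{axiom:succc} to evaluate $e(a,c_i)$ at this first disagreement, bounding the tails $c_i - e(a,c_i) \leq a^{-1}$ via Axiom~T\ref{axiom:ca}, and invoking $a > 2 p_A(a)$ (which follows from Property~(A) of $Q$ when $a \in Q$ and from Axiom~T\ref{axiom:fast} when $0 \ll a$) so that the tails cannot swamp the head difference $p_A(a)^{-1} - a^{-1}$, one verifies $c_1 < c_2 \iff \neg E(a,c_1) \wedge E(a,c_2)$. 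Consequently, the order of $c$ relative to any $e' \in C(X)$ is determined by $\tp_{\Cal L_B}(c|X)$.

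Now fix $x \in X$. If $x \notin [0,1]$, the question ``$x < c$?'' follows from $0,1 \in C(X) \subseteq X$. Otherwise $\nu(x) \in C(X)$, since $\nu$ is a function symbol of $\Cal L_C^*$ and $X \unlhd \Cal M$. If $x = \nu(x) \in C$, the intermediate claim applies directly. Else $x$ lies in a complementary interval of $C$ with left endpoint $\nu(x)$; by Corollary~\ref{cor:nuexplicit}, $\nu(x) = e(p_A(a_0), \nu(x)) + a_0^{-1}$, where $a_0 \in A$ is maximal with $\neg E(a_0, \nu(x))$, and by Corollary~\ref{cor:complintervals} the right endpoint of the interval is $f := \nu(x) + p_A(a_0)^{-1} - 2 a_0^{-1}$. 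The map $C \to A$ sending $\nu(x)$ to $a_0$ is $\Cal L_B$-$\emptyset$-definable (with some default value where no such maximum exists), hence is a symbol of $\Cal L_C^*$; therefore $a_0, f \in X$. Since $c \in C$ and $(\nu(x), f) \cap C = \emptyset$, either $c \leq \nu(x) < x$ (giving $x > c$) or $c \geq f > x$ (giving $x < c$), and which alternative holds is determined by $\tp_{\Cal L_B}(c|X)$ via the intermediate claim. As $\beta$ preserves the $\Cal L_C^*$-definable construction of $a_0, f$ from $\nu(x)$ as well as the $\Cal L_B$-structure on $X$, the analogous answer holds for $\beta(x)$ and $d$ in $N$. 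The main obstacle is the intermediate claim itself --- aligning the proposed $\Cal L_B$-definable order on $C$ with the $M$-order via the tail-cancellation computation sketched above.
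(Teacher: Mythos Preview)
Your proposal is correct and follows essentially the same approach as the paper: reduce to showing that the cut of $c$ in $X$ is determined by its $\Cal L_B$-type, use closure of $X$ under $\nu$ to replace an arbitrary $x\in X$ by $\nu(x)\in C(X)$, and then invoke $\Cal L_B$-definability of the order on $C$. The paper leaves the last point implicit, whereas you spell it out; conversely, the paper's reduction is slightly cleaner than yours. You compute the \emph{right} endpoint $f$ of the complementary interval containing $x$, but this is unnecessary: since $c\in C$ and $c\notin X$, one has $x<c$ iff $\nu(x)<c$ directly (if $\nu(x)<c$ then $c\notin(\nu(x),x]$ forces $c>x$; if $c<x$ then $c\le\nu(x)$ and $c\ne\nu(x)$). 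So the chain $x<c<y\iff \nu(x)<c<\nu(y)\iff \nu(\beta(x))<d<\nu(\beta(y))\iff \beta(x)<d<\beta(y)$ suffices, and the detour through $a_0$ and $f$ can be dropped.
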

\begin{proof} Suppose $\beta \tp_{\Cal L_B}(c|X) = \tp_{\Cal L_B}(d|\beta(X))$. We can easily reduce to the case that $c\notin X$ and $d \notin \beta(X)$. Since $X\unlhd \Cal M$ and $\beta$ is a $\Cal L_C^*$-embedding, both $X$ and $\beta(X)$ are closed under $\nu$. Consequently, for all $x \in X$ with $\nu(x)<c$, we have $x < c$, and for all $y \in X$ with $c<y$, we have $c < \nu(y)$. Similarly for all $x \in X$ with $\nu(\beta(x))<d$, we have $\beta(x) < d$, and for all $y \in X$ with $d<\beta(y)$, we have $d < \nu(\beta(y))$. Let $x,y \in X$. Since $\beta \tp_{\Cal L_B}(c|X) = \tp_{\Cal L_B}(d|\beta(X))$, $x<c<y$ iff $\nu(x)<c<\nu(y)$ iff $\nu(\beta(x)) < d<\nu(\beta(y))$ iff $\beta(x)<d<\beta(y)$. Thus $d$ lies in the image of the cut of $c$ over $X$ under $\beta$. Therefore $\beta \tp_{\Cal L}(c|X)=\tp_{\Cal L}(d|\beta(X))$.
%
\end{proof}

\begin{cor}\label{cor:ctype1} Let $\varphi(x)$ be a $\Cal L$-$X$-formula and let $p(x)$ be a complete $\Cal L_B$-type over $X$. Then there is a $\Cal L_B$-$X$-formula $\psi(x)\in p(x)$ such that either
$\Cal M \models \forall x \in C \ \psi(x) \rightarrow \varphi(x)$  or
$\Cal M \models \forall x \in C \ \psi(x) \rightarrow \neg \varphi(x)$ .
\end{cor}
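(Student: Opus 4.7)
The plan is a compactness argument leveraging Lemma \ref{lem:ctype1}. That lemma says that realisations of the same $\Cal L_B$-type over $X$ by elements of $C$ — in any structure — must agree on every $\Cal L$-$X$-formula, in particular on $\varphi$; compactness and completeness of $p$ will then promote this agreement to a single witnessing formula.

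First I would pass to a sufficiently saturated $\Cal L_C^+$-elementary extension $\Cal N'$ of $\Cal M$. Since $X \unlhd \Cal M \preceq \Cal N'$ we have $X \unlhd \Cal N'$, and the identity inclusion $X\hookrightarrow \Cal N'$ is an $\Cal L_C^*$-embedding, so Lemma \ref{lem:ctype1} is applicable to any pair of elements of $C(\Cal N')$ with $\beta=\mathrm{id}$. Now suppose toward contradiction that no $\psi\in p$ witnesses either alternative. Then, using that $p$ is complete and hence closed under finite conjunctions, both partial $\Cal L_C^+$-types
\[
p(x)\cup\{x\in C,\ \varphi(x)\}\qquad\text{and}\qquad p(x)\cup\{x\in C,\ \neg\varphi(x)\}
\]
are finitely satisfiable in $\Cal M$, hence realised in $\Cal N'$ by some $c,d\in C(\Cal N')$ respectively. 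By construction $\tp_{\Cal L_B}(c|X)=p=\tp_{\Cal L_B}(d|X)$, so Lemma \ref{lem:ctype1} forces $\tp_{\Cal L}(c|X)=\tp_{\Cal L}(d|X)$, contradicting $\Cal N'\models \varphi(c)\wedge\neg\varphi(d)$.

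Hence at least one of the two partial types is not finitely satisfiable in $\Cal M$; compactness then produces a finite conjunction $\psi$ of members of $p$ that rules it out, and $\psi$ itself lies in $p$ by completeness. The desired universal statement about $\Cal M$ then follows directly (or by transferring from $\Cal N'$ via elementarity). The only non-routine ingredient is Lemma \ref{lem:ctype1} itself, which has already been established; I do not anticipate any substantive obstacle beyond keeping the ambient structures straight when invoking that lemma.
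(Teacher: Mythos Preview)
Your argument is correct and essentially identical to the paper's: both pass to a saturated elementary extension, assume the conclusion fails, use saturation to realise $p$ by two elements $c,d\in C$ with $\varphi(c)\wedge\neg\varphi(d)$, and then invoke Lemma~\ref{lem:ctype1} for the contradiction. The paper is just slightly terser, writing ``we can assume $\Cal M=\Cal N$'' where you spell out the finite-satisfiability step.
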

\begin{proof} By replacing $\Cal M$ by a $\kappa^+$-saturated elementary extension, we can assume that $\Cal M=\Cal N$. Suppose the conclusion of the corollary fails. By saturation of $\Cal M$ there are $c,d \in C(M)$ such that $p=\tp_{\Cal L_B}(c|X)=\tp_{\Cal L_B}(d|X)$ and $\varphi(c) \wedge \neg \varphi(d)$. By Lemma \ref{lem:ctype1} $\tp_{\Cal L}(c|X) = \tp_{\Cal L}(d|X)$. This contradicts $\varphi(c) \wedge \neg \varphi(d)$. 
\end{proof}

\begin{prop}\label{prop:cbimpl} Let $c \in C(M), d\in C(N)$ and $a_1,\dots,a_n \in A(X)$. If $\beta \tp_{\Cal L_B}(c|X) = \tp_{\Cal L_B}(d|\beta(X))$,
then
\begin{equation}\label{eq:lbrl}
\beta \tp_{\Cal L}(c,e(a_1,c),\dots,e(a_n,c)|X)=\tp_{\Cal L}(d,e(\beta(a_1),d),\dots,e(\beta(a_n),d)|\beta(X)).
\end{equation}
\end{prop}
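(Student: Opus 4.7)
The strategy is to extend the $\Cal L_C^*$-embedding $\beta$ to an $\Cal L_C^*$-embedding $\gamma$ on a larger special substructure $Y\unlhd \Cal M$ that contains $c$ and all of $e(a_1,c),\dots,e(a_n,c)$; the desired $\Cal L$-type equality is then read off by evaluating $\gamma$ at the tuple. Assume without loss of generality that $a_1<a_2<\dots<a_n$. Lemma \ref{lem:ctype1} already supplies $\beta\tp_{\Cal L}(c|X)=\tp_{\Cal L}(d|\beta(X))$, and because each $e(a_i,c)$ is $\Cal L_B$-definable from $c$ and $a_i\in A(X)$ via $S(e(a_i,c))=S(c)_{\leq a_i}$, the hypothesis immediately lifts to the joint $\Cal L_B$-type of $(c,e(a_1,c),\dots,e(a_n,c))$ over $X$.

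Construct $Y$ as the smallest superset of $X\cup\{c\}$ that is closed under $\dcl$, under the $\Cal L_C^*$-functions $\lambda$, $\nu$, and $e(a,-)$ for every $a\in A(Y)$, and under adjoining every $a\in A$ that is $\Cal L_B$-definable from $Y$. Setting $D:=Y\cap C$, one checks that $Y=\dcl(X\cup D)$, that $D$ is closed under $e(a,-)$ for every $a\in A(Y)$, and that $Y$ is $A$-closed, so Proposition \ref{cor:aclosure} gives $Y\unlhd \Cal M$. Define $\gamma:Y\to \Cal N$ by $\gamma|_X:=\beta$, $\gamma(c):=d$, and then successively as follows: if $a\in A(Y)\setminus A(X)$ is the unique realization in $A$ of an $\Cal L_B$-$X$-formula with parameter $c$, let $\gamma(a)$ be the unique realization of the same formula with parameter $d$, which is well-defined by the $\Cal L_B$-type hypothesis; for every $a\in A(Y)$ set $\gamma(e(a,c)):=e(\gamma(a),d)$.

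The main obstacle is verifying that $\gamma$ is an $\Cal L$-embedding. The key device is the telescoping identity $c=e(a_1,c)+\sum_{i=2}^{n}\bigl(e(a_i,c)-e(a_{i-1},c)\bigr)+\bigl(c-e(a_n,c)\bigr)$; by Axiom T\ref{axiom:ca3} and Lemma \ref{lem:eab} every summand $\tilde c_i$ lies in $C$, its $\Cal L_B$-type over $X$ is determined by the hypothesis (its support being an $\Cal L_B$-definable sub-interval of $S(c)$), and by Lemma \ref{lem:ctype1} its $\Cal L$-type over $X$ is then determined as well. When $\tilde c_i$ is nonzero and the smallest $b_i$ of its support satisfies $b_i\gg 0$, the inverse $\tilde c_i^{-1}$ has $T$-level $[p_A(b_i)]$; by Lemma \ref{lem:oneaperlevel} these $T$-levels are pairwise distinct across $i$, and for new $b_i\in A(Y)\setminus A(X)$ the $\Cal L$-type is pinned down by Corollary \ref{cor:aspecial}. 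Applying Lemma \ref{lem:leveltype} to the resulting tuple of $\tilde c_i^{-1}$'s assembles the joint $\Cal L$-type of the summands from the individual cuts, exactly in the spirit of the proof of Proposition \ref{prop:abimpl}. Finally, Lemma \ref{lem:eavf02} forces every $\Cal L$-dependency inside $C(Y)$ over $X$ to be a $\Q$-linear combination of elements already in $C(X)\cup\{c,e(a_1,c),\dots,e(a_n,c)\}$, so preservation of $\Cal L$ by $\gamma$ on $Y$ follows from preservation on this generating set, and evaluating $\gamma$ at the tuple yields the desired $\Cal L$-type equality.
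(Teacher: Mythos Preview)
Your telescoping idea is natural, but the appeal to Lemma~\ref{lem:leveltype} breaks down precisely because the $a_i$ lie in $A(X)$. Write $\tilde c_i=e(a_i,c)-e(a_{i-1},c)$ and $b_i=\min S(\tilde c_i)\in(a_{i-1},a_i]$. By Lemma~\ref{lem:valsum} the $T$-level of $\tilde c_i^{-1}$ is $[p_A(b_i)]$; the intervals $[a_{i-1},a_i)$ are disjoint, so condition~(iii) of Lemma~\ref{lem:leveltype} holds, but nothing prevents $b_i=s_A(a_{i-1})$, in which case $p_A(b_i)=a_{i-1}\in X$ and $[\tilde c_i^{-1}]$ meets $X$. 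Condition~(ii) then fails, and Lemma~\ref{lem:leveltype} no longer assembles the joint type from the individual cuts. This is the essential difference from Proposition~\ref{prop:abimpl}: there one first reduces to $a_i\notin A(X)$ and then invokes Lemma~\ref{lem:inx} to force $\mu_{r_i}(u)\notin A(X)$, which is exactly what supplies condition~(ii). With the $a_i$ inside $A(X)$ that mechanism is unavailable.

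The paper sidesteps this obstruction by arguing inductively on $n$ and using the $T$-convex valuation at $a_n$ rather than $T$-levels. Assuming the statement for $u=(e(a_1,c),\dots,e(a_{n-1},c))$, one first rules out $\overline{(u,c)}^{a_n}$ being $\dcl$-dependent over $\overline{X}^{a_n}$ via Lemma~\ref{lem:eavf12} (this would force an unwanted $\Q$-linear relation). Then for any continuous $\Cal L$-$\emptyset$-definable $f,g$ not constant in the last $n$ coordinates, one has $f(x,c,u),g(x,c,u)\notin\mathfrak{m}_{a_n}$ while $c-e(a_n,c)\in\mathfrak{m}_{a_n}$, so the inequality $f(x,c,u)<c-e(a_n,c)<g(x,c,u)$ collapses to $f(x,c,u)<0<g(x,c,u)$, which is decided by the inductive hypothesis. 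This valuation argument is what replaces the failed hypothesis~(ii), and your proposal does not supply an analogue.
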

\begin{proof} Let $c \in C(M)$, $d\in C(N)$ such that $\beta \tp_{\Cal L_B}(c|X) = \tp_{\Cal L_B}(d|\beta(X))$. Let $a_1,\dots,a_n\in A(X)$ with $a_1<a_2<\dots < a_n$.
Because $e(x,y)$ is $\Cal L_B$-definable from $x,y$ and $\beta \tp_{\Cal L_B}(c|X) = \tp_{\Cal L_B}(d|\beta(X))$,
\[
\beta \tp_{\Cal L_B}(c,e(a_1,c),\dots,e(a_n,c)|X)=\tp_{\Cal L_B}(d,e(\beta(a_1),d),\dots,e(\beta(a_n),d)|\beta(X)).
\]
We will now show that the statement by induction on $n$. By Proposition \ref{prop:buechiexp}(v) we can directly reduce to the case that $e(a_1,c),\dots,$ $e(a_n,c)$ are $\Q$-linearly independent over $C(X)$ and $e(\beta(a_1),d),\dots,e(\beta(a_n),d)$ are $\Q$-linearly independent over $C(\beta(X))$. For $n=0$, note that  $\tp_{\Cal L}(c|X)=\tp_{\Cal L}(d|\beta(X))$ by Lemma \ref{lem:ctype1}. For the induction step, suppose that
\begin{equation}\label{eq:lbrl3}
\beta \tp_{\Cal L}(c,e(a_1,c),\dots,e(a_{n-1},c)|X)=\tp_{\Cal L}(d,e(\beta(a_1),d),\dots,e(\beta(a_{n-1}),d)|\beta(X)).
\end{equation}
We will use the following abbreviations:
\[
u := \big(e(a_1,c),\dots,e(a_{n-1},c)\big), v:= \big(e(\beta(a_1),d),\dots,e(\beta(a_{n-1}),d)\big).
\]
\noindent Suppose that $\overline{u}^{a_n},\overline{c}^{a_n}$ are $\dcl$-dependent over $\overline{X}^{a_n}$. By Lemma \ref{lem:eavf12} and since $e(a_n,e(a_j,c))=e(a_j,c)$ for $j\leq n$, there are $x \in C(X)^m$, $p \in \Q^m$ and $q\in \Q^n$ such that $e(a_n,c) = (p,q) (x,u)$. This contradicts our assumption that $e(a_1,c),\dots,\linebreak e(a_n,c)$ are $\Q$-linearly independent over $C(X)$. Similarly we can rule out that
$\overline{v},\overline{d}^{a_n}$ are $\dcl$-dependent over $\overline{\beta(X)}^{a_n}$.

\noindent Now suppose that $\overline{u}^{a_n},\overline{c}^{a_n}$ are $\dcl$-independent over $\overline{X}^{a_n}$. From Lemma \ref{lem:definv} and the fact that $\beta \tp_{\Cal L_B}(c|X) = \tp_{\Cal L_B}(d|\beta(X))$, we deduce $\beta \tp_{\Cal L_B}(c- e(a_n,c)|X)=\tp_{\Cal L_B}(c-e(a_n,d)|\beta(X))$. Thus by Lemma \ref{lem:ctype1}
\begin{equation}\label{eq:lbrl4}
\beta \tp_{\Cal L}(c- e(a_n,c)|X)=\tp_{\Cal L}(d-e(a_n,d)|\beta(X)) .
\end{equation}
Let $Z \subseteq M^{m+n}$ open and $f : Z \to M$ and $g: Z \to M$ be $\Cal L$-$\emptyset$-definable continuous functions. Let $x=(x_1,\dots,x_m) \in X^m$. By o-minimality of $T$ it is enough to show that
\begin{equation}\label{eq:lbrl2}
f(x,c,u) < c- e(a_n,c) < g(x,c,u) \hbox{ iff } f(\beta(x),d,v) < d-e(\beta(a_n),d) < g(\beta(x),d,v).
\end{equation}
By regular cell-decomposition in o-minimal structures and \eqref{eq:lbrl4}, we can reduce to the case that $f,g$ are not constant in all the last $n$ coordinates. By our assumptions on $c$ and $d$ we have that $f(x,c,u)$, $g(x,c,u) \notin \mathfrak{m}_{a_n}$. However, by Axiom T\ref{axiom:ca} $c-e(a_n,c)\in \mathfrak{m}_{a_n}$. Therefore
\[
f(x,c,u) < c- e(a_n,c)< g(x,c,u) \hbox{ iff } f(x,c,u) < 0 < g(x,c,u).
\]
A similar argument shows that
\[
f(\beta(x),d,v) < d- e(\beta(a_n),d)< g(\beta(x),d,v) \hbox{ iff } f(\beta(x),d,v) < 0 < g(\beta(x),d,v).
\]
Hence \eqref{eq:lbrl2} follows from \eqref{eq:lbrl3}.
\end{proof}

\begin{lem}\label{lem:yspecial} Let $U \subseteq C(M), V \subseteq C(N)$ and $\gamma : \dcl(X\cup U) \to \dcl(\beta(X)\cup V)$ such that
\begin{itemize}
\item[(i)] $\dcl(X \cup U)\unlhd \Cal M$,
\item[(ii)] $\gamma$ is an $\Cal L$-isomorphism extending $\beta$ with $\gamma(U)=V$,
\item[(iii)] $\beta \tp_{\Cal L_B}(U|X) = \tp_{\Cal L_B}(V|\beta(X))$.
\end{itemize}
Then $\dcl(\beta(X)\cup V) \unlhd \Cal N$ and $\gamma$ is $\Cal L_C^{*}$-isomorphism.
\end{lem}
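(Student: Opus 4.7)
The plan is to verify the conclusion in three stages: first, reduce to a convenient setup where $U$ is closed under $e(a,-)$ for every $a \in A(Y)$, writing $Y := \dcl(X\cup U)$; second, establish that $\gamma$ preserves the full $\Cal L_B$-structure on $Y$; third, apply Proposition~\ref{cor:aclosure} to conclude $Y' := \dcl(\beta(X)\cup V) \unlhd \Cal N$, and read off that $\gamma$ respects each symbol of $\Cal L_C^*$.

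For the first stage, I would enlarge $U$ to $U^* := C(Y)$ and $V$ correspondingly to $V^* := \gamma(C(Y))$; since $Y \unlhd \Cal M$, $U^*$ is automatically closed under $e(a,-)$ for every $a \in A(Y)$. By Lemma~\ref{lem:eavf02} every element of $C(Y)$ is a $\Q$-linear combination of elements of $C(X)\cup U$, and by Proposition~\ref{prop:buechiexp}(vi) membership of such a combination in $C$ is $\Cal L_B$-expressible; hypothesis~(iii) together with $\beta$ being an $\Cal L_C^*$-embedding then forces $V^* \subseteq C(\Cal N)$, and a parallel application of Proposition~\ref{prop:buechiexp} gives $\beta\tp_{\Cal L_B}(U^*|X) = \tp_{\Cal L_B}(V^*|\beta(X))$. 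Since $\dcl(X\cup U^*) = Y$ and $\dcl(\beta(X)\cup V^*) = Y'$, we may replace $U,V$ by $U^*,V^*$ without loss.

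For the second stage, the central claim is that for every $\Cal L_B$-formula $\varphi$ and every tuple $w$ in $A(Y)^m\times C(Y)^n$,
\[
\Cal B(\Cal M)\models\varphi(w) \quad\Longleftrightarrow\quad \Cal B(\Cal N)\models\varphi(\gamma(w)).
\]
The $C$-components reduce to $C(X)\cup U$ via Lemma~\ref{lem:eavf02} and the $\Cal L_B$-expressibility of arithmetic relations from Proposition~\ref{prop:buechiexp}, where the claim is supplied by (iii) and by $\beta$. The $A$-components are more delicate: by Lemma~\ref{lem:aspecial} each $a\in A(Y)\setminus A(X)$ sits in a fresh $T$-level disjoint from $X$, so $a$ arises through $\lambda$ applied to $Y$-elements or through $\Cal L_B$-definability from $A(X)\cup C(X)\cup U$, and one uses (iii) together with the $T$-level machinery of Section~5 (Corollary~\ref{cor:oneaperlevel}, Lemma~\ref{lem:leveltype}, and the dependence results around Proposition~\ref{prop:abimpl}) to match each such $a$ uniquely with an element $\gamma(a)\in A(\Cal N)$ in the same fresh $T$-level and with the same $\Cal L_B$-definition.

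For the third stage, first check $\beta(X)\unlhd\Cal N$: writing $X = \dcl(Z\cup D)$ with $Z\unlhd^+\Cal M$ (or $Z = \emptyset$), the functions $\nu_f,\tau_f$ are defined in $\TT^+$ by the $\Cal L_C$-formulas $\varphi_f$, which are preserved by the $\Cal L_C^*$-embedding $\beta$; hence $\beta(Z)\unlhd^+\Cal N$ and $\beta(X)\unlhd\Cal N$. With stage two in hand, $V$ is closed under $e(b,-)$ for every $b\in A(Y')$ and $Y'$ is $A$-closed, so Proposition~\ref{cor:aclosure} yields $Y'\unlhd\Cal N$. Preservation of the remaining $\Cal L_C^*$-symbols is then automatic: $\lambda$ and $\nu$ are $\Cal L_C$-definable and so preserved once $A$ and $C$ are; $e$ and each $\Cal L_B$-definable $g\colon A^m\times C^n\to A$ are preserved via stage two and Lemma~\ref{lem:definv}; each predicate $P_\varphi$ is preserved by construction. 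The main obstacle is the $A$-part of stage two: pinning down that every new $a \in A(Y)\setminus A(X)$ is genuinely matched to a unique element of $A(\Cal N)$ with the correct $\Cal L_B$-relations, which requires careful bookkeeping of how $A(Y)$ is built inductively from $A(X)$, $U$, and $\lambda$, and a disciplined use of the fresh-$T$-level picture from Section~5.
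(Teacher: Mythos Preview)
Your Stage~1 is essentially the paper's Claim~1 and is correct: every element of $C(Y)$ is a $\Q$-linear combination of elements of $C(X)\cup U$ by Lemma~\ref{lem:eavf02}, and membership in $C$ is $\Cal L_B$-expressible via Proposition~\ref{prop:buechiexp}(vi), so (iii) transfers. But the remaining two stages each contain a genuine gap.

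In Stage~2 your treatment of the $A$-components does not establish the crucial fact $\gamma(a)\in A(N)$ for $a\in A(Y)$. You appeal to $T$-level machinery and Proposition~\ref{prop:abimpl}, but that proposition takes $\Cal L_B$-type agreement as \emph{input} and produces $\Cal L$-type agreement as output; it cannot be used to show $\gamma(a)$ lands in $A(N)$. Saying ``$a$ arises through $\lambda$ or through $\Cal L_B$-definability'' does not help either: to conclude $\gamma(\lambda(y))=\lambda(\gamma(y))$ you already need $\gamma(a)\in A(N)$ and $\gamma(s_A(a))=s_A(\gamma(a))$, which is circular. The paper breaks this circle by a simple observation you are missing: since $a\in A(Y)$ forces $a^{-1}\in C(Y)$, Claim~1 already gives $\gamma(a)^{-1}\in C(N)$ with matching $\Cal L_B$-type; by Corollary~\ref{cor:ainv} the $\Cal L_B$-type of $a^{-1}$ encodes ``$S(c)=A_{>b}$ for some $b\in A$'', so the same holds for $\gamma(a)^{-1}$, forcing $\gamma(a)\in A(N)$. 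This one-line reduction of the $A$-part to the $C$-part is the key move (the paper's Claim~2).

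In Stage~3 you want to apply Proposition~\ref{cor:aclosure} inside $\Cal N$, which requires $\beta(X)\unlhd\Cal N$. Your justification---that the $\Cal L_C$-formulas $\varphi_f$ defining $\nu_f,\tau_f$ are ``preserved by the $\Cal L_C^*$-embedding $\beta$''---is incorrect: $\beta$ preserves only quantifier-free $\Cal L_C^*$-formulas, whereas $\varphi_f$ involves a supremum over $C^n$ and is certainly not quantifier-free. (Showing $\beta$ respects $\nu_f,\tau_f$ is exactly the content of the later Lemma~\ref{lem:nusandtaus}, which you cannot invoke here.) The paper avoids this entirely by \emph{not} using Proposition~\ref{cor:aclosure}: once Claims~1 and~2 give $C(Y')=\gamma(C(X'))$ and $A(Y')=\gamma(A(X'))$ with matching $\Cal L_B$-types, it verifies directly (Claims~3 and~4) that $\lambda$ and $\nu$ commute with $\gamma$, using Corollary~\ref{cor:nuexplicit} for $\nu$. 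Closure under $e$ and $\Cal L_B$-definable functions into $A$ then follows immediately from the $\Cal L_B$-type preservation.
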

\begin{proof} For ease of notation, set $X':=\dcl(X\cup U)$ and $Y':=\dcl(\beta(X) \cup V)$. We will establish the conclusion of the Lemma by proving a sequence of claims.

\begin{claim}\label{claim:one} Let $c \in C(X')$.  Then $c$ is $\Cal L_B$-definable over $X \cup U$, $\gamma(c) \in C(Y')$ and $\gamma \tp_{\Cal L_B}(c|X\cup U) = \tp_{\Cal L_B}(\gamma(c)|\beta(X)\cup V)$.
\end{claim}
\begin{proof}[Proof of Claim \ref{claim:one}] Since $X\unlhd \Cal M$, there are $p\in \Q^m$, $q \in \Q^n$ $x \in X^m$ and $u \in U^n$ such that $c = p\cdot x + q \cdot u$ by Lemma \ref{lem:eavf02}. By (ii) $\gamma(c) = p \cdot \beta(x) + q \cdot \gamma(u)$ and $\gamma(u) \in V^n$. By (iii) and Proposition \ref{prop:buechiexp}(vi) $p\cdot x + q \cdot u\in C(M)$ iff $p \cdot \gamma(x) + q \cdot \beta(u) \in C(N)$. Hence $\gamma(c) = p \cdot \gamma(z) + q \cdot \gamma(u) \in C(Y')$.
\end{proof}

\noindent Because every element in $C(X')$ is $\Cal L_B$-definable over $X\cup U$ by Claim \ref{claim:one}, we can conclude that $\gamma \tp_{\Cal L_B}(C(X')|X\cup U) = \tp_{\Cal L_B}(\gamma(C(X'))|\beta(X)\cup V)$.

\begin{claim}\label{claim:two} Let $a \in A(X')$. Then $a$ is $\Cal L_B$-definable over $X \cup U$, $\gamma(a) \in A(Y')$ and $\gamma \tp_{\Cal L_B}(a|X\cup U) = \gamma \tp_{\Cal L_B}(\gamma(a)|\beta(X)\cup V)$.
\end{claim}
\begin{proof} Since $a \in A(X')$, $a^{-1} \in C(X')$. Thus $\gamma(a^{-1})=\gamma(a)^{-1}$ by (ii) and $\gamma(a)^{-1} \in C(Y')$ by Claim \ref{claim:one}. Since $a \in A(X')$, there is a unique $b \in A(M)$ such that $E(b,a^{-1})$. By  Claim \ref{claim:one}, $\gamma \tp_{\Cal L_B}(a^{-1}|X \cup U)= \tp_{\Cal L_B}(\gamma(a)^{-1}|\beta(X) \cup V)$. Therefore there is also a unique $b \in A(N)$ such that $E(b,\gamma(a)^{-1})$. The existence of such a $b$ implies that $\gamma(a) \in A(Y')$. By Lemma \ref{lem:definv}(i)  $\gamma \tp_{\Cal L_B}(a|X \cup U)= \tp_{\Cal L_B}(\gamma(a)|\beta(X) \cup V)$.
\end{proof}

\begin{claim}\label{claim:three} $A(Y') = \gamma(A(X'))$ and $\lambda(\gamma(x))=\gamma(\lambda(x))$ for all $x\in X'$.
\end{claim}
\begin{proof} We first prove the second statement. Let $y \in Y'$ and $x \in X'$ such that $\gamma(x)=y$. By Claim \ref{claim:two} we have that $\gamma(\lambda(x))\in A(Y')$, $\gamma(s_A(\lambda(x))\in A(Y')$ and $s_A(\gamma(\lambda(x))= \gamma(s_A(\lambda(x)))$. Hence $\gamma(\lambda(x)) \leq \gamma(x)=y < s_A(\gamma(\lambda(x)))$. Consequently $\lambda(y)= \gamma(\lambda(x)) \in A(Y')$. For the proof of the first statement let $a \in A(Y')$ and $b \in X'$ such that $\gamma(b)=a$. Then $a=\lambda(a)=\lambda(\gamma(b))=\gamma(\lambda(b))$. Thus $\lambda(b)=b$ and $b \in A(X')$.
\end{proof}

\noindent It follows immediately from Claim \ref{claim:three} that $Y'$ is closed under $\lambda$.

\begin{claim}\label{claim:four} $C(Y') = \gamma(C(X'))$ and $\nu(\gamma(x))=\gamma(\nu(x))$ for all $x \in X'$.
\end{claim}
\begin{proof} Again we prove the second statement first. Let $y \in Y'$ and $x \in X'$ such that $\gamma(x)=y$. We can reduce to the case that $y \in (0,1)$, $\nu(y)\neq 0$ and $\nu(y)\neq y$. Let $d \in A(M)$ be maximal such that $\neg E(d,\nu(x))$. By Corollary \ref{cor:nuexplicit}
\[
x \in \Big[ e(p_A(d),\nu(x))+d^{-1}, e(p_A(d),\nu(x))+p_A(d)^{-1}-d^{-1}\Big).
\]
Because $d$ is $\Cal L_B$-definable from $\nu(x)$ and $X'\unlhd \Cal M$, $d \in A(X')$ and so is $p_A(d)$. Since $X'$ is closed under $e$, $e(p_A(d),\nu(x))\in X'$.
By Claim \ref{claim:two}, $\gamma(d) \in A(Y')$. By Claim \ref{claim:one} $\gamma(\nu(x)),\gamma(e(p_A(d),\nu(x)) \in C(Y')$ and $\gamma(e(p_A(d),\nu(x)))=e(p_A(\gamma(d)),\gamma(\nu(x)))$.
Since $\gamma$ is an $\Cal L$-isomorphism, we have
\[
y \in \big [(e(p_A(\gamma(d)),\gamma(\nu(x))))+\gamma(d)^{-1}, e(p_A(\gamma(d)),\gamma(\nu(x)))+p_A(\gamma(d))^{-1}-\gamma(d)^{-1}\big ).
\]
By Lemma \ref{lem:complintervals} $\nu(y) = e(p_A(\gamma(d)),\gamma(\nu(x)))+\gamma(d)^{-1} \in C(Y')$. Hence  $\nu(y)=\gamma(\nu(x))$. For the proof of the first statement let $y \in C(Y')$ and $x \in X'$ such that $\gamma(x)=y$. Then $\gamma(\nu(x)) = \nu(y) = y$. Since $\gamma$ is bijective, $\nu(x)=x$. Therefore $x\in C(X')$.
\end{proof}

\noindent We directly get from Claim \ref{claim:four} that $Y'$ is closed under $\nu$. Combining Claim \ref{claim:four} with the statement after Claim \ref{claim:one} we get that $\beta \tp_{\Cal L_B}(C(X') | X) = \tp_{\Cal L_B}(C(Y') | \beta(X))$. Hence $\gamma$ is a $\Cal L_C^B$-isomorphism. Since $X' \unlhd \Cal M$, it follows easily that $Y'$ is closed under $e$ and under all $\Cal L_C$-definable functions into $A$, and that these functions commute with $\gamma$. Since we already know that $Y'$ is closed under $\lambda$ and $\nu$, and that $\gamma$ commutes with $\nu$ and $\lambda$, we have that $Y'\unlhd \Cal N$ and $\gamma$ is $\Cal L_C^*$-isomorphism.
\end{proof}

\subsection*{Embedding Lemmas} We will now prove the necessary embedding lemmas for our quantifier elimination result. We still assume that $\kappa = |\Cal L_C^+|$ and $\Cal M,\Cal N\models \TT^+$ are such that $|\Cal M|\leq \kappa$ and $\Cal N$ is $\kappa^+$-saturated.

\begin{defn} Let $Z\subseteq M$. We define $\Cal D(Z)$ as the union of $\lambda(Z)$ and the set of all elements in $A(M)$ that are $\Cal L_B$-definable from $Z$.
\end{defn}

\begin{prop}\label{prop:extendbyc} Let $X\unlhd \Cal M$, $\beta: X \to \Cal N$ be a $\Cal L_C^*$-embedding and $c \in C(M)$. Then there is a $\Cal L_C^*$-embedding $\gamma$ into $\Cal N$ extending $\beta$ such that $c \in \dom(\gamma)$ and $\dom(\gamma)\unlhd \Cal M$.
\end{prop}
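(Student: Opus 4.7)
The plan is to find $d \in C(N)$ realizing $\beta\tp_{\Cal L_B}(c|X)$, enlarge $\{c\}$ to a subset $U \subseteq C(M)$ for which $\dcl(X \cup U)$ satisfies the hypotheses of Proposition \ref{cor:aclosure}, build a parallel $V \subseteq C(N)$, and finally invoke Lemma \ref{lem:yspecial} to obtain the desired $\Cal L_C^*$-embedding. Using $\kappa^+$-saturation of $\Cal N$ together with $|\tp_{\Cal L_B}(c|X)| \leq \kappa$, I would first pick $d \in C(N)$ realizing $\beta\tp_{\Cal L_B}(c|X)$.

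Next I would build $U \subseteq C(M)$, $V \subseteq C(N)$ and a bijection $\sigma: U \to V$ with $\sigma(c) = d$ iteratively. Starting with $U_0 = \{c\}$ and $V_0 = \{d\}$, at each stage add to $U$ both (a) all values $e(a,u)$ for $a \in A(M) \cap \dcl(X \cup U_n)$ and $u \in U_n$, and (b) $a^{-1}$ for every $a \in A(M)$ equal to $\lambda(y)$ for some $y \in \dcl(X \cup U_n)$ or $\Cal L_B$-definable from $\dcl(X \cup U_n)$. Each element added this way is $\Cal L_B$-definable over $X \cup \{c\}$; evaluating the same $\Cal L_B$-formula with $d$ in place of $c$ and $\beta(x)$ in place of $x \in X$ yields the corresponding element on the $\Cal N$-side, extending $\sigma$ while preserving $\Cal L_B$-types over $(X,\beta(X))$. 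In the union $U = \bigcup_n U_n$, the set $Y := \dcl(X \cup U)$ is $A$-closed and $U$ is closed under $e(a,-)$ for every $a \in A(Y)$, so Proposition \ref{cor:aclosure} gives $Y \unlhd \Cal M$.

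Then I would promote $\beta \cup \sigma$ to an $\Cal L$-isomorphism $\gamma: Y \to \dcl(\beta(X) \cup V)$ by induction on the construction stages. At each stage, Proposition \ref{prop:cbimpl} converts matching $\Cal L_B$-types into matching $\Cal L$-types for the newly added $C$-elements paired with the relevant $A$-elements of the previous stage, while Proposition \ref{prop:abimpl} does the same for newly added $A$-elements paired with previously established $C$-elements; Corollary \ref{cor:oneaperlevel} guarantees that distinct new $A$-elements lie in pairwise disjoint $T$-levels, so Lemma \ref{lem:leveltype} pins down the $\Cal L$-type from the order information and the individual types. With (i) $Y \unlhd \Cal M$, (ii) $\gamma$ an $\Cal L$-isomorphism extending $\beta$ with $\gamma(U) = V$, and (iii) $\beta\tp_{\Cal L_B}(U|X) = \tp_{\Cal L_B}(V|\beta(X))$, Lemma \ref{lem:yspecial} shows $\dcl(\beta(X) \cup V) \unlhd \Cal N$ and upgrades $\gamma$ to the desired $\Cal L_C^*$-isomorphism whose domain contains $c$.

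The main obstacle will be managing the inductive extension of $\gamma$: Propositions \ref{prop:cbimpl} and \ref{prop:abimpl} each fix their ``base'' $X$ and require the auxiliary parameters (the $a_i$'s for Proposition \ref{prop:cbimpl} and the $c_i$'s for Proposition \ref{prop:abimpl}) to already lie inside that base, but the iterative closure produces both new $C$-elements and new $A$-elements interleaved. To apply these propositions cleanly, one must sequence the additions so that at each substage the needed parameters already belong to the current base, then treat that substage's enlargement as the new base for the next step, and verify throughout that the $T$-level disjointness from Corollary \ref{cor:oneaperlevel} holds.
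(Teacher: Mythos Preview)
Your ingredients are right, and you correctly identify the main obstacle, but the scheme you propose does not resolve it. Two concrete issues. First, Propositions~\ref{prop:abimpl} and~\ref{prop:cbimpl} are stated under the standing hypothesis $X\unlhd\Cal M$ with $\beta$ an $\Cal L_C^*$-embedding; your intermediate stages $\dcl(X\cup U_n)$ are not special $\Cal L_C^*$-substructures, so you cannot invoke those propositions over them as bases. Second, your claim that ``each element added this way is $\Cal L_B$-definable over $X\cup\{c\}$'' fails for the $\lambda(y)$ case: for $y\in\dcl(X\cup U_n)$ the value $\lambda(y)$ depends on where $y$ sits relative to $A$, which is governed by the $\Cal L$-structure, not by $\Cal L_B$ alone. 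So you cannot simply transport $\lambda(y)$ to the $\Cal N$-side by evaluating an $\Cal L_B$-formula at $d$.

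The paper's proof reverses your order of choices and uses a two-phase construction. First it builds \emph{all} the needed $A$-elements in $\Cal M$---including those that $c$ will eventually generate---by iterating $U_{i+1}=\Cal D(X\cup\{c\}\cup U_i\cup V_i)$, $V_{i+1}=e(U_i,C(X)\cup\{c\})$, and then uses $\kappa^+$-saturation to pick $W\subseteq A(N)$ realizing $\beta\tp_{\Cal L_B}(U|X)$ as a whole (no $d$ has been chosen yet). A single application of Proposition~\ref{prop:abimpl} over the original special $X$ then yields an $\Cal L$-isomorphism onto $X'=\dcl(X\cup U\cup e(U,C(X)))$; only $e$-values of elements of $C(X)$ appear here, so the hypothesis $c_i\in C(X)$ of that proposition is met. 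Proposition~\ref{cor:aclosure} and Lemma~\ref{lem:yspecial} upgrade this to an $\Cal L_C^*$-embedding $\beta'$ with $X'\unlhd\Cal M$. Only now is $d\in C(N)$ chosen, realizing $\beta'\tp_{\Cal L_B}(c|X')$, and a single application of Proposition~\ref{prop:cbimpl} over the now-special $X'$, with the $a_i$ ranging over $A(X')=U$, extends to $X''=\dcl(X'\cup\{c\}\cup e(U,c))$. The anticipatory construction of $U$ guarantees $A(X'')=U$, so $X''\unlhd\Cal M$ and a second use of Lemma~\ref{lem:yspecial} finishes. The essential point is that each proposition is applied exactly once, over a base already known to be special, and $d$ is chosen last.
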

\begin{proof}
Set $U_0 := A(X)$ and  $V_0 := \emptyset$, and recursively define
\begin{align*}
U_{i+1} &:= \Cal D(X \cup \{c\} \cup U_i\cup V_i), \ V_{i+1} := e(U_i,C(X)\cup \{c\}).
\end{align*}
Set $U := \bigcup_{i\in \N} U_i, V := \bigcup_{i \in \N} V_i$. Since $\beta$ is a $\Cal L_C^B$-isomorphism and $\Cal N$ is saturated, there is $W \subseteq A(N)$ such that
\begin{equation}\label{eq:eeaeq1}
\beta \tp_{\Cal L_B}(U | X) = \tp_{\Cal L_B}(W | \beta(X)).
\end{equation}
By Proposition \ref{prop:abimpl} we have
\begin{equation*}
\beta \tp_{\Cal L}(U \cup e(U,C(X)) | X) = \tp_{\Cal L}(W \cup e(W,C(\beta(X)) | \beta(X)).
\end{equation*}
Therefore $\beta$ extends to a $\Cal L$-isomorphism $\beta'$ between $X':=\dcl(X \cup U \cup e(U,C(X))$ and $Y':=\dcl(Y \cup W \cup e(W,C(\beta(X))))$ such that $\beta'(U)=W$ and $e(u,c)=e(\beta'(u),\beta(c))$ for all $u \in U$ and $c\in C(X)$.  By \eqref{eq:eeaeq1} $\beta \tp_{\Cal L_B}(U \cup e(U,C(X)) | X) = \tp_{\Cal L_B}(W \cup e(W,C(\beta(X)) | \beta(X))$.
By Lemma \ref{lem:yspecial} it is only left to show  that $X'\unlhd\Cal M$. We will prove that $X'$ satisfies the assumptions of Proposition \ref{cor:aclosure}. We first establish that $X'$ is $A$-closed. Note that by construction of $U$ and $V$, for every $x\in X'$ there is $i\in \N$ such that $x \in \dcl(X\cup U_i\cup V_i)$. Hence $\lambda(y) \in U_{i+1}$. Thus $A(X')=U$. If $a \in A(M)$ is $\Cal L_B$-definable from $X'$, there is $i\in \N$ such that $a$ is $\Cal L_B$-definable from $X \cup U_i \cup V_i$. Hence $a\in U_{i+1}$ and $X'$ is $A$-closed. Since $U \cup e(U,C(X))$ is closed under $e(a,-)$ for each $a \in U$ and $A(X')=U$, we get $X'\unlhd \Cal M$ by Proposition \ref{cor:aclosure}. Therefore $\beta'$ is a $\Cal L_C^*$-embedding.\\

\noindent We now extend $\beta'$ to a $\Cal L_C^*$-embedding $\gamma$ whose domain contains $c$. Because $\beta'$ is a $\Cal L_C^B$-embedding and $\Cal N$ is saturated, there is $d \in C(N)$ such that
\begin{equation}\label{eq:eeaeq2}
\beta' \tp_{\Cal L_B}(c | X') = \tp_{\Cal L_B}(d | Y').
\end{equation}
By Proposition \ref{prop:cbimpl} $\beta' \tp_{\Cal L}(c \cup e(A(X'),c) | X') = \tp_{\Cal L}(d \cup e(A(Y'),d)| Y')$. Consequently $\beta'$ extends to an $\Cal L$-isomorphism $\gamma$ between $X'':=\dcl(X' \cup \{c\} \cup e(A(X'),c))$ and $Y'':=\dcl(Y' \cup \{d\} \cup e(A(Y'),d))$ mapping $c$ to $d$ and $e(a,c)$ to $e(\beta'(a),d)$ for every $a\in A(X')$. By \eqref{eq:eeaeq2}  $\beta' \tp_{\Cal L_B} (c \cup e(A(X'),c)|X') = \tp_{\Cal L_B}(d \cup e(A(Y',d)|Y')$. In order to show that $\gamma$ is
 a $\Cal L_C^*$-embedding, it is again only left to show that $X''\unlhd \Cal M$.  We will establish that the conditions of Proposition \ref{cor:aclosure} are satisfied. We first prove that $X''$ is $A$-closed. By construction of $U$ and $V$, for every $x\in X''$ there is $i\in \N$ such that $x \in \dcl(X \cup \{c\} \cup U_i \cup V_i)$.
Thus for $x \in X''$ we can find $i\in \N$ such that $x \in \dcl(X \cup \{c\} \cup U_i \cup V_i)$. Hence $\lambda(x) \in U_{i+1}$. In particular, $A(X'')=U$. If $a \in A(M)$ is $\Cal L_B$-definable from $X''$, there is $i\in \N$ such that $a$ is $\Cal L_B$-definable from $X  \cup \{c\}\cup U_i \cup V_i$. Hence $a\in U_{i+1}$ and $X''$ is $A$-closed. Since $A(X'')=U=A(X')$, we also have that $\{c\} \cup e(A(X'),c)$ is closed under $e(a,-)$ for  each $a \in A(X'')$. By Proposition \ref{cor:aclosure} $X''\unlhd \Cal M$. Thus $\gamma$ is a $\Cal L_C^*$-embedding.
\end{proof}

\begin{cor}\label{cor:extendbymanyc}  Let $X\unlhd \Cal M$, $\beta: X \to \Cal N$ be a $\Cal L_C^*$-embedding and $Z \subseteq C(M)$ be such that $|Z|\leq \kappa$. Then there is a $\Cal L_C^*$-embedding $\gamma$ extending $\beta$ such that $Z \subseteq \dom(\gamma)$ and $\dom(\gamma) \unlhd \Cal M$.
\end{cor}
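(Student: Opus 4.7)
The plan is a transfinite recursion applying Proposition \ref{prop:extendbyc} one element at a time. Enumerate $Z = \{z_\alpha : \alpha < \mu\}$ with $\mu = |Z| \leq \kappa$, and build an increasing chain $(\gamma_\alpha)_{\alpha \leq \mu}$ of $\Cal L_C^*$-embeddings $\gamma_\alpha : X_\alpha \to \Cal N$ with $\gamma_0 = \beta$, $X_0 = X$, each $X_\alpha \unlhd \Cal M$, $\{z_\xi : \xi < \alpha\} \subseteq X_\alpha$, and $|X_\alpha| \leq \kappa$; then $\gamma := \gamma_\mu$ is the promised extension.

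At a successor stage $\alpha+1$, the bound $|X_\alpha| \leq \kappa$ together with the $\kappa^+$-saturation of $\Cal N$ permits the invocation of Proposition \ref{prop:extendbyc} with the embedding $\gamma_\alpha$ and the element $z_\alpha \in C(M)$, yielding $\gamma_{\alpha+1}$ with $z_\alpha \in X_{\alpha+1}$ and $X_{\alpha+1} \unlhd \Cal M$. Inspection of the construction in that proof shows that only countably many new elements are adjoined to $X_\alpha$ through iterations of $\dcl$, $\lambda$, $e$, and $\Cal L_B$-definable operations, so the bound $|X_{\alpha+1}| \leq \kappa$ is preserved.

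At a limit stage $\lambda$, set $X_\lambda := \bigcup_{\alpha < \lambda} X_\alpha$ and $\gamma_\lambda := \bigcup_{\alpha < \lambda} \gamma_\alpha$. A directed union of compatible $\Cal L_C^*$-embeddings is again a $\Cal L_C^*$-embedding, and closure under all $\Cal L_C^*$-operations transfers to the union, so $X_\lambda$ is a $\Cal L_C^*$-substructure. The remaining and only non-routine step is verifying that $X_\lambda$ is \emph{special}, i.e.\ that it admits a decomposition $X_\lambda = \dcl(Z_\lambda \cup D_\lambda)$ with $D_\lambda \subseteq C$ and $Z_\lambda$ either empty or $\unlhd^+ M$. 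The strategy is to arrange the successor-step decompositions $X_\alpha = \dcl(Z_\alpha \cup D_\alpha)$ supplied by Proposition \ref{prop:extendbyc} monotonically in $\alpha$, keeping $Z_\alpha$ as the fixed $\unlhd^+$-part of the original $X$ while letting only the $C$-part $D_\alpha$ grow, so that the direct union produces the required decomposition of $X_\lambda$.

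This limit-stage bookkeeping is the main obstacle; the successor steps must be carried out coherently enough that the union at limits is again in the form prescribed by the definition of a special $\Cal L_C^*$-substructure. Once this is in place, the recursion terminates at stage $\mu$ and $\gamma = \gamma_\mu$ is the required $\Cal L_C^*$-embedding whose domain contains $Z$ and is a special $\Cal L_C^*$-substructure of $\Cal M$.
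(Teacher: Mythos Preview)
Your approach is correct and is exactly what the paper intends: the corollary is stated without proof precisely because it follows from Proposition~\ref{prop:extendbyc} by transfinite iteration, and you carry this out with appropriate care at limit stages. Your observation that the $\unlhd^+$-part $Z_0$ of the original $X$ can be held fixed while only the $C$-part grows is the right way to see that limits remain special; this works because every new generator adjoined in the proof of Proposition~\ref{prop:extendbyc} lies in $C$ or in $A$, and each $a\in A$ satisfies $a\in\dcl(a^{-1})$ with $a^{-1}\in C$, so the extension is always of the form $\dcl(Z_0\cup D_\alpha)$ with $D_\alpha\subseteq C$.

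One small inaccuracy: you write that ``only countably many new elements are adjoined'' at each successor step, but since $\kappa=|\Cal L_C^+|\geq |\Cal L|$ the $\dcl$-closure may add up to $\kappa$ elements. This is harmless for your argument, since all you need is $|X_{\alpha+1}|\leq |X_\alpha|+\kappa\leq\kappa$, which still holds; but the claim as stated is not quite right.
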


\noindent So far we have only extended $\beta$ to another $\Cal L_C^*$-embedding. In order to extend $\beta$ to a $\Cal L_C^+$-embedding, we need to better understand the interaction of $\beta$ with $\nu_f$ and $\tau_f$.

\begin{lem}\label{lem:nusandtaus} Let $X\unlhd \Cal M$ and $\beta: X \to \Cal N$ be a $\Cal L_C^*$-embedding. Let $x\in X^l$, $y\in M$, $f: U \subseteq M^{l+n} \to [0,1]$ be $\Cal L$-$\emptyset$-definable and continuous. Let $\gamma$ be a $\Cal L$-embedding extending $\beta$ with $y\in \dom(\gamma)$. If $\nu_f(x,y),\tau_f(x,y) \in X$, then $\beta(\nu_f(x,y))=\nu_f(\beta(x),\gamma(y))$ and $\beta(\tau_f(x,y))=\tau_f(\beta(x),\gamma(y))$.
\end{lem}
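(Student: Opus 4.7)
I treat $\nu_f$; the argument for $\tau_f$ is symmetric. Let $c := \nu_f(x,y) \in C(X)^n$ and pick $i \in \{1,\dots,k\}$ realizing the defining sup, so that
\[
z_- := g_i(x,c) = \sup\{f(x,d) : d \in C^n \cap U_x,\ f(x,d) \leq y\}.
\]
Symmetrically choose $c' := \tau_f(x,y) \in C(X)^n$ and $j$ with $z_+ := g_j(x,c')$ the corresponding inf. Since $c,c',x \in X$ and the $g_i$ are $\Cal L$-$\emptyset$-definable, $z_-, z_+ \in X$. Because $\beta$ is an $\Cal L_C^*$-embedding and the condition ``$c \in C^n \cap \cl(U_x)$'' is a quantifier-free $\Cal L_C^*$-condition on $(x,c)$ (the closure of an $\Cal L$-definable set is $\Cal L$-definable, and $C$-membership is atomic), we have $\beta(c), \beta(c') \in C(N)^n \cap \cl(U^N_{\beta(x)})$, where $U^N$ denotes the interpretation of $U$ in $\Cal N$. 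Applying $\gamma$ yields $\gamma(z_-) = g_i(\beta(x),\beta(c))$, $\gamma(z_+) = g_j(\beta(x),\beta(c'))$, and $\gamma(z_-) \leq \gamma(y) \leq \gamma(z_+)$.

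To conclude that $\beta(c) = \nu_f(\beta(x),\gamma(y))$ and $\beta(c') = \tau_f(\beta(x),\gamma(y))$ it suffices, by Definition \ref{def:nutau}, to verify two claims in $\Cal N$: (A) no element $d \in C(N)^n \cap U^N_{\beta(x)}$ satisfies $f(\beta(x),d) \in (\gamma(z_-),\gamma(z_+))$, so that $\gamma(z_\pm)$ are themselves the appropriate sup and inf; and (B) $\beta(c)$, respectively $\beta(c')$, is lex-minimal in $C(N)^n \cap \cl(U^N_{\beta(x)})$ among tuples whose image under some $g_{i'}$ equals $\gamma(z_-)$, respectively $\gamma(z_+)$.

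Both claims are handled by contradiction via pullback from $\Cal N$ to $\Cal M$. For (A), assume some $d \in C(N)^n \cap U^N_{\beta(x)}$ has $\gamma(z_-) < f(\beta(x),d) \leq \gamma(y)$ (the other side is identical). Combining Corollary \ref{cor:extendbymanyc} with the $\kappa^+$-saturation of $\Cal N$ and $|X \cup \{y\}| \leq \kappa$, I would arrange a back-and-forth extension of $\beta$ to an $\Cal L_C^*$-embedding $\beta^\ast : X^\ast \to \Cal N$ with $X^\ast \unlhd \Cal M$ containing some $d_0 \in C(M)^n$ such that $\beta^\ast(d_0)$ realizes the same $\Cal L_B$-type over $\beta(X) \cup \{\gamma(y)\}$ as $d$ does; Proposition \ref{prop:cbimpl} and Lemma \ref{lem:ctype1} then upgrade this to matching $\Cal L$-types, so that the inequality $\gamma(z_-) < f(\beta(x),d) \leq \gamma(y)$ pulls back under $\beta^\ast \cup \gamma$ to $z_- < f(x,d_0) \leq y$ in $\Cal M$, contradicting the definition of $z_-$. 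Claim (B) is symmetric: a hypothetical lex-smaller $\tilde c <_{\mathrm{lex}} \beta(c)$ in $C(N)^n \cap \cl(U^N_{\beta(x)})$ witnessing $\gamma(z_-)$ through some $g_{i'}$ would pull back, in the same way, to a lex-smaller $c_0 <_{\mathrm{lex}} c$ in $C(M)^n \cap \cl(U_x)$ witnessing $z_-$ in $\Cal M$, contradicting the lex-minimality of $c$.

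The main technical obstacle is securing the simultaneous compatibility of the extension $\beta^\ast$ with the given $\Cal L$-embedding $\gamma$, since Corollary \ref{cor:extendbymanyc} only directly controls the $\Cal L_C^*$-side. The resolution is to arrange the back-and-forth so that the adjoined $d_0 \in C(M)^n$ realizes the precise $\Cal L_B$-type that $d$ has over $\beta(X) \cup \{\gamma(y)\}$ in $\Cal N$; this is possible by $\kappa^+$-saturation of $\Cal N$ together with the small cardinality of $X \cup \{y\}$. Proposition \ref{prop:cbimpl} and Lemma \ref{lem:ctype1} then ensure that the matching $\Cal L_B$-types promote to matching $\Cal L$-types, so that $\beta^\ast \cup \gamma$ is a well-defined $\Cal L$-embedding and the pullback is legitimate. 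With this compatibility in hand, the contradictions in both (A) and (B) close, and the lemma follows.
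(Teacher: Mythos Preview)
Your strategy has a genuine gap: the pullback step cannot be executed as written. You want to produce $d_0 \in C(M)^n$ realizing a prescribed $\Cal L_B$-type, but in the ambient setup $|\Cal M|\leq \kappa$ and it is $\Cal N$, not $\Cal M$, that is $\kappa^+$-saturated; saturation of $\Cal N$ is of no help in realizing a type inside $\Cal M$. Corollary \ref{cor:extendbymanyc} only lets you extend $\beta$ by elements of $C(M)$ you already have in hand; it does not manufacture a $d_0$ with a specified type. Moreover, you phrase the target type as an $\Cal L_B$-type over $\beta(X)\cup\{\gamma(y)\}$, but $\gamma(y)$ need not lie in either sort of $\Cal B(N)$, so this type is not even well-defined; for the same reason Proposition \ref{prop:cbimpl} and Lemma \ref{lem:ctype1}, which are stated over a special $\Cal L_C^*$-substructure, do not apply when the arbitrary element $y$ is adjoined to the base. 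In short, there is no mechanism here that simultaneously puts $d_0$ in $\Cal M$ and makes $\beta^\ast\cup\gamma$ an $\Cal L$-embedding.

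The paper sidesteps all of this by observing that, because $\nu_f(x,y),\tau_f(x,y)\in X$, the obstruction to $\beta(c),\beta(c')$ being the correct values can be packaged into a single $\Cal L$-formula $\varphi(z;x,u,v)$ with parameters $x,\nu_f(x,y),\tau_f(x,y)\in X$ only (no $y$): $\varphi$ says that $z$ either lands strictly between $g_i(x,u)$ and $g_j(x,v)$, or ties one of these values while being lex-smaller than $u$ (resp.\ $v$). In $\Cal M$ no $z\in C(M)$ satisfies $\varphi$. Corollary \ref{cor:ctype1} (whose proof passes to a saturated elementary extension of $\Cal M$, so no saturation hypothesis on $\Cal M$ is needed here) then says that this $\Cal L$-condition on $C$ is, over $X$, decided by an $\Cal L_B$-formula; since $\beta$ is an $\Cal L_C^*$-embedding, the non-existence of such $z$ transfers to $\Cal N$. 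The element $y$ enters only at the very end, through the $\Cal L$-embedding $\gamma$, to place $\gamma(y)$ between $g_i(\beta(x),\beta(c))$ and $g_j(\beta(x),\beta(c'))$.
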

\begin{proof} Let $g_1,\dots,g_k : M^{l+n} \to M$ be as in Axiom T\ref{axiom:tau} for $f$. Suppose that $\nu_f(x,y),\tau_f(x,y) \in X$. Let $i,j\leq k$ such that
\[
g_i(x,\nu_f(x,y)) = \sup_{d \in C(M)^n\cap U_x, f(x,d)\leq y} f(x,d)
\]
and
\[
g_j(x,\tau_f(x,y)) = \inf_{d \in C(M)^n\cap U_x, f(x,d)\geq y} f(x,d).
\]
Let $\varphi(z,x,u,v)$ be the $\Cal L$-formula stating that $(x,z) \in U$ and one of the following three statements holds:
\begin{itemize}
\item[(i)] $f(x,z)= g_i(x,u)$ and $z$ is lexicographically smaller than $u$,
\item[(ii)] $f(x,z)= g_j(x,v)$ and $z$ is lexicographically smaller than $v$,
\item[(iii)] $g_i(x,u) < f(x,z) < g_j(x,v)$.
\end{itemize}
By definition of $\nu_f$ and $\tau_f$ and our choice of $i,j$, there is no $c\in C(M)$ with $\varphi(c,x,\nu_f(x,y),\tau_f(x,y))$. We now show that
there is no $d\in C(N)$ such that $\varphi(d,\beta(x),\beta(\nu_f(x,y)),\beta(\nu_f(x,y))$. Suppose towards a contradiction that there is such an $d \in C(N)$.
Let $p(z)$ be the $\Cal L_B$-type $\beta^{-1}\tp_{\Cal L_B}(d | \beta(X))$. By Corollary \ref{cor:ctype1} there is a $\Cal L_B$-formula $\psi(z,x')\in  p(z)$, where $x' \in X^n$ such that
\begin{equation}\label{eq:lplusproof}
\Cal M \models \forall z\in C(M) \ \psi(z,x') \rightarrow \varphi(c,x,\nu_f(x,y),\tau_f(x,y)).
\end{equation}
Since $\Cal N \models \psi(d,\beta(x'))$, $\Cal N\models \exists z \in C(N) \ \psi(z,\beta(x'))$. Because $\beta$ is a partial $\Cal L_C^*$-isomorphism and $\psi$ is an $\Cal L_B$-formula, $\Cal M\models \exists z \in C(M) \ \psi(z,x')$. Hence there is $d' \in C(M)$ such that $\psi(d',x)$. By \eqref{eq:lplusproof} we get $\varphi(d',x,\nu_f(x,y),\tau_f(x,y))$, a contradiction. Since $\gamma$ is a $\Cal L$-embedding,
\[
g_i(\beta(x),\beta(\nu_f(x,y))\leq \gamma(y)\leq g_j(\beta(x),\beta(\tau_f(x,y)).
\]
Because there is no $d\in C(N)$ with $\varphi(d,\beta(x),\beta(\nu_f(x,y)),\beta(\nu_f(x,y))$, we can conclude that $\beta(\nu_f(x,y))=\nu_f(\beta(x),\gamma(y))$ and $\beta(\tau_f(x,y))=\tau_f(\beta(x),\gamma(y))$.
\end{proof}

\begin{prop}\label{prop:extendbycplus} Let $X\unlhd^+ \Cal M$, $\beta: X \to \Cal N$ be a $\Cal L_C^+$-embedding and $c \in C(M)$. Then there is a $\Cal L_C^+$-embedding $\gamma$ into $\Cal N$ extending $\beta$ such that $c \in \dom(\gamma)$.
\end{prop}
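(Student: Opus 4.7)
The idea is to iteratively apply Corollary \ref{cor:extendbymanyc} to close the domain under the operations $\nu_f$ and $\tau_f$ for every $\Cal L$-$\emptyset$-definable continuous $f$ satisfying the assumption of Definition \ref{def:nutau}, and then invoke Lemma \ref{lem:nusandtaus} to conclude that the resulting $\Cal L_C^*$-embedding actually respects these function symbols.

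First I would apply Proposition \ref{prop:extendbyc} to $X$, $\beta$ and $c$ (noting that $X \unlhd^+ \Cal M$ in particular gives $X \unlhd \Cal M$) to obtain a $\Cal L_C^*$-embedding $\beta_0 : X_0 \to \Cal N$ extending $\beta$ with $c \in X_0$ and $X_0 \unlhd \Cal M$. Then I would recursively construct a chain of $\Cal L_C^*$-embeddings $\beta_i : X_i \to \Cal N$ with $X_i \unlhd \Cal M$ as follows: given $\beta_i$, let $Z_i \subseteq C(M)$ be the set of all coordinates of tuples of the form $\nu_f(x,y)$ and $\tau_f(x,y)$, where $f$ ranges over the relevant functions and $(x,y)$ ranges over tuples from $X_i$ of the appropriate arities. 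Since $|Z_i| \le \kappa$, Corollary \ref{cor:extendbymanyc} yields a $\Cal L_C^*$-embedding $\beta_{i+1} : X_{i+1} \to \Cal N$ extending $\beta_i$ with $Z_i \subseteq X_{i+1}$ and $X_{i+1} \unlhd \Cal M$.

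Setting $\gamma := \bigcup_{i \in \N} \beta_i$ with domain $Y := \bigcup_{i \in \N} X_i$, we obtain a $\Cal L_C^*$-embedding into $\Cal N$, since a directed union of $\Cal L_C^*$-substructures is again a $\Cal L_C^*$-substructure. By construction $Y$ is closed under $\nu_f$ and $\tau_f$ for all admissible $f$, so $Y \unlhd^+ \Cal M$.

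It remains to verify that $\gamma$ preserves each function symbol $\nu_f$ and $\tau_f$. Given $x \in Y^l$ and $y \in Y$, pick $i$ such that $x, y \in X_i$; then $\nu_f(x,y), \tau_f(x,y)$ lie in $X_{i+1}$ by construction. Applying Lemma \ref{lem:nusandtaus} with $\beta_{i+1} : X_{i+1} \to \Cal N$ in the role of $\beta$ (and of the $\Cal L$-embedding extension) yields $\beta_{i+1}(\nu_f(x,y)) = \nu_f(\beta_{i+1}(x), \beta_{i+1}(y))$ and analogously for $\tau_f$, and since $\gamma$ agrees with $\beta_{i+1}$ on $X_{i+1}$, the same identities hold for $\gamma$. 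Thus $\gamma$ is a $\Cal L_C^+$-embedding extending $\beta$ with $c \in \dom(\gamma)$. The main obstacle is just organizing the simultaneous closure under the two families of operations $\Cal L_C^*$ and $\{\nu_f,\tau_f\}$ without blowing up the cardinality; the $\omega$-step iteration handles this because each stage adds at most $\kappa$-many new elements, so the application of Corollary \ref{cor:extendbymanyc} is always legitimate.
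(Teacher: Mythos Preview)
Your proposal is correct and follows essentially the same approach as the paper: start with Proposition \ref{prop:extendbyc}, iterate Corollary \ref{cor:extendbymanyc} over an $\omega$-chain to close under all $\nu_f,\tau_f$, and then invoke Lemma \ref{lem:nusandtaus} to upgrade the resulting $\Cal L_C^*$-embedding to an $\Cal L_C^+$-embedding. The only cosmetic difference is that you apply Lemma \ref{lem:nusandtaus} at each finite stage $X_{i+1}$, whereas the paper applies it once to the union $X_\infty$; both are valid.
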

\begin{proof} By Proposition \ref{prop:extendbyc} we can extend $\beta$ to a $\Cal L_C^*$-embedding $\gamma : X' \to Y'$ such that $c \in X'$ and $X'\unlhd \Cal M$. For $Z\subseteq M$, we define $\Cal E(Z)$ to be the union of all $\nu_f(Z^l,Z) \cup \tau_f(Z^l,Z)$  for all continuous $\Cal L$-$\emptyset$-definable $f: U \subseteq M^{l+n} \to [0,1]$. Since $\kappa > |\Cal L|$, we have that $|\Cal E (Z)|\leq \kappa$ if $|Z|\leq \kappa$. Hence by Corollary \ref{cor:extendbymanyc} we can extend $\gamma$ to a $\Cal L_C^*$-embedding $\gamma_0 : X_0 \to Y_0$ such that $\Cal E(X') \subseteq X_0$ and $X_0 \unlhd \Cal M$. For $n\in \N$, define $\gamma_{n+1} : X_{n+1} \to Y_{n+1}$ to be a $\Cal L_C^*$-embedding extending $\gamma_n$ such that $\Cal E(X_n) \subseteq X_{n+1}$ and $X_{n+1} \unlhd \Cal M$. Let $\gamma_{\infty} = \bigcup_{n=0}^{\infty} \gamma _n : X_{\infty} \to Y_{\infty}$. Because each $\gamma_n$ is a $\Cal L_C^*$-embedding, so is $\gamma_{\infty}$. Moreover, since $X_{n}\unlhd \Cal M$ for each $n$, $X_{\infty}\unlhd \Cal M$. It is easy to see that by the construction of $\gamma_{\infty}$, $X_{\infty}$ is closed under all $\nu_f$'s and $\tau_f$'s. Hence $X_{\infty} \unlhd^+ \Cal M$.  By Lemma \ref{lem:nusandtaus} $\gamma_{\infty}$ is a $\Cal L_C^+$-embedding.
\end{proof}

\noindent We are now ready to prove the two of the main results of the paper: quantifier elimination for $\TT^+$ and completeness of $\TT$.

\begin{proof}[Proof of Theorem \ref{thm:qe}] Let $\kappa = |\Cal L_C^+|$ and let $\Cal M,\Cal N\models \TT^+$ such that $|\Cal M|\leq \kappa$ and $\Cal N$ is $\kappa^+$-saturated. Let $X\unlhd^+ \Cal M$ and suppose that $\beta: X \to \Cal N$ is a $\Cal L_C^+$-embedding. It is enough to show that $\beta$ can be extended. By Proposition \ref{prop:extendbycplus} we can assume $C(M)\subseteq X$. Let $u \in \Cal M$. Without loss of generality we can assume that $u\in [0,1]$. We will extend $\beta$ to a $\Cal L_C^+$-embedding $\gamma$ such that $u \in \dom(\gamma)$. Let $v \in \Cal N$ such that $\tp_{\Cal L}(v|\beta(X))=\beta \tp_{\Cal L}(u|X)$. Because $\beta$ is a $\Cal L$-embedding, such a $v$ exists. Then $\beta$ extends to an $\Cal L$-embedding $\gamma$ between $\dcl(X\cup \{u\})$ and $\dcl(\beta(X) \cup \{v\})$ with $\gamma(u)=v$. Since $C(M)\subseteq X$, it is easy to check that $\dcl(X\cup \{u\})\unlhd^+ \Cal M$.
It is left to show that $\dcl(\beta(X) \cup \{v\})\unlhd^+ \Cal N$ and that $\gamma$ is a $\Cal L_C^+$-embedding. Because $\beta$ is also a $\Cal L_C^+$-embedding, it is enough to prove that $\dcl(\beta(X) \cup \{v\})\cap C(N) \subseteq \beta(X)$. Suppose towards a contradiction that there is $d \in C(N)$ such that $d\in \dcl(\beta(X) \cup \{v\})\setminus \beta(X)$. By o-minimality of $T$ there is a continuous $\Cal L$-$\emptyset$-definable function $f: U \subseteq M^{l+1}\to [0,1]$ and $x \in X^l$ such that $f(\beta(x),d) = v$. Let $g_1,\dots,g_k : M^{l+n} \to M$ be as in Axiom T\ref{axiom:tau} for $f$. Since $u \notin X$ and $C(M)\subseteq X$, we have that $g_i(x,\nu_f(x,u))< u<g_i(x,\tau_f(x,u))$ for $i=1,\dots,k$. By Lemma \ref{lem:nusandtaus} $\gamma(\nu_f(x,u))=\nu_f(\beta(x),v)$ and $\gamma(\tau_f(x,u))=\tau_f(\beta(x),v)$. Since $f(\beta(x),d)=v$, there is $i\in \{1,\dots,k\}$ such that either $v=g_i(\beta(x),\nu_f(\beta(x),v))$ or $v=g_i(\beta(x),\tau_f(\beta(x),v))$. But then for this $i$, $u=g_i(x,\nu_f(x,u))$ or $u=g_i(x,\tau_f(x,u))$, contradicting our assumption on $u$. Hence $\dcl(\beta(X)\cup \{v\})\cap C(N) \subseteq \beta(X)$. Because $\beta$ is a $\Cal L_C^+$-embedding, it now follows easily that $\gamma$ is a $\Cal L_C^+$-embedding and $\dcl(\beta(X)\cup\{v\})\unlhd^+ \Cal N$.
\end{proof}

\begin{proof}[Proof of Theorem \ref{thm:complete}]  Let $\kappa = |\Cal L_C^+|$ and let $\Cal M,\Cal N\models \TT^+$ be such that $\Cal M$ and $\Cal N$ are $\kappa^+$-saturated. Remember that by $K$ we denote the interpretation of the $\Cal L$-constant symbols $c_k$, where $k\in K$. By Axiom T\ref{axiom:c} $K \subseteq C(M)$ and $K \subseteq C(N)$. By Axiom T\ref{axiom:o-minimal} there is a $\Cal L$-isomorphism $\beta$ between $\dcl(\emptyset)\subseteq M$ and $\dcl(\emptyset)\subseteq N$ with $\beta(K)=K$. By Axioms T\ref{axiom:buechi} and T\ref{axiom:buechi2} $\beta \tp_{\Cal L_B}(K) = \tp_{\Cal L_B}(K)$. Note that $Q \subseteq \dcl(\emptyset)$ and $\dcl(\emptyset)$ is archimedean. Therefore $A(M)\cap \dcl(\emptyset)=Q$ and $\dcl(\emptyset)$ is $A$-closed. By Axiom T\ref{axiom:buechi2} $\dcl(\emptyset)$ is closed under $e(a,-)$ for every $a\in Q$. By Proposition \ref{cor:aclosure} $\dcl(\emptyset)\unlhd \Cal M$. By the same argument we get that $\dcl(\emptyset) \unlhd \Cal N$. By Lemma \ref{lem:yspecial} $\beta$ is a $\Cal L_C^*$-isomorphism. As in the proof of Proposition \ref{prop:extendbycplus} we can extend this $\Cal L_C^*$-embedding to a partial $\Cal L_C^+$-isomorphism $\gamma : X \to Y$ such that $X \unlhd^+ \Cal M$ and $Y \unlhd^+ \Cal N$. Because $\TT^+$ has quantifier elimination, we have that $X \equiv \Cal M$ and $Y \equiv \Cal N$. Since $X$ and $Y$ are isomorphic, $X \equiv Y$. Hence $\Cal M \equiv \Cal N$. Thus $\TT^+$ is complete and so is $\TT$.
\end{proof}

\section{Definable sets are Borel}
In this section it will be shown that every set definable in $(\Cal R,K)$ is Borel. The main ingredients of the proof is the quantifier elimination result established in the previous section. Using Fact \ref{fact:landweber} we will establish that the interpretation of every $\Cal L_C^+$-relation symbol and $\Cal L_C^+$-function symbol in $(\Cal R,K)$ is Borel. It then follows easily from Theorem \ref{thm:qe} using elementary results from descriptive set theory that every definable set is Borel.\newline

\noindent We first introduce some new notation we will use. We write $\Cal R_K$ for the $\Cal L_C$-structure $(\Cal R, K, Q, \epsilon)$. As usual we will consider it is a $\Cal L_C^+$-structure. For $q\in Q$, set $K_q:= \{ c \in K \ : \ e(q,c)=c\}$ and set $K_{\operatorname{fin}}:= \bigcup_{q\in Q} K_q$. Note that $K_q$ is $\Cal L_C$-definable over $q$ and $K_{\operatorname{fin}}$ is $\Cal L_C$-$\emptyset$-definable. Also note that $K_q$ is finite for each $q\in Q$ and $K_{\operatorname{fin}}$ is countable. Let $f: Q \to \R$ be a $\Cal L_C$-definable function. We define $\lim_{q\in Q} f(q)$ as the element $x \in [0,1]$ such that for all $\varepsilon >0$ there is $b\in Q$ such that $|f(b')-x|< \varepsilon$ for all $b'\in Q_{>b}$. Obviously, if such $x$ exists, it is unique. We set
\[
\liminf_{q \in Q} f(q) := \lim_{q \in Q} \inf \{ f(b) \ : \ b \in Q_{\geq q}\}.
\]
For $c=(c_1,\dots,c_n) \in K^n$ and $q\in Q$, set $U_{c,q} := [c_1,c_1+q^{-1}]\times \dots \times [c_n,c_n+q^{-1}]$.\newline

\noindent Now fix a continuous $\Cal L$-$\emptyset$-definable function $f: X\subseteq \R^{l+n} \to [0,1]$ such that $X$ is open. Let $g_1,\dots,g_k$ be as in Axiom T\ref{axiom:tau}. For $(x,y)\in \R^{l+1}$, let
\[
l(x,y) := \sup_{d \in K_{\operatorname{fin}}\cap X_x, f(x,d)\leq y} f(x,d), \ r(x,y):= \inf_{d \in K_{\operatorname{fin}}\cap X_x, f(x,d)\geq y} f(x,d).
\]
Because $X$ is open and $K_{\operatorname{fin}}$ is dense in $K$, we directly get
\[
l(x,y) = \sup_{d \in K\cap X_x, f(x,d)\leq y} f(x,d), \ r(x,y) = \inf_{d \in K\cap X_x, f(x,d)\geq y} f(x,d).
\]
Since $K_{\operatorname{fin}}$ is countable, it follows easily that the graphs of $l$ and $r$ are Borel.

\begin{defn} For $q\in Q$, $(x,y)\in \R^{l+1}$, let $D_{q,x,y}\subseteq K_q$ be the set of all $d \in K_q$ such that for all $b\in Q$ there  exists $d'\in K_{\operatorname{fin}} \cap X_x$ such that $d'\in U_{d,q}$ and $0 \leq l(x,y) - f(x,d') < b^{-1}$.
\end{defn}
\noindent Since $Q$ and $K_{\operatorname{fin}}$ are countable and the graph of $l$ is Borel, the set $\{ (q,x,y,c) \ : \ c \in D_{q,x,y}\}$ is Borel as well.

\begin{lem}\label{lem:lg} Let $g : Q \to K_{\operatorname{fin}}$ be such that $g(q) \in D_{q,x,y}$. Then there is $i\in \{1,\dots, k\}$ such that $l(x,y) = g_i(x,\liminf_{q\in Q} g(q))$.
\end{lem}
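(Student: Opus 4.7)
The plan is to construct, for each $q\in Q$, an approximating point in $K_{\operatorname{fin}}^n$ that is simultaneously close to $g(q)$ and has $f$-value close to $l(x,y)$, and then apply Lemma~\ref{lem:limitg} at the limit of these points. First, for each $q\in Q$, apply the definition of $D_{q,x,y}$ with the choice $b=q$ to pick $d_q \in K_{\operatorname{fin}}^n \cap X_x$ with $d_q \in U_{g(q),q}$ and $0 \leq l(x,y) - f(x,d_q) < q^{-1}$. Two consequences are immediate from this construction: $\|d_q - g(q)\|_\infty \leq q^{-1}$ (since $d_q \in U_{g(q),q}$), and $f(x,d_q)\to l(x,y)$ as $q\to \infty$ in $Q$.

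Next, set $c := \liminf_{q\in Q} g(q) \in K^n$. The goal is to extract an increasing cofinal sequence $q_1 < q_2 < \cdots$ in $Q$ along which $g(q_j)\to c$. In the one-dimensional situation this is immediate from the defining formula for $\liminf$; in the general case one combines the defining formula coordinate-by-coordinate with the compactness of $K^n$ and a diagonal refinement to realize the coordinatewise liminf as a simultaneous limit along a single cofinal subsequence. Since $\|d_{q_j} - g(q_j)\|_\infty \leq q_j^{-1} \to 0$, we then also have $d_{q_j}\to c$; and because each $d_{q_j}\in X_x$, this forces $c \in \cl(X_x)\cap K^n$.

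Finally, apply Lemma~\ref{lem:limitg} to $f$ at $c$: it furnishes the functions $g_1,\dots,g_k$ and, for every $\varepsilon>0$, a $\delta>0$ such that $f(x,c')\in \bigcup_{i=1}^k B_\varepsilon(g_i(x,c))$ for every $c'\in K^n\cap B_\delta(c)\cap X_x$. For all sufficiently large $j$ we have $d_{q_j}\in B_\delta(c)$, so $f(x,d_{q_j})$ lies within $\varepsilon$ of some $g_i(x,c)$. Combining this with $f(x,d_{q_j})\to l(x,y)$ and passing to a further subsequence by the pigeonhole principle, some fixed $i\in\{1,\dots,k\}$ witnesses $|l(x,y)-g_i(x,c)|<\varepsilon$; letting $\varepsilon\to 0$ gives $l(x,y)=g_i(x,c)$, as required.

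The main obstacle is the subsequence extraction in the second paragraph: making sure that the componentwise liminf $c$ is actually attained as a simultaneous limit of some $g(q_j)$ through a cofinal subset of $Q$. Once this is set up correctly (or once one reduces to a coordinate-by-coordinate argument via cell decomposition of $X_x$), the remainder is a routine transfer of the continuity-on-cones machinery of Lemma~\ref{lem:limitg} from sequences in $K^n$ to the specific sequence $(d_{q_j})$.
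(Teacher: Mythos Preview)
Your approach coincides with the paper's: both unfold the definition of $D_{q,x,y}$ to produce, for a suitably large $q$, a point $d\in U_{g(q),q}\cap X_x\cap K_{\operatorname{fin}}^n$ with $|l(x,y)-f(x,d)|<q^{-1}$, and both invoke Lemma~\ref{lem:limitg} at $c=\liminf_q g(q)$. The only difference is packaging: the paper argues by contradiction (assume $l(x,y)\neq g_i(x,c)$ for every $i$, use Lemma~\ref{lem:limitg} to find a ball $B_{q^{-1}}(c)$ on which $f(x,\cdot)$ stays bounded away from $l(x,y)$, then exhibit a single such $d$ inside that ball, contradiction), while you run the same computation forward along a subsequence.

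On the point you single out as the main obstacle, your proposed ``diagonal refinement'' does not work. The coordinatewise $\liminf$ of a sequence in $[0,1]^n$ need not be a limit point of the sequence at all: if $g$ alternates between $(0,1)$ and $(1,0)$ the coordinatewise $\liminf$ is $(0,0)$, which no subsequence approaches, and no diagonal procedure repairs this. The paper's own proof does not address this step either; it simply writes ``Let $b\in Q$ be such that $b>\max\{2nq,a\}$ and $|c-g(b)|<q^{-1}/2$'' with no justification for the existence of such $b$ when $n>1$. So the gap you identify is real and shared. For $n=1$ it vanishes (the scalar $\liminf$ is always a limit point), and in the paper's actual application to the subsequent Proposition the specific function $h$ carries extra constraints from $V_{c,b,m}$ that pin down the first $m$ coordinates; but as the Lemma is stated in full generality, neither your argument nor the paper's quite closes it.
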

\begin{proof} Let $c:=\liminf_{q\in Q} g(q)$. Since $K$ is closed, $c\in K$. Suppose towards a contradiction that $l(x,y)\neq g_i(x,c)$ for each $i=1,\dots,k$. Then by Lemma \ref{lem:limitg} there are $a,q\in Q$ such that $|l(x,y) - f(x,d)| > a^{-1}$ for all $d \in B_{q^{-1}}(c)\cap X_x \cap K_{\operatorname{fin}}$. Let $b\in Q$ be such that $b > \max \{2nq,a\}$ and $|c-g(b)| < \frac{q^{-1}}{2}$. Since $g(b) \in D_{b,x,y}$, there is $d \in U_{g(b),b}$ such that $|l(x,y) - f(x,d)|< b^{-1}$. Because $d \in U_{g(b),b}$ and $b>2nq$,
\[
|d -c| < |d-g(b)| + |c-g(b)| < nb^{-1} + \frac{q^{-1}}{2} < q^{-1},
\]
contradicting our assumption on $q$.\end{proof}

\noindent For $m,n\in \N$ with $m< n$, $c\in K^n$ and $q \in Q$ we set
\[
V_{c,q,m} := \{ d=(d_1,\dots,d_n) \in K_q^n \ : \ e(q,c_i) = d_i \wedge e(q,c_{m+1}) > d_{m+1}.\}.
\]

\begin{prop} Let $c=(c_1,\dots,c_n) \in K^n$. The following are equivalent:
\begin{itemize}
\item[(i)] $c=\nu_f(x,y)$
\item[(ii)] $l(x,y)=g_i(x,c)$ for some $i\in \{1,\dots,k\}$ and for $m=0,\dots,n-1$
\[
\forall q\in Q\forall d \in D_{q,x,y}\cap V_{c,q,m} \exists b \in Q_{\geq q} \ U_{d,q} \cap D_{b,x,y}\cap V_{c,b,m} =\emptyset.
\]
\end{itemize}
\end{prop}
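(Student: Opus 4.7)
The plan is to establish the equivalence by proving the two implications separately, each by a contradiction argument.

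For (i) $\Rightarrow$ (ii), the existence of $i$ with $l(x,y) = g_i(x,c)$ is immediate from the definition of $\nu_f$, since $l(x,y)$ coincides with the supremum appearing in Definition~\ref{def:nutau}. For the second clause I argue by contradiction: suppose there are $m$, $q$, and $d \in D_{q,x,y} \cap V_{c,q,m}$ such that for every $b \in Q_{\geq q}$ one can choose $d^{(b)} \in U_{d,q} \cap D_{b,x,y} \cap V_{c,b,m}$. By compactness of $U_{d,q}$, pass to a subsequence $d^{(b_k)} \to c' \in K^n \cap U_{d,q}$. The condition $d^{(b)} \in V_{c,b,m}$ forces $d^{(b)}_i = e(b,c_i) \to c_i$ for $i \leq m$, so $c'_i = c_i$ on those coordinates. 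Using that $q_{k+1} > 3 q_k$ in the construction of $Q$, Lemma~\ref{lem:complintervals2} implies that the gap in $K_q$ between any element strictly less than $e(q,c_{m+1})$ and $e(q,c_{m+1})$ exceeds $q^{-1}$, so $c'_{m+1} \leq d_{m+1} + q^{-1} < e(q,c_{m+1}) \leq c_{m+1}$, and hence $c'$ is lexicographically strictly smaller than $c$. Moreover, $d^{(b)} \in D_{b,x,y}$ provides $d'^{(b)} \in K_{\operatorname{fin}} \cap X_x$ within $b^{-1}$ of $d^{(b)}$ with $f(x,d'^{(b)}) \to l(x,y)$; this yields $c' \in K^n \cap \cl(X_x)$ and, by Corollary~\ref{cor:limitpoints}, $l(x,y) = g_i(x,c')$ for some $i$, contradicting the lex-minimality of $c = \nu_f(x,y)$.

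For (ii) $\Rightarrow$ (i) I argue the contrapositive. Assume (ii)'s first clause holds but $c \neq \nu_f(x,y)$, and set $c^* := \nu_f(x,y)$, which is lex-smaller than $c$; choose $m$ with $c^*_i = c_i$ for $i \leq m$ and $c^*_{m+1} < c_{m+1}$. For $q$ large enough that $e(q,c^*_{m+1}) < e(q,c_{m+1})$, the tuple $e(q,c^*)$ lies in $V_{c,q,m}$. The crucial step is to verify that $e(q,c^*) \in D_{q,x,y}$: invoking Fact~\ref{fact:1} at $c^*$ along the cone on which $f$ extends continuously to $g_j(x,c^*) = l(x,y)$, together with density of $K_{\operatorname{fin}}$ in $K$, I produce, for each scale $b$, an element $d' \in K_{\operatorname{fin}} \cap X_x \cap U_{e(q,c^*),q}$ with $f(x,d')$ within $b^{-1}$ of $l(x,y)$; the inequality $f(x,d') \leq l(x,y)$ is automatic because any $d'$ with $f(x,d') > l(x,y)$ would also satisfy $f(x,d') > y$ (otherwise it would contradict $l(x,y)$ being the supremum over $d$ with $f(x,d) \leq y$). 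Once $e(q,c^*) \in D_{q,x,y} \cap V_{c,q,m}$ is in hand, for each $b \in Q_{\geq q}$ the tuple $e(b,c^*)$ lies in $U_{e(q,c^*),q}$ (since $e(q,c^*_i) \leq e(b,c^*_i) \leq e(q,c^*_i) + q^{-1}$), in $D_{b,x,y}$ (by the same construction with $b$ in place of $q$), and in $V_{c,b,m}$, contradicting (ii).

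The main obstacle is checking $e(q,c^*) \in D_{q,x,y}$ in the (ii) $\Rightarrow$ (i) direction. The difficulty is ensuring that, at every scale $b$, a valid $K_{\operatorname{fin}}$-approximation of $l(x,y)$ can be situated inside the small product $U_{e(q,c^*),q}$ around $c^*$. The plan is to exploit the cone structure from Fact~\ref{fact:1}, applied with the index $j$ realizing $g_j(x,c^*) = l(x,y)$, together with the density of $K_{\operatorname{fin}}$ in $K$; the ``from below'' constraint then comes for free from the supremum characterization of $l(x,y)$. This interplay between the lex-minimal achiever $c^* = \nu_f(x,y)$ and the continuous-extension structure of $f$ on cones is the technical heart of the proof.
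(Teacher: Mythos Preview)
Your overall strategy matches the paper's proof: both implications by contradiction, producing a lex-smaller witness that violates the second clause of (ii). For (i)$\Rightarrow$(ii) your compactness/subsequence extraction is equivalent to the paper's construction via $\liminf$ and Lemma~\ref{lem:lg}; the only quibble is that the conclusion $l(x,y)=g_i(x,c')$ follows from Lemma~\ref{lem:limitg} (as in the proof of Corollary~\ref{cor:limitpoints}), not from the statement of that corollary. Your gap estimate $d_{m+1}+q^{-1}<e(q,c_{m+1})$ is exactly what the paper uses.

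For (ii)$\Rightarrow$(i) you also mirror the paper: pick a lex-smaller $c^*$ with $g_j(x,c^*)=l(x,y)$ (you specialise to $c^*=\nu_f(x,y)$, the paper allows any such $c'$), then show $e(b,c^*)\in D_{b,x,y}\cap V_{c,b,m}\cap U_{e(q,c^*),q}$ for all $b\ge q$. You are right that the crux is verifying $e(q,c^*)\in D_{q,x,y}$; the paper asserts this in one line (``Since $l(x,y)=g_i(x,c')$ for some $i$, we have that $e(b,c')\in D_{b,x,y}$ for every $b\in Q$'') with no further argument.

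Your attempted justification of this step, however, does not go through. You claim that ``$f(x,d')\le l(x,y)$ is automatic because any $d'$ with $f(x,d')>l(x,y)$ would also satisfy $f(x,d')>y$''. The conditional is correct, but it is beside the point: it does not exclude the possibility that \emph{every} $d'\in K_{\operatorname{fin}}\cap X_x$ near $c^*$ on the chosen cone has $f(x,d')>l(x,y)$ (and hence $>y$). In that situation no admissible $d'$ exists and $e(q,c^*)\notin D_{q,x,y}$. Producing $d'$ with $f(x,d')$ \emph{near} $l(x,y)$ via Fact~\ref{fact:1} and density of $K_{\operatorname{fin}}$ is not the same as producing one with $f(x,d')\le l(x,y)$; the approach along a cone can perfectly well be strictly from above. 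So this remains a genuine gap in your write-up --- one that the paper also does not fill, but that you have made visible by trying to argue it explicitly.
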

\begin{proof} Suppose $c=\nu_f(x,y)$. Towards a contradiction suppose that (ii) fails. Then there are $q\in Q$ and $d=(d_1,\dots,d_n) \in D_{q,x,y}\cap V_{c,q,m}$ such that $U_{d,q} \cap D_{b,x,y}\cap V_{c,b,m} \neq \emptyset$ for all $b\in Q_{\geq q}$. Let $h: Q \to \R$ be the $\Cal L_C$-definable function that maps $b$ to the lexicographically smallest such $d'\in U_{d,q} \cap D_{b,x,y}\cap V_{c,b,m}$ if $b\geq q$ and to $0$ otherwise. Note that the lexicographic minimum in the definition of $h$ exists, because $D_{b,x,y}$ is finite. Set $c'=(c_1',\dots,c_n'):= \liminf_{b\in Q}h(b)$.  Because $h(b)\in V_{c,b,m}$ for $b \in \Q_{\geq q}$, we get that $c_i = c_i'$ for $i=1,\dots, m$ and $c_{m+1} \geq c_{m+1}'$. However, $h(b)\in U_{q,d}$ for each $b\geq q$ and hence so is $c'$. Since $d_{m+1} < e(q,c_{m+1})$, $d_{m+1} +q^{-1} < e(q,c_{m+1})$. Hence $c_{m+1} > c_{m+1}'$. Thus $c'$ is lexicographically smaller than $c$. This contradicts $c=\nu_f(x,y)$, because by Lemma \ref{lem:lg} there is $i\in \{1,\dots, k\}$ such that $l(x,y) = g_i(x,\liminf_{q\in Q} g(q))=g_i(x,c')$.\newline

\noindent Now suppose that $c$ satisfies (ii). By definition of $\nu_f$ it is only left to show that $c$ is lexicographically minimal such that there is $i\in \{1,\dots, k\}$ with $l(x,y) = g_i(x,c)$. Suppose not. Let $c'=(c_1',\dots,c_n') \in K^n$ such that $c'$ is lexicographically smaller than $c$ and there is $i\in \{1,\dots, k\}$ with $l(x,y) = g_i(x,c')$. Let $m < n$ be such that $c_i = c'_i$ for $i=1,\dots,m$ and $c_{m+1} > c_{m+1}'$. Then $e(b,c_i)=e(b,c_i')$ for $i=1,\dots, m$ for all $b\in Q$, and there is $q \in Q$ such that $e(b,c_{m+1})>e(b,c_{m+1}')$ for all $b\in Q$ with $b\geq q$. Thus $e(b,c') \in V_{c,b,m}$ for every $b \geq q$. Since $l(x,y) = g_i(x,c')$ for some $i$, we have that $e(b,c') \in D_{b,x,y}$ for every $b\in Q$. Hence $e(b,c') \in U_{e(q,c'),q} \cap D_{b,x,y}\cap V_{c,b,m}$ for all $b\geq q$. This contradicts (ii).
\end{proof}

\begin{cor}\label{cor:nufboreldef} The graph of $\nu_f$ is Borel.
\end{cor}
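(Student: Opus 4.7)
The plan is to read off Borelness directly from the characterization in the preceding proposition, exploiting the fact that all quantifiers appearing there range over the countable sets $Q$ and $K_{\operatorname{fin}}$, and that every basic ingredient is Borel.

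First I would check that the ``value'' part of condition (ii) — namely that $l(x,y)=g_i(x,c)$ for some $i\in\{1,\dots,k\}$ — cuts out a Borel subset of $\R^{l+1}\times K^n$. The graph of $l$ is Borel by the remark made just before the proposition (it is expressed as a sup over the countable set $K_{\operatorname{fin}}\cap X_x$ of values of the continuous function $f$). Each $g_i$ is $\Cal L$-$\emptyset$-definable in $\OR$ and hence continuous on its domain, so the set $\{(x,y,c): l(x,y)=g_i(x,c)\}$ is Borel and a finite union over $i$ remains Borel.

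Next I would handle the ``minimality'' part. For fixed $q\in Q$ and fixed $m<n$, the map $c\mapsto(e(q,c_1),\dots,e(q,c_n))$ is a piecewise constant (hence Borel) map $K^n\to K_q^n$, because $K_q$ is finite and $e(q,-)$ is constant on each of the finitely many intervals of $K$ that touch a given point of $K_q$. Therefore the condition $d\in V_{c,q,m}$ — which only imposes $e(q,c_i)=d_i$ for $i\le m$ and $e(q,c_{m+1})>d_{m+1}$ — defines a Borel subset of $K^n\times K_q^n$. Since $\{(q,x,y,d):d\in D_{q,x,y}\}$ has already been noted to be Borel, and since $U_{d,q}$ is a closed box, the set
\[
\{(x,y,c,q,d,b):d\in D_{q,x,y}\cap V_{c,q,m}\text{ and }U_{d,q}\cap D_{b,x,y}\cap V_{c,b,m}=\emptyset\}
\]
is Borel in $\R^{l+1}\times K^n\times Q\times K_q^n\times Q$. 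Then for each fixed $m$, condition (ii) for that $m$ is of the form $\forall q\in Q\;\forall d\in K_q\;\exists b\in Q_{\ge q}$ applied to a Borel predicate, and because $Q$ and each $K_q$ are countable, these quantifiers reduce to countable intersections and unions, preserving Borelness. Intersecting over the finitely many values $m=0,\dots,n-1$ and over the choice of $i$ yields that condition (ii) cuts out a Borel subset of $\R^{l+1}\times K^n$.

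Finally, by the proposition the graph of $\nu_f$ (on its domain of definition, intersected with $K^n$) coincides with this Borel set, so it is Borel. The only possible obstacle is verifying that the auxiliary objects $V_{c,q,m}$, $D_{q,x,y}$, and $U_{d,q}$ combine jointly Borel-measurably in all their arguments, but each piece has already been established to be Borel above, and all remaining quantifiers range over the countable sets $Q$ and $K_{\operatorname{fin}}$.
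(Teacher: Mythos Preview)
Your proposal is correct and is exactly the argument the paper has in mind: the corollary is stated without proof because it is meant to follow immediately from the preceding proposition, and your write-up supplies precisely the routine verification that condition~(ii) there is a Borel condition (countable quantification over $Q$ and the finite sets $K_q^n$, applied to Borel ingredients such as the graph of $l$, the set $\{(q,x,y,d):d\in D_{q,x,y}\}$, and the $\Cal L$-definable $g_i$). Two tiny slips worth fixing: the $g_i$ are $\Cal L$-$\emptyset$-definable and hence Borel, but not necessarily continuous on all of $\R^{l+n}$; and the universal quantifier over $d$ should range over $K_q^n$, not $K_q$.
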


\noindent Similarly it can be shown that the graph of $\tau_f$ is Borel. We leave the details to the reader. We are now ready to finish the proof of Theorem B.

\begin{proof}[Proof of Theorem B] First note that the interpretation of $A$ in $\Cal R_K$ is $Q$ and hence countable and Borel. The same is true for $K_{\operatorname{fin}}$. The interpretation of $C$ in $\Cal R_K$ is $K$ and closed, in particular Borel. We will first show that the interpretation of each of the function symbols from $\Cal L_C^+$ is a Borel function. It is enough to check that the graph of each function is Borel. Since $\Cal R$ is o-minimal, the graph of every $\Cal L$-definable function is Borel. It follows immediately from its definition that the graph of $\lambda$ is Borel. By Corollary \ref{cor:nuboreldef} the graph of $\nu$ is Borel. By Corollary \ref{cor:nufboreldef} the graphs of $\nu_f$ and $\tau_f$ are Borel. Since the sets definable from $\Cal L_B$-formula are Borel by Fact \ref{fact:landweber}, every $\Cal L_B$-definable function is Borel. Hence all interpretation of function symbols from $\Cal L_C^+$ are Borel. Again, because sets definable from a $\Cal L_B$-formula or a $\Cal L$-formula are Borel, the interpretation of any predicate symbol from $\Cal L_C^+$ is Borel. Since Borel sets are closed under preimages under Borel functions and under boolean combinations, every set definable by a quantifier-free $\Cal L_C^+$-formula is Borel. By Theorem \ref{thm:qe} every set definable in $\Cal R_K$ is Borel.
\end{proof}

  \bibliographystyle{plain}
  \bibliography{hieronymi}

\end{document}